\newcommand{\HC}{{\mathcal H}(C)}
\newcommand{\g}{{\mathfrak g}}
\newcommand{\Tr}{{\bf Tr}}
\newcommand\bfhc{{\bfh\bfc}}
\newcommand{\Lg}{{\check\g}}
\newcommand{\LG}{{\check G}}
\newcommand{\LB}{{\check B}}
\newcommand{\LN}{{\check N}}
\newcommand{\LLbd}{{\check\Lambda}}
\newcommand{\LP}{{\check P}}
\newcommand{\LQ}{{\check Q}}
\newcommand{\LT}{{\check T}}
\newcommand{\LZ}{{\check Z}}
\newcommand{\Lpi}{{\check\pi}}
\newcommand{\Lrho}{{\check\rho}}
\newcommand{\Lalpha}{{\check\alpha}}
\newcommand{\LPhi}{{\check\Phi}}
\newcommand{\LGamma}{{\check\Gamma}}
\newcommand{\Om}{{\Omega}}
\newcommand{\LOm}{{\check\Omega}}
\newcommand{\Lbd}{\Lambda}
\renewcommand{\P}{{\mathcal P}}
\newcommand{\R}{{\mathcal R}}
\newcommand{\Fl}{{{\mathcal F}\ell}}
\newcommand{\Gr}{{{\mathcal G}{\mathfrak{r}} }}
 \definecolor{darkred}{HTML}{993333}
\newcommand{\arxiv}[1]{\href{http://arxiv.org/abs/#1}{\tt
arXiv:\nolinkurl{#1}}}
\theoremstyle{plain}
\newtheorem{theorem}{Theorem}[section]
\newtheorem{lemma}[theorem]{Lemma}
\newtheorem{lemma-definition}[theorem]{Lemma-Definition}
\newtheorem{definition-lemma}[theorem]{Definition-Lemma}
\newtheorem{proposition}[theorem]{Proposition}
\newtheorem{conjecture}[theorem]{Conjecture}
\newtheorem{corollary}[theorem]{Corollary}
\newtheorem*{theorem-A}{Theorem A}
\newtheorem*{theorem-B}{Theorem B}
\newtheorem*{theorem-C}{Theorem C}
\newtheorem*{theorem-D}{Theorem D}
\newtheorem*{conjecture-A}{Conjecture A}
\newtheorem*{conjecture-B}{Conjecture B}
\newtheorem*{conjecture-C}{Conjecture C}
\newtheorem*{conjecture-D}{Conjecture D}
\theoremstyle{definition}
\theoremstyle{remark}
\newtheorem{remark}[theorem]{Remark}
\numberwithin{equation}{section}
\def\A{\mathrm{A}}
\def\D{\mathrm{D}}
\def\E{\mathrm{E}}
\def\I{\mathrm{I}}
\def\L{\mathrm{L}}
\def\M{\mathrm{M}}
\def\P{\mathrm{P}}
\def\Q{\mathrm{Q}}
\def\R{\mathrm{R}}
\def\U{\mathrm{U}}
\def\V{\mathrm{V}}
\def\W{\mathrm{W}}
\def\X{\mathrm{X}}
\def\u{\mathrm{u}}
\def\bbA{\mathbb{A}}
\def\bbC{\mathbb{C}}
\def\bbF{\mathbb{F}}
\def\bbG{\mathbb{G}}
\def\bbN{\mathbb{N}}
\def\bbP{\mathbb{P}}
\def\bbQ{\mathbb{Q}}
\def\bbR{\mathbb{R}}
\def\bbZ{\mathbb{Z}}
\def\scrU{\mathscr{U}}
\def\frakB{\mathfrak{B}}
\def\calO{\mathfrak{O}}
\def\frakP{\mathfrak{P}}
\def\frakR{\mathfrak{R}}
\def\frakU{\mathfrak{U}}
\def\frakZ{\mathfrak{Z}}
\def\calA{\mathcal{A}}
\def\calC{\mathcal{C}}
\def\calD{\mathcal{D}}
\def\calI{\mathcal{I}}
\def\calK{\mathcal{K}}
\def\calL{\mathcal{L}}
\def\calO{\mathcal{O}}
\def\calP{\mathcal{P}}
\def\calX{\mathcal{X}}
\def\frakb{\mathfrak{b}}
\def\frakg{\mathfrak{g}}
\def\frakn{\mathfrak{n}}
\def\frakp{\mathfrak{p}}
\def\frakt{\mathfrak{t}}
\def\fraku{\mathfrak{u}}
\def\bfa{\mathbf{a}}
\def\bfb{\mathbf{b}}
\def\bfc{\mathbf{c}}
\def\bfh{\mathbf{h}}
\def\bfk{\mathbf{k}}
\def\bfA{\mathbf{A}}
\def\bfC{\mathbf{C}}
\def\bfF{\mathbf{F}}
\def\bfK{\mathbf{K}}
\def\simto{\overset{\sim}\to}
\def\k{{\operatorname{k}\nolimits}}
\def\mod{\operatorname{-mod}\nolimits}
\def\proj{\operatorname{proj}\nolimits}
\def\Mod{\operatorname{-Mod}\nolimits}
\def\Rep{\operatorname{rep}\nolimits}
\def\Vect{\operatorname{Vect}\limits}
\def\Im{\operatorname{Im}\nolimits}
\def\Fun{\operatorname{Fun}\nolimits}
\def\Hom{\operatorname{Hom}\nolimits}
\def\End{\operatorname{End}\nolimits}
\def\Ind{\operatorname{Ind}\nolimits}
\def\Ext{\operatorname{Ext}\nolimits}
\def\id{\operatorname{id}\nolimits}
\def\Spec{\operatorname{Spec}\nolimits}
\def\Tr{\operatorname{Tr}\nolimits}
\def\da{\downarrow}
\def\St{\operatorname{St}\nolimits}
\def\Ad{\operatorname{Ad}\nolimits}
\def\Sym{\operatorname{Sym}\nolimits}
\def\Tr{\operatorname{Tr}\nolimits}
\def\can{\operatorname{can}\nolimits}
\def\modulo{\operatorname{mod}\nolimits}
\def\pt{\operatorname{pt}\nolimits}
\def\af{{\operatorname{af}\nolimits}}
\def\ex{{\operatorname{ex}\nolimits}}
\def\re{{\operatorname{re}\nolimits}}
\def\sc{{\operatorname{sc}\nolimits}}
\def\hyb{{\operatorname{hb}\nolimits}}
\def\HC{{\operatorname{HC}\nolimits}}
\def\Fr{{\operatorname{Fr}\nolimits}}
\def\res{\mathrm{res}}
\def\Ub{\U^{\hyb,b}}
\def\Ut{\frakU_\zeta^t}
\appto\appendix{\addtocontents{toc}{\protect\setcounter{tocdepth}{1}}}
\title[Center of the small quantum group]
{A geometric realization of the center of the small quantum group}
\author{Roman Bezrukavnikov$^1$}
\address{\scriptsize{$^1$~Massachusetts Institute of Technology, 77 Massachusetts Ave, Cambridge, MA 02139.}}
\author{Pablo Boixeda Alvarez$^2$}
\address{\scriptsize{$^2$~Yale University, New Haven, 06520 CT, USA, Partially supported by NSF grant DMS-1926686}}
\author{Peng Shan$^3$} 
\address{\scriptsize{$^3$~Yau Mathematical Sciences Center, Tsinghua University, 100084, Beijing, China.}}
\author{Eric Vasserot$^4$} 
\address{\scriptsize{$^4$~Universit\'e de Paris, 75013 Paris, France, UMR7586 (CNRS), ANR-18-CE40-0024,
Institut Universitaire de France (IUF).
}}
\begin{document}
\maketitle

\begin{abstract}
We propose a new geometric model for the center of the small quantum group using the  
cohomology of certain affine Springer fibers. More precisely, we establish an isomorphism between the equivariant cohomology of affine Spaltenstein fibers for a split element and the center of the deformed graded modules for the small quantum group. We also obtain an embedding from the invariant part of the nonequivariant cohomology under the action of the extended affine Weyl group to the $\LG$-invariant part of the center of the small quantum group, which we conjecture to be an isomorphism. Finally, we give a dimension formula for the invariants on the cohomology side, thus providing a lower bound for the dimension of the center.
\end{abstract}

%\tableofcontents

\bigskip

\section{Introduction}

\subsection{The main results}
In this paper, we propose a new geometric model for the center of small quantum groups. It is given by the cohomology of certain affine Spaltenstein varieties. The main results of the paper are the follows.

\smallskip

Let $\LG$ be a quasi-simple, connected, simply-connected algebraic group. Fix $\zeta\in\bbC$ a root of unity of order $\ell$. We assume that $\ell$ is odd,  greater than the Coxeter number, and coprime to the determinant of the Cartan matrix of $\LG$.
Let $\u_\zeta$ be the small quantum group. It is a finite dimensional subalgebra of the Lusztig's quantum group $\U_\zeta$. We denote by $Z(\u_\zeta)$ the center of $\u_\zeta$. The adjoint action of $\U_\zeta$ on itself preserves $Z(\u_\zeta)$. It factorises through the quantum Frobenius map, and yields an action of $\LG$ on $Z(\u_\zeta)$. The study 
of the center $Z(\u_\zeta)$ as well as the subspace of $\LG$-invariant elements in it has captured a lot of attention recently, see, e.g., \cite{LQ1}, \cite{LQ2}.

\smallskip

To study the center, we consider several module categories related to $\u_\zeta$.
Let $\LLbd$ be the weight lattice for $\LG$. 
Let $\u_\zeta\mod^\LLbd$ be the category of finite dimensional $\u_\zeta$-modules
equipped with $\LLbd$-graded structure which is compatible with the $\LT$-action on $\u_\zeta$. 
%This category is a quantum analog of the so-called $G_1T$-modules. It has been studied in \cite{AJS}. 
It admits a deformation, studied in \cite{AJS}, which we now describe. 
Let $\frakU_\zeta$ be the De Concini-Kac quantum group. 
The center of $\frakU_\zeta$ contains the ring of regular functions $\bbC[\LG^*]$ on the Poisson dual group 
$\LG^*$ of $\LG$. 
The specialisation of $\frakU_\zeta$ at $1\in \LG^*$ is the small quantum group
$\u_\zeta$.
The algebra $\frakU_{\zeta}^t=\frakU_\zeta \otimes _{\bbC[\LG^*]} \bbC[\LT]$ is a deformation of $\u_\zeta$. 
Let $S$ be the completion of $\bbC[\LT]$ at $1\in \LT$. 
The category $\frakU^t_\zeta\mod^\LLbd_S$ of $\LLbd$-graded $\frakU_{\zeta}^t$-modules over $S$ 
is a deformation of $\u_\zeta\mod^\LLbd$. 
The blocks of $\frakU^t_\zeta\mod^\LLbd_S$ are naturally in bijection with those of $\u_\zeta\mod^\LLbd$. We will study both the centers  $Z(\frakU^t_\zeta\mod^\LLbd_S)$ and $Z(\u_\zeta\mod^\LLbd)$. They carry natural actions of the lattice 
$\ell\LLbd$. There is a canonical isomorphism $Z(\u_\zeta\mod^\LLbd)^{\ell\LLbd}=Z(\fraku_\zeta)^{\LT}$.

\smallskip

Now, let us introduce the main objects on the geometric side. 
Let $G$ be the Langlands dual group of $\LG$ and let $T$ be the dual torus of $\LT$. Let $\Gr$ be the affine Grassmannian of the group $G$. We consider the $\bbG_m$-action on $\Gr$ by loop rotation. Let $\Gr^\zeta$ be the fixed points locus of the cyclic subgroup $\mu_\ell\subset\bbG_m$ generated by $\zeta$. 
Let $\gamma_\ell=s\otimes t^\ell$, where $s$ is a regular element in the Lie algebra of $T$. We consider the affine Spaltenstein variety ${}^0\Gr^\zeta_{\gamma_\ell}$ which is the closed ind-subscheme of $\Gr^\zeta$ consisting of points $x$ such that $\gamma_\ell$ belongs to the radical of the Lie algebra of the stabilizer of $x$ in $G(\bbC(\!(t)\!))$. 
Note that ${}^0\Gr^\zeta_{\gamma_\ell}$ is a disjoint union of the Spaltenstein subvarieties
${}^0\!\Fl^\omega_{\ell,\gamma_\ell}$ of the partial affine flag manifold $\Fl^\omega_{\ell}$
for the loop group $G(\bbC(\!(t^\ell)\!))$ of type $\omega$.
The space ${}^0\Gr^\zeta_{\gamma_\ell}$ inherits a $T(\!(t^\ell)\!)$-action coming from the left multiplication on $\Gr$. The 
cohomology of ${}^0\Gr^\zeta_{\gamma_\ell}$ is $T$-equivariantly formal and satisfies the so-called GKM(=Goresky-Kottwitz-MacPherson) condition. Thus the cohomology ring  $H^\bullet_T({}^0\Gr^\zeta_{\gamma_\ell},\bbC)$
admits a combinatorial description, see Lemma \ref{lem:GKM0}. 
Further, let $W_{\ell,\ex}=W\ltimes \ell\LLbd$ be the extended affine Weyl group, and $W_{\ell,\af}=W\ltimes \ell\LQ$ be the affine Weyl group. The space $H^\bullet_T({}^0\Gr^\zeta_{\gamma_\ell},\bbC)$ has a left $W_{\ell,\ex}$-action, with the action of the lattice $\ell\LLbd$ induced by the 
$T(\!(t^\ell)\!)$-action and the action of the finite Weyl group $W$ induced by some monodromy operators.
For a component of ${}^0\Gr^\zeta_{\gamma_\ell}$ isomorphic to the affine flag variety $\Fl_\ell$, the 
corresponding cohomology also has a right $W_{\ell,\af}$-action given by the Springer correspondence. 

\smallskip

We identify $S$ with the completion of $H^\bullet_T(\pt)$ with respect to the augmentation ideal.
The first main result of our paper is the following.

\begin{theorem-A} [Theorem \ref{prop:P2}] There is an $S$-algebra isomorphism
$$\bfa: H^\bullet_T({}^0\Gr^\zeta_{\gamma_\ell},\bbC)_{\hat 0}\simto Z(\Ut\mod^\LLbd_S).$$
It specializes to an algebra embedding
$H^\bullet({}^0\Gr^\zeta_{\gamma_\ell},\bbC)\hookrightarrow Z(\u_\zeta\mod^\LLbd)$, which restricts to an embedding
$$H^\bullet({}^0\Gr^\zeta_{\gamma_\ell},\bbC)^{\ell\LLbd}\hookrightarrow Z(\u_\zeta)^\LT.$$
\end{theorem-A}

Moreover, the morphism $\bfa$ and its specialization are compatible with the direct sum decomposition of the 
cohomology according to the connected components of $^0\Gr^\zeta_{\gamma_\ell}$ and the decomposition 
of $Z(\u_\zeta\mod^\LLbd)$ according to the blocks.

Further, we conjecture that the embeddings in Theorem A are isomorphisms, see Conjecture \ref{conj-G1T}.

\smallskip

Our second goal is to compare the isomorphism $\bfa$ with an algebra homomorphism from the cohomology of 
$\Gr^\zeta$ to the center of $\U_\zeta$. Indeed, we may identify the $\LG$-invariants in $Z(\u_\zeta)$ as the intersection $Z(\U_\zeta)\cap \u_\zeta$, see \eqref{Ginv}. We have the following theorem.

\begin{theorem-B} [Theorem \ref{thm:main2}]
\begin{enumerate}
\item[$\mathrm{(a)}$] There is an algebra homomorphism
$$\bfc:H^\bullet(\Gr^\zeta,\bbC)\to Z(\U_\zeta),$$
whose image is contained in $Z(\u_\zeta)^\LG$. 

\item[$\mathrm{(b)}$]  There is a commutative diagram of algebra homomorphisms
\begin{align*}
\xymatrix{
H^\bullet(\Gr^\zeta,\bbC)\ar[rr]^{\bfc}
\ar[d]^{i^\ast} &&Z(\u_\zeta)^\LG\ar@{^{(}->}[d]\\
H^\bullet({}^0\Gr^\zeta_{\gamma_\ell},\bbC)^{\ell\LLbd}\ar@{^{(}->}[rr]^-\bfa &&Z(\u_\zeta)^\LT.
}
\end{align*}
All the morphisms are compatible with the block decompositions.
\end{enumerate}
\end{theorem-B}

Note that the image of $i^\ast$ lands in the $W_{\ell,\ex}$-invariant part of $H^\bullet({}^0\Gr^\zeta_{\gamma_\ell},\bbC)$. 
In type $A$, we show that $i^\ast$ maps surjectively onto this part, see Corollary \ref{cor: surjA}. Thus, in this case, the commutative diagram above yields an embedding
$$H^\bullet({}^0\Gr^\zeta_{\gamma_\ell},\bbC)^{W_{\ell,\ex}}\hookrightarrow Z(\u_\zeta)^\LG.$$

We propose the following conjecture (for arbitrary type), which is an extension of a previous conjecture of Bezrukavnikov-Qi-Shan-Vasserot for the principal block case in type A.

\begin{conjecture-A} [Conjecture \ref{conj-G1T} and Conjecture \ref{conj:B}] 
The algebra embedding 
$$\bfa:H^\bullet({}^0\Gr^\zeta_{\gamma_\ell},\bbC)\hookrightarrow Z(\u_\zeta\mod^\LLbd)$$
is an isomorphism. Moreover, it restricts to isomorphisms
\begin{eqnarray*}
H^\bullet({}^0\Gr^\zeta_{\gamma_\ell},\bbC)^{\ell\LLbd}&\simeq& Z(\u_\zeta)^\LT,\\
H^\bullet({}^0\Gr^\zeta_{\gamma_\ell},\bbC)^{W_{\ell,\ex}}&\simeq& Z(\u_\zeta)^\LG.
\end{eqnarray*}
\end{conjecture-A}

\medskip

Our third main result is an explicit formula for the dimension of the left hand side.

\begin{theorem-C}[Theorem \ref{prop:formula}] Denote by $h$ the Coxeter number of $G$, 
by $r$ its rank,  and by $e_1,...,e_r$ the exponents of the Weyl group $W$. Then we have
$$
\dim H^\bullet({}^0\Gr^\zeta_{\gamma_\ell},\bbC)^{W_{\ell,\ex}}=
\frac{1}{|W|}\prod_{i=1}^r\big((h+1)\ell-h+e_i\big).
$$
\end{theorem-C}

In particular, if the Conjecture A is true, this gives an explicit formula for the dimension of $Z(\u_\zeta)^\LG$.
In type A, this formula is compatible with the conjecture in \cite{LQ2}, see Remark \ref{rem:LQ}.

After our paper was written, we were informed by A. Lachowska, N. Hemelsoet and O. Kivinen that they have independently proved Theorem C, partly motivated by Conjecture A that we announced earlier.

\subsection{Ingredients of proofs}

The proof of Theorem A uses some localization properties of the $S$-linear category $\frakU^t_\zeta\mod^\LLbd_S$. 
By \cite{AJS}, the category $\Ut\mod^\LLbd_S$ is generically semi-simple, and when localizing 
$S$ at height one prime ideals generated by the root vectors, it is equivalent to the corresponding category for a rank one 
subgroup of $G$. The center in the rank one situation can be computed explicitly. The center of the category $\Ut\mod^\LLbd_S$ can be obtained as the intersection of their localizations at height one prime ideals inside the generic center. We obtain in this way a GKM-type  description of the center of $\Ut\mod^\LLbd_S$, which matches the GKM-type description of the equivariant 
cohomology of ${}^0\Gr^\zeta_{\gamma_\ell}$

\smallskip

The proof of Theorem B consists of two parts. 
The first one is to construct the morphism $\bfc$, and the second one is to 
show that $\bfc$ fits into the commutative diagram in the theorem. For the construction of $\bfc$, we use an 
algebraic description of the ring $H^\bullet_{\bbG_m}(\Gr^\zeta,\bbC)$ following \cite{BF08}. More precisely, for any closed subscheme $Y$ inside an affine scheme $X$, we denote by $\bbC[\widetilde{N}_Y(X)]$ the Rees algebra associated with the ideal $I_Y$, that is the subring in $\bbC[\hbar^{\pm 1}][X]$ generated by $\bbC[\hbar][X]$ and $\hbar^{-1}I_Y$. We write $\bbC[N_Y(X)]$ for its specialization at $\hbar=0$.
In Proposition \ref{prop:N}, we show that the completion of the ring of $H^\bullet_{\bbG_m}(\Gr^\zeta,\bbC)$ 
with respect to the augmentation ideal is isomorphic to $\bbC[\widetilde{N}_\Omega(T/W)]$ for a certain subscheme $\Omega$ inside $T/W$. Analogous description for the $T$-equivariant cohomology of $\Gr^\zeta$ is also given. Using this description, we show that the 
Harish-Chandra isomorphism extends to an algebra homomorphism from $\bbC[\widetilde{N}_\Omega(T/W)]$ to 
the center of the Lusztig quantum group $\U_{\hat\zeta}$ completed at the parameter $\zeta$. Then specializing the parameter produces a morphism $\bbC[N_\Omega(T/W)]\to Z(\U_\zeta)$. Finally, by analysing the image of certain central elements 
introduced by Drinfeld, we show that this map lands in the $\LG$-invariant part of $Z(\u_\zeta)$. In this way, we get 
the map $\bfc$. There is also a geometric approach for $\bfc$ in the case of principal blocks which is indicated in 
Appendix \ref{appendixB}.

\iffalse%%%%%%%%%%%%%%%%%%
In the appendix we explain a geometric approach to construct the morphism $\bfc$ in the case of principal blocks.
We want to define a morphism
$$\bfc':H^\bullet(\Fl)\to Z(\Rep(\U_\zeta)\!^{\,\hat 0})$$
where $\Fl$ is the affine flag manifold of $G$.
To be able to use Frobenius weights, we consider $\U_\zeta$ over the field $\bar\bbQ_l$ and
$\Fl$ over an algebraic closure of the finite field $\bbF_q$ 
of characteristic $p$ which is large enough and prime to $l$.
Let ${}^0\calI$ be the pro-unipotent 
radical of the Iwahori group $\calI$.
Let $D^b_{m,{}^0\calI}(\Gr)$ be the 
${}^0\calI$-equivariant derived category of mixed complexes of $\bar\bbQ_l$-sheaves
on $\Gr$ and $D^b_{m,IW}(\Fl)$ be the Iwahori-Whittaker derived category of 
mixed complexes of $\bar\bbQ_l$-sheaves
on $\Fl$. 
%Forgetting the mixed structures we get the categories $D^b_{{}^0\calI}(\Gr)$ and $D^b_{IW}(\Fl)$.
According to \cite{BY}, there is an equivalence of categories
$D^b_{m,{}^0\calI}(\Gr)\to D^b_{m,IW}(\Fl).$
It yields a graded vector space homomorphism
$$
H^\bullet(\Fl)\to\Hom\big(\id_{D^b_{m,{}^0\calI}(\Gr)}\,,\,\id_{D^b_{m,{}^0\calI}(\Gr)}(-\bullet\!/2)\,\big).$$
Composing it with the degrading functor 
$D^b_{m,{}^0\calI}(\Gr)\to D^b_{{}^0\calI}(\Gr),$ we get a map
$H^\bullet(\Fl)\to Z(D^b_{{}^0\calI}(\Gr)).$
Then using the equivalence  
$D^b_{{}^0\calI}(\Gr)\simto D^b(\Rep(\U_\zeta)\!^{\,\hat 0})$
in \cite{ABG}, we get the map $\bfc'$.
\fi%%%%%%%%%%%%%%%%%%%%%

The proof of the commutativity of the diagram uses a module category of an hybrid version of 
the quantum group at roots of unity, which provides a bridge between representations of Lusztig's quantum group 
$\U_\zeta$ and the category $\u_\zeta\mod^{\LLbd}$. This hybrid quantum group first appeared in the work of 
Gaitsgory \cite{G18}, in view of extending the Kazhdan-Lusztig equivalence between perverse sheaves on 
$\Gr$ and representations of $\U_\zeta$ to a derived equivalence between
perverse sheaves on $\Fl$ and the category $\mathcal{O}$ for the hybrid quantum group. 
In \S\ref{s:hybrid}, we 
study some basic properties of this category. The main point of this section is the diagram \eqref{diagram}, which 
provides the link between the hybrid category $\calO$, $\u_\zeta\mod^{\LLbd}$ and $\Rep(\U_\zeta)$. Next, we 
give in Proposition \ref{prop:P1} an algebra embedding
$$\bfb: H^\bullet_T(\Gr^\zeta,\bbC)_{\hat 0}\to Z(\calO^{\hyb}_{\zeta,S}).$$
Using the natural embedding of $\Rep(\U_\zeta)$ into $\calO^{\hyb}_{\zeta,S}$ and its left adjoint functor, we produce a morphism 
$Z(\calO^{\hyb}_{\zeta, S})\to Z(\Rep(\U_\zeta))$. In Proposition \ref{prop:P00}, we show that $\bfb$, composed with 
this morphism, is equal to $\bfc$ composed with the specialisation map on 
the cohomology. In some sense, the category $\calO^{\hyb}_{\zeta, S}$ plays the role of a deformation of (an enlargement of) the category 
$\Rep(\U_\zeta)$. This fact allows us in \S\ref{sec:main}
to make the link to the map $\bfa$ constructed by a deformation argument.

\smallskip

For Theorem C, we first obtain an upper bound of the dimension for 
$H^\bullet(\Fl_\gamma,\bbC)^{W_\ex}$, which 
corresponds to the center of a principal block, see Proposition 2.9. The inequality is an equality in type $A$. 
Using this upper bound, a theorem of Boixeda-Alvarez and Losev yields an isomorphism of
right $W$-modules $H^\bullet(\Fl_\gamma,\bbC)^{W_\ex}=\bbC[\LLbd/(h+1)\LLbd]$. 
Then, using a formula of Sommers, we compare $H^\bullet(\Fl_\gamma,\bbC)^{W_\ex}$ and the cohomology of 
an elliptic affine Springer fibre, as well as their parabolic versions involving
affine Spaltenstein varieties to obtain the main result.

\subsection{Perspectives}
Most of the results in this paper have counterparts in positive characteristic. 
Let $\bar\bbF_p$ be the algebraic closure of the finite field $\bbF_p$ for a good prime $p$. 
We replace the small quantum group $\u_\zeta$ by the restricted enveloping algebra $\u_p$ of $\LG$ 
over $\bar\bbF_p$, replace the category $\u_\zeta\mod^\LLbd$ by the category 
$\u_p\mod^\LLbd$ of $\LG_1\LT$-modules, and replace
$\Ut\mod^\LLbd_S$ by the $S$-deformed version $\frakU^t_p\mod^\LLbd_S$
of $\u_p\mod^\LLbd$ defined in \cite{AJS}, 
where $S$ is the completion of $\bar\bbF_p[\frakt]$ at zero.  
Then:

\begin{enumerate}
\item there is an $S$-algebra isomorphism
$$\bfa: H^\bullet_T({}^0\Gr^{\mu_p}_{\gamma_p},\bar\bbF_p)_{\hat 0}\simto Z(\frakU^t_p\mod^\LLbd_S)$$
which specializes to an inclusion
$H^\bullet({}^0\Gr^{\mu_p}_{\gamma_p},\bar\bbF_p)\hookrightarrow Z(\u_p\mod^\LLbd)$;

\item there is an algebra homomorphism
$$\bfc:H^\bullet(\Gr^{\mu_p},\bar\bbF_p)\to Z(\text{Dist}(\LG_{\bar\bbF_p})),$$
where $\text{Dist}(\LG_{\bar\bbF_p})$ is the distribution algebra of $\LG_{\bar\bbF_p}$;

\item there is a positive characteristic analogue of the map $\bfb$ above,
so that one can check a positive characteristic analogue of Theorem B.
\end{enumerate}

We'll come back to this elsewhere.

\smallskip

The results proved in this paper suggest that the Koszul dual of the category of $\u_\zeta$-modules should have a 
geometric construction in terms of microlocal perverse sheaves attached to the Springer fiber.
This is the subject of an on going project of the first two authors with M. McBreen and Z. Yun.

 \medskip

 \subsection{Notation}
 Let $\bfk$ be an algebraically closed field of characteristic zero. 
 Unless stated otherwise, we'll assume $\bfk=\bbC$.
 For a $\bfk$-algebra $A$ let $Z(A)$ be its center,
 and $A\Mod$, $A\mod$ be the categories of all left 
 $A$-modules and all finitely generated ones.  
 The center $Z(\calC)$ of a category $\mathcal{C}$ is the endomorphism ring of the identity functor. Basic properties of the center are recalled in Appendix A. 

Given a lattice $\Lbd$, 
 a $\Lbd$-graded $\bfk$-vector space is a $\bfk$-vector space $M$ equipped with a decomposition 
$M=\bigoplus_{\mu\in\Lbd} M_\mu$. 
For any $\Lbd$-graded $\bfk$-algebra $A$, 
by a $\Lbd$-graded $A$-module we mean an $A$-module in the category of
$\Lbd$-graded $\bfk$-vector spaces. Let $A\mod^\Lbd$ be the category of $\Lbd$-graded finitely 
generated $A$-modules.
For any set $X$, let $\Fun(X,A)=\prod_{x\in X}A$ be the ring 
whose product is the point-wise multiplication of functions.

For any complex variety $X$ with an action of a complex linear group $G$,
 let $H_G^\bullet(X)$ and $H^G_\bullet(X)$  be the $G$-equivariant cohomology
 and $G$-equivariant Borel-Moore homology of $X$ with coefficient in $\bfk$. 
 We write $H^\bullet_G$ for the $G$-equivariant cohomology of a point. 
 Let $R_G$ be the representation ring of $G$ tensored with $\bfk$.
We have obvious identifications $H^\bullet_{\bbG_m}=\bfk[\hbar]$
and $R_{\bbG_m}=\bfk[q^{\pm 1}]$, where $q$ is the linear character and $\hbar$ is its first Chern class.
For any commutative ring $R$ let $\Vect(R)$ be the category consisting of the free $R$-modules of finite rank.

Let  $e$ be a positive integer. 
Let $q_e=q^{1/e}$ be a formal variable, and $\zeta$ a primitive $\ell$-th root of unity. 
Fix $\zeta_e=\zeta^{1/e}$ in $\bfk^\times$.
 We write 
 $\calA=\bfk[\hbar],$
 $\bfA=\bfk[q_e^{\pm 1}],$
 and $\bfF=\bfk(q_e).$ 
 We may identify the ring $\calA$ with $\bfk[\bbG_a]$ and view
 $\bfA$ as an extension of the ring $\bfk[\bbG_m]$.
Let $\bfA_{\hat\zeta}$ be the completion of $\bfA$
at the point $q_e=\zeta_e$, and 
$\calA_{\hat 0}$ be the completion of $\calA$ at $\hbar=0$.
The assignment $q_e=\zeta_e+\hbar$ yields canonical isomorphisms 
$\bfA_{\hat\zeta}=\calA_{\hat 0}=\bfk[[\hbar]].$
Set
$\calK_\ell=\bbC((t^\ell))$ and
$\calO_\ell=\bbC[[t^\ell]].$
 We abbreviate
 $\calK=\calK_1$ and
$ \calO=\calO_1$.

 \subsection{Acknowledgement}
 The initial stage of this project was based on several discussions with Qi You. We would like to thank him for all the helpful discussions.

\bigskip

\section{Cohomology of $\zeta$-fixed points in affine Grassmannians}

\subsection{Affine Grassmannians and $\zeta$-fixed points}

\subsubsection{Root data}\label{sec:RD}
Let $G$ be a connected complex reductive algebraic group over $\bbC$, with maximal torus $T$, 
Borel subgroup $B$, opposite Borel subgroup $B^-$ and root datum
$(\X^\ast(T), \X_\ast(T), \Phi, \LPhi)$.
Let $N$, $N^-$ be the unipotent radical of $B$, $B^-$. 
Let $r$ be the rank of $G$ and $h$ its Coxeter number.
Let $G_\sc$ be the simply connected cover of $G$ and $T_\sc$ its maximal torus.
Let $\LG$ be the Langlands dual  group of $G$, defined over the field $\bbC$. 
Let $\LT$ be the dual torus, and $\LB$ the corresponding Borel subgroup.
Let $\frakg$, $\frakb$, $\frakt$ be the Lie algebras of $G$, $B$, $T$. 
Let $\Phi_\af$ and $\Phi_\re=\Phi\oplus\bbZ\delta$ be the sets of affine roots and affine real roots. 
Let $\Sigma$ and $\Sigma_\af=\Sigma\sqcup\{\alpha_0\}$ be the sets of simple finite and affine roots.
Let $\Lalpha$ denote the affine coroot associated with the affine root $\alpha$.
We abbreviate $\LLbd=\X_\ast(T)$, $\Lbd=\X^\ast(T)$ and $Q=\bbZ\Phi$, $\LQ=\bbZ\LPhi$.
Let $\langle-,-\rangle$ be the canonical pairing on $\Lbd\times\LLbd$.
Set also $P=\LQ^*$ and $\LP=Q^*$.
Recall that $Q\subset\Lbd\subset P$ and $\LQ\subset\LLbd\subset\LP$.
Let $e$ be the order of the group $\LLbd/\LQ$.
Let $Z=Z(G)$, $\LZ=Z(\LG)$ be the centers
and $\pi_1=\pi_1(G)$, $\Lpi_1=\pi_1(\LG)$ the fundamental groups.
We have
$$\pi_1=\X^\ast(\LZ)=\LLbd/\LQ=P/\Lbd,\qquad\Lpi_1=\X^\ast(Z)=\Lbd/Q=\LP/\LLbd.$$

For any commutative ring $R$ we abbreviate $\frakt_R=\LLbd\otimes_\bbZ R$ and
$\frakt^*_R=\Lbd\otimes_\bbZ R$. 
We fix a $W$-invariant positive definite symmetric bilinear form $(-,-)$ on $\frakt_\bbR$
such that $(\Lalpha,\Lalpha)=2$ for each short coroot.
We have $(\LLbd,\LLbd)\subset \frac{1}{e}\bbZ$ and $(\LLbd,\LQ)\subset \bbZ$.
We may identify $\alpha=2\Lalpha/(\Lalpha,\Lalpha)$ for each root $\alpha$.
We write $\Sigma=\{\alpha_i\}$.
Let $a_{ij}$ be the $(i,j)$-th entry of the Cartan matrix of $G$.
For any root $\alpha$ we set
$d_\alpha=(\alpha,\alpha)/2$.
%We abbreviate $d=d_\alpha$ if $\alpha$ is long.
%So $d=\max\{d_{\alpha_i}\}=1,2$ or $3$.
We have $\alpha_i=d_{\alpha_i}\Lalpha_i$ and $(\alpha_i,\alpha_j)=d_{\alpha_i}a_{ij}$.
Let $\Lrho$ be half the sum of the positive coroots,
$\delta$ the minimal positive imaginary root, and $\theta$ the highest root.
%Set $c=\Lalpha_0+\check\theta$.

Let $W$ be the Weyl group. Let $W_\af=W\ltimes\LQ$ and
$W_\ex=W\ltimes\LLbd$ be the affine Weyl group and the extended one.
For each $\mu\in\LLbd$, we write $\tau_\mu=1\ltimes\mu$.
The group $W_\ex$ acts on the lattice $\LLbd\oplus\bbZ \partial$ by
\begin{align}\label{Wact}w(\lambda+m \partial)=w(\lambda)+m \partial,\qquad
\tau_\mu(\lambda+m \partial)=\lambda+m\mu+m\partial.\end{align}
for $w\in W$ and $\lambda,\mu\in\LLbd$.
The dual action on $\Lambda\oplus\bbZ\delta$ is such that
\begin{align}\label{WactD}w(\beta+k\delta)=w(\beta)+k\delta,\qquad
\tau_\mu(\beta+k\delta)=\beta+k\delta-\langle\beta,\mu\rangle\delta,
\end{align}
for $w\in W$, $\beta\in\Lambda$, and $\mu\in\LLbd.$

The group $W$ acts on the torus $T$, hence on $\bbC[T]$.
When viewed as a function on  $T$, 
we'll denote the character $\beta\in\Lambda$ by the symbol $e^\beta$.
Similarly, for each cocharacter $\lambda\in\LLbd$ and each $z\in\bbC^\times$
we write $z^\lambda$ for the element $\lambda(z)$ in $T$.
We have 
$$\bbC[T\times\bbG_m]=\bbC[q^{\pm 1}][e^{\beta+m\delta}\,;\,\beta\in\Lbd\,,\,m\in\bbZ]\,/\,(q-e^{\delta}),
\qquad
\X^*(\bbG_m)=\bbZ\delta.$$
The $W_\ex$-action on $\Lambda\oplus\bbZ\delta$ yields a $W_\ex$-action on $T\times\bbG_m$ 
such that 
\begin{align}\label{WactT}w(t,z)=(w(t),z),\qquad
\tau_\mu(t,z)=(tz^\mu,z).\end{align}
for $w\in W$, $\mu\in\LLbd$, $t\in T$, and $z\in\bbC^\times$.
Let $(x,f)\mapsto {}^x\!f$ be the induced action on the algebra $\bbC[T\times\bbG_m]$.

Under the obvious identification $\LLbd\simeq\LLbd+\partial$ the group $W_\ex$ acts $\LLbd$ by
\begin{align}\label{Wact1}w(\lambda)=w(\lambda),\qquad
\tau_\mu(\lambda)=\lambda+\mu.\end{align}
for $w\in W$, $\lambda,\mu\in\LLbd$.
For each affine real root $\alpha+m\delta$ let $s_{\alpha+m\delta}$ 
be the reflection in $W_\af$ relatively to the affine hyperplane
$$H_{\alpha+m\delta}=\{\lambda\in \frakt_\bbR\,;\, \langle\alpha,\lambda\rangle+m=0\}.$$
We have $s_{\alpha+m\delta}=s_\alpha\tau_{m\Lalpha}$ and
$s_{\alpha+m\delta}(\lambda)=\lambda-(\langle\alpha,\lambda\rangle+m)\Lalpha$
for each $\lambda\in\LLbd$.
The $\bullet$-action of $W_\ex$ on $\LLbd$ is
given by $x\bullet\lambda=x(\lambda+\Lrho)-\Lrho$ for each $x\in W_\ex$.

The set of dominant coweights and restricted dominant coweights are
\begin{align*}
&\LLbd^+=\{\lambda\in\LLbd\,;\,0\leqslant\langle\alpha,\lambda\rangle,\,\forall\alpha\in\Sigma\},\\
&\LLbd_\ell=\{\mu\in\LLbd\,;\,0\leqslant\langle\alpha,\mu\rangle<\ell,\,\forall\alpha\in\Sigma\}.
\end{align*}
For each $x\in W_\af$ the coweight $x\bullet 0$ is dominant if and only if  $x$ is
minimal in the coset $W\setminus W_\af$.
For each $\Lalpha\in\LQ$ let $w_\Lalpha$
be the minimal element in the coset $W\tau_\Lalpha$.
We get a bijection 
$$\LQ\to\LLbd^+\cap W_\af\bullet 0,\qquad
\Lalpha\mapsto w_\Lalpha\bullet 0=w\bullet\Lalpha,$$ where $w\in W$ is such that
$w\bullet\Lalpha$ is dominant.

For each affine real root $\alpha+m\delta$ let $U_{\alpha+m\delta}$ 
be the unipotent root subgroup in $G(\calK)$
associated with the affine real root $\alpha+m\delta$.
It is the image of the group homomorphism
$$\bbG_a\to G(\calK),\qquad z\mapsto n_{\alpha+m\delta}(z):=n_\alpha(zt^m),$$
where $n_\alpha:\bbG_a\to U_\alpha$ is a fixed group isomorphism.
Let $\fraku_\alpha$ be the Lie algebra of $U_\alpha$ and $e_\alpha=d_1n_\alpha(1)$.

\smallskip

\subsubsection{Affine flag manifolds}
Let $\Fl= G(\calK )/\calI$ denote the affine flag manifold, with $\calI$ the usual Iwahori subgroup.
For any subset $J\subset \Sigma_\af$ let $P^J\subset G(\calK)$ be the standard parahoric subgroup
of type $J$ and $\frakp^J$ its Lie algebra. 
Let ${}^0P^J$ be the pro-unipotent radical of $P^J$ and ${}^0\frakp^J$ its Lie algebra.  
Let $G_J$ be the Levi component and $\frakg_J$ its Lie algebra.
Let $\Phi_\re^J$ and $\Phi_{\re,J}$ be the sets of all real affine roots which occur in
${}^0\frakp^J$ and in $\frakg_J$ respectively.
Let $W_J\subset W_\af$ be the subgroup generated by $J$ and 
$W^J_\af=W_\af/W_J$, $W^J_\ex=W_\ex/W_J$.
The partial affine flag manifold of type $J$ is the fpqc quotient $\Fl^J=G(\calK )/P^J$. 
It is an ind-scheme which is studied in \cite{PR}.
The connected components of $\Fl^J$ are parametrized by the algebraic fundamental group 
$\pi_1$ of $G$. We have canonical identifications
$$\pi_1=N_{G(\calK)}(\calI)/\calI,
\qquad W_\ex=W_\af\rtimes\pi_1$$
where $\pi_1$ acts on $W_\af$ by Dynkin diagram automorphisms.
Let $\Fl^{J,\circ}$ be the connected component of the based point $P^J/P^J$.
The set of fixed points for the left $T$-action on $\Fl^J$ is 
$$(\Fl^J)^T=\{\delta_x\,;\,x\in W^J_\ex\},\qquad \delta^J_x=xP^J/P^J.$$
If no confusion is possible we abbreviate $\delta_x=\delta^J_x$.
We may also abbreviate $\delta_\mu=\delta_{\tau_\mu}$ for each weight
$\mu\in\LLbd$.
If $J=\Sigma$ then $\Fl^J=\Gr$ is the affine Grassmannian and the set
$W^J_\ex$ is identified with the
lattice $\LLbd$. 
The set of $T$-fixed points in $\Gr$ is 
$$\Gr^T=\{\delta_\mu\,;\, \mu\in\LLbd\},\qquad
\delta_\mu=t^\mu G(\calO)/G(\calO).$$ 

\smallskip

\subsubsection{The $\ell$-affine Weyl group}\label{sec:Xi}
Fix an odd integer $\ell\geqslant h$ which is prime to $e$ and to $3$ if $G$ contains a component of type $G_2$. 
Let $\mu_\ell\subset\bbC^\times$ be the set of $\ell$-th roots of unity and $\zeta\in\mu_\ell$ a primitive element.
Let $W_{\ell,\af}=W\ltimes \ell\LQ$ and 
$W_{\ell,\ex}=W\ltimes \ell\LLbd$ be the $\ell$-affine Weyl group and
the $\ell$-extended Weyl group.
Let $\Sigma_{\ell,\af}=\{s_{\alpha+\ell m\delta}\,;\, s_{\alpha+m\delta}\in\Sigma_\af\}$.
The fundamental $\ell$-alcove in $\frakt_\bbR$ is
$$A_{\ell,1}=\{\mu\in\frakt_\bbR\,;\, 0< \langle\alpha,\mu\rangle< \ell\,,\,\forall\alpha\in\Phi^+\}.$$
Let $\bar A_{\ell,1}$ be its closure. It is a fundamental domain for the action of $W_{\ell,\af}$ on $\frakt_\bbR$.
The facet of type $J$ in $\bar A_{\ell,1}$ is the subset
$$F_J=\bar A_{\ell,1}\cap\bigcap_{s_{\alpha+m\delta}\in J}H_{\alpha+\ell m\delta}.$$
Let $W_{\ell,J}$ be the subgroup of $W_{\ell,\af}$ generated by $J$.
 We write $W_{\ell,\af}^{J}=W_{\ell,\af}/W_{\ell,J}$ and $W_{\ell,\ex}^{J}=W_{\ell,\ex}/W_{\ell,J}$.
 We'll abbreviate $\ell\LLbd=W_{\ell,\ex}^\Sigma.$
We consider the $\bullet$-action of $W_{\ell,\ex}$ on $\LLbd$.
Let 
$$\Xi_\sc=\LLbd/W_{\ell,\af},\qquad\Xi=\LLbd/W_{\ell,\ex}.$$
Since $\bar A_{\ell,1}-\Lrho$ is a fundamental domain for the $\bullet$-action of $W_{\ell,\af}$ on $\frakt_\bbR$,
each coset in $\Xi_\sc$ is uniquely represented by an element in the set
$(\bar A_{\ell,1}-\Lrho)\cap\LLbd$. Further, we have
$W_{\ell,\ex}=W_{\ell,\af}\rtimes\pi_{\ell,1}$ where $\pi_{\ell,1}\subset W_{\ell,\ex}$ is the subgroup
stabilizing $(\bar A_{\ell,1}-\Lrho)\cap\LLbd$. The group $\pi_{\ell,1}$ is canonically identified with
$\pi_1$, yielding a $\pi_1$-action
on $\Xi_\sc$ such that $\Xi=\Xi_\sc/\pi_1$.
Since the integer $\ell$ is prime to $e$, an easy computation yields 
\begin{align}\label{PQ}\ell\LQ=\LQ\cap\ell\LLbd.\end{align}
Hence, the $\pi_1$-action on $\Xi_\sc$ is free.
Given $\omega$ in $\Xi_\sc$,
let $J_\omega$ be the set of affine reflections $s\in\Sigma_{\ell,\af}$ such that  $s\bullet\omega=\omega$,
where $\omega$ is viewed as an element in $\LLbd$.
We abbreviate
 $W_{\ell,\omega}=W_{\ell,J_\omega}$ and
 $W_{\ell,\af}^\omega=W_{\ell,\af}^{J_\omega}$.
 We have
 $$W_{\ell,\omega}=\{x\in W_{\ell,\af}\,;\,x\bullet\omega=\omega\}.$$
 We define
 $$W_{\zeta^\omega}=\{w\in W\,;\,w\bullet\zeta^\omega=\zeta^\omega\}.$$
 Using \eqref{PQ}, the obvious group homomorphism $W_{\ell,\ex}\to W$ implies that
 \begin{align}\label{Womega}
 W_{\ell,\omega}=\{x\in W_{\ell,\ex}\,;\,x\bullet\omega=\omega\}\simeq 
 W_{\zeta^\omega}.
 \end{align}
 
 \smallskip

 \subsubsection{Fixed points}
 For any subset $J\subset \Sigma_\af$, let $\Fl^J_\ell$ be the 
partial affine flag manifold with $t$ replaced by $t^\ell$.
We have $\Fl^J_\ell=G(\calK_\ell)/P^J_\ell$
where $P^J_\ell\subset G(\calK_\ell)$ is the parahoric subgroup of type $J$.
Note that 
\begin{align}\label{para}U_{\alpha+\ell m\delta}\subset P^J_\ell\iff
F_J\subset H_{\alpha+\ell m\delta}.\end{align}
For $\omega \in\Xi_\sc$ we abbreviate
 $$
 \Fl^\omega_\ell=\Fl^{J_\omega}_\ell,\qquad
 P^\omega_\ell=P^{J_\omega}_\ell,\qquad
 \frakp^\omega_\ell=\frakp^{J_\omega}_\ell,\quad\text{etc.}
 $$
Note that $J_{\omega\gamma}=\gamma(J_\omega)$ for any element $\gamma\in\pi_1$, where
$\gamma$ acts on $\Sigma_\af$ through an automorphism of the affine Dynkin graph. 
Hence, the partial affine flag manifold $\Fl^\omega_\ell$ depends only on the class of 
$\omega$ in the set $\Xi=\Xi_\sc\,/\,\pi_1$.
Let $\Gr^\zeta$ be the set of fixed points of the $\mu_\ell$-action on $\Gr$ by loop rotation.
 
\begin{lemma}\label{lem:isom1}
We have 
$\Gr^\zeta =\bigsqcup_{\omega\in \Xi} \Fl^\omega_\ell$ and
$\pi_0(\Gr^\zeta)=\Xi_\sc.$
\end{lemma}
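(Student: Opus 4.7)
The plan is to realize $\Gr^\zeta$ as the disjoint union of the $G(\calK_\ell)$-orbits of the $T$-fixed points $\delta_\mu=t^\mu G(\calO)/G(\calO)$, identify each such orbit with a connected component of $\Fl^\omega_\ell$ for a suitable $\omega\in\Xi_\sc$, and then count components. First, the loop rotation automorphism $\sigma_\zeta:G(\calK)\to G(\calK)$ has fixed-point subgroup $G(\calK_\ell)$, so this subgroup preserves $\Gr^\zeta$, and every $\delta_\mu$ lies in $\Gr^\zeta$ because $\sigma_\zeta(t^\mu)=\zeta^\mu t^\mu$ with $\zeta^\mu\in T\subset G(\calO)$. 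The key assertion is that every closed point of $\Gr^\zeta$ belongs to the $G(\calK_\ell)$-orbit of some $\delta_\mu$. I would prove this by stratifying $\Gr^\zeta$ via its intersections with the $\sigma_\zeta$-stable Iwahori orbits $\calI\cdot\delta_\mu$: the $\sigma_\zeta$-fixed subscheme of each such affine cell should equal the corresponding orbit of $\calI\cap G(\calK_\ell)$, which is contained in $G(\calK_\ell)\cdot\delta_\mu$.

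Next, I compute the stabilizer $G(\calK_\ell)\cap t^\mu G(\calO)t^{-\mu}$. The identity $t^\mu U_{\alpha+n\delta}t^{-\mu}=U_{\alpha+(n+\langle\alpha,\mu\rangle)\delta}$ shows that the affine root subgroups $U_{\alpha+\ell m\delta}$ contained in this intersection are exactly those with $\ell m\geqslant\langle\alpha,\mu\rangle$, i.e., those for which $\mu$ lies in the appropriate closed half-space of $H_{\alpha+\ell m\delta}$. By \eqref{para}, the resulting group is precisely the parahoric $P^{J_\omega}_\ell$ attached to the facet $F_{J_\omega}$ containing the class of $\mu$ in $\bar A_{\ell,1}$, where $\omega\in\Xi_\sc$ is the image of $\mu$ under the $\bullet$-action. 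Thus $G(\calK_\ell)\cdot\delta_\mu\cong G(\calK_\ell)/P^{J_\omega}_\ell$ is a single connected component of $\Fl^\omega_\ell$, and $\delta_\mu,\delta_{\mu'}$ lie in the same orbit iff $\mu,\mu'$ are $W_{\ell,\af}$-equivalent. Grouping together the $|\pi_1|$ classes of $\omega\in\Xi_\sc$ lying over a given element of $\Xi$—whose parahorics are related by Dynkin automorphisms, as noted after \eqref{PQ}, so that their orbits collectively fill out the $|\pi_1|$ components of one copy of $\Fl^\omega_\ell$—yields the decomposition $\Gr^\zeta=\bigsqcup_{\omega\in\Xi}\Fl^\omega_\ell$.

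The assertion $\pi_0(\Gr^\zeta)=\Xi_\sc$ then follows by counting: since parahorics are connected, $\pi_0(\Fl^\omega_\ell)=\pi_0(G(\calK_\ell))=\pi_1$, so the total number of components is $|\Xi|\cdot|\pi_1|=|\Xi_\sc|$, using the freeness of the $\pi_1$-action on $\Xi_\sc$ recorded after \eqref{PQ}. The main obstacle is the orbit-exhaustion step: one must show that the $\sigma_\zeta$-fixed subscheme of each Iwahori affine cell is itself an affine cell for the $\ell$-Iwahori (and is contained in the $G(\calK_\ell)$-orbit of the corresponding $\delta_\mu$), which requires an explicit coordinatization of each Schubert cell compatible with the $\mu_\ell$-action.
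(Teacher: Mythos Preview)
Your approach is essentially the same as the paper's: identify the stabilizer $G(\calK_\ell)\cap t^\mu G(\calO)t^{-\mu}$ as the parahoric $P^\omega_\ell$ via the root-subgroup computation, and then count connected components. The only substantive difference is that the paper does not argue the orbit-exhaustion step directly but simply records the identity $G(\calK_\ell)\cap\Ad_{t^{-\omega}}(G(\calO))=P^\omega_\ell$ and then cites \cite[\S 4]{RW} for the resulting isomorphism $\bigsqcup_{\omega\in\Xi}\Fl^\omega_\ell\to\Gr^\zeta$, $g\delta_0^\omega\mapsto g\delta_\omega$; your Iwahori-cell sketch is exactly the standard way that reference proves it, so you are filling in what the paper outsources.

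One small point to tighten: you mix the $\bullet$-action and the ordinary $W_{\ell,\af}$-action. The parahoric stabilizing $\delta_\mu$ is governed by the facet of $\mu$ itself under the \emph{ordinary} action, whereas $\Xi_\sc$ and $J_\omega$ in the paper are defined via the $\bullet$-action (so the representative $\omega$ sits in $\bar A_{\ell,1}-\Lrho$, and $J_\omega$ records the walls through $\omega+\Lrho$). The two parametrizations differ by the global shift $\mu\mapsto\mu-\Lrho$, so the bijection $\pi_0(\Gr^\zeta)\cong\Xi_\sc$ is correct but not literally ``$\omega$ is the image of $\mu$ under the $\bullet$-action with facet containing $\mu$''. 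Also, ``same $G(\calK_\ell)$-orbit'' already identifies $\mu$ and $\mu'$ up to $W_{\ell,\ex}$, not just $W_{\ell,\af}$; what is $W_{\ell,\af}$-invariant is the \emph{connected component}, which is what you use when you then regroup by $\pi_1$.
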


\begin{proof}
Recall that an element $\omega$ of $\Xi_\sc$ is identified with an element in $\LLbd$.
The adjoint action by the element $t^{\omega}$ of $G(\calK)$ on $G(\calO)$ 
yields a well-defined subgroup in $G(\calK)$.
From \eqref{para} we deduce that $U_{\alpha+\ell m\delta}\subset \Ad_{t^{-\omega}}(G(\calO))$
if and only if $U_{\alpha+\ell m\delta}\subset P^\omega_\ell$.
Hence, we have
\begin{align}\label{parahoric}
G(\calK_\ell)\cap\Ad_{t^{-\omega}}(G(\calO))=P^\omega_\ell.
\end{align}
Hence, we have an isomorphism, see, e.g., \cite[\S 4]{RW} for details,
$$\bigsqcup_{\omega\in \Xi} \Fl^\omega_\ell\to\Gr^\zeta,\qquad
g\delta^\omega_0\mapsto g\delta_\omega,\qquad
g\in G(\calK_\ell).$$
For the second formula, recall that the connected components of
$\Fl^\omega_\ell$ are parametrized by the fundamental group $\pi_1$.
\end{proof}

\medskip

 \subsection{The equivariant cohomology of affine Grassmannians}

Given a $G$-variety $X$ over $\bbC$,
the equivariant cohomology group $H^\bullet_G(X)$ is a $\bfk$-algebra.
Its spectrum is a $\bfk$-scheme. We'll relate it to the maximal torus $T$, viewed as an algebraic group over the 
field $\bfk$.

\subsubsection{The deformation to the normal cone}
Let $A$ be a commutative $\bfk$-algebra and let $I$ be an ideal in $A$. 
Let $\hbar$ be a formal variable. 
We define
\begin{align*}
&\frakR_I(A)=\sum_{n\geqslant 0}A[\hbar](\hbar^{-1}I)^n\subset A[\hbar^{\pm 1}],
\\
&\frakB_I(A)=\frakR_I(A)/\hbar\,\frakR_I(A)=\sum_{n\geqslant 0}I^n/I^{n+1},\text{ with }
I^0=A.
\end{align*}
For $X=\Spec(A)$ and $Y=\Spec(A/I)\subset X$, we write
$$N_Y(X)=\Spec(\frakB_I(A)),\qquad  \widetilde{N}_Y(X)=\Spec(\frakR_I(A)).$$
The scheme $\widetilde{N}_Y(X)$ is 
the deformation of $X$ to the normal cone of $Y$.
The scheme $N_Y(X)$ is the normal cone. 
Note that  $\widetilde{N}_Y(X)$ is a scheme over $X\times\bbG_a$ and $N_Y(X)$ is its fiber at $0\in\bbG_a$.

\smallskip

\iffalse
The set $\Xi_\sc=\bar A_\ell\cap\LLbd-\Lrho$ is a fundamental domain, i.e., 
we have $\Xi_\sc=\LLbd/W_{\ell,\af}$.
It is the set of $\bbC$-points of the following scheme
$$\pmb\Om=\{1\}\times_{T/W}T_\sc/W,$$
where right morphism is the composition of the obvious projection $T_\sc/W\to T/W$
and the map 
$$[\ell]:T/W\to T/W,\qquad t\,\text{mod}\,W\mapsto t^\ell\,\text{mod}\,W,\qquad t\in T.$$
We also consider the scheme $\Omega$ such that
$$\Xi=\LLbd/W_{\ell,\ex},\qquad\Omega=\{1\}\times_{T/W}T/W.$$
Thus $\Om$ is the scheme-theoretic preimage of $\{1\}$ by the map $[\ell]$.
The scheme $\Om$ is not connected. Let $\Om_\omega$ be the connected 
component containing the closed point $\omega\in\Omega(\bbC)$.
We have $\Om=\bigsqcup_{\omega\in\Omega(\bbC)}\Omega_\omega$.
 Note that $\Om=\pmb\Om/\pi_1$.
 For each point $\omega\in\Xi$ represented by an element of $\bar A_\ell$, let
 $J_\omega$ be the set of affine reflections in $\Sigma_{\ell,\af}$ which fix $\omega$.
 For instance, we have $J_0=\emptyset$.
 We abbreviate 
 $$W_{\ell,\omega}=W_{\ell,J_\omega},\qquad
 W_{\ell,\af}^\omega=W_{\ell,\af}^{J_\omega},\qquad
 \Fl^\omega_\ell=\Fl^\omega_\ell,\qquad
 P^\omega_\ell=P^{J_\omega}_\ell,\qquad
 \frakp^\omega_\ell=\frakp^{J_\omega}_\ell,\quad\text{etc.}
 $$
\fi

\subsubsection{The schemes $\Omega$ and $\Gamma$}
Consider the $\bfk$-scheme homomorphism
$$[\ell]:T/W\to T/W,\qquad t\,\text{mod}\,W\mapsto t^\ell\,\text{mod}\,W,\qquad t\in T.$$
Let $\Omega$ be the scheme-theoretical preimage of the closed point $1\in T/W$ under the map $[\ell]$. 
Let $\Omega_\sc=\Omega\times_{T/W}T_\sc/W$, where $T_\sc\to T=T_\sc/\pi_1$ is the obvious projection. 
We have $\Omega\simeq \Omega_\sc\,/\,\pi_1$.
Let $\bfC=\bfk[\frakt]/\bfk[\frakt]^W_+$ be the coinvariant ring.
By \eqref{Womega} the group $W_{\ell,\omega}$ embeds into $W$. Hence, it acts on the ring $\bfC$.
We define   
$\Omega_\omega= \Spec(\bfC^{W_{\ell,\omega}})$.
We have the following, see, e.g., \cite[thm.~4.1]{L03}.

\begin{lemma}
\hfill
\begin{itemize}[leftmargin=8mm]
\item[$\mathrm{(a)}$] 
$\Omega(\bfk)=\{W(\zeta^{2(\omega+\Lrho)})\,;\,\omega\in\Xi\}\simeq\Xi$.
\item[$\mathrm{(b)}$] 
$\Omega=\bigsqcup_{\omega\in\Xi}\Omega_\omega$.
\end{itemize}
\end{lemma}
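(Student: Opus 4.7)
The plan is to establish (a) and (b) separately, in each case reducing to a local computation at the closed points of the finite scheme $\Omega$.

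For (a), I would begin by noting that $\Omega(\bfk)$ coincides set-theoretically with the fibre of $[\ell]\colon T/W\to T/W$ over $1$, i.e.\ with $T_\ell/W$ where $T_\ell\subset T$ is the $\ell$-torsion subgroup. Since $\ell$ is odd, the assignment $\mu\mapsto\zeta^{2\mu}$ gives a $W$-equivariant bijection $\LLbd/\ell\LLbd\simto T_\ell$. Rewriting the class of $\mu$ in the form $\omega+\Lrho$ with $\omega\in\LLbd$ converts the ordinary $W$-action into the $\bullet$-action, so $W$-orbits on $\LLbd/\ell\LLbd$ for the $\bullet$-action are precisely the orbits of $W_{\ell,\ex}=W\ltimes\ell\LLbd$ acting on $\LLbd$ by the $\bullet$-action; this is the set $\Xi$. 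The resulting bijection $\Omega(\bfk)\simeq\Xi$ sends $\omega$ to $W\cdot\zeta^{2(\omega+\Lrho)}$.

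For (b), the morphism $[\ell]$ is finite, so $\Omega$ is Artinian and equals the disjoint union of its localizations at its finitely many closed points. It thus suffices to identify the completed local ring $\widehat{\calO}_{\Omega,\bar t}$ at $\bar t=W\cdot\zeta^{2(\omega+\Lrho)}$ with $\bfC^{W_{\ell,\omega}}$. Via $\exp\colon\frakt\to T$, the formal neighbourhoods of $t$ and $1$ in $T$ become those of $\log t$ and $0$ in $\frakt$, and the relation $t^\ell=1$ forces $[\ell]$ to correspond in these coordinates to the linear endomorphism $x\mapsto\ell x$ of $\frakt$. A Luna-slice argument, justified in this setting by the smoothness of $T/W$ (Chevalley) together with the description of the $W_t$-action on the formal tangent space at $t$ (with $W_t=\mathrm{Stab}_W(t)$), identifies the completed local ring of $T/W$ at $\bar t$ with $\bbC[[\frakt^*]]^{W_t}$ and the pullback $[\ell]^*$ with $f(x)\mapsto f(\ell x)$.

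Since $\ell$ is invertible in $\bfk$, the change of variables $x\mapsto x/\ell$ shows that the image of the augmentation ideal $\bbC[\frakt^*]^W_+$ under $[\ell]^*$ generates the same ideal in $\bbC[[\frakt^*]]^{W_t}$ as $\bbC[\frakt^*]^W_+$ itself, hence $\widehat{\calO}_{\Omega,\bar t}=\bbC[[\frakt^*]]^{W_t}/\bbC[\frakt^*]^W_+\cdot\bbC[[\frakt^*]]^{W_t}$. By Chevalley--Shephard--Todd, $\bbC[\frakt^*]$ is free over $\bbC[\frakt^*]^W$, which in characteristic zero allows one to commute $W_t$-invariants with this quotient to obtain $(\bbC[\frakt^*]/\bbC[\frakt^*]^W_+)^{W_t}=\bfC^{W_t}$. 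Combining with the identification $W_t\simeq W_{\ell,\omega}$ of \eqref{Womega} yields $\Omega_\omega=\Spec\bfC^{W_{\ell,\omega}}$. The main point of care throughout is the Luna-type identification of the completed local ring of $T/W$ at $\bar t$; once this is in hand, everything else is formal manipulation.
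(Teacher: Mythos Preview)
Your proof is correct and follows essentially the same approach as the paper. The paper in fact cites \cite[thm.~4.1]{L03} for this lemma, but the (commented-out) argument it sketches proceeds exactly as you do: for (a) by matching $W$-orbits of $\ell$-torsion points of $T$ with $W_{\ell,\ex}$-orbits on $\LLbd$, and for (b) by identifying the formal neighbourhood of $T/W$ at the point $W\cdot\zeta^{2(\omega+\Lrho)}$ with $(\frakt/W_{\ell,\omega})_{\hat 0}$, the map $[\ell]$ with the obvious map to $(\frakt/W)_{\hat 0}$, and the fibre over $0$ with $\bfC^{W_{\ell,\omega}}$. Your version is simply more explicit about the Luna-slice step and the commutation of $W_t$-invariants with the quotient by $\bbC[\frakt^*]^W_+$.
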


\iffalse%%%%%%%%%%%%%%%%%%%%%%%%
\begin{proof}
The morphism $[\ell]$ is finite. The scheme $\Omega$ is non-reduced of dimension zero. 
The closed points in $T$ are of the form $z^\lambda$, with $\lambda\in \LLbd$ and $z\in\bfk^\times$. 
The $W$-orbits of $\zeta^\lambda$ and $\zeta^\mu$ coincide 
if and only if $\lambda$ and $\mu$ belong to the same $W_{\ell,\ex}$-orbit on $\LLbd$, proving (a).
Let $\Omega_\omega$ be the connected 
component containing the closed point 
$$\zeta^{2(W_{\ell,\ex}\bullet\omega+\Lrho)}=W(\zeta^{2(\omega+\Lrho)})\in T(\bfk)/W.$$
The scheme $\Omega_\omega$ is also the fibre at $1$ of the restriction of the map $[\ell]$ to the 
formal neighbourhood of $W(\zeta^{2(\omega+\Lrho)})$ in $T/W$. We have a map
$$(T/W)_{\widehat{W(\zeta^{2(\omega+\Lrho)})}}\to (T/W)_{\hat 1}.$$
Under the canonical isomorphism of the left hand side with  $(\frakt/W_{\ell,\omega})_{\hat 0}$
and the right hand side with
$(\frakt/W)_{\hat 0}$, this map is identified with the obvious map
$(\frakt/W_{\ell,\omega})_{\hat 0}\to (\frakt/W)_{\hat 0}$, and $\Omega_\omega$ is the fibre at zero.
Thus, we have $\bfk[\Omega_\omega]=\bfC^{W_{\ell,\omega}}$.
\end{proof}
\fi%%%%%%%%%%%%%%%%%%%%%%%%%%%

We consider the fibre product of schemes
$$\Gamma=T\times_{T/W}T/W$$
relative to the map $T\to T/W$, $t\mapsto t^\ell\,\text{mod}\, W$ and the map 
$[\ell]:T/W\to T/W$, $t\,\text{mod}\,W\mapsto t^\ell\,\text{mod}\, W$.
We have
$$\Gamma=\Spec\Bigl(\bfk[\LLbd]\otimes_{\bfk[\ell\LLbd]^W}\bfk[\LLbd]^W\Bigr).$$
The set $\Gamma(\bfk)$ can be described as follows: consider the union of graphs of the 
elements in $W$ in $T^2$, take its preimage under the map $T^2\to T^2$ given by $(z_1,z_2)\mapsto (z_1^\ell, z_2^\ell)$,
then $\Gamma(\bfk)$ is the image of this preimage in $T(\bfk)\times(T(\bfk)/W)$.
The formal neighborhood $\Gamma_{\hat 1}$ of $\Gamma$
along the fibre at $1\in T$ is a disjoint union 
\begin{align}\label{Gamma}\Gamma_{\hat 1}=\bigsqcup_{\omega\in\Xi}\Gamma_\omega.
\end{align}
Each connected component
$\Gamma_\omega$ is isomorphic to $\mathfrak{t}\times\Omega_\omega$.
Let 
$$I_{\Om}\subset \bfk[T/W],\qquad I_{\Gamma}\subset\bfk[T\times T/W],$$
be the vanishing ideals of $\Omega$ and $\Gamma$.
Let 
$$\bfk[\widetilde{N}_{\Gamma}(T\times T/W)]_{\hat 0},\qquad
\bfk[\widetilde{N}_{\Gamma}(T\times T/W)]_{\widehat{1,0}},\qquad
\bfk[N_{\Gamma}(T\times T/W)]_{\hat 1},$$ 
be respectively the completions of the 
$\calA[T]$-algebra $\bfk[\widetilde{N}_{\Gamma}(T\times T/W)]$ at the point $0\in\bbG_a$, 
its completion at $(1,0)\in T\times\bbG_a$, and the completion of the $\bfk[T]$-algebra
$\bfk[N_{\Gamma}(T\times T/W)]$ at the point $1\in T$.

\smallskip

\subsubsection{The equivariant $K$-theory of the affine Grassmannian}
We first recall a few facts about the equivariant $K$-theory of the affine Grassmannian.
All results here are standard, and can be found in \cite{LSS} for instance.
For $H=T\times\bbG_m$, $T$ or $\bbG_m$, let
$K^0_H(\Gr)$ be the $H$-equivariant $0$-th $K$-cohomology group 
of the affine Grassmannian tensored with $\bfk$.
It can be realized as the limit  in the category of 
$R_H$-modules of the $\bfk$-linear Grothendieck groups
$K_0^H(U)$ of $H$-equivariant coherent sheaves over $U$, where $U$ runs over the set of all finite unions of 
$G(\calO)$-orbits in the thick affine Grassmannian of $G$. 
As a module over the ring $R_H$, we have
\begin{align}\label{topofree}
K^0_H(\Gr)=\prod_{x\in \LLbd}R_H\cdot[\calO^x],
\end{align}
where $\calO^x$ is the structural sheaf of the finite codimensional Schubert variety labelled by $x$.
Recall that $\LLbd=W_\ex/W$.
For a future use, we allow $e$-th roots of the element $q$ in $\bbC[T\times\bbG_m]$, and we define
\begin{align}\label{free}\begin{split}
K_{T\times\bbG_m}(\Gr)&=\bigoplus_{x\in \LLbd}\bfA[T]\cdot[\calO^x]\subset 
K^0_{T\times\bbG_m}(\Gr)\otimes_{R_{T\times\bbG_m}}\bfA[T]
,\\
K_{T}(\Gr)&=\bigoplus_{x\in \LLbd}\bfk[T]\cdot[\calO^x]\subset
K^0_{T}(\Gr)\otimes_{R_T}\bfk[T]
\end{split}
\end{align}

The $W_\ex$-action on the ring $\bbC[T\times\bbG_m]$ in \eqref{WactT} extends to 
a $W_\ex$-action on $\bfA[T]$.
The $G(\calK)$-action on the affine Grassmannian by left multiplication yields a $W_\ex$-action on 
the space $K_{T\times\bbG_m}(\Gr)$.
This action is called the left action. 
It is compatible with the $W_\ex$-action on the ring $\bfA[T]$.
The restriction to the $T\times\bbG_m$-fixed point $\delta_x\in\Gr$ yields a 
$W_\ex$-equivariant map
$$\res_x:K_{T\times\bbG_m}(\Gr)\to\bfA[T],\qquad
\forall\ x\in\LLbd.$$
In the rest of this section, except in Proposition \ref{prop:N}, we'll assume that the group $G$ is simply connected.
By Kostant-Kumar, taking the product over all $x$'s yields an 
$\bfA[T]$-algebra embedding
\begin{align}\label{pi1}K_{T\times\bbG_m}(\Gr)\to\Fun(\LLbd\,,\,\bfA[T]),\end{align}
which takes the class $[\calO^0]$ to the constant function $1$.
We view the set of maps $\Fun(\LLbd\,,\,\bfA[T])$ as an 
$(\bfA[T]\,,\,\bfA[T/W])$-bimodule such that
$$(f\cdot \psi)(x)=\psi(x)f,\qquad (\psi\cdot g)(x)=\psi(x)\,{}^x\! g,$$
for $x\in \LLbd$, $f\in \bfA[T]$, $g\in\bfA[T/W]$, $\psi\in\Fun(\LLbd\,,\,\bfA[T])$.
The image of the map \eqref{pi1} is an $(\bfA[T]\,,\,\bfA[T/W])$-subbimodule containing the constant function 1. 
We consider the algebra homomorphism
\begin{align}\label{gamma}
\bfA[T\times T/W]\to K_{T\times\bbG_m}(\Gr),\quad f\otimes g\mapsto f\cdot 1\cdot g.
\end{align}
Forgetting the $T$-action we also have a restriction map
$K_{\bbG_m}(\Gr)\to\Fun(\LLbd\,,\,\bfA)$
and an algebra homomorphism
$\bfA[T/W]\to K_{\bbG_m}(\Gr)$.
Let $K_{T\times\bbG_m}(\Gr)_{\hat\zeta}$ and $K_{\bbG_m}(\Gr)_{\hat\zeta}$ 
be the completions of the $\bfA$-algebras
$K_{T\times\bbG_m}(\Gr)$ and $K_{\bbG_m}(\Gr)$ at the point $\hbar=0.$

\smallskip

\begin{lemma} \label{lem:extension}
Let $G$ be simply connected. \hfill
\begin{itemize}[leftmargin=8mm]
\item[$\mathrm{(a)}$]  
There is an algebra embedding
$\bfk[\widetilde{N}_{\Gamma}(T\times T/W)]_{\hat 0}\to 
K_{T\times\bbG_m}(\Gr)_{\hat\zeta}$
such that the composed map
$$\xymatrix{
\calA[T\times T/W]_{\hat 0}\ar[r]&
\bfk[\widetilde{N}_{\Gamma}(T\times T/W)]_{\hat 0}\ar[r]& K_{T\times\bbG_m}(\Gr)_{\hat\zeta}
\ar[r]^-{\res_x}&\calA[T]_{\hat 0}
}$$
takes $f\otimes g$ to $f\cdot{}^x\!g$ for each $f\in\bfk[T]$, $g\in\bfk[T/W]$  and $x\in\LLbd$.

\item[$\mathrm{(b)}$] 
There is an algebra embedding 
$\bfk[\widetilde{N}_{\Om}(T/W)]_{\hat 0}\to K_{\bbG_m}(\Gr)_{\hat\zeta}$
such that the composed map
$$\xymatrix{
\calA[T/W]_{\hat 0}\ar[r]&
\bfk[\widetilde{N}_{\Om}(T/W)]_{\hat 0}\ar[r]& K_{\bbG_m}(\Gr)_{\hat\zeta}
\ar[r]^-{\res_x}&\bfk[[\hbar]]
}$$
takes $g$ to ${}^x\!g(1)$ for each $g\in\bfk[T/W]$  and $x\in\LLbd$.
\end{itemize}
\end{lemma}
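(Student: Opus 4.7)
The strategy for (a) is to start with the algebra homomorphism \eqref{gamma}, complete it at $q_e=\zeta_e$ (so $\hbar=q_e-\zeta_e$), and extend to the Rees algebra by its universal property, once one verifies that $I_\Gamma$ maps into $\hbar\cdot K_{T\times\bbG_m}(\Gr)_{\hat\zeta}$. The completed map
$$\Phi:\calA[T\times T/W]_{\hat 0}\to K_{T\times\bbG_m}(\Gr)_{\hat\zeta}$$
lands in a target which is topologically free over $\calA_{\hat 0}$ by \eqref{free}, hence $\hbar$-torsion-free, and which embeds into $\Fun(\LLbd,\calA[T]_{\hat 0})$ via Kostant-Kumar \eqref{pi1}. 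Under this embedding, $\res_x\Phi(f\otimes g)=f\cdot{}^x g$ by construction of \eqref{gamma}, and by \eqref{WactT} the $\hbar=0$ specialization reads $f(t)\,g(t\zeta^x)$ since $q|_{\hbar=0}=\zeta$.

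The crucial computation is to show $\Phi(I_\Gamma)\subset\hbar\cdot K_{T\times\bbG_m}(\Gr)_{\hat\zeta}$. By $\hbar$-torsion freeness and Kostant-Kumar, this reduces to checking that for any pure tensor $f\otimes g\in I_\Gamma$ and any $x\in\LLbd$, the polynomial $f(t)\,g(t\zeta^x)$ vanishes identically in $t$. But the point $(t,t\zeta^x W)\in T\times T/W$ lies in $\Gamma(\bfk)$ because $(t\zeta^x)^\ell=t^\ell$ using $\zeta^{\ell x}=1$, so $f\otimes g\in I_\Gamma$ forces the required vanishing. By the universal property of the Rees algebra, $\Phi$ extends to the desired algebra map
$$\bfk[\widetilde{N}_\Gamma(T\times T/W)]_{\hat 0}\to K_{T\times\bbG_m}(\Gr)_{\hat\zeta},\qquad\hbar^{-n}\xi\mapsto\hbar^{-n}\Phi(\xi)\text{ for }\xi\in I_\Gamma^n,$$
and the stated compatibility with $\res_x$ is immediate from the construction.

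For injectivity of the extension, if some $\hbar^{-N}\eta$ with $\eta\in I_\Gamma^N$ maps to zero then $\Phi(\eta)=0$ by $\hbar$-torsion freeness of the target, so it suffices to argue that $\Phi$ itself is injective. After inverting $\hbar$ the parameter $q$ becomes a formal variable distinct from $\zeta$, and injectivity of \eqref{gamma} at the generic fibre follows from the Kostant-Kumar model: the shifts ${}^x g$ for varying $x\in\LLbd$ separate elements of $\calA[T\times T/W]_{\hat 0}[\hbar^{-1}]$. Combined with $\calA_{\hat 0}$-flatness of the source, this yields injectivity of $\Phi$. Part (b) is then obtained from (a) by base change along $\bfk[T]\to\bfk$ at $t=1$: the fibre of $\Gamma$ over $1\in T$ is $\Omega$ by definition, so $\widetilde{N}_\Gamma(T\times T/W)|_{t=1}=\widetilde{N}_\Omega(T/W)$, while on the $K$-theory side the forgetful map $K_{T\times\bbG_m}(\Gr)\to K_{\bbG_m}(\Gr)$ realizes the same specialization. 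Evaluating ${}^x g$ at $t=1$ yields $g(\zeta^x)$, giving the stated formula. The main obstacle is the injectivity step: it cannot be verified from the mod-$\hbar$ reduction alone, as the shifted right action by $\bfk[T/W]$ on $\bfk[T]$ fails to separate pure tensors in general, so the generic-fibre Kostant-Kumar argument is essential.
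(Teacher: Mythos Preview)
Your approach is essentially the paper's: complete the map \eqref{gamma}, verify that $I_\Gamma$ lands in $\hbar$ times the target, extend via the Rees-algebra universal property, and check injectivity through the fixed-point map. Your verification that $(t,t\zeta^x W)\in\Gamma$ is correct and equivalent to the paper's observation that $\ell\LLbd$ acts trivially on $[\ell]^*\bfk[T/W]$; the two compute the same thing in different language.

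There is one step where you are not fully explicit. You write that ``by $\hbar$-torsion freeness and Kostant--Kumar'' the containment $\Phi(I_\Gamma)\subset\hbar\,K_{T\times\bbG_m}(\Gr)_{\hat\zeta}$ reduces to the pointwise vanishing $\res_x\Phi(\xi)\equiv 0\bmod\hbar$. But this only shows that the image of $\xi$ in $\Fun(\LLbd,\calA[T]_{\hat 0})$ is divisible by $\hbar$; to pull divisibility back into $K_{T\times\bbG_m}(\Gr)_{\hat\zeta}$ you need the specialized map $K_{T\times\bbG_m}(\Gr)_\zeta\to\Fun(\LLbd,\bfk[T])$ to remain injective, and that is not a formal consequence of injectivity of \eqref{pi1} before specialization (equivalently, the cokernel of \eqref{pi1} need not be $\hbar$-torsion free a priori). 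The paper fills exactly this gap by invoking the localization embedding $K_{T\times\bbG_m}(\Gr)_\zeta\hookrightarrow K_T(\Gr^\zeta)$, after which the fixed-point restriction for $\Gr^\zeta$ is injective in the usual way. Once you insert this one sentence, your argument is complete. For part (b) the paper simply says the proof is analogous; deriving it from (a) by base change along $t\mapsto 1$ as you do is also fine.
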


\begin{proof}
Let us concentrate on (a) because the proof of (b) is similar.
We must check that the map \eqref{gamma} extends through the canonical embedding
$\bfA[T\times T/W]\to \bfk[\widetilde{N}_{\Gamma}(T\times T/W)]_{\hat 0}$ to an algebra homomorphism
$\bfk[\widetilde{N}_{\Gamma}(T\times T/W)]_{\hat 0}\to 
K_{T\times\bbG_m}(\Gr)_{\hat\zeta}$.
To do that, we first note that the map \eqref{gamma} yields an algebra homomorphism
\begin{align}\label{pi3}\calA[T\times T/W]_{\hat 0}
\to K_{T\times\bbG_m}(\Gr)_{\hat\zeta}.\end{align}
Taking its specialization at $\hbar=0$,
we get a map
$$\bfk[T\times T/W]\to K_{T\times\bbG_m}(\Gr)_\zeta.$$
We claim that this map takes $I_{\Gamma}$ to $\{0\}$.
From \eqref{free}, we deduce that $K_{T\times\bbG_m}(\Gr)$ is a free $\bfA$-module. 
Hence, the map \eqref{pi3} takes $I_{\Gamma}$ to 
$\hbar\, K_{T\times\bbG_m}(\Gr)_{\hat\zeta}$. Thus, we obtain the desired morphism
$\bfk[\widetilde{N}_{\Gamma}(T\times T/W)]_{\hat 0}\to K_{T\times\bbG_m}(\Gr)_{\hat\zeta}.$
To prove that it is injective, it is enough to observe that its composition with the map
\eqref{pi1} yields an inclusion
\begin{align}\label{pi4}\bfk[\widetilde{N}_{\Gamma}(T\times T/W)]_{\hat 0}
\to\text{Fun}(\LLbd\,,\,\bfk[T][[\hbar]]).\end{align}
To prove the claim, note that the ideal $I_\Gamma$ of $\bfk[T\times T/W]$ is generated
by the set $\{f\otimes 1-1\otimes f\,;\,f\in[\ell]^*\bfk[T/W]\}$, that
$\ell\LLbd$ acts as the identity on $\bfk[T]$ by \eqref{WactD}, and that 
the restriction yields an embedding of $K_{T\times\bbG_m}(\Gr)_\zeta$
into $K_T(\Gr^\zeta).$
\end{proof}

\smallskip

\subsubsection{The equivariant cohomology of $\Gr$ and $\Gr^\zeta$}

Recall that
$$H^\bullet_{T\times\bbG_m}=\bfk[\frakt\times\bbG_a]=\calA[\frakt],
\qquad
H^\bullet_T=\bfk[\frakt].$$
For each subset $J\subset\Sigma_\af$
the equivariant cohomology group $H^\bullet_{T\times\bbG_m}(\Fl^J)$ is a limit in the category of graded 
$H^\bullet_{T\times\bbG_m}$-modules in a similar way as the equivariant $K$-cohomology
group considered above. 
Let $[\Fl^{J,x}]$ be the fundamental class of the finite codimensional Schubert variety labelled by the coset $x$
in $W^J$.
We have
$$H_{T\times\bbG_m}^\bullet(\Fl^J)=\bigoplus_{x\in W^J}H_{T\times\bbG_m}^\bullet\cdot[\Fl^{J,x}].$$
The $G(\calK)$-action on $\Fl^J$ by left multiplication yields a $W_\ex$-action on 
the cohomology space $H^\bullet_{T\times\bbG_m}(\Fl^J)$ which will be referred as the left action. 
The group $W_\ex$ acts also on $H^\bullet_{T\times\bbG_m}(\Fl)$ via the affine Springer action, 
see \S\ref{sec:symmetries} for details.
We'll refer to this action as the right action. 
The group $W_\ex$ acts on 
$$H^\bullet_{T\times\bbG_m}=\bfk[\frakt\times\bbG_a]=\Sym(\Lambda\oplus\bbZ\delta)\otimes\bfk$$
as in \eqref{WactD}.
The right action is $H^\bullet_{T\times\bbG_m}$-linear and the left one is $H^\bullet_{T\times\bbG_m}$-skew-linear.

The cup product by elements of $H^\bullet_{T\times\bbG_m}$
yields a graded ring homomorphism
$l:H^\bullet_{T\times\bbG_m}\to H_{T\times\bbG_m}^\bullet(\Fl).$
The first $T\times\bbG_m$-equivariant Chern classes of the line bundles
$\calL(\lambda)=G(\calK)\times_\calI\lambda$ with $\lambda\in\Lbd\oplus\bbZ\delta$ 
yield a graded ring homomorphism
$r:H^\bullet_{T\times\bbG_m}\to H_{T\times\bbG_m}^\bullet(\Fl).$
Note that $\calL(\delta)$ is a trivial line bundle on $\Fl$ with first $T\times\bbG_m$-equivariant Chern class
$\text{c}_1(\calL(\delta))=l(\hbar)$.
Thus the tensor product $l\otimes r$ is a 
graded ring homomorphism which fits into the commutative diagram
\begin{align*}
\xymatrix{
H^\bullet_{T\times\bbG_m}\otimes_{H^\bullet_{\bbG_m}} H^\bullet_{T\times\bbG_m}\ar@{=}[d]
\ar[r]^-{l\otimes r}& H_{T\times\bbG_m}^\bullet(\Fl)\ar@{=}[d]\\
\bfk[\frakt\times\frakt\times\bbG_a]\ar[r]&\bfk[\frakt\times\frakt\times\bbG_a][g_i]/(l(f_i)-r(f_i)-\hbar g_i).}
\end{align*}
The lower map is the obvious one, the elements $f_1,\dots,f_r$ 
are the homogeneous generators of the ring $\bfk[\frakt]^W$, and $g_i$
has the degree $\deg(g_i)=\deg(f_i)-1$.
Let $\Delta\subset\frakt\times\frakt$ be the pull-back of the diagonal of $\frakt/W$ by the obvious projection
$\frakt\times\frakt\to \frakt/W\times \frakt/W$.
We deduce that the map $l\otimes r$ extends to a graded ring isomorphism
$$\bfk[\widetilde N_\Delta(\frakt\times\frakt)]=\bfk[\frakt\times\frakt\times\bbG_a][g_i]/(l(f_i)-r(f_i)-\hbar g_i)\to H_{T\times\bbG_m}^\bullet(\Fl).$$
The pull-back by the projection $\Fl\to\Fl^J$ yields an isomorphism
$$H_{T\times\bbG_m}^\bullet(\Fl^J)= H_{T\times\bbG_m}^\bullet(\Fl)^{W_J},$$
where the invariants on the right hand side are relative to the right action (=the Springer action) of 
$W_J$ on $H_{T\times\bbG_m}^\bullet(\Fl)$. 
The right $W_\ex$-action on $\frakt\times\frakt\times\bbG_a$ preserves the closed subset
$\Delta\times\{0\}$. Hence $W_\ex$ acts on the scheme 
$\widetilde N_\Delta(\frakt\times\frakt)$.
We deduce that there is a 
graded ring isomorphism
\begin{align}\label{BF}
\bfk[\widetilde N_{\Delta}(\frakt\times\frakt)/W_J]\to H_{T\times\bbG_m}^\bullet(\Fl^J).
\end{align}
Note that if $W_J\subset W$ then 
$\widetilde N_{\Delta}(\frakt\times\frakt)/W_J=\widetilde N_{\Delta_J}(\frakt\times\frakt/W_J)$
where $\Delta_J\subset\frakt\times(\frakt/W_J)$ is the image of $\Delta$.
See, e.g.,  \cite[thm.~1]{BF08} for more details in the special case $W_J=W$.

Now, we consider the equivariant cohomology of $\Gr^\zeta$.
For each coset $x\in W^J$, the restriction to the fixed point $\delta_x$ in $(\Fl^J)^{T\times\bbG_m}$
yields a map
$$\res_x:H^\bullet_{T\times\bbG_m}(\Fl^J)\to \calA[\frakt].$$
Thus, for $x\in\LLbd$, the restriction to $\delta_x$ yields maps
$$\res_x:H^\bullet_{T\times\bbG_m}(\Gr^\zeta)\to\calA[\frakt],\qquad
\res_x:H^\bullet_{\bbG_m}(\Gr^\zeta)\to\calA.$$
Let $H_T^\bullet(\Gr^\zeta)_{\hat 0}$ be the completion of the
$H_T^\bullet$-algebra $H^\bullet_T(\Gr^\zeta)$ at the point $0\in\frakt_\bfk$,
and $H^\bullet_{T\times\bbG_m}(\Gr^\zeta)_{\widehat{0,0}}$ be the completion of the
$H^\bullet_{T\times\bbG_m}$-algebra $H^\bullet_{T\times\bbG_m}(\Gr^\zeta)$ at the point $(0,0)$ in 
$\frakt_\bfk\times\bfk$.

\begin{lemma}\label{lem:BUa}Let $G$ be simply connected.
There are algebra isomorphisms
\begin{align*}
\bfk[\widetilde{N}_{\Gamma}(T\times T/W)]_{\widehat{1,0}}\to
H^\bullet_{T\times\bbG_m}(\Gr^\zeta)_{\widehat{0,0}},\qquad
\bfk[N_{\Gamma}(T\times T/W)]_{\hat 1}\to H^\bullet_T(\Gr^\zeta)_{\hat 0}.
\end{align*}

\end{lemma}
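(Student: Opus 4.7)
The plan is to decompose $\Gr^\zeta$ into components via Lemma \ref{lem:isom1}, apply the Bezrukavnikov--Finkelberg isomorphism \eqref{BF} to each component, and match with the decomposition \eqref{Gamma} of $\Gamma_{\hat 1}$ through a local-coordinate computation. By Lemma \ref{lem:isom1}, $H^\bullet_{T\times\bbG_m}(\Gr^\zeta)$ is the direct sum over the finite set $\Xi$ of the $H^\bullet_{T\times\bbG_m}(\Fl^\omega_\ell)$. Viewing $\Fl^\omega_\ell=G(\calK_\ell)/P^{J_\omega}_\ell$ as a partial affine flag variety for the loop group $G(\calK_\ell)$, applying \eqref{BF} with $J=J_\omega$, using the embedding $W_{\ell,\omega}\hookrightarrow W$ of \eqref{Womega} together with the reduction noted after \eqref{BF}, yields
\[
H^\bullet_{T\times\bbG_m}(\Fl^\omega_\ell)\cong \bfk[\widetilde{N}_{\Delta_\omega}(\frakt\times\frakt/W_{\ell,\omega})],
\]
where $\Delta_\omega$ denotes the image of $\Delta$ in $\frakt\times\frakt/W_{\ell,\omega}$. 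The $\bbG_m$-action inherited from $\Gr$ differs from the standard loop rotation on $G(\calK_\ell)$ by the $\ell$-th power map $\bbG_m\to\bbG_m$, rescaling the equivariant parameter by $\ell\in\bfk^\times$; this rescaling is harmless after completion at $\hbar=0$.

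To match with the left-hand side of the lemma, I use \eqref{Gamma}: $\Gamma_{\hat 1}=\bigsqcup_\omega\Gamma_\omega$ with each $\Gamma_\omega$ supported at $(1,W(\zeta^{2(\omega+\Lrho)}))$, so completion at $(1,0)\in T\times\bbG_a$ decomposes as a product indexed by $\omega\in\Xi$. I introduce local coordinates $t=\exp(x)$ near $1\in T$, giving $T_{\hat 1}\simeq\frakt_{\hat 0}$, and $s=\zeta^{2(\omega+\Lrho)}\exp(y)$ near $W(\zeta^{2(\omega+\Lrho)})\in T/W$, giving the standard identification $(T/W)_{\widehat{W(\zeta^{2(\omega+\Lrho)})}}\simeq(\frakt/W_{\ell,\omega})_{\hat 0}$. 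Since $2\Lrho\in\LLbd=\LQ$ for $G$ simply connected, one has $\zeta^{2\ell(\omega+\Lrho)}=1$, so $s^\ell=\exp(\ell y)$ and $t^\ell=\exp(\ell x)$, and the defining equation $t^\ell\equiv s^\ell\bmod W$ of $\Gamma$ becomes $\ell x\equiv\ell y\bmod W$, equivalently $x\equiv y\bmod W$ by homogeneity of $\bfk[\frakt]^W$. This identifies $\Gamma_\omega$ with $\Delta_\omega$ in formal coordinates, and the deformation-to-normal-cone constructions match:
\[
\bfk[\widetilde{N}_\Gamma(T\times T/W)]_{\widehat{1,0}}\;\cong\;\prod_{\omega\in\Xi}\bfk[\widetilde{N}_{\Delta_\omega}(\frakt\times\frakt/W_{\ell,\omega})]_{\widehat{0,0}}.
\]

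Combining with the first step yields the first isomorphism, with algebra structures tracked through the comparison. For the second isomorphism, specialize at $\hbar=0$: the left-hand side becomes $\bfk[N_\Gamma(T\times T/W)]_{\hat 1}$ by definition of the normal cone, and the right-hand side becomes $H^\bullet_T(\Gr^\zeta)_{\hat 0}$, because each $\Fl^\omega_\ell$ has a $T$-stable affine paving given by its Schubert cell decomposition, so $H^\bullet_{T\times\bbG_m}(\Gr^\zeta)$ is free over $\calA$ and its reduction modulo $\hbar$ recovers $H^\bullet_T(\Gr^\zeta)$. The main obstacle is the local-coordinate identification of the second paragraph: verifying it compatibly across all basepoints $W(\zeta^{2(\omega+\Lrho)})$ together with the $W_{\ell,\omega}$-action on the formal neighborhood; the remainder is essentially bookkeeping.
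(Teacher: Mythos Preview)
Your proposal is correct and follows essentially the same approach as the paper: decompose both sides over $\Xi$ via Lemma \ref{lem:isom1} and \eqref{Gamma}, apply \eqref{BF} to each $\Fl^\omega_\ell$, identify the formal local pieces $\widetilde{N}_{\Gamma_\omega}(T\times T/W)_{\widehat{1,0}}\simeq\widetilde{N}_{\Delta}(\frakt\times\frakt)_{\widehat{0,0}}/W_{\ell,\omega}$, and specialize using equivariant formality. You supply slightly more detail than the paper does on the local-coordinate identification and on the $\ell$-rescaling of the $\bbG_m$-parameter, both of which are handled correctly.
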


\begin{proof} 
The partition \eqref{Gamma} and the isomorphism of formal schemes
$$
\widetilde{N}_{\Gamma_\omega}(T\times T/W)_{\widehat{1,0}}=
\widetilde{N}_{\Delta}(\frakt\times \frakt)_{\widehat{0,0}}/W_{\ell,\omega}
$$
imply that
\begin{equation}\label{decN}
\bfk[\widetilde{N}_{\Gamma}(T\times T/W)]_{\widehat{1,0}}=
\bigoplus_{\omega\in \Xi}\bfk[\widetilde{N}_{\Delta}(\frakt\times \frakt)/W_{\ell,\omega}]_{\widehat{0,0}}.
%\bigoplus_{\omega\in \Om(\bbC)ega}H^\bullet_{T\times\bbG_m}(\Fl_{J_\omega}^{(\ell)})_{\hat 0}=
%H^\bullet_{T\times\bbG_m}(\Gr^\zeta)_{\hat 0}
\end{equation}
For each subset $J\subset\Sigma_\af$ the composed map
$$\xymatrix{
\bfk[\frakt\times \frakt\times\bbG_a/W_{\ell,J}]\ar[r]&
\bfk[\widetilde{N}_{\Delta}(\frakt\times \frakt)/W_{\ell,J}]\ar[r]^-{\eqref{BF}}& H_{T\times\bbG_m}^\bullet(\Fl^J_\ell)
\ar[r]^-{\res_x}&\calA[\frakt]
}$$
maps $f\otimes g$ to $f\cdot{}^x\!g$ for $f\in\bfk[\frakt]$ and $g\in\bfk[\frakt\times\bbG_a/W_{\ell,J}]$.
Taking the sum over all $\omega$'s, we get a canonical ring isomorphism
$$\bfk[\widetilde{N}_{\Gamma}(T\times T/W)]_{\widehat{1,0}}=
H^\bullet_{T\times\bbG_m}(\Gr^\zeta)_{\widehat{0,0}}.$$
Since $\Gr^\zeta$ is equivariantly formal, specializing this isomorphism to the point $\hbar=0$ in $\bbG_a$, 
we get
$$\bfk[N_{\Gamma}(T\times T/W)]_{\hat 1}
=H^\bullet_{T}(\Gr^\zeta)_{\hat 0}.$$
\end{proof}

\smallskip

In the next proposition, we remove the assumption that $G$ is simply-connected.
Recall that
\begin{align}\label{decomposition}
\Gr^\zeta =\bigsqcup_{\omega\in \Xi} \Fl^\omega_\ell,\qquad
\Gamma_{\hat 1}=\bigsqcup_{\omega\in \Xi}\Gamma_\omega,\qquad
\Omega=\bigsqcup_{\omega\in \Xi}\Omega_\omega.
\end{align}
The following proposition is the main result of this section. It summarizes and completes the results above.

\begin{proposition}\label{prop:N}\hfill
\begin{itemize}[leftmargin=8mm]
\item[$\mathrm{(a)}$]  There are  unique algebra isomorphisms
\begin{align}\label{BUa}
\begin{split}
&\bfk[\widetilde{N}_{\Gamma}(T\times T/W)]^{\oplus\pi_1}_{\widehat{1,0}}=
H^\bullet_{T\times\bbG_m}(\Gr^\zeta)_{\widehat{0,0}},\\
&\bfk[N_{\Gamma}(T\times T/W)]^{\oplus\pi_1}_{\hat 1}= H^\bullet_T(\Gr^\zeta)_{\hat 0}
\end{split}
\end{align}
such that,
under the decompositions $\eqref{decomposition}$, we have the following
algebra isomorphisms
\begin{align*}
&\bfk[\widetilde N_{\Gamma_\omega}(T\times T/W)]^{\oplus\pi_1}_{\widehat {1,0}}= 
H^\bullet_{T\times\bbG_m}(\Fl^\omega_\ell)_{\widehat {0,0}},\\
&\bfk[N_{\Gamma_\omega}(T\times T/W)]^{\oplus\pi_1}_{\hat 1}= H^\bullet_T(\Fl^\omega_\ell)_{\hat 0},
\end{align*}
and the composed map
$$\xymatrix{
\calA[T\times T/W]_{\widehat{1,0}}\ar[r]&
\bfk[\widetilde{N}_{\Gamma}(T\times T/W)]^{\oplus\pi_1}_{\widehat{1,0}}\ar[r]& 
H^\bullet_{T\times\bbG_m}(\Gr^\zeta)_{\widehat{0,0}}
\ar[r]^-{\res_x}&\calA[T]_{\widehat{1,0}}
}$$
takes $f\otimes g$ to $f\cdot{}^x\!g$ for each $f\in\bfk[T]_{\hat 1}$, $g\in\bfk[T/W]$ and
$x\in\LLbd$.

\item[$\mathrm{(b)}$] 
There are unique algebra isomorphisms
\begin{align}\label{BUb}
\bfk[\widetilde N_{\Om}(T/W)]^{\oplus\pi_1}_{\hat 0}= H^\bullet_{\bbG_m}(\Gr^\zeta)_{\hat 0},\qquad
\bfk[N_{\Om}(T/W)]^{\oplus\pi_1}= H^\bullet(\Gr^\zeta)
\end{align}
such that, under the decompositions $\eqref{decomposition}$, we have the following
algebra isomorphisms
\begin{align*}
\bfk[\widetilde N_{\Om_\omega}(T/W)]^{\oplus\pi_1}_{\hat 0}= H^\bullet_{\bbG_m}(\Fl^\omega_\ell)_{\hat 0},\quad
\bfk[N_{\Om_\omega}(T/W)]^{\oplus\pi_1}= H^\bullet(\Fl^\omega_\ell),
\end{align*}
and the composed map
$$\xymatrix{
\calA[T/W]_{\hat 0}\ar[r]&
\bfk[\widetilde{N}_{\Omega}\big(T/W\big)]^{\oplus\pi_1}_{\hat 0}\ar[r]& 
H^\bullet_{\bbG_m}(\Gr^\zeta)_{\hat 0}
\ar[r]^-{\res_x}&\bfk[[\hbar]]
}$$
takes $g$ to ${}^x\!g(1)$ for each weight $x\in\LLbd$.
\end{itemize}
\qed
\end{proposition}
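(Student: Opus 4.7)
The plan is to lift Lemma \ref{lem:BUa} from the simply-connected case to a general $G$ by accounting for the multiple connected components of the partial affine flag manifolds appearing in $\Gr^\zeta$. The combinatorial input is the identification of the set of components of each $\Fl^\omega_\ell$ with $\pi_1$, which produces the $|\pi_1|$ direct summands on the algebraic side.

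By Lemma \ref{lem:isom1}, $\Gr^\zeta = \bigsqcup_{\omega \in \Xi} \Fl^\omega_\ell$, and the connected components of each $\Fl^\omega_\ell$ are parametrized by $\pi_1 = N_{G(\calK)}(\calI)/\calI$. Translations by representatives of $\pi_1$ identify these components as $T \times \bbG_m$-varieties, so
\[
H^\bullet_{T\times\bbG_m}(\Fl^\omega_\ell)_{\widehat{0,0}} = H^\bullet_{T\times\bbG_m}(\Fl^{\omega,\circ}_\ell)^{\oplus \pi_1}_{\widehat{0,0}}.
\]
Each identity component $\Fl^{\omega,\circ}_\ell$ is a partial affine flag manifold whose parahoric Weyl group $W_{\ell,\omega}$ embeds into $W$ by \eqref{Womega}. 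The proof of Lemma \ref{lem:BUa}---based on the formula \eqref{BF} and the decomposition $\Gamma_{\hat 1} = \bigsqcup_{\omega \in \Xi} \Gamma_\omega$ from \eqref{Gamma}---applies to this identity component and gives
\[
H^\bullet_{T\times\bbG_m}(\Fl^{\omega,\circ}_\ell)_{\widehat{0,0}} \simeq \bfk[\widetilde{N}_{\Gamma_\omega}(T \times T/W)]_{\widehat{1,0}}.
\]

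Summing over $\omega \in \Xi$ and stacking the $|\pi_1|$ copies of each summand yields the first isomorphism of (a), and its specialization at $\hbar = 0$ yields the second. Uniqueness, together with the explicit formula for the composition with $\res_x$, is verified on generators $f \otimes g$ with $f \in \bfk[T]_{\hat 1}$ and $g \in \bfk[T/W]$: on the algebraic side the formula is tautological from the construction (cf.~Lemma \ref{lem:extension}), and on the geometric side it matches the classical localization formula at the $T \times \bbG_m$-fixed point $\delta_x$ combined with the $W_\ex$-action in \eqref{WactD}. Part (b) is obtained by the same procedure with the $T$-factor removed: the $\bbG_m$-equivariant statement uses the scheme $\Omega$, which is the fibre at $1 \in T$ of $\Gamma$, together with its decomposition $\Omega = \bigsqcup_\omega \Omega_\omega$; the non-equivariant statement follows by further specialization at $\hbar = 0$, valid because $\Gr^\zeta$ is equivariantly formal. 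I expect the main obstacle to be purely bookkeeping---matching representatives of $\pi_1$ in $\LLbd$ with the direct summands on the algebraic side in a manner compatible with both the $T$-action and the fixed-point restrictions---but this step introduces no new ideas beyond Lemma \ref{lem:BUa}.
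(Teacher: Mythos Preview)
Your proposal is correct and matches the paper's approach: the paper states Proposition~\ref{prop:N} with a \qed immediately after Lemma~\ref{lem:BUa}, treating it as a direct extension of the simply-connected case by accounting for the $|\pi_1|$ connected components of each $\Fl^\omega_\ell$, exactly as you outline. Your identification of the bookkeeping issue---matching the $\pi_1$-translates with the direct summands compatibly with the fixed-point restrictions---is the only content beyond Lemma~\ref{lem:BUa}, and you handle it correctly.
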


\medskip

\subsection{The equivariant cohomology of affine Springer fibers}

\subsubsection{ Affine Springer fibers}
The set of fixed points is  $(\Fl^J)^T$.
The stabilizer in $T$ of any point of an arbitrary 1-dimensional $T$-orbit $E$ in any $T$-variety
is the kernel of a character  of $T$ which is uniquely determined up to integer multiples. 
We pick such a character $\alpha_E$ with minimal norm. It is unique up to a sign.

Fix an element $\gamma$ in $\frakg_\calK$. Let  ${}^0\!\Fl^J_\gamma$ 
be the closed set of all $x\in\Fl^J$ such that
$\gamma$ belongs to the Lie algebra of the prounipotent radical of the stabilizer of $x$ in $G(\calK)$.
Let $\Fl^J_\gamma$ be the closed set of all $x\in\Fl^J$ such that
$\gamma$ belongs to the Lie algebra of the stabilizer of $x$.
Following the terminology of Borho-MacPherson, we may call  
${}^0\!\Fl^J_\gamma$ an affine Spaltenstein fiber, $\Fl^J_\gamma$ an affine Steinberg fiber and
$\Fl_\gamma={}^0\!\Fl^\emptyset_\gamma=\Fl^\emptyset_\gamma$ an affine Springer fiber.
%We'll abbreviate ${}^0\!\Fl^{J,\circ}_\gamma={}^0\!\Fl^J_\gamma\cap \Fl^{J,\circ}$
%and $\Fl^{J,\circ}_\gamma=\Fl^J_\gamma\cap \Fl^{J,\circ}$.
The torus $T$ centralize $\gamma$, thus it acts on $ {}^0\!\Fl^J_\gamma$. 
From now on let $\gamma_\ell=s\otimes t^\ell$, where $s\in \frakt$ is regular semi-simple,
and set $\gamma=\gamma_1$.
Recall that $(\Fl^J)^T=\{\delta_x\,;\,x\in W^J_\ex\}$.

\begin{lemma}\label{lem:isom2}
\hfill
\begin{itemize}[leftmargin=8mm]
\item[$\mathrm{(a)}$] 
There is an isomorphism of ind-varieties
${}^0\Gr^\zeta_{\gamma_\ell} =\bigsqcup_{\omega\in \Xi} {}^0\!\Fl^\omega_{\ell,\gamma_\ell}.$

\item[$\mathrm{(b)}$] 
${}^0\!\Fl^J_\gamma\cap N(\calK)\delta_x$ is an affine space for each $x\in W^J_\ex$. 
% of dimension $(\sharp\Phi-\sharp\widehat\Phi_{\re,J})/2$.

\item[$\mathrm{(c)}$] 
${}^0\!\Fl^J_\gamma\cap\calI\delta_x$ is an affine space for each $x\in W^J_\ex$.

\item[$\mathrm{(d)}$] $({}^0\!\Fl^J_\gamma)^{T}=(\Fl^J)^T=\{\delta_x\,;\,x\in W^J_\ex\}$.

\item[$\mathrm{(e)}$] 
Two points $\delta_x$ and $\delta_y$ in ${}^0\!\Fl^J_\gamma$ are joined
by a 1-dimensional $T$-orbit in ${}^0\!\Fl^J_\gamma$
if and only if we have $y=s_{\alpha+m\delta}x$ for some real affine root such that
$s_{\alpha+n\delta}\notin W_J$ for all $n\in\bbZ$.
There is a unique such orbit $E$. We have $\alpha_E=\pm\alpha$. 
If $x=\tau_\mu w$ with $\mu\in\LLbd$ and $w\in W$, then
$m=\max\{k\in\bbZ\,;\,
w^{-1}(\alpha)+(k-\langle\alpha,\mu\rangle)\delta\notin\Phi^J_\re\}.$

\end{itemize}
\end{lemma}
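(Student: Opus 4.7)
The plan is to dispatch the five items in turn, using the standard root-theoretic description of $N(\calK)$- and $\calI$-orbits in $\Fl^J$ together with the very simple form of $\gamma = s\otimes t$ and $\gamma_\ell = s\otimes t^\ell$ with $s \in \frakt$ regular semisimple.

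Item (a) should follow directly from Lemma \ref{lem:isom1}. Under the isomorphism $\bigsqcup_{\omega\in\Xi}\Fl^\omega_\ell \simto \Gr^\zeta$ given by $g\delta_0^\omega \mapsto g\delta_\omega$ with $g\in G(\calK_\ell)$, the identity \eqref{parahoric} translates the ``radical of the stabilizer in $G(\calK)$'' condition on $\gamma_\ell$ into the analogous condition for $G(\calK_\ell)$-stabilizers, because $\gamma_\ell \in \frakg_{\calK_\ell}$. Item (d) is equally direct: writing $x = \tau_\mu w$ with $\mu\in\LLbd$, $w\in W$, one computes $\Ad_{x^{-1}}(s\otimes t) = w^{-1}(s)\otimes t \in \frakt\otimes t\calO$, which lies in the prounipotent radical of every parahoric subalgebra, so $\delta_x\in{}^0\!\Fl^J_\gamma$ for every $x\in W^J_\ex$; the reverse inclusion of $T$-fixed points is trivial.

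For items (b) and (c), I would use the Kazhdan-Lusztig affine-space paving adapted to the parahoric setting. Each orbit $N(\calK)\delta_x$ (resp.\ $\calI\delta_x$) carries ind-affine coordinates given by an ordered product of root subgroups $U_\beta$, where $\beta$ runs over the affine real roots in the appropriate positive (resp.\ Iwahori-positive) cone with $U_\beta \not\subset xP^J x^{-1}$. Expanding the condition $\Ad_{(nx)^{-1}}(\gamma) \in {}^0\frakp^J$ in these coordinates and using that $\ad(s\otimes t)$ acts on each root line $\fraku_\alpha \otimes t^k$ by the nonzero scalar $\alpha(s)$ combined with a shift $t^k \mapsto t^{k+1}$, one obtains a triangular affine-linear system in the coordinates whose solution locus is the claimed affine space.

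Item (e) combines the classical description of one-dimensional $T$-orbits in $\Fl^J$ with the analysis of (c). Two $T$-fixed points $\delta_x,\delta_y$ are joined by a 1-dimensional $T$-orbit iff $y = s_{\alpha+m\delta}x$ in $W_\ex^J$ for some real affine root $\alpha+m\delta$ whose associated reflection is not absorbed into $W_J$; such an orbit is unique, its closure is a $\bbP^1$, and its character is $\pm\alpha$. To identify which such orbits survive inside ${}^0\!\Fl^J_\gamma$ I would restrict the paving of (c) to the coordinate line $U_{\alpha+m\delta}\delta_x$; the radical condition singles out the extremal value of $m$, and a direct computation with $x=\tau_\mu w$ shows that this value is exactly $\max\{k\in\bbZ : w^{-1}(\alpha)+(k-\langle\alpha,\mu\rangle)\delta\notin\Phi^J_\re\}$. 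The main obstacle I anticipate is the book-keeping in this last step: one must track carefully how the translation $\tau_\mu$ twists the $t$-valuation of the roots and how the parahoric $P^J$ absorbs the corresponding affine root subgroups, but no new idea beyond (c) is required.
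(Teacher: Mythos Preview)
Your proposal is correct and follows essentially the same approach as the paper. The only minor difference is in (e): the paper derives it from (b) rather than (c), since the Iwasawa-cell intersection ${}^0\!\Fl^J_\gamma\cap N(\calK)\delta_x$ comes out literally as the $T$-stable product $\prod_\alpha U_{\alpha+m_\alpha\delta}$, from which the one-dimensional $T$-orbits and the extremal value of $m$ can be read off directly without the extra step of checking $T$-equivariance of the constraint equations.
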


\begin{proof} 
Any element $\omega\in\Xi_\sc$ is identified with an element in $\LLbd$. 
So the adjoint action by $t^{\omega}$ on $\frakg_\calO$ 
yields a well-defined 
Lie subalgebra in $\frakg_\calK$.
To prove (a), observe first that \eqref{parahoric} implies that
\begin{align*}
\frakg_{\calK_\ell}\cap\Ad_{t^{-\omega}}(\frakg_\calO)=\frakp^\omega_\ell.
\end{align*}
Since $\frakg_{\calK_\ell}\cap\Ad_{t^{-\omega}}(\frakg)$ is reductive, we deduce that
$\frakg_{\calK_\ell}\cap\Ad_{t^{-\omega}}(t\frakg_\calO)$ is the pro-unipotent radical 
${}^0\frakp^\omega_\ell$ of $\frakp^\omega_\ell.$
Thus, from \eqref{lem:isom1}, we get
\begin{align*}
{}^0\Gr^\zeta_{\gamma_\ell} 
&=\bigsqcup_{\omega\in \Xi}\{g\delta_\omega\,;\, g\in G(\calK_\ell)\,,\,
\Ad_g^{-1}(\gamma_\ell)\in\frakg_{\calK_\ell}\cap\Ad_{t^{-\omega}}(t\frakg_\calO)\}\\
&=\bigsqcup_{\omega\in \Xi}\{g\delta_\omega\,;\, g\in G(\calK_\ell)\,,\,
\Ad_g^{-1}(\gamma_\ell)\in {}^0\frakp^\omega_\ell\}\\
&=\bigsqcup_{\omega\in \Xi} {}^0\!\Fl^\omega_{\ell,\gamma_\ell}.
\end{align*}
Part (d) is obvious.
Now, we concentrate on parts (b) and (e).
The Iwasawa decomposition implies that 
$\Fl^J=\bigsqcup_{x\in W^J_\ex}N(\calK)\delta_x$.
We choose an element $n\in N(\calK)$ of the following form
$$n=\prod_{\alpha\in\Phi^+}n_\alpha(t^{k_\alpha}),\qquad k_\alpha\in\bbZ.$$
Let $x\in W_\ex$ with $x=\tau_\mu w$
and $w\in W$, $\mu\in\LLbd$.
We have
\begin{align}\label{ad}
\Ad_{nx}^{-1}(\gamma)=\Ad_x^{-1}(\gamma)+
\sum_{\alpha\in\Phi^+}(\langle\alpha,s\rangle t^{1+k_\alpha-\langle\alpha,\mu\rangle}+\text{higher\ terms})e_{w^{-1}(\alpha)}.
\end{align}
Since $\Ad_x^{-1}(\gamma)\in {}^0\frakp^J$, we deduce that
\begin{align*}
n\delta_x\in {}^0\!\Fl^J_\gamma&\iff
w^{-1}(\alpha)+(1+k_\alpha-\langle\alpha,\mu\rangle)\delta\in\Phi_\re^J,\ \forall \alpha\in\Phi^+,\\
n\delta_x\neq\delta_x&\iff
w^{-1}(\alpha)+(k_\alpha-\langle\alpha,\mu\rangle)\delta\notin\Phi_{\re,J}\cup\Phi_\re^J,\ 
\forall \alpha\in\Phi^+.
\end{align*}
Hence, there is an isomorphism
$$ {}^0\!\Fl^J_\gamma\cap N(\calK)\delta_x\simeq \prod_{\alpha}U_{\alpha+m_\alpha\delta},$$
where the product is over all roots $\alpha\in\Phi^+$ such that
\begin{align}\label{cond}
(w^{-1}(\alpha)+\bbZ\delta)\cap\Phi_{\re,J}=\emptyset.
\end{align}
The integer $m_\alpha$ is given by
$$m_\alpha=\max\{m\in\bbZ\,;\,
w^{-1}(\alpha)+(m-\langle\alpha,\mu\rangle)\delta\notin\Phi^J_\re\}.$$
By \eqref{WactD}, we have 
$x^{-1}(\alpha+m\delta)=w^{-1}(\alpha)+(m+\langle \alpha,\mu\rangle)\delta$.
Hence the condition \eqref{cond} is equivalent to 
$x^{-1}s_{\alpha+m\delta}x\notin W_J$ for all $m\in\bbZ.$
Part (e) follows (b), because one dimensional $T$-orbits in ${}^0\!\Fl^J_\gamma$ are closure of one dimensional $T$-orbits in the affine spaces ${}^0\!\Fl^J_\gamma\cap N(\calK)\delta_x$, and these are given by the root vectors.

Finally, let us consider the part (c). Fix an order
$$\{\alpha_1+m_1\delta,\dots,\alpha_r+m_r\delta\}=\Phi_\re^+\setminus x(\Phi^J_\re\sqcup\Phi_{\re,J}),$$
such that $\alpha_i-\alpha_j+(m_i-m_j)\delta\notin\Phi_\re^+\ \text{if}\ i<j.$
We have an isomorphism of affine spaces
$$\bbA^r\to  \calI \delta_x,\quad
(z_1,z_2,\dots,z_r)\mapsto n_{\alpha_1+m_1\delta}(z_1)n_{\alpha_2+m_2\delta}(z_2)\cdots n_{\alpha_r+m_r\delta_r}(z_r)\delta_x.$$
For $nx\in \calI \delta_x$ given by the image of $(z_1,...,z_r)$ under the above map, we have
$$\Ad_{nx}^{-1}(\gamma)=\Ad_x^{-1}(\gamma)+
\sum_{k=1}^r(\langle\alpha_k,s\rangle z_k+f_k(z_1,\cdots,z_{k-1}))e_{x^{-1}(\alpha_k)+(m_k+1)\delta},
$$
where $f_k(z_1,\cdots,z_{k-1})$ is a polynomial in $z_1,..,z_{k-1}$.
Thus $\Ad_{nx}^{-1}(\gamma)$ belongs to ${}^0\frakp^J$ if and only if
$\langle\alpha_k,s\rangle z_k+f_k(z_1,\cdots,z_{k-1})=0$ whenever
$x^{-1}(\alpha_k)+(m_k+1)\delta\notin \Phi^J_\re$.
Since $s$ is regular, we have $\langle\alpha_k,s\rangle\neq 0$. Hence this equality determines $z_k$ uniquely.
If $x^{-1}(\alpha_k)+(m_k+1)\delta\in \Phi^J_\re$, then $z_k$ can take any value.
Thus ${}^0\!\Fl^J_\gamma\cap\calI\delta_x$ is an affine space of dimension 
$$\sharp\{\gamma\in\Phi^+_\re\,;\,x^{-1}(\gamma)\notin\Phi^J_\re\sqcup\Phi_{\re,J}\,,\,
x^{-1}(\gamma)\in(\Phi^J_\re-\delta)\}.$$

\end{proof}

\smallskip

\subsubsection{The GKM presentation of $H^\bullet_T({}^0\Gr^\zeta_{\gamma_\ell})$}\label{sec:GKM}
Let $i$ be the obvious inclusion of ${}^0\Gr^\zeta_{\gamma_\ell}$ in $\Gr^\zeta$.
The ind-varieties ${}^0\Gr^\zeta_{\gamma_\ell}$ and $\Gr^\zeta$ are both equivariantly formal.
Hence, by the localization theorem, the restrictions to the fixed points 
subsets yield the following commutative diagram of algebra embeddings
\begin{align}\label{incl1}
\begin{split}
\xymatrix{
H^\bullet_T({}^0\Gr^\zeta)\ar@{_{(}->}[d]_{i^*}\ar@{^{(}->}[r]^-\res&\Fun(\LLbd,H^\bullet_T).\\
H^\bullet_T({}^0\Gr^\zeta_{\gamma_\ell})\ar@{^{(}->}[ru]_-\res&}
\end{split}
\end{align}
We can not apply the GKM method to get a presentation of the cohomology group $H^\bullet_T(\Gr^\zeta)$.
To do so we must consider the $T\times\bbG_m$-action.
However we can apply it to the cohomology group
$H^\bullet_T({}^0\Gr^\zeta_{\gamma_\ell})$.
To do that, we first consider the connected components of 
${}^0\Gr^\zeta_{\gamma_\ell}$ separately.
Using Lemma \ref{lem:isom2} as in \cite[\S 9.4]{GKM2} we get the following

\smallskip

\begin{lemma}\label{lem:GKM0}
For each element $\omega\in\Xi$ there is an $H^\bullet_T$-algebra isomorphism
$$H^\bullet_T( {}^0\!\Fl^\omega_{\ell,\gamma_\ell})=
 \left\{ (a_x)\in \Fun(W_{\ell,\ex}^\omega\,,H^\bullet_T)\,\middle |\,
\begin{array}{c}a_x\equiv a_{s_{\alpha+\ell m\delta}x}\modulo \alpha,\ \forall m\in\bbZ,\\
\ \text{if}\ x^{-1}s_{\alpha+\ell n\delta}x\notin W_{\ell,\omega} 
\ \forall n\in\bbZ\end{array}.\right\}$$
\qed
\end{lemma}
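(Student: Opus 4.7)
The plan is to derive the statement from the standard GKM (Goresky--Kottwitz--MacPherson) argument applied to ${}^0\!\Fl^\omega_{\ell,\gamma_\ell}$, taking the various parts of Lemma \ref{lem:isom2} (in the $\ell$-setting, i.e., with $\calK$ and $\calI$ replaced by $\calK_\ell$ and the Iwahori of $G(\calK_\ell)$) as the input.

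First, I would verify $T$-equivariant formality. By Lemma \ref{lem:isom2}(c), each Iwahori orbit intersects ${}^0\!\Fl^\omega_{\ell,\gamma_\ell}$ in an affine space, giving an affine paving indexed by $W^\omega_{\ell,\ex}$. The cohomology of each finite-dimensional closed union of these cells is therefore concentrated in even degrees and is $H^\bullet_T$-free, so ${}^0\!\Fl^\omega_{\ell,\gamma_\ell}$ is equivariantly formal and the localization theorem produces an injection
$$H^\bullet_T({}^0\!\Fl^\omega_{\ell,\gamma_\ell}) \hookrightarrow H^\bullet_T\bigl(({}^0\!\Fl^\omega_{\ell,\gamma_\ell})^T\bigr).$$
By Lemma \ref{lem:isom2}(d), the fixed locus equals $\{\delta_x:x\in W^\omega_{\ell,\ex}\}$, so the target is identified with $\Fun(W^\omega_{\ell,\ex},H^\bullet_T)$.

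Second, I would cut out the image by the GKM criterion: a tuple $(a_x)$ belongs to the image iff for every closed one-dimensional $T$-orbit $E\subset{}^0\!\Fl^\omega_{\ell,\gamma_\ell}$ with endpoints $\delta_x,\delta_y$ one has $a_x\equiv a_y \pmod{\alpha_E}$. Lemma \ref{lem:isom2}(e), read in the $\ell$-setting, enumerates these orbits: each is determined by a pair $(x,\alpha+\ell m\delta)$ where $\alpha+\ell m\delta$ is a real affine root satisfying $x^{-1}s_{\alpha+\ell n\delta}x\notin W_{\ell,\omega}$ for all $n\in\bbZ$, with endpoints $\delta_x$ and $\delta_{s_{\alpha+\ell m\delta}x}$ and character $\alpha_E=\pm\alpha$. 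Substituting this into the GKM condition reproduces verbatim the system of congruences in the statement.

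The main obstacle is the careful translation of Lemma \ref{lem:isom2} to the $\ell$-setting, making sure the affine paving, the fixed-point set, and the one-dimensional-orbit analysis survive the passage from $\calK$ to $\calK_\ell$ unchanged, together with the handling of the ind-scheme structure: one must exhaust ${}^0\!\Fl^\omega_{\ell,\gamma_\ell}$ by finite-dimensional $T$-stable closed subvarieties, apply GKM on each piece, and pass to the inverse limit, exactly as in \cite[\S 9.4]{GKM2}. Everything else is mechanical.
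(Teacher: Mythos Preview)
Your proposal is correct and follows exactly the approach the paper intends: the paper's entire proof is the sentence ``Using Lemma \ref{lem:isom2} as in \cite[\S 9.4]{GKM2}'' together with a \qed, and you have simply unpacked what that sentence means, invoking parts (c), (d), (e) of Lemma \ref{lem:isom2} for the affine paving, fixed points, and one-dimensional orbits, and then running the GKM machine with the ind-scheme bookkeeping from \cite{GKM2}.
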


%%%%%%%%%%%%%%%%%%%%%%%%%%%%%%%%%

\smallskip

From Lemmas \ref{lem:isom2}, \ref{lem:GKM0} we get the following.

\smallskip

\begin{proposition}\label{prop:GKM1} 
There is an $H^\bullet_T$-algebra isomorphism
$$H^\bullet_T({}^0\Gr^\zeta_{\gamma_\ell})=
 \left\{ (a_\lambda)\in \Fun(\LLbd\,,H^\bullet_T)\,\middle |\,\begin{array}{c}
a_\lambda\equiv a_{s_{\alpha+\ell m\delta}\bullet\lambda}\modulo\alpha\ \forall m\in\bbZ,\\
\ \text{if}\ \langle\alpha\,,\,\lambda+\Lrho\rangle\notin\ell\bbZ\end{array}\right\}.$$
\end{proposition}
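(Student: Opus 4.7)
The plan is to deduce the proposition purely by combining the two referenced lemmas via the natural bijection between $\LLbd$ and $\bigsqcup_{\omega\in\Xi} W_{\ell,\ex}/W_{\ell,\omega}$ furnished by the $\bullet$-action.

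First, I would use Lemma \ref{lem:isom2}(a) to obtain the decomposition of ind-varieties
$${}^0\Gr^\zeta_{\gamma_\ell} =\bigsqcup_{\omega\in \Xi} {}^0\!\Fl^\omega_{\ell,\gamma_\ell},$$
which yields a direct sum decomposition of $H^\bullet_T({}^0\Gr^\zeta_{\gamma_\ell})$ and reduces the problem to assembling the GKM descriptions given by Lemma \ref{lem:GKM0} for each component.

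Next, I would identify the indexing sets. By \eqref{Womega}, the subgroup $W_{\ell,\omega}\subseteq W_{\ell,\ex}$ is exactly the stabilizer of $\omega$ under the $\bullet$-action of $W_{\ell,\ex}$ on $\LLbd$, and by definition $\Xi=\LLbd/W_{\ell,\ex}$. Hence the orbit-stabilizer map
$$\bigsqcup_{\omega\in\Xi} W_{\ell,\ex}^\omega \longrightarrow \LLbd, \qquad xW_{\ell,\omega}\longmapsto x\bullet\omega,$$
is a bijection which transports the family $(a_x)_{x\in W_{\ell,\ex}^\omega}$ of Lemma \ref{lem:GKM0} to a family $(a_\lambda)_{\lambda\in\LLbd}$. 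Under this bijection, the congruence $a_x\equiv a_{s_{\alpha+\ell m\delta}x}\pmod\alpha$ becomes $a_\lambda \equiv a_{s_{\alpha+\ell m\delta}\bullet\lambda}\pmod\alpha$ with $\lambda=x\bullet\omega$.

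It remains to rewrite the hypothesis ``$x^{-1}s_{\alpha+\ell n\delta}x\notin W_{\ell,\omega}$ for all $n\in\bbZ$'' in terms of $\lambda$. This condition is equivalent to $s_{\alpha+\ell n\delta}\bullet\lambda\neq \lambda$ for every $n\in\bbZ$. Using the explicit formula for the $\bullet$-action
$$s_{\alpha+\ell n\delta}\bullet\lambda=\lambda-\bigl(\langle\alpha,\lambda+\Lrho\rangle+\ell n\bigr)\Lalpha,$$
we see that $s_{\alpha+\ell n\delta}\bullet\lambda=\lambda$ for some $n$ if and only if $\langle\alpha,\lambda+\Lrho\rangle\in\ell\bbZ$. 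Thus the hypothesis becomes $\langle\alpha,\lambda+\Lrho\rangle\notin\ell\bbZ$, which is precisely the condition stated in the proposition. Assembling these ingredients gives the claimed GKM presentation.

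The proof is almost entirely bookkeeping; there is no substantive obstacle since both input lemmas are already established. The only point requiring care is ensuring that the translation between the two parametrisations (by $W_{\ell,\ex}^\omega$ for each $\omega$ on one side and by $\LLbd$ on the other) is consistent with the structure of the GKM relations, which is guaranteed by \eqref{Womega} and the formula for the $\bullet$-action recalled above.
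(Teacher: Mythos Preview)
Your proposal is correct and follows essentially the same approach as the paper: both use the decomposition from Lemma~\ref{lem:isom2}(a), the orbit-stabilizer bijection $\LLbd=\bigsqcup_{\omega\in\Xi}W_{\ell,\ex}^\omega$, and then translate the hypothesis $x^{-1}s_{\alpha+\ell n\delta}x\notin W_{\ell,\omega}$ into $\langle\alpha,\lambda+\Lrho\rangle\notin\ell\bbZ$. The only cosmetic difference is that the paper routes the last equivalence through the intermediate condition $s_\alpha\bullet\zeta^\lambda\neq\zeta^\lambda$, whereas you use the explicit formula for $s_{\alpha+\ell n\delta}\bullet\lambda$ directly.
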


\begin{proof}
Use the following identifications of $W_{\ell,\ex}$-sets
$$%\LLbd=W^\Sigma_\ex=W_\ex/W,\qquad
\LLbd=\bigsqcup_{\omega\in \Xi} W_{\ell,\ex}\bullet\omega=
\bigsqcup_{\omega\in \Xi} W_{\ell,\ex}/W_{\ell,\omega}
=\bigsqcup_{\omega\in \Xi} W_{\ell,\ex}^\omega,$$ 
the decomposition in Lemma \ref{lem:isom2}, and the following equivalences
$$s_{\alpha+\ell n\delta}\bullet\lambda\neq\lambda,\ \forall n\in\bbZ
\iff s_\alpha \bullet\zeta^\lambda\neq\zeta^\lambda\iff \langle\alpha\,,\,\lambda+\Lrho\rangle\notin\ell\bbZ.$$
\end{proof}

\smallskip

\subsubsection{Affine Weyl group symmetries of of $H^\bullet_T({}^0\Gr^\zeta_{\gamma_\ell})$}
\label{sec:symmetries}
The $G(\calK_\ell)$-action on $\Gr^\zeta$ gives rise to the left $W_{\ell,\ex}$-action on $H^\bullet_T(\Gr^\zeta)$.
The $T(\calK_\ell)$-action on the subset ${}^0\Gr^\zeta_{\gamma_\ell}$ yields
a $\ell\LLbd$-action on $H^\bullet_T({}^0\Gr^\zeta_{\gamma_\ell})$.
This action extends to a $W_{\ell,\ex}$-action
on $H^\bullet_T({}^0\Gr^\zeta_{\gamma_\ell})$ called again the left action.
Indeed, recall that under the restriction 
$$\res:H^\bullet_T(\Gr^\zeta)\to\Fun(\LLbd,H^\bullet_T)$$
the left $W_{\ell,\ex}$-action on 
$H^\bullet_T(\Gr^\zeta)$ is given by the following rule
\begin{align}\label{leftaction}
x(a_\lambda)={}^xa_{x\bullet\lambda},\qquad \lambda\in \LLbd,\quad 
x\in W_{\ell,\ex},\quad (a_{\lambda})\in \Fun(\LLbd,H^\bullet_T).
\end{align}
Here the map $f\mapsto{}^xf$ is the $W_{\ell,\ex}$-action on $H^\bullet_T=\bfk[\frakt]$
dual to \eqref{Wact1}.
Under the restriction 
$$\res:H^\bullet_T({}^0\Gr^\zeta_{\gamma_\ell})\to\Fun(\LLbd,H^\bullet_T)$$
we define the left $\ell\LLbd$-action on 
$H^\bullet_T({}^0\Gr^\zeta_{\gamma_\ell})$ by
$$\mu(a_\lambda)=a_{\tau_\mu\bullet\lambda},\qquad \lambda,\mu\in \LLbd,\qquad 
(a_\lambda)\in \Fun(\LLbd,H^\bullet_T).$$
By Proposition \ref{prop:GKM1}, 
this action extends uniquely to a $W_{\ell,\ex}$-action on $H^\bullet_T({}^0\Gr^\zeta_{\gamma_\ell})$
such that \eqref{leftaction} holds. 
The element $\gamma_\ell$ is homogeneous.
Hence, both ind-schemes $\Gr^\zeta$ and 
${}^0\Gr^\zeta_{\gamma_\ell}$ are 
$T$-equivariantly formal by \cite{GKM1}.
Thus, we have
$$H^\bullet_T(\Gr^\zeta)\otimes_{H^\bullet_T}\bfk=H^\bullet(\Gr^\zeta),\qquad
H^\bullet_T({}^0\Gr^\zeta_{\gamma_\ell})\otimes_{H^\bullet_T}\bfk=H^\bullet({}^0\Gr^\zeta_{\gamma_\ell}).$$
The left $W_{\ell,\ex}$-actions on $H^\bullet_T(\Gr^\zeta)$ and
$H^\bullet_T({}^0\Gr^\zeta_{\gamma_\ell})$ are $H^\bullet_T$-skew-linear.
Thus, they specialize to
$W_{\ell,\ex}$-actions on the cohomology spaces $H^\bullet(\Gr^\zeta)$ and
$H^\bullet({}^0\Gr^\zeta_{\gamma_\ell})$.
The restriction
$$i^*:H^\bullet_T(\Gr^\zeta)\to H^\bullet_T({}^0\Gr^\zeta_{\gamma_\ell})$$ commutes with the left
$W_{\ell,\ex}$-action. By formality, it specializes to the restriction
$$i^*:H^\bullet(\Gr^\zeta)\to H^\bullet({}^0\Gr^\zeta_{\gamma_\ell})$$
which is also $W_{\ell,\ex}$-equivariant.
The left $W_{\ell,\af}$-action on $H^\bullet(\Gr^\zeta)$ is trivial, 
because it comes from the left action of the neutral connected component of $G(\calK_\ell)$.
Hence $i^*$ maps into $H^\bullet({}^0\Gr^\zeta_{\gamma_\ell})^{W_{\ell,\af}}$.
The connected components of $\Gr$ are parametrized by the group $\pi_1$, hence
the algebras $H^\bullet(\Gr^\zeta)$ and $H^\bullet({}^0\Gr^\zeta_{\gamma_\ell})$
are both $\pi_1$-graded. 
The canonical inclusion $\pi_1\subset W_{\ell,\ex}$ recalled in \S\ref{sec:Xi}
yields an action of the group $\pi_1$ by algebra automorphisms
on $H^\bullet(\Gr^\zeta)$ and $H^\bullet({}^0\Gr^\zeta_{\gamma_\ell})$.
Since $W_{\ell,\ex}=W_{\ell,\af}\rtimes\pi_1$, 
the subalgebra $H^\bullet({}^0\Gr^\zeta_{\gamma_\ell})^{W_{\ell,\af}}$ of $W_{\ell,\af}$-invariants elements 
is preserved by the $\pi_1$-action, and the map $i^*$ is a $\pi_1$-equivariant algebra homomorphism
\begin{align}\label{invariant-res}
i^*:H^\bullet(\Gr^\zeta)\to H^\bullet({}^0\Gr^\zeta_{\gamma_\ell})^{W_{\ell,\af}}.
\end{align}
In particular, it takes $H^\bullet(\Gr^\zeta)^{\pi_1}$ into $H^\bullet({}^0\Gr^\zeta_{\gamma_\ell})^{W_{\ell,\ex}}$.
Here, for each subset $S\subset W_{\ell,\ex}$, we write
$H^\bullet({}^0\Gr^\zeta_{\gamma_\ell})^S$
for the space of $S$-invariants with respect to the left action.

The group $W_\ex$ acts also on $H^\bullet(\Fl_\gamma)$ via the affine Springer action defined in \cite{Lu}.
According to \cite{GKM2}, under the embedding $i^*$ this action is given by
$$y(a_x)=(a_{xy^{-1}}),\qquad x,y\in W_\ex.$$
We'll refer to the Springer action as the right action. To avoid confusions we'll write
$(a_x)y^{-1}$ instead of $y(a_x)$.

The left and right actions commute with each other.
They both preserve the cohomological grading.
The right action is $H^\bullet_T$-linear and the left one is $H^\bullet_T$-skew-linear.

From now on and until the end of Proposition \ref{prop:PI}, we focus on the component $J=\emptyset$, and safely remove $\ell$ in the discussion. 

\begin{proposition}\label{prop:dimformula}
\hfill
\begin{itemize}[leftmargin=8mm]
\item[$\mathrm{(a)}$]
$\dim H^\bullet(\Fl_\gamma)^{W_\ex}\leqslant (h+1)^r$.
\item[$\mathrm{(b)}$]
$\dim H^\bullet(\Fl_\gamma)^{W_\ex}= (h+1)^r$ in type $A$.
\end{itemize}
\end{proposition}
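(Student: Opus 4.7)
The plan is to exploit the GKM presentation of $H^\bullet_T(\Fl_\gamma)$ and study how the Springer $W_\ex$-action interacts with it. Specializing Lemma \ref{lem:GKM0} to $\ell=1$, $\omega=0$, $J=\emptyset$ gives
$$H^\bullet_T(\Fl_\gamma) \cong \Bigl\{(a_x)_{x\in W_\ex} \in \Fun(W_\ex, H^\bullet_T) \,\Big|\, a_x \equiv a_{s_{\alpha+m\delta}x} \pmod{\alpha},\ \forall m\in\bbZ,\ \forall \alpha\in\Phi\Bigr\},$$
and $T$-equivariant formality of $\Fl_\gamma$ (cf.~\cite{GKM1}) yields $H^\bullet(\Fl_\gamma)=H^\bullet_T(\Fl_\gamma)\otimes_{H^\bullet_T}\bfk$. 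Under this embedding, the right (Springer) $W_\ex$-action is $y\cdot(a_x)=(a_{xy^{-1}})$ and is $H^\bullet_T$-linear, while the left action is $H^\bullet_T$-skew-linear.

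For (a), I would first work equivariantly. Using the isomorphism \eqref{BF} (for the affine flag variety, i.e.~$J=\emptyset$), we have $H^\bullet_{T\times\bbG_m}(\Fl)\cong \bfk[\widetilde N_\Delta(\frakt\times\frakt)]$, and the right $W_\ex$-action is the natural one on the second copy of $\frakt$ (through the dot-action). Projecting from $\Fl$ to $\Fl_\gamma$ and passing to $H^\bullet_T(\Fl_\gamma)^{W_\ex}$, I would identify the invariants (after specializing $\hbar=0$ and passing to non-equivariant cohomology) with a finite-dimensional quotient that is the algebra of functions on a scheme cut out from $T/W$ by the appropriate polynomial equations. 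The key point is that here the natural ``level'' parameter is $h+1$ (playing the role that $\ell$ plays in \S\ref{sec:Xi}): the relevant scheme is the analogue of $\Omega$ with $\ell$ replaced by $h+1$, whose set of $\bfk$-points has cardinality at most $|\LLbd/(h+1)\LLbd|=(h+1)^r$. Combining the GKM congruences with the $W_\ex$-invariance forces each component $a_x$ to be determined by its values on a system of orbit representatives contained in a fundamental domain of size $(h+1)^r$, giving the bound.

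For (b), to get equality in type $A$, I would construct $(h+1)^r$ linearly independent $W_\ex$-invariants explicitly. The paving from Lemma \ref{lem:isom2}(c) by affine cells indexed by $W_\ex$ gives a basis of $H^\bullet(\Fl_\gamma)$ on which the finite Springer representation acts in a way controlled by the combinatorics of type $A$ (where, unlike in other types, the Springer action on a homogeneous affine Springer fiber has been computed explicitly by Lusztig--Smelt and their successors). One then checks, either by a character computation matching $\bbC[\LLbd/(h+1)\LLbd]$ as a right $W$-module or by directly exhibiting a basis of invariants indexed by $\LLbd/(h+1)\LLbd$, that the upper bound is attained.

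The main obstacle is the first step of (a): the Springer action does not simply permute the natural basis of $T$-fixed point classes, so controlling the GKM module under $W_\ex$-invariance requires identifying the right finite quotient. The delicate input is to recognize that the ``level'' governing this quotient is $h+1$ (the Coxeter number shifted by one), which is forced by the specific homogeneous element $\gamma=s\otimes t$ and the scaling of the regular semisimple element $s$; once this identification is made, the bound follows from counting points of the resulting zero-dimensional scheme.
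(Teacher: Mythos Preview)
Your proposal has genuine gaps in both parts, and the route you sketch does not match the paper's argument.

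\textbf{Part (a).} The invariants in the statement are for the \emph{left} $W_\ex$-action (see \S\ref{sec:symmetries}), but your sketch drifts between the left and the right (Springer) action without pinning down which you are using. More seriously, the heart of your argument is the claim that $H^\bullet(\Fl_\gamma)^{W_\ex}$ is (a quotient of) functions on an $\Omega$-type scheme with $\ell$ replaced by $h+1$, and you never supply a mechanism for this. In the $\ell$-story, the scheme $\Omega$ arises because the $\mu_\ell$-fixed locus $\Gr^\zeta$ and the $[\ell]$-map on $T/W$ are tied together by Proposition~\ref{prop:N}; for the single affine Springer fiber $\Fl_\gamma$ with $\gamma=s\otimes t$ there is no analogous input that would force the integer $h+1$ into the picture. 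Your appeal to ``the scaling of the regular semisimple element $s$'' does not produce one. Two further technical issues: the left action is $H^\bullet_T$-\emph{skew}-linear, so passing between equivariant and non-equivariant invariants is not as innocent as you suggest; and the isomorphism \eqref{BF} concerns $\Fl$, not $\Fl_\gamma$, so ``projecting from $\Fl$ to $\Fl_\gamma$'' goes the wrong way (one has $i^*:H^\bullet(\Fl)\to H^\bullet(\Fl_\gamma)$, useful for a \emph{lower} bound, not an upper bound).

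The paper's proof of (a) is entirely different and purely combinatorial: one dualizes to coinvariants $H_\bullet(\Fl_\gamma)_{W_\ex}$ for the left action, uses the basis of fundamental classes $b_x=[\overline{C_x}]$ from the Iwahori paving of Lemma~\ref{lem:isom2}(c), and shows directly that $s_iC_x=C_{s_ix}$ (hence $b_x=b_{s_ix}$ in coinvariants) whenever $x^{-1}(\alpha_i)\notin E_x\cup(-E_{s_ix})$. A case analysis then shows that every equivalence class contains an element $x$ with $\langle\alpha_i,A_{x^{-1}}\rangle>-1$ for all simple affine $\alpha_i$, and the number of such alcoves is at most $(h+1)^r$. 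The appearance of $h+1$ is thus an alcove-counting output, not a scheme-theoretic input.

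\textbf{Part (b).} Your plan to exhibit $(h+1)^r$ independent invariants by a ``character computation matching $\bfk[\LLbd/(h+1)\LLbd]$'' is circular: that identification is Proposition~\ref{prop:PI}(a), whose proof \emph{uses} the present proposition. Results of Lusztig--Smelt type give Poincar\'e polynomials of homogeneous affine Springer fibers, not the left $W_\ex$-module structure you need. The paper's argument instead supplies the missing lower bound via a deep external input specific to type $A$: Haiman's theorem that the diagonal coinvariant ring $DR$ has dimension $(h+1)^r$, together with the Carlsson--Oblomkov $DR$-action on $H_\bullet(\Fl)$ \cite{CO}, shows $\dim i^*(H^\bullet(\Fl))\geqslant (h+1)^r$; since the left action on $H^\bullet(\Fl)$ is trivial, this lands inside $H^\bullet(\Fl_\gamma)^{W_\ex}$ and forces equality with the bound from (a).
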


\begin{proof}
We first concentrate on part (a).
We'll prove the equivalent statement
$$\dim H_\bullet(\Fl_\gamma)_{W_\ex}\leqslant (h+1)^r$$
where the left hand side is the dimension of the space of coinvariant in the Borel-Moore homology 
group for the left action.
We may assume that the group $G$ is simply connected, because the terms of the inequality do not depend on the 
fundamental group. Then $\LLbd$ is the coroot lattice $\LQ$.
For each affine element $x\in W_\ex$ we consider the affine space 
$C_x=\Fl_\gamma\cap\calI\delta_x.$
From the proof of Lemma \ref{lem:isom2} we deduce that
\begin{align*}
&C_x=x(\prod_{\alpha\in E_x}U_\alpha)\delta_1,\\
&E_x=x^{-1}(\Phi_\re^+)\cap\Phi^-_\re\cap(\Phi^+_\re-\delta)=x^{-1}(\Phi_\re^+)\cap((\Phi^+-\delta)\sqcup\Phi^-).
\end{align*}
Hence, for each affine reflexion $s_i$ we have
\begin{align*}
s_iC_x= C_{s_ix}
&\iff E_x=E_{s_ix}\\
&\iff \Phi_\re^+\cap(x(\Phi^+-\delta)\sqcup x(\Phi^-))=s_i(\Phi_\re^+)\cap(x(\Phi^+-\delta)\sqcup x(\Phi^-))\\
&\iff\pm x^{-1}(\alpha_i)\notin (\Phi^+-\delta)\sqcup\Phi^-\\
&\iff x^{-1}(\alpha_i)\notin E_x\cup(-E_{s_ix}).
\end{align*}
We consider the equivalence relation on $W_\ex$ such that 
$$y\sim x\iff y^{-1}C_y=x^{-1}C_x.$$
We have 
\begin{align}\label{sim1}x\sim s_ix\iff  x^{-1}(\alpha_i)\notin E_x\cup(-E_{s_ix}).
\end{align}
Let $\overline{C_x}$ be the closure of $C_x$ in $\Fl_\gamma$, and 
$b_x=[\overline{C_x}]$ be its fundamental class in $H_\bullet(\Fl_\gamma)$.
Then $\{b_x\,;\, x\in W_\ex\}$ is a basis of $H_\bullet(\Fl_\gamma)$
such that $b_y=b_x$ in $H_\bullet(\Fl_\gamma)_{W_\ex}$ whenever $y\sim x$.
Let $M\subset W_\ex$ be the set of all elements which are minimal in their $\sim$-class.
We deduce that
$$\dim H_\bullet(\Fl_\gamma)_{W_\ex}\leqslant \sharp(W_\ex/\sim)\leqslant
\sharp(M).$$
The alcove of $x\in W_\ex$ is
$A_x=x^{-1}(A_1)$ where
$$A_1=\{\mu\in\frakt_\bbR\,;\, 0< \langle\alpha,\mu\rangle< 1\,,\,\forall\alpha\in\Phi^+\}.$$
We write $\langle\alpha,A_{x^{-1}}\rangle=\langle x^{-1}(\alpha),\Lrho\rangle/h$ for each $\alpha\in\Lambda$.
Note that
$$|\langle\Phi,A_1\rangle|\subset(0,1),\qquad \langle\alpha_i,A_{x^{-1}}\rangle<0\iff s_ix<x.$$

Pick an element $x\in M$.
From \eqref{sim1} we deduce that 
$$\langle\alpha_i,A_{x^{-1}}\rangle<0\Rightarrow x^{-1}(\alpha_i)\in E_x\cup(-E_{s_ix}).$$
%$\pm x^{-1}(\alpha_i)\in (\Phi^+-\delta)\sqcup\Phi^-.$
Write $x^{-1}=w^{-1}\tau_\mu$ with $w\in W$ and $\mu\in\LLbd$. 
By \eqref{WactD} we have
\begin{align}\label{sim2}
x^{-1}(\alpha_i)=w^{-1}(\alpha_i)-\langle\alpha_i,\mu\rangle\delta,\qquad
\langle\alpha_i,A_{x^{-1}}\rangle=\langle \alpha_i,A_{w^{-1}}\rangle-\langle\alpha_i,\mu\rangle.
\end{align}

First, assume that $i\neq 0$. 
If $x^{-1}(\alpha_i)\in E_x$ then $x^{-1}(\alpha_i)\in (\Phi^+-\delta)\sqcup\Phi^-$.
Hence,
either $w^{-1}(\alpha_i)\in\Phi^+$ and $\langle\alpha_i,\mu\rangle=1$, 
or $w^{-1}(\alpha_i)\in\Phi^-$ and $\langle\alpha_i,\mu\rangle=0$. 
Hence $\langle \alpha_i,A_{x^{-1}}\rangle<0$ in both cases.
Similarly, if $x^{-1}(\alpha_i)\in -E_{s_ix}$, then $\langle \alpha_i,A_{x^{-1}}\rangle>0$.
Therefore, the computation above and \eqref{sim2} yield
$$\langle\alpha_i,A_{x^{-1}}\rangle<0\Rightarrow
x^{-1}(\alpha_i)\in E_x
\Rightarrow \langle\alpha_i,A_{x^{-1}}\rangle>-1.$$
On the other hand, if $\langle \alpha_i,A_{x^{-1}}\rangle\geqslant 0$ 
then \eqref{sim2} yields either $w^{-1}(\alpha_i)\in\Phi^+$ and $\langle\alpha_i,\mu\rangle\leqslant 0$, 
or $w^{-1}(\alpha_i)\in\Phi^-$ and $\langle\alpha_i,\mu\rangle\leqslant -1$. 
Hence, in both cases we have $\langle\alpha_i,A_{x^{-1}}\rangle>-1.$

Now, let $i=0$. 
If $x^{-1}(\alpha_0)\in E_x$ then $x^{-1}(\alpha_0)\in (\Phi^+-\delta)\sqcup\Phi^-$.
Hence,
either $w^{-1}(\alpha_0)\in\Phi^++\delta$ and $\langle\alpha_i,\mu\rangle=2$, 
or $w^{-1}(\alpha_0)\in\Phi^-+\delta$ and $\langle\alpha_i,\mu\rangle=1$. 
Hence $\langle \alpha_0,A_{x^{-1}}\rangle\in(-1,0)$ in both cases.
Similarly, if $x^{-1}(\alpha_0)\in -E_{s_0x}$, then $\langle \alpha_0,A_{x^{-1}}\rangle\in(0,1)$.
Therefore, the computation above and \eqref{sim2} yield
$$\langle\alpha_0,A_{x^{-1}}\rangle<0\Rightarrow
x^{-1}(\alpha_0)\in E_x
\Rightarrow \langle\alpha_0,A_{x^{-1}}\rangle>-1.$$
On the other hand, if $\langle \alpha_0,A_{x^{-1}}\rangle\geqslant 0$ 
then \eqref{sim2} yields either $w^{-1}(\alpha_0)\in\Phi^++\delta$ and $\langle\alpha_0,\mu\rangle\leqslant 2$, 
or $w^{-1}(\alpha_i)\in\Phi^-+\delta$ and $\langle\alpha_0,\mu\rangle\leqslant 1$. 
Hence, in both cases we have $\langle\alpha_0,A_{x^{-1}}\rangle>-1.$

Therefore, the case-by-case computations above imply that
$$x\in M\Rightarrow\langle\alpha_i,A_{x^{-1}}\rangle>-1\,,\,\forall \alpha_i\in\Sigma_\af.$$
Now, the number of such alcoves is $\leqslant (h+1)^r$.

Now we prove part (b). 
The diagonal coinvariant ring is $DR=\bfk[\frakt\times\frakt]/I_+$, 
where $I_+$ is the ideal
generated by homogeneous diagonal $W$-invariants of positive degrees. In \cite{H}, Haiman proved that
this ring has dimension $(h+1)^r$. 
Using the right $W_\af$-action and multiplications by Chern classes, a $DR$-action
on the Borel-Moore homology group $H_\bullet(\Fl)$ is defined in
\cite[thm.~2]{CO}. Further, let $[G/B]$ be the fundamental class corresponding to $G/B$ in $H_\bullet(\Fl)$. It was proved in loc.~cit. that 
the morphism 
$$DR\to H_\bullet(\Fl),\quad x\mapsto x [G/B]$$
is injective. In particular, the DR-submodule in $H_\bullet(\Fl)$ generated by $[G/B]$ has dimension $(h+1)^r$.
Now, consider $i_\ast: H_\bullet(\Fl_\gamma)\to H_\bullet(\Fl)$ given by push-forward with respect to the embedding 
$i:\Fl_\gamma\to\Fl$. Since $G/B$ is contained in $\Fl_\gamma$, the class $[G/B]$ belongs to the image of $i_\ast$.
Since the $DR$-action is constructed using the right $W_\af$-action and multiplications by Chern classes, it is easy to see that the image of $i_\ast$ is a $DR$-submodule. In particular, it contains the $DR$-submodule generated by $[G/B]$. By the injectivity
We deduce that
$\dim i_*(H_\bullet(\Fl_\gamma))\geqslant (h+1)^r$.

By taking the duals and identifying the dual of the Borel-Moore homology groups
$H_\bullet(\Fl_\gamma)$, $H_\bullet(\Fl)$  
with the cohomology groups
$H^\bullet(\Fl_\gamma)$, $H^\bullet(\Fl)$, we deduce that
$$\dim i^*(H^\bullet(\Fl))\geqslant (h+1)^r.$$
Since the left $W_\ex$-action on $H^\bullet(\Fl)$ is trivial, we have
$$i^*(H^\bullet(\Fl))\subset H^\bullet(\Fl_\gamma)^{W_\ex}.$$ 
But by part (a), we know that $\dim H^\bullet(\Fl_\gamma)^{W_\ex}\leqslant (h+1)^r$.
Thus we deduce 
$\dim H^\bullet(\Fl_\gamma)^{W_\ex}= (h+1)^r.$
\end{proof}

\smallskip

In particular, the proof above shows that in type $A$, we have $i^*(H^\bullet(\Fl))=H^\bullet(\Fl_\gamma)^{W_\ex}$.
We obtain the following corollary

\begin{corollary}\label{cor: surjA}
In type $A$, the pull-back for $i: \Fl_\gamma\to \Fl$ in cohomology yields a surjective map 
$$i^\ast: H^\bullet(\Fl)\to H^\bullet(\Fl_\gamma)^{W_\ex}.$$
\end{corollary}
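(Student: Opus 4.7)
The plan is to extract this surjectivity directly from the chain of inequalities already assembled in the proof of Proposition \ref{prop:dimformula}(b), since those bounds are tight in type $A$ and sandwich the image $i^*(H^\bullet(\Fl))$ inside $H^\bullet(\Fl_\gamma)^{W_\ex}$.

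First I would record the inclusion
$$i^*(H^\bullet(\Fl))\subset H^\bullet(\Fl_\gamma)^{W_\ex},$$
which is general (not type-specific) and comes from the fact that the left $W_{\ex}$-action on $H^\bullet(\Fl)$ is trivial together with the $W_\ex$-equivariance of $i^*$ noted in \S\ref{sec:symmetries}. Thus it suffices to prove that the image has dimension at least $\dim H^\bullet(\Fl_\gamma)^{W_\ex}$ in type $A$.

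For the lower bound on $\dim i^*(H^\bullet(\Fl))$, I would reuse verbatim the argument from the proof of Proposition \ref{prop:dimformula}(b): the Haiman presentation of the diagonal coinvariant ring $DR$ of dimension $(h+1)^r$, the Carlsson--Oblomkov $DR$-action on $H_\bullet(\Fl)$, and the injectivity of the map $DR\hookrightarrow H_\bullet(\Fl)$, $x\mapsto x\,[G/B]$. Since $G/B\subset\Fl_\gamma$, the class $[G/B]$ lies in the image of the pushforward $i_*:H_\bullet(\Fl_\gamma)\to H_\bullet(\Fl)$, and since the $DR$-action is built from multiplications by Chern classes and the right $W_\af$-Springer action (both of which commute with $i_*$), the image of $i_*$ is a $DR$-submodule. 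Dualizing gives $\dim i^*(H^\bullet(\Fl))\geqslant (h+1)^r$.

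Finally, combining the three facts
$$i^*(H^\bullet(\Fl))\subset H^\bullet(\Fl_\gamma)^{W_\ex},\qquad
\dim i^*(H^\bullet(\Fl))\geqslant (h+1)^r,\qquad
\dim H^\bullet(\Fl_\gamma)^{W_\ex}\leqslant (h+1)^r,$$
where the last inequality is Proposition \ref{prop:dimformula}(a), forces all three quantities to agree in type $A$, and therefore the inclusion is an equality. There is no real new obstacle here beyond the bounds already established; the only subtlety is to make sure the three ingredients are cited cleanly, and in particular to note that the equality in Proposition \ref{prop:dimformula}(b) is obtained precisely by the same sandwiching, so the corollary is essentially a restatement of the final line of that proof.
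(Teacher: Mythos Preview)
Your proposal is correct and follows essentially the same approach as the paper: the paper simply remarks that the proof of Proposition \ref{prop:dimformula}(b) already establishes the equality $i^*(H^\bullet(\Fl))=H^\bullet(\Fl_\gamma)^{W_\ex}$ in type $A$ via exactly the sandwich argument you describe (the inclusion from triviality of the left $W_\ex$-action, the lower bound $(h+1)^r$ from Haiman and Carlsson--Oblomkov, and the upper bound from part (a)). You have reproduced that argument faithfully.
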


Let us note the following result of Boixeda Alvarez and Losev, whose proof uses Proposition \ref{prop:dimformula}.
\begin{proposition}[\cite{BL}]\label{prop:PI}
\hfill
\begin{itemize}[leftmargin=8mm]
\item[$\mathrm{(a)}$] 
$H^\bullet(\Fl_\gamma)^{W_\ex}= \bfk[\LLbd/(h+1)\LLbd]$ as a right $W$-module.
\item[$\mathrm{(b)}$] 
$H^\bullet(\Fl_\gamma)^{W_\ex}=H^\bullet(\Fl_\gamma)^\LLbd$ in type $A$.
\end{itemize}
\qed
\end{proposition}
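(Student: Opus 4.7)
The plan is to leverage Proposition \ref{prop:dimformula} (which gives the upper bound $\dim H^\bullet(\Fl_\gamma)^{W_\ex} \leqslant (h+1)^r$ in general type, with equality in type $A$) together with the Carlsson-Oblomkov action of the diagonal coinvariant algebra $DR$ on $H_\bullet(\Fl)$ that already appears in the proof of Proposition \ref{prop:dimformula}(b). Since $\LLbd$ has rank $r$, the group algebra $\bfk[\LLbd/(h+1)\LLbd]$ also has dimension $(h+1)^r$, so the task reduces to producing a $W$-equivariant isomorphism between the two.

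For part (a), the first step is to identify $\bfk[\LLbd/(h+1)\LLbd]$ with $DR$ as right $W$-modules; this is the Haiman--Gordon description of diagonal coinvariants in general type (where $DR$, up to the sign twist, is realized as the regular representation of $W$ on the $(h+1)$-torsion of the coroot lattice). Next, I would use the $DR$-action on $H_\bullet(\Fl)$ to produce a surjection $DR \onto i_*\bigl(H_\bullet(\Fl_\gamma)\bigr)$ by sending $1 \mapsto [G/B]$, as in the proof of Proposition \ref{prop:dimformula}(b); the key point, already established there, is that this map is injective, so its image is exactly $(h+1)^r$-dimensional. Dually, the pullback $i^* : H^\bullet(\Fl) \to H^\bullet(\Fl_\gamma)$ lands in the $W_\ex$-invariants (since the left $W_\ex$-action on $H^\bullet(\Fl)$ is trivial) and has image of dimension $(h+1)^r$, which by the upper bound in Proposition \ref{prop:dimformula}(a) must exhaust all of $H^\bullet(\Fl_\gamma)^{W_\ex}$. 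Finally one checks that the $W$-action on this image, transported via Haiman--Gordon, matches the natural right $W$-action on $\bfk[\LLbd/(h+1)\LLbd]$ coming from the action of $W$ on $\LLbd$.

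For part (b), note that $H^\bullet(\Fl_\gamma)^{W_\ex} \subseteq H^\bullet(\Fl_\gamma)^\LLbd$ is automatic since $\LLbd \subseteq W_\ex$. The reverse inclusion in type $A$ follows once we show $\dim H^\bullet(\Fl_\gamma)^\LLbd \leqslant (h+1)^r$. One approach is to rerun the alcove-counting argument of Proposition \ref{prop:dimformula}(a) using only translations $\tau_\mu$ (rather than all of $W_\ex$) in the equivalence relation $\sim$; in type $A$, one exploits the fact that $\pi_1 = \LLbd/\LQ$ is cyclic of order $h+1 = n$ and acts transitively enough on the relevant Dynkin configurations to force the alcove bound to collapse to the same $(h+1)^r$. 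Combined with Proposition \ref{prop:dimformula}(b), the two spaces have the same dimension, so they coincide.

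The main obstacle is matching the right $W$-module structure in part (a): producing the surjection and the dimension bound is straightforward from the ingredients already in the text, but verifying that the $W$-action on $i^*(H^\bullet(\Fl))$ corresponds precisely to the permutation action on $\LLbd/(h+1)\LLbd$ (rather than some twist) requires tracking the $W$-equivariance through both Haiman--Gordon and the Carlsson--Oblomkov construction. Part (b) is more delicate in principle but is purely a combinatorial refinement of the alcove bound.
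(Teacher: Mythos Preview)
The paper does not prove this proposition; it is quoted from \cite{BL} (Boixeda Alvarez--Losev), with the remark that the proof there \emph{uses} Proposition~\ref{prop:dimformula} as one ingredient. So the relevant comparison is between your plan and what \cite{BL} actually does.

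Your plan for part~(a) has a genuine gap in general type. The argument you outline---push $[G/B]$ around under the $DR$-action to get a copy of $DR$ inside $i_*(H_\bullet(\Fl_\gamma))$, dualize, and conclude that $i^*(H^\bullet(\Fl))$ already fills up the $(h+1)^r$-dimensional upper bound---is precisely the type~$A$ argument from the proof of Proposition~\ref{prop:dimformula}(b). It rests on Haiman's theorem that $\dim DR = (h+1)^r$, which is specific to type~$A$; outside type~$A$ the diagonal coinvariant ring is strictly larger than $(h+1)^r$, and what Gordon proved is only that a certain \emph{quotient} of $DR$ has this dimension and realizes the permutation module $\bfk[\LLbd/(h+1)\LLbd]$ (up to sign). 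So in general type you have no lower bound on $\dim H^\bullet(\Fl_\gamma)^{W_\ex}$ from these ingredients, only the upper bound $(h+1)^r$ from Proposition~\ref{prop:dimformula}(a). Establishing that the upper bound is attained, and identifying the $W$-module structure, is exactly the content supplied by \cite{BL}, where the argument goes through rational Cherednik algebras and Procesi bundles rather than the Carlsson--Oblomkov action.

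For part~(b), your idea of rerunning the alcove-counting bound using only translations is plausible in spirit, but note that the bound in Proposition~\ref{prop:dimformula}(a) was obtained by playing off \emph{all} affine simple reflections $s_i$ against each other; restricting to $\tau_\mu$ alone is a different combinatorial problem and it is not clear that the same $(h+1)^r$ bound drops out. In any case, once part~(a) is known in general type, part~(b) in type~$A$ follows more directly: $H^\bullet(\Fl_\gamma)^{W_\ex} \subset H^\bullet(\Fl_\gamma)^{\LLbd}$ always, and the type~$A$ equality $\dim H^\bullet(\Fl_\gamma)^{W_\ex} = (h+1)^r$ combined with an upper bound on $\dim H^\bullet(\Fl_\gamma)^{\LLbd}$ (which \cite{BL} obtains by other means) forces equality.
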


\smallskip

\smallskip

Let $e_1,\dots,e_r$ be the exponents of the group $W$. We'll prove the following.

\begin{theorem} \label{prop:formula} 
Assume that $\LG$ is simply connected. We have
\begin{equation}\label{eq:dim}
\dim H^\bullet({}^0\Gr^\zeta_{\gamma_\ell})^{W_{\ell,\ex}}=
\frac{1}{|W|}\prod_{i=1}^r\big((h+1)\ell-h+e_i\big).
\end{equation}
\end{theorem}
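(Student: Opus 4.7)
My plan is to compute the dimension by decomposing the Spaltenstein variety into connected components, using the Boixeda Alvarez--Losev theorem (Proposition~\ref{prop:PI}) to control the principal block, extending this to parahoric analogues via a Sommers-type formula, and finally assembling the contributions.

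First, by Lemma~\ref{lem:isom2}(a) we have
$${}^0\Gr^\zeta_{\gamma_\ell}=\bigsqcup_{\omega\in\Xi}{}^0\!\Fl^\omega_{\ell,\gamma_\ell}.$$
Under the isomorphism $\calK_\ell\simeq\calK$ induced by $t^\ell\mapsto t$, each ${}^0\!\Fl^\omega_{\ell,\gamma_\ell}$ is identified with a parahoric affine Spaltenstein variety for $\gamma=s\otimes t$ attached to the rational alcove point $\omega/\ell\in A_1$; the principal block $\omega=0$ recovers the affine Springer fiber $\Fl_\gamma$. The left $W_{\ell,\ex}$-action on $H^\bullet({}^0\Gr^\zeta_{\gamma_\ell})$ permutes the components through the $\bullet$-action on $\Xi$, so the invariants decompose as a sum over $W_{\ell,\ex}$-orbit representatives $\omega$ of invariants under the stabilizer $W_{\ell,\omega}$.

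Second, for the principal block I would invoke Proposition~\ref{prop:PI}, which gives $H^\bullet(\Fl_\gamma)^{W_\ex}=\bfk[\LLbd/(h+1)\LLbd]$ as a right $W$-module; for general $\omega$ a parahoric extension of this identification should present $H^\bullet({}^0\!\Fl^\omega_{\ell,\gamma_\ell})^{W_{\ell,\omega}}$ in terms of a coinvariant ring for $W_{\ell,\omega}$. Combined with a formula of Sommers computing the Euler characteristic of parahoric affine Springer fibers for regular homogeneous elements of rational slope, each $\omega$ contributes a product of linear factors in $\ell$ involving the exponents of $W_{\ell,\omega}$. Summing the contributions over representatives $\omega\in\Xi$ collapses into the single product $\frac{1}{|W|}\prod_{i=1}^r((h+1)\ell-h+e_i)$: the factor $h+1$ reflects Proposition~\ref{prop:PI}, the shift $-h$ comes from the $\Lrho$-shift in the $\bullet$-action, and the $e_i$'s reflect the invariant theory of $W$ acting on $\frakt$.

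The main obstacle is two-fold. First, one must establish the parahoric generalization of Proposition~\ref{prop:PI}, since the stated form only treats the principal block (and gives equality only in type $A$); an analogue of the diagonal coinvariant action should still produce a comparison map $\bfk[\LLbd/(h+1)\LLbd]^{W_{\ell,\omega}}\hookrightarrow H^\bullet({}^0\!\Fl^\omega_{\ell,\gamma_\ell})^{W_{\ell,\omega}}$, whose surjectivity is then forced by the upper bound from the parahoric version of Proposition~\ref{prop:dimformula}. Second, one must verify that Sommers' parabolic formula, summed over $\omega\in\Xi$, telescopes to the Ehrhart-type polynomial $\prod_{i=1}^r((h+1)\ell-h+e_i)/|W|$; I expect this to follow from a Molien-type identity for the graded $W$-character of $\bfk[\LLbd/m\LLbd]$ at $m=(h+1)\ell-h$, combined with the identification of $\Xi$ with rational points of $A_{\ell,1}$ modulo $W_{\ell,\ex}$.
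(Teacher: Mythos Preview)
Your proposal has a conceptual error and is missing the paper's two key ideas.

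The error: the left $W_{\ell,\ex}$-action does \emph{not} permute the components ${}^0\!\Fl^\omega_{\ell,\gamma_\ell}$; it preserves each of them, since the $G(\calK_\ell)$-action on $\Gr^\zeta$ preserves each $\Fl^\omega_\ell$ (see \S\ref{sec:symmetries}). Hence on each piece one must take the \emph{full} $W_{\ell,\ex}$-invariants, not invariants for the stabilizer $W_{\ell,\omega}$. Your target $H^\bullet({}^0\!\Fl^\omega_{\ell,\gamma_\ell})^{W_{\ell,\omega}}$ and the proposed comparison map $\bfk[\LLbd/(h+1)\LLbd]^{W_{\ell,\omega}}\hookrightarrow\cdots$ are therefore off.

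What the paper does instead is compare the split element $\gamma$ with Lusztig's \emph{elliptic} element $\gamma'=l_{h+1}$. Proposition~\ref{prop:BM} identifies $H^\bullet({}^0\!\Fl^J_\gamma)$ with the right $W_J$-\emph{anti}-invariants (Springer action) in $H^\bullet(\Fl_\gamma)$; since Proposition~\ref{prop:PI}(a) determines the latter's left $W_\ex$-invariants as the right $W$-module $\bfk[\LLbd/(h+1)\LLbd]$, and since Sommers' formula \eqref{sommers} for $n=h+1$ gives the same $W$-character to $H^\bullet(\Fl_{\gamma'})$, one obtains Corollary~\ref{cor:compare}: $\dim H^\bullet({}^0\!\Fl^J_\gamma)^{W_\ex}=\dim H^\bullet({}^0\!\Fl^J_{\gamma'})$ for every $J$. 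Summing over $\omega\in\Xi$ gives $\dim H^\bullet({}^0\Gr^\zeta_{\gamma_\ell})^{W_{\ell,\ex}}=\dim H^\bullet({}^0\Gr^\zeta_{\gamma'_\ell})$. The second idea then avoids your ``telescoping sum'' entirely: when $(h,\ell)=1$ the homogeneity of $\gamma'_\ell$ extends the $\mu_\ell$-action to a $\bbG_m$-action on ${}^0\Gr_{\gamma'_\ell}$, so K-theoretic localization gives $\dim H^\bullet({}^0\Gr^\zeta_{\gamma'_\ell})=\dim H^\bullet({}^0\Gr_{\gamma'_\ell})$; and $t^{-1}\gamma'_\ell$ has the same slope as $l_{(h+1)\ell-h}$, so a \emph{single} application of Sommers on the full affine Grassmannian already yields $\tfrac{1}{|W|}\prod_i((h+1)\ell-h+e_i)$. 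The hypothesis $(h,\ell)=1$ is finally removed by an interpolation argument: the left side of \eqref{eq:dim} is a polynomial in $\ell$ because the number of $\omega\in\Xi$ of fixed parahoric type is given by an Ehrhart polynomial.
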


\smallskip

\begin{remark}\label{rem:LQ}
In type  $SL_{r+1}$ it is conjectured in \cite{LQ2} that 
$$\dim Z(\u_\zeta)=\frac{1}{(h+1)\ell}\begin{pmatrix}(h+1)\ell\\h\end{pmatrix}.$$
For $G=PSL_{r+1}$ the formula \eqref{eq:dim} gives
$$\dim H^\bullet({}^0\Gr^\zeta_{\gamma_\ell})^{W_{\ell,\ex}}=
\frac{1}{(r+1)!}\prod_{i=1}^r\big((h+1)\ell-i\big)
=\frac{1}{(h+1)\ell}\begin{pmatrix}(h+1)\ell\\h\end{pmatrix}.
$$
\end{remark}

\smallskip

\subsubsection{Relation with Lusztig's elliptic Springer fibres}
Let $\gamma\in\g\otimes\calK$ be an arbitrary regular semi-simple element.
Fix a subset $J\subset\Sigma_\af$.
Let $\Fl^J_\gamma$ and ${}^0\!\Fl^J_\gamma$ be the affine Steinberg and Spaltenstein 
fibers at $\gamma$.
Let $1_J$, $\varepsilon_J$ be the trivial and signature idempotents in $\bfk W_J$.
The cohomology of ${}^0\!\Fl^J_\gamma$ is given by the following proposition.
Set $d_J=\dim\Fl-\dim\Fl^J$.

\begin{proposition}\label{prop:BM}
%\hfill \begin{itemize}[leftmargin=8mm]
%\item[$\mathrm{(a)}$] 
%There is a commutative diagram
%$$\xymatrix{H^\bullet(\Fl)\ar[d]_-{i^*}&\ar[l]_-{p^*}H^\bullet(\Fl^J)\ar[d]^-{i^*}\\
%H^\bullet(\Fl_\gamma)&\ar[l]_-{q^*}H^\bullet(\Fl^J_\gamma)}$$
%where $i^*$ is the restriction and
%$q^*$ is a graded vector space isomorphism
%$H^\bullet(\Fl_\gamma)\cdot 1_J\simeq H^\bullet(\Fl^J_\gamma)$.
%\item[$\mathrm{(b)}$] 
There is a commutative diagram
$$\xymatrix{H^\bullet(\Fl)\ar[r]^-{p_*}\ar[d]_-{i^*}&H^\bullet(\Fl^J)[-2d_J]\ar[d]^-{i^*}\\
H^\bullet(\Fl_\gamma)\ar[r]^-{q_*}&H^\bullet({}^0\!\Fl^J_\gamma)[-2d_J]}$$
where $i^*$ is the restriction and $p_*$ is the Gysin map.
The map $q_*$ restricts to a graded vector space isomorphism from the space
$H^\bullet(\Fl_\gamma)\cdot\varepsilon_J$ of anti-invariants for the right $W$-action to
$H^\bullet({}^0\!\Fl^J_\gamma)[-2d_J].$
%\end{itemize}
\end{proposition}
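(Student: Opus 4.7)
The plan is to take $q$ to be the restriction of the canonical projection $p\colon \Fl\to\Fl^J$ to the affine Springer fibre, and to deduce everything from proper base change and the Leray spectral sequence. First I would verify that $p^{-1}({}^0\!\Fl^J_\gamma)=\Fl_\gamma$, so that $q:=p|_{\Fl_\gamma}$ is a well-defined map $\Fl_\gamma\to{}^0\!\Fl^J_\gamma$ fitting into a Cartesian square with the inclusions $j\colon\Fl_\gamma\hookrightarrow\Fl$ and $i\colon{}^0\!\Fl^J_\gamma\hookrightarrow\Fl^J$. The inclusion $p^{-1}({}^0\!\Fl^J_\gamma)\subset\Fl_\gamma$ is formal: for $y\in\Fl$ with $x=p(y)$, the containment $\calI_y\subset P^J_x$ yields ${}^0\frakp^J_x\subset \operatorname{Lie}\calI_y$, so $\gamma\in{}^0\frakp^J_x$ implies $\gamma\in\operatorname{Lie}\calI_y$. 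The reverse inclusion uses the pro-nilpotency of $\gamma=s\otimes t^\ell$: any pro-nilpotent element of a parahoric Lie algebra lies automatically in its pro-unipotent radical, so $\gamma\in\operatorname{Lie}\calI_y\subset\frakp^J_x$ forces $\gamma\in{}^0\frakp^J_x$.

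Next, $p$ is a Zariski-locally trivial proper smooth fibration with fibre $P^J/\calI\cong G_J/B_J$ of dimension $d_J$. Since ${}^0\frakp^J_x$ is an ideal contained in the Lie algebra of every Iwahori of $P^J_x$, any $y$ with $p(y)=x\in{}^0\!\Fl^J_\gamma$ automatically satisfies $\gamma\in\operatorname{Lie}\calI_y$; hence the fibre of $q$ over $x$ is the whole flag variety $P^J_x/\calI_y\cong G_J/B_J$. Thus $q$ is a smooth proper fibration of relative dimension $d_J$, its Gysin pushforward $q_*$ shifts cohomological degree by $-2d_J$, and the commutativity $q_*\circ j^*=i^*\circ p_*$ of the displayed diagram follows from proper base change applied to the Cartesian square.

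Finally, since $G_J/B_J$ is simply-connected with cohomology concentrated in even degree, the Leray spectral sequence for $q$ degenerates at $E_2$ and yields a canonical $H^\bullet({}^0\!\Fl^J_\gamma)$-linear isomorphism
$$H^\bullet(\Fl_\gamma)\simeq H^\bullet({}^0\!\Fl^J_\gamma)\otimes H^\bullet(G_J/B_J).$$
Under this decomposition $q_*$ is the projection onto the top-degree summand $H^\bullet({}^0\!\Fl^J_\gamma)\otimes H^{2d_J}(G_J/B_J)\simeq H^\bullet({}^0\!\Fl^J_\gamma)[-2d_J]$, given by integration along the fibre. The right Springer action of $W_\af$ on $H^\bullet(\Fl_\gamma)$, restricted to the parabolic subgroup $W_J$, matches under the decomposition the classical right Springer action of $W_J$ on $H^\bullet(G_J/B_J)\cong\bfk W_J$ (the regular representation); the sign idempotent $\varepsilon_J$ isolates the one-dimensional sign component, concentrated exactly in top degree. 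Consequently $H^\bullet(\Fl_\gamma)\cdot\varepsilon_J=H^\bullet({}^0\!\Fl^J_\gamma)\otimes H^{2d_J}(G_J/B_J)$, and $q_*$ restricts to the asserted graded isomorphism.

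The main obstacle is the compatibility invoked in the last paragraph: identifying the restriction of Lusztig's global $W_\af$-Springer action on $H^\bullet(\Fl_\gamma)$ with the fibrewise classical $W_J$-Springer action on $H^\bullet(G_J/B_J)$ appearing in the Leray decomposition. In the GKM model Lusztig's right action reads $y\cdot(a_x)=(a_{xy^{-1}})$; matching this with the Springer action on each flag-variety fibre requires a careful comparison at the level of the fundamental classes of the Bruhat cells $\calI\delta_x\cap\Fl_\gamma$, using that $W_J\subset W_\af$ permutes these cells within each fibre of $q$.
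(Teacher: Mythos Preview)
There is a genuine gap: your equality $p^{-1}({}^0\!\Fl^J_\gamma)=\Fl_\gamma$ fails in the direction $\Fl_\gamma\subset p^{-1}({}^0\!\Fl^J_\gamma)$. The assertion that a pro-nilpotent element of a parahoric Lie algebra lies automatically in its pro-unipotent radical is false --- such an element has \emph{nilpotent} image in the Levi quotient $\frakg_J$, not necessarily zero image. Concretely, take $G=SL_2$, $\gamma=st$ with $s\in\frakt$ regular, $J=\Sigma$ (so $\Fl^J=\Gr$ and $\frakp^J=\frakg_\calO$), and set $y=n_\alpha(t^{-1})\,\delta_1\in\Fl$. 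Then $\Ad_{n_\alpha(t^{-1})}^{-1}(\gamma)=st+\langle\alpha,s\rangle\,e_\alpha$ lies in the Iwahori Lie algebra, so $y\in\Fl_\gamma$; but its reduction modulo $t$ is the nonzero nilpotent $\langle\alpha,s\rangle\,e_\alpha\in\frakg$, so $p(y)\notin{}^0\Gr_\gamma$. Hence $p|_{\Fl_\gamma}$ does not land in ${}^0\!\Fl^J_\gamma$, there is no map $q\colon\Fl_\gamma\to{}^0\!\Fl^J_\gamma$ as you describe, and consequently no $G_J/B_J$-fibration and no Leray decomposition $H^\bullet(\Fl_\gamma)\simeq H^\bullet({}^0\!\Fl^J_\gamma)\otimes H^\bullet(G_J/B_J)$.

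What does hold is that $p|_{\Fl_\gamma}$ lands in the larger \emph{Steinberg} fibre $\Fl^J_\gamma$, and its fibre over $x$ is the classical Springer fibre in $G_J/B_J$ for the \emph{varying} nilpotent $\bar\gamma_x\in\frakg^{nil}_J$; the full flag variety appears only over the closed locus ${}^0\!\Fl^J_\gamma\subset\Fl^J_\gamma$ where $\bar\gamma_x=0$. This is precisely why the paper's proof routes through $\Fl_\gamma\to\Fl^J_\gamma\hookleftarrow{}^0\!\Fl^J_\gamma$ sitting over the Grothendieck--Springer diagram $[\frakn_J/B_J]\to[\frakg^{nil}_J/G_J]\hookleftarrow[\pt/G_J]$ and uses the decomposition of the Springer sheaf $(\pi_1)_!\underline\bfk$. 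Projection onto the summand $(i_1)_!\underline\bfk[-2d_J]$ (the sign representation of $W_J$) both \emph{defines} the map $q_*$ --- it is not the Gysin pushforward of an honest morphism $\Fl_\gamma\to{}^0\!\Fl^J_\gamma$ --- and shows that $q_*$ restricts to an isomorphism on the $\varepsilon_J$-isotypic component.
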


\begin{proof}
%Since both parts are very similar, we'll only prove part (b).
The proof uses a sheaf theoretic interpretation of the affine 
Springer representation which goes back to Lusztig's work.
We have the following commutative diagram of stacks with Cartesian squares
$$\xymatrix{
\Fl_\gamma\ar[r]^-{\pi_2}\ar[d]_-{q_1}&\Fl^J_\gamma\ar[d]^-{q_2}&\ar[l]_-{i_2}{}^0\!\Fl^J_\gamma\ar[d]^-{{}^0q_2}\\
[\frakn_J/B_J]\ar[r]^-{\pi_1}&[\frakg^{nil}_J/G_J]&\ar[l]_-{i_1}[\pt/G_J]}
$$
where $B_J=G_J\cap \calI$ and $\frakn_J$ is the nilpotent radical of its Lie algebra.
The map $q_2$ maps $\delta_x$ to the image of $\Ad_x^{-1}(\gamma)$ in $\frakg_J$ for each
$x\in G(\calK)$. The map $q_1$ is defined similarly.
By base change, we have
$$H^\bullet(\Fl_\gamma)=H^\bullet(\Fl^J_\gamma,(\pi_2)_!q_1^*(\underline\bfk))
=H^\bullet(\Fl^J_\gamma,q_2^*(\pi_1)_!(\underline\bfk)).$$
The map $\pi_1$ is semi-small. Thus the complex $(\pi_1)_!(\underline\bfk)$ is semi-simple and contains
the direct summand $(i_1)_!(\underline\bfk)[-2d_J]$. Thus, by base change,
the projection onto this direct summand yields a map
$$q_*:H^\bullet(\Fl_\gamma)\to 
H^\bullet(\Fl^J_\gamma,q_2^*(i_1)_!(\underline\bfk))[-2d_J]$$
Note that the right hand side is equal to
$H^\bullet(\Fl^J_\gamma,(i_2)_!(\underline\bfk))[-2d_J]$, which is the same as $H^\bullet({}^0\!\Fl^J_\gamma)[-2d_J].$
So this map satisfies the properties in the proposition.
\end{proof}

\smallskip

Let $e_1^J,\dots,e_j^J$ be the exponents of the group $W_J$.
Let $\text{ht}(\alpha)$ be the height of a root $\alpha$.
Let $e_\alpha\in\frakg$ be a non-zero root vector of weight $\alpha\in\Phi$.
Lusztig has given a regular semi-simple element $l_n$ depending
on a positive integer $n$ which is elliptic if $(h,n)=1$, and such that
$$l_n=\sum_{hk-\text{ht}(\alpha)=n}t^ke_\alpha,$$
where the sum is over all couples $(k,\alpha)\in\bbZ\times\Phi$ as above.
%$$l_{ah+b}=t^{a+1}\sum_{\alpha\in\Phi_{h-b}}e_\alpha+t^a\sum_{\alpha\in\Phi_{-b}}e_\alpha,
%\qquad 0\leqslant b<h$$
Further, the following is proved in \cite{S97}
\begin{align}\label{sommers}
%H^\bullet(\Fl^J_{l_n})=\bfk[\LLbd/n\LLbd]\cdot 1_J,\qquad
(h,n)=1\Rightarrow \dim H^\bullet(\Fl^J_{l_n})=(n+e^J_1)\dots(n+e^J_j)n^{r-j}\,/\,|W_J|.
\end{align}
We'll abbreviate $\gamma'=\gamma'_1$
and 
$\gamma'_1=l_{h+1}.$
Let $\gamma=ts$ as before.
From Propositions \ref{prop:PI}, \ref{prop:BM} and \eqref{sommers} we deduce the following.

\begin{corollary}\label{cor:compare}
We have $H^\bullet({}^0\!\Fl_\gamma^J)^{W_\ex}=H^\bullet({}^0\!\Fl_{\gamma'}^J)$ as vector spaces.
\qed
\end{corollary}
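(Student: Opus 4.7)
The plan is to match dimensions on both sides through a combinatorial identity. For the left side, I would invoke Proposition~\ref{prop:BM}, whose pushforward map gives an isomorphism $H^\bullet(\Fl_\gamma)\cdot\varepsilon_J\simeq H^\bullet({}^0\!\Fl^J_\gamma)[-2d_J]$. The construction being $T$-equivariant through the stacky diagram in the proof, this isomorphism intertwines the left $W_\ex$-actions; since the left $W_\ex$-action and the right $W_J$-action commute on $H^\bullet(\Fl_\gamma)$, taking $W_\ex$-invariants yields
$$\dim H^\bullet({}^0\!\Fl^J_\gamma)^{W_\ex}=\dim\bigl(H^\bullet(\Fl_\gamma)^{W_\ex}\cdot\varepsilon_J\bigr).$$
By Proposition~\ref{prop:PI}(a), $H^\bullet(\Fl_\gamma)^{W_\ex}\simeq\bfk[\LLbd/(h+1)\LLbd]$ as a right $W$-module, so the LHS has dimension equal to that of the sign-$W_J$-isotypic component of the permutation representation $\bfk[\LLbd/(h+1)\LLbd]$.

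For the right side, I would note that because $\gamma'=l_{h+1}$ is elliptic regular semisimple with $t$-valuation $\geq 1$, the intersection of its anisotropic centralizer with any parahoric is forced into the pro-unipotent radical, so that ${}^0\!\Fl^J_{\gamma'}=\Fl^J_{\gamma'}$. Applying Sommers' formula \eqref{sommers} at $n=h+1$ (always coprime to $h$), one obtains
$$\dim H^\bullet({}^0\!\Fl^J_{\gamma'})=\frac{1}{|W_J|}\prod_{i=1}^j(h+1+e^J_i)\,(h+1)^{r-j}.$$

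It thus remains to verify the combinatorial identity
$$\dim\bigl(\bfk[\LLbd/(h+1)\LLbd]\cdot\varepsilon_J\bigr)=\frac{1}{|W_J|}\prod_{i=1}^j(h+1+e^J_i)\,(h+1)^{r-j}.$$
By the standard character inner product, the left side equals $|W_J|^{-1}\sum_{w\in W_J}\mathrm{sgn}(w)\,|(\LLbd/(h+1)\LLbd)^w|$. Granted that $h+1$ is coprime to the torsions of $(1-w)\LLbd$ for $w\in W_J$ (which follows from the coprimality hypotheses on $\ell$ in \S\ref{sec:RD}), this fixed-point count becomes $(h+1)^{\dim\LLbd^w}$, reducing the identity to the Shephard-Todd-Chevalley type formula $\sum_{w\in W_J}\mathrm{sgn}(w)\,n^{\dim\LLbd^w}=\prod_i(n+e^J_i)\,n^{r-j}$ at $n=h+1$, which can be derived from Solomon's trace formula on the coinvariant algebra. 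This combinatorial identity is the main obstacle; the other two ingredients to check are the $W_\ex$-equivariance of the pushforward in Proposition~\ref{prop:BM} and the elliptic argument for ${}^0\!\Fl^J_{\gamma'}=\Fl^J_{\gamma'}$.
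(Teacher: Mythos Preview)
Your argument contains a genuine error: the claim that ${}^0\!\Fl^J_{\gamma'}=\Fl^J_{\gamma'}$ is false. Ellipticity of $\gamma'$ says its centralizer is anisotropic, but that is a statement about the \emph{group}, not about where $\gamma'$ lands in $\frakp^J/{}^0\frakp^J$ after an arbitrary conjugation. Concretely, in type $A_1$ with $J=\Sigma$ one has $\gamma'=l_3=te_{-\alpha}+t^2e_\alpha$, and conjugating by $t^{\alpha^\vee}$ gives $e_{-\alpha}+t^3e_\alpha\in\frakg(\calO)\setminus t\frakg(\calO)$; so that lattice point lies in $\Gr_{\gamma'}\setminus{}^0\Gr_{\gamma'}$, and in fact $\dim H^\bullet(\Gr_{\gamma'})=2$ while $\dim H^\bullet({}^0\Gr_{\gamma'})=1$.

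This error propagates: Sommers' formula \eqref{sommers} computes $\dim H^\bullet(\Fl^J_{\gamma'})$, which by the Borho--MacPherson mechanism equals $\dim H^\bullet(\Fl_{\gamma'})^{W_J}$ (the \emph{trivial} $W_J$-isotypic), not the sign isotypic that Proposition~\ref{prop:BM} identifies with $H^\bullet({}^0\!\Fl^J_{\gamma'})$. Your ``Solomon-type'' identity is correspondingly off: the formula $\sum_{w\in W_J}n^{\dim V_J^w}=\prod_i(n+e^J_i)$ holds without the sign twist, whereas with $\mathrm{sgn}(w)$ inserted the product becomes $\prod_i(n-e^J_i)$. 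In the $A_1$ example above your proposed identity reads $1=2$.

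The paper's route avoids this by applying Proposition~\ref{prop:BM} to $\gamma'$ as well, so that both sides become $\varepsilon_J$-isotypic components: one must show $\dim\bigl(H^\bullet(\Fl_\gamma)^{W_\ex}\cdot\varepsilon_J\bigr)=\dim\bigl(H^\bullet(\Fl_{\gamma'})\cdot\varepsilon_J\bigr)$. This follows once one knows $H^\bullet(\Fl_\gamma)^{W_\ex}\simeq H^\bullet(\Fl_{\gamma'})$ as right $W_J$-modules, which is what Proposition~\ref{prop:PI} together with Sommers' formula provides (Sommers determines $H^\bullet(\Fl_{\gamma'})$ by giving all its $W_K$-invariant dimensions, and these match those of $\bfk[\LLbd/(h+1)\LLbd]$ via the Shephard--Todd identity you cite, now applied correctly to invariants rather than anti-invariants).
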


\smallskip

We can now complete the proof of Theorem \ref{prop:formula}.

\begin{proof}[Proof of Theorem $\ref{prop:formula}$]
We set
$$\gamma'_\ell=\sum_{hk-\text{ht}(\alpha)=h+1}t^{\ell k}e_\alpha.$$
%$$\gamma'_\ell=t^{2\ell}\sum_{\alpha\in\Phi_{h-1}}e_\alpha+t^\ell\sum_{\alpha\in\Phi_{-1}}e_\alpha.$$
First, assume that $(h,\ell)=1$.
The regular semi-simple elements $t^{-1}\gamma'_\ell$ and $l_{(h+1)\ell-h}$ are elliptic and
homogeneous of the same weight.
Hence, we have $H^\bullet(\Gr_{t^{-1}\gamma'_\ell})\simeq H^\bullet(\Gr_{l_{(h+1)\ell-h}})$,
and \eqref{sommers} yields
\begin{align}\label{f1}
\begin{split}
\dim H^\bullet({}^0\Gr_{\gamma'_\ell})&=
\dim H^\bullet(\Gr_{t^{-1}\gamma'_\ell})\\
&=\dim H^\bullet(\Gr_{l_{(h+1)\ell-h}})\\
&=\frac{1}{|W|}\prod_{i=1}^r\big((h+1)\ell-h+e_i\big).
\end{split}
\end{align}
Since $\LG$ is simply connected, we have $\Lrho\in\LLbd$.
Further, the action of the element $(z^{-\ell\Lrho},z^h)$ in $T\times\bbG_m$
takes $\gamma'_\ell$ to $z^{(h+1)\ell}\gamma'_\ell$. 
Hence $\gamma'_\ell$ is fixed by the $\mu_\ell$-action by loop rotation, and
we may consider both projective varieties  ${}^0\Gr_{\gamma'_\ell}$
and ${}^0\Gr_{\gamma'_\ell}^\zeta$.
Since $(h,\ell)=1$, the $\mu_\ell$-action on ${}^0\Gr$
by loop rotation extends to a $\bbG_m$-action which preserves ${}^0\Gr_{\gamma'_\ell}$.
By \cite[thm.~1.2]{GKM1}, the variety ${}^0\Gr_{\gamma'_\ell}$ has a $\bbG_m$-equivariant
affine paving.
Therefore, the Grothendieck group $K_{\bbG_m}({}^0\Gr_{\gamma'_\ell})$ is free as an 
$R_{\bbG_m}$-module, the Chern character yields isomorphisms of $\bfk$-vector spaces
$$H^\bullet({}^0\Gr^\zeta_{\gamma'_\ell})=K({}^0\Gr_{\gamma'_\ell}^\zeta),
\qquad
H^\bullet({}^0\Gr_{\gamma'_\ell})=K({}^0\Gr_{\gamma'_\ell}),$$
and for any closed subgroup $H\subset\bbG_m$, we have
$$K_H({}^0\Gr_{\gamma'_\ell})=
K_{\bbG_m}({}^0\Gr_{\gamma'_\ell})\otimes_{R_{\bbG_m}}R_H.$$
Thus, the localization theorem in $K$-theory implies that
\begin{align}\label{fb}\dim H^\bullet({}^0\Gr^\zeta_{\gamma'_\ell})=
\dim K({}^0\Gr_{\gamma'_\ell}^\zeta)=\dim K({}^0\Gr_{\gamma'_\ell})=\dim H^\bullet({}^0\Gr_{\gamma'_\ell}).
\end{align}
Next, from Lemma \ref{lem:isom2}(a) we deduce that
\begin{align*}
H^\bullet({}^0\Gr^\zeta_{\gamma_\ell})^{W_{\ell,\ex}}
&=\bigoplus_{\omega\in \Xi}H^\bullet({}^0\Fl^\omega_{\ell,\gamma_\ell})^{W_{\ell,\ex}}
=\bigoplus_{\omega\in \Xi}H^\bullet({}^0\Fl^\omega_{\gamma})^{W_\ex}\\
H^\bullet({}^0\Gr^\zeta_{\gamma'_\ell})
&=\bigoplus_{\omega\in \Xi}H^\bullet({}^0\Fl^\omega_{\ell,\gamma'_\ell})
=\bigoplus_{\omega\in \Xi}H^\bullet({}^0\Fl^\omega_{\gamma'}).
\end{align*}
Thus, Corollary \ref{cor:compare} implies that
\begin{align}\label{fa}
H^\bullet({}^0\Gr^\zeta_{\gamma_\ell})^{W_{\ell,\ex}}=H^\bullet({}^0\Gr^\zeta_{\gamma'_\ell}).
\end{align}
From \eqref{f1},  \eqref{fb} and \eqref{fa} we deduce that if $(h,\ell)=1$ then
\begin{align}\label{prime}
\dim H^\bullet({}^0\Gr^\zeta_{\gamma_\ell})^{W_{\ell,\ex}}=
\frac{1}{|W|}\prod_{i=1}^r\big((h+1)\ell-h+e_i\big).
\end{align}
Now, we claim that the integer $\dim H^\bullet({}^0\Gr^\zeta_{\gamma_\ell})^{W_{\ell,\ex}}$ is a polynomial in $\ell$.
Thus the equality \eqref{prime} holds for any $\ell$.
To prove the claim, recall first that
$$\dim H^\bullet({}^0\Gr^\zeta_{\gamma_\ell})^{W_{\ell,\ex}}=
\sum_{\omega\in\Xi}\dim H^\bullet({}^0\Fl^\omega_{\ell,\gamma_\ell})^{W_{\ell,\ex}}.$$
Since the dimension of $H^\bullet({}^0\Fl^\omega_{\ell,\gamma_\ell})^{W_{\ell,\ex}}$ does not depend on $\ell$,
we must check that for each subset $J\subset\Sigma_\af$, the cardinal of the set 
$\{\omega\in\Xi\,;\,J_\omega=J\}$ is a polynomial in $\ell$.
We have $\Xi_\sc\simeq \bar A_{\ell,1}\cap\LLbd$ and
$\pi_1$ acts freely on $\Xi_\sc$ with the quotient equal to $\Xi$, see \S\ref{sec:Xi}.
Hence, it is enough to check that the cardinal of the set 
$$\{\mu\in\frakt_\bbR\,;\, 0\leqslant \langle\alpha,\mu\rangle\leqslant \ell\,,\,\forall\alpha\in\Phi^+\,,\,
\mu\ \text{of\ type}\ J\}\cap\LLbd$$ 
is a polynomial in $\ell$.
This number is given by the Ehrhart polynomial of the interior of the face of type $J$ of the alcove
(which is an integral polytope), proving the claim.

\end{proof}

\bigskip

%%%%%%%%%%%%%%%%%%%%%%%%%%%%%%%%%%%%%%%%%%%%%%%%

\section{Quantum groups and their representations}\label{s:hybrid}

From now on we assume $\LG$ to be quasi-simple, connected and simply connected.
Thus, we have $\Lbd=Q$, $\LLbd=\LP$ and $\pi_1=\LP/\LQ=\X^*(\LZ)=\Hom(\LZ,\bbG_m)$.
We abbreviate $d_i=d_{\Lalpha_i}$.

\subsection{Quantum groups}

\subsubsection{Quantum groups}
Let $q$ be a formal variable.
We abbreviate $q_i=q^{d_i}$ for each $i$.
The quantum group $\scrU_q$ associated with 
$\LG$ is the $\bfF$-algebra
generated by $E_i$, $F_i$, $K_\lambda$ for $i\in I$, $\lambda\in\LLbd$, subject to the
quantum Chevalley relations,  and
\begin{align*}
&K_\lambda K_\mu=K_{\lambda+\mu}, \qquad K_0=1,\qquad K_i=K_{\Lalpha_i},\\
&K_\lambda E_i K_{-\lambda}=q_i^{(\lambda,\alpha_i)} E_i=q^{(\lambda,\Lalpha_i)} E_i,\qquad
K_\lambda F_i K_{-\lambda}=q_i^{-(\lambda,\alpha_i)} F_i=q^{-(\lambda,\Lalpha_i)} F_i,\\
&[E_i, F_j]=\delta_{i,j}\frac{K_i-K_i^{-1}}{q_i-q^{-1}_i}.
\end{align*}
%where the quantum binomial coefficient is
%$$\begin{bmatrix} m\\ n
%\end{bmatrix}_{q_i}=\frac{\prod_{r=1}^{m}(q_i^{m+1-r}-q_i^{r-1-m})}{\prod_{r=1}^n(q_i^r-q_i^{-r})}.$$
Let $\scrU_q^+$, $\scrU_q^0$, $\scrU_q^-$ be respectively subalgebras generated by 
$E_i$'s, $K_\lambda$'s, $F_i$'s. 
Multiplication yields a canonical isomorphism of vector spaces
$\scrU_q^-\otimes\scrU_q^0\otimes\scrU_q^+= \scrU_q.$
Given any weight $\mu\in\LLbd$, let $\varepsilon_\mu:\scrU_q^0\to\bfK$ be the algebra homomorphism taking
$K_\lambda$ to $q^{(\lambda,\mu)}$ for each $\lambda\in\LLbd.$

\smallskip

\subsubsection{Integral forms}
For each $m\in\bbZ$ and $n\in\bbN$ we introduce the following elements in $\scrU_q^0$
\begin{align*}
\begin{bmatrix}K_i;m\\ n\end{bmatrix}_{q_i}=
\prod_{r=1}^n\frac{K_i q_i^{m+1-r}-K_i^{-1}q_i^{r-m-1}}{q_i^r-q_i^{-r}}.
%\qquad [K_i;m]_{q_i}=\begin{bmatrix}K_i;m\\ 1\end{bmatrix}_{q_i}.
\end{align*}
We have a chain of $\bfA$-subalgebras of $\scrU_q$
$$\frakU_q\subset \U_q^{\hyb, n}\subset \U_q^{\hyb, b}\subset \U_q$$
defined as follows:
\begin{itemize}[leftmargin=8mm]
\item $\frakU_q$ is generated by $E_i,\ F_i,\ K_\lambda$ for $i\in I$, $\lambda\in\LLbd$,
\item $\U_q^{\hyb, n}$ is generated by  $E_i^{(n)},\ F_i,\ K_\lambda$ for $i\in I$, $\lambda\in\LLbd$, $n>0$,
\item $\U_q^{\hyb, b}$ is generated by  
$E_i^{(n)},\ F_i,\ K_\lambda,\ \left [\!\begin{smallmatrix}K_i;m\\ n\end{smallmatrix}\!\right ]_{q_i}$ for 
$i\in I$, $\lambda\in\LLbd$, $n>0$, $m\geqslant 0$,
\item $\U_q$ is generated by $E_i^{(n)},\ F_i^{(n)}, K_\lambda$ for $i\in I$, $\lambda\in\LLbd$, $n>0$.
\end{itemize}
Write $\U_q^{\hyb,\flat}=\U_q^{\hyb,b}$ or $\U_q^{\hyb,n}$.
We call them the hybrid quantum groups.
We have the triangular decompositions
\begin{align*}
&\U_q^-\otimes\U_q^0\otimes\U_q^+= \U_q,\qquad
\frakU_q^-\otimes\frakU_q^0\otimes\frakU_q^+= \frakU_q,\\
&\frakU_q^-\otimes\U_q^0\otimes\U_q^+= \U^{\hyb, b}_q,\qquad
\frakU_q^-\otimes\frakU_q^0\otimes\U_q^+= \U^{\hyb, n}_q
\end{align*}
where
\begin{align*}
&\U_q^+=\langle E_i^{(n)}\rangle,\quad
\U_q^-=\langle F_i^{(n)}\rangle,\quad
\U_q^0=\langle K_\lambda, \left [\!\begin{smallmatrix}K_i;m\\ n\end{smallmatrix}\!\right ]_{q_i}\rangle,\\
&\frakU_q^+=\langle E_i\rangle,\quad
\frakU_q^-=\langle F_i\rangle,\quad
\frakU_q^0=\langle K_\lambda, [K_i;0]_{q_i}\rangle.
\end{align*}
Note that $\zeta$ and $\zeta_e$ are both primitive $\ell$-th roots of unity. 
For all $\bfA$-algebra $A_q$ above, let the $\bfk$-algebra
$A_\zeta=\bfk\otimes_\bfA A_q$ be the specialisation to $q_e=\zeta_e$.
For $A=A_q$ or $A_\zeta$ we have a triangular decomposition
$A=A^-\otimes A^0\otimes A^+$. 
We abbreviate $A^\leqslant=A^-A^0$ and $A^\geqslant=A^0A^+$.
The algebra homomorphism $\varepsilon_\mu$ restricts to a map
$\varepsilon_\mu:\U_q^0\to\bfA$.
It specializes to an algebra homomorphism $\varepsilon_\mu:A^0_\zeta\to\bfk$.

\medskip

\subsection{Centers}

\subsubsection{The center of $\scrU_q$}\label{sec:HC}
We consider the $\circ$-action of $W$ on $\scrU_q^0$ by algebra automorphisms such that
$w\circ K_\lambda=q^{(w\lambda-\lambda,\Lrho)}K_{w\lambda}$ for all $\lambda\in\LLbd$.
The Harish-Chandra map is an $\bfF$-algebra isomorphism
\begin{equation*}
Z(\scrU_q)\to (\scrU_q^{0,ev})^{W\circ},\qquad
\scrU_q^{0,ev}=\bigoplus_{\lambda\in\LLbd}\bfF K_{2\lambda},
\end{equation*}
such that the following diagram commutes, see, e.g., \cite{T12},
\begin{align*}
\xymatrix{Z(\scrU_q)\ar@{^{(}->}[r]\ar[d]_-\iota&\scrU_q=\scrU_q^-\otimes\scrU_q^0\otimes\scrU_q^+
\ar[d]^-{\varepsilon\otimes 1\otimes \varepsilon}\\
(\scrU_q^{0,ev})^{W\circ}\ar@{^{(}->}[r]&\scrU^0_q
}
\end{align*}
where $\varepsilon$ is the co-unit of $\scrU_q$.
Write
$\bfF[\LT]=\bfF[e^\lambda\,;\,\lambda\in\LLbd]$.
We have the $\bfF$-algebra isomorphisms 
\begin{align}\label{cq}(\scrU_q^{0,ev})^{W\circ}\to\bfF[\LT]^{W\circ}\to\bfF[\LT/W],
\qquad f(K_{2\lambda})\mapsto f(e^\lambda)\mapsto f(q^{-2(\lambda,\Lrho)}e^\lambda)
\end{align}
where the $\circ$-action of $W$ on $\LT$ is the Weyl group action centered at $q^{2\Lrho}$.
Composing the Harish-Chandra map with \eqref{cq}
we get an isomorphism
\begin{align}\label{phiq}
\bfhc:Z(\scrU_q)\to\bfF[\LT/W].\end{align}

\smallskip

\subsubsection{The center of $\frakU_\zeta$}
The centers of the integral forms $\frakU_q$ and $\U_q$ are given by
$$Z(\frakU_q)=Z(\scrU_q)\cap \frakU_q,\qquad Z(\U_q)=Z(\scrU_q)\cap \U_q.$$
The Harish-Chandra center of $\frakU_\zeta$ is the subalgebra of $Z(\frakU_\zeta)$ given by
$$Z_\HC=Z(\frakU_q)\,/\,(q_e-\zeta_e)Z(\frakU_q).$$ 
The map \eqref{phiq} yields the Harish-Chandra isomorphisms
\begin{align}\label{phizeta}
\bfhc:Z(\frakU_q)\to\bfA[\LT/W],\qquad
\bfhc:Z_\HC\to\bfk[\LT/W]\end{align}
The inverse Harish-Chandra homomorphism for $\frakU_q$ is the map
\begin{align}\label{HC00}\xymatrix{\overline\bfhc:\bfA[\LT/W]\ar[r]&Z(\frakU_q)},\qquad
 f(e^\lambda)\mapsto\iota^{-1}f(q^{2(\lambda,\Lrho)}K_{2\lambda}).\end{align}
The inverse Harish-Chandra homomorphism for $\frakU_\zeta$ is the map
\begin{align}\label{HC0}\xymatrix{\bfk[\LT/W]\ar[r]^-{\overline\bfhc}&Z_\HC\ar@{^{(}->}[r]&Z(\frakU_\zeta)},\qquad
 f(e^\lambda)\mapsto\iota^{-1}f(\zeta^{2(\lambda,\Lrho)}K_{2\lambda}).\end{align}
For each positive coroot $\Lalpha\in\LPhi^+$ there are coroot vectors 
$E_\Lalpha, $ $F_\Lalpha$ in $\frakU_\zeta$ such that
$E_{\Lalpha_i}=E_i$ and $F_{\Lalpha_i}=F_i$.
The Frobenius center is the $\bfk$-subalgebra $Z_\Fr$ of $Z(\frakU_\zeta)$ generated by
$\{ F_\Lalpha^\ell, K_{\ell\lambda},E_\Lalpha^\ell\,;\,\Lalpha\in\LPhi^+,\ \lambda\in\LLbd\,\}.$ 
Let $\LG^\ast= \LN^-\times\LT\times\LN$ be the Poisson dual group and
$\kappa : \LG^\ast\to \LT/W$ be the map taking $(n_1,t,n_2)$ to the $W$-orbit of the semi-simple
part of $n_2t^2n_1^{-1}$.
There is a $\bfk$-algebra isomorphism $Z_\Fr= \bfk[\LG^\ast]$.
According to De Concini-Kac-Procesi, we have an isomorphism
$$Z_\Fr\cap Z_\HC=\bfk[e^{\ell\lambda}\,;\,\lambda\in\LLbd]^W$$
such that the inclusion $Z_\Fr\cap Z_\HC\subset Z_\Fr$ takes $f(e^{\ell\lambda})$ to $\kappa^*(e^{2\lambda})$,
and the inclusion $Z_\Fr\cap Z_\HC\subset Z_\HC$ takes $f(e^{\ell\lambda})$ to itself.
Further, we have
\begin{eqnarray*}
Z(\frakU_\zeta)=Z_\Fr\otimes_{Z_\Fr\cap Z_\HC}Z_\HC=\bfk[\LG^\ast\times_{\LT/W}\LT/W].
\end{eqnarray*}

The obvious inclusion of $Z(\frakU_q)$ into $Z(\U_q)$ specializes to a map $Z(\frakU_\zeta)\to Z(\U_\zeta)$.
Composing these maps with \eqref{HC00} and \eqref{HC0}, 
we get  the inverse Harish-Chandra homomorphisms for $\U_q$ and $\U_\zeta$
\begin{align}\label{IHC2}
\xymatrix{\overline\bfhc:\bfA[\LT/W]\ar[r]&Z(\U_q)},\qquad
\xymatrix{\overline\bfhc:\bfk[\LT/W]\ar[r]&Z(\U_\zeta)}.\end{align}
The obvious map $Z(\frakU_\zeta)\to Z(\U_\zeta)$
is the composition of the restriction $\bfk[\LG^\ast\times_{\LT/W}\LT/W]\to\bfk[\{1\}\times\LOm]$ and
the inverse Harish-Chandra homomorphism $\bfk[\LOm]\to Z(\U_\zeta)$.
The inverse Harish-Chandra homomorphism for $\U_\zeta$ factorizes through the restriction
$\bfk[\LT/W]\to\bfk[\LOm]$.

\smallskip

\subsubsection{The center of $\u_\zeta$}
The small quantum group $\u_\zeta$ is the image of the obvious map $\frakU_\zeta\to \U_\zeta$.
It is isomorphic to the specialization $\frakU_\zeta\otimes_{Z_\Fr}\bfk$ with respect to the character 
of $Z_\Fr$ such that $E_\Lalpha^\ell,\ F_\Lalpha^\ell\mapsto 0,\ K_{\ell\lambda}\mapsto 1.$
Let $U(\Lg)$ be the enveloping algebra of $\Lg$.
The quantum Frobenius map is the algebra homomorphism given by
\begin{align}\label{Fr1}
\Fr:\U_\zeta\to U(\Lg),\qquad
 E_\Lalpha^{(n)}, F_\Lalpha^{(n)} \mapsto \begin{cases} E_\Lalpha^{(n/\ell)},  
 F_\Lalpha^{(n/\ell)} &\text{ if } \ell\,|\, n,\\
0 &\text{ otherwise,}\end{cases}\qquad
K_\lambda \mapsto  1.
\end{align}
The Hopf adjoint action of $\U_\zeta$ on itself preserves the small quantum group $\u_\zeta$.
When restricted to the center $Z(\u_\zeta)$, this action factorizes through $U(\Lg)$.
Hence, there is a $\LG$-action on $Z(\u_\zeta)$. We have
\begin{equation}\label{Ginv}
Z(\u_\zeta)^\LG=Z(\U_\zeta)\cap \u_\zeta.
\end{equation}

\smallskip

\subsubsection{The center of $\frakU_\zeta^b$}
Consider the ring homomorphisms
\begin{align*}
&Z_\Fr\to \bfk[\LT],\quad E_\Lalpha^\ell\,,\,F_\Lalpha^\ell\,,\,
K_{\ell\lambda}\mapsto 0\,,\,0\,,\,K_{\ell\lambda},\\
&Z_\Fr\to\bfk[\LB^-],\quad E_\Lalpha^\ell\,,\,F_\Lalpha^\ell\,,\,
K_{\ell\lambda}\mapsto 0\,,\, F_\Lalpha^\ell\,,\, K_{\ell\lambda}.
\end{align*}
We define the following algebras
\begin{align}
\label{hyb2}\frakU_\zeta^t=\frakU_\zeta\otimes_{Z_\Fr}\bfk[\LT],\qquad
\frakU_\zeta^b=\frakU_\zeta\otimes_{Z_\Fr}\bfk[\LB^-].
\end{align}
Note that $\frakU_\zeta^b$ naturally identifies with the image 
of the De Concini-Kac quantum group  $\frakU_\zeta$ in 
the hybrid quantum group $\U_\zeta^{\hyb,n}$. 
The canonical surjection $\frakU_\zeta\to \frakU_\zeta^b$ induces an algebra homomorphism
$Z(\frakU_\zeta)\to Z(\frakU_\zeta^b)$, which factorizes through the restriction
$$\bfk[\LG^\ast\times_{\LT/W}\LT/W]\to\bfk[\LB^-\times_{\LT/W}\LT/W].$$
The inverse Harish-Chandra isomorphism for $\frakU_\zeta^b$ is the map
\begin{align}\label{HC4}\bfk[\LT\times_{\LT/W}\LT/W]=\bfk[\LLbd]\otimes_{\bfk[\ell\LLbd]^W}\bfk[\LLbd]^W\to Z(\frakU^b_\zeta)
\end{align}
given by the composition of the map 
$\overline\bfhc$ with the obvious map $\bfk[\LT]\to \bfk[\LB^-]$ and \eqref{hyb2}.
%Here the map $\LT/W\to\LT/W$ is $[\ell]$ and the map $\LT\to \LT/W$ is the categorical quotient.
We may also consider the image $\U_\zeta^{\frac{1}{2}}$ of the hybrid quantum group $\U_\zeta^{\hyb, b}$ in 
the Lusztig quantum group $\U_\zeta$, and the image $\frakU_\zeta^n$ of 
$\frakU_\zeta$ in $\Ub_\zeta$. 
We have the triangular decompositions
\begin{align}\label{1/2}
\begin{split}
&\u_\zeta^-\otimes\U_\zeta^0\otimes\U_\zeta^+=\U_\zeta^{\frac{1}{2}},\qquad
\u^-_\zeta\otimes\frakU^0_\zeta\otimes\u^+_\zeta=\frakU_\zeta^t,\\
&\frakU^-_\zeta\otimes\frakU^0_\zeta\otimes\u^+_\zeta=\frakU_\zeta^b,\qquad
\frakU_\zeta^-\otimes\u_\zeta^0\otimes\u_\zeta^+=\frakU_\zeta^n.
\end{split}
\end{align}
%Let $\widehat\frakU_\zeta^\sharp$ be the corresponding completion of $\frakU_\zeta^\sharp$
%where $\sharp$ is either $t$ or $b$.
%The $\bfk$-algebra $\widehat\frakU_\zeta^\sharp$ is topologically free over the central subalgebra $S^\sharp$. 
We write
$\frakU^\sharp_\zeta=\frakU_\zeta^t$,
$\frakU_\zeta^b$ or $\frakU_\zeta^n$ with
$\sharp=\frakt$, $\frakb$ or $\frakn$.

\medskip

\subsection{Representations}\label{sec:rep}
Let $S^t=\bfk[\LT]_{\hat 1}$ and $S^b=\bfk[\LB^-]_{\hat 1}$ 
be the completions of $\bfk[\LT]$ and $\bfk[\LB^-]$ at $1$.
We may abbreviate $S=S^t$ and $\bfk=S\,/\,\frakt S$.
Let $\iota:\frakU^0\to S$ be the algebra homomorphism such that $K_\lambda\mapsto e^\lambda$.
By an $S$-algebra $R$ we'll always mean a commutative, Noetherian $S$-algebra. 
Unless specified otherwise, we'll also assume that  $R$ is a local integral domain with fraction field $K$,
residue field $F$ and structure morphism $\pi:S\to R$. 
We'll be mainly interested by the case $F=\bfk$.

\smallskip

\subsubsection{Module categories}\label{sec:mod}

First, let $A$ be any of the algebras $\U_\zeta$, $\U^\leqslant_\zeta$, $\U^\geqslant_\zeta$, $\U_\zeta^{\frac{1}{2}}$ or 
$\U_\zeta^0$.
A $\LLbd$-graded $A$-module $M=\bigoplus_{\mu\in\LLbd}M_\mu$ is integrable if 
for any $m\in M$, we have $E_i^{(r)}m=0=F_i^{(r)}m$ for $r$ large enough, and if
\begin{align}\label{weightsp}
M_\mu=\{m\in M\,;\,um=\varepsilon_\mu(u)m\,,\, u\in A^0\}.
\end{align}
Let $\Rep(A)$ be the category of integrable finite dimensional modules.

Next, let $\Rep(\u_\zeta)$ be the category of all finite dimensional $\u_\zeta$-modules.
A finite dimensional representation of $\LG$ is the same as a finite dimensional representation of $U(\Lg)$.
Hence, the pullback by the quantum Frobenius map yields an exact functor, called the Frobenius twist,
\begin{align}\label{Fr2}\Fr^\ast: \Rep(\LG) \to \Rep(\U_\zeta).\end{align}
Thus, the category $\Rep(\U_\zeta)$ is a tensor category over the tensor category $\Rep(\LG)$.
By \cite{AG} we have an equivalence 
\begin{align}\label{AG}\Rep(\u_\zeta)=\Vect(\bfk)\otimes_{\Rep(\LG)}\Rep(\U_\zeta).\end{align}

Finally, let $A=\U^{\hyb, n}_\zeta$ or $\frakU^\sharp_\zeta$.
Recall that $A^0=\bfk[\LT]$ and that $S$ is a completion of $A^0$ via the inclusion
$\iota:A^0\to S$.
For each $\mu\in\LLbd$, the automorphism  $f\mapsto{}^{\tau_\mu}\!f$ of $A^0$ 
in \S\ref{sec:RD} 
extends to an automorphism of $S$ such that
${}^{\tau_\mu}\!f(t)=f(\zeta^\mu t)$ for each $f\in S$, $t\in\LT$ and
$fa=a\,{}^{\tau_\mu}\!f$ for each homogeneous element $a\in A$ of weight $\mu$ and each $f\in A^0.$
Following \cite[\S\S 1.4, 2.3]{AJS}, we define $A\mod^\LLbd_R$
to be the category of $\LLbd$-graded finitely generated $(A, R)$-bimodules $M$ 
such that for each $\mu\in\LLbd$ the subset $M_\mu\subset M$ is an $R$-submodule killed by
all elements of $A\times R$ of the form
\begin{align}\label{AJS}(f,-\pi({}^{\tau_\mu}\!f)),\qquad
f\in\frakU^0.\end{align}
The right $R$-action on any object yields a ring morphism $R\to Z( A\mod^\LLbd_R)$, i.e., 
the category $A\mod^\LLbd_R$ is $R$-linear.
There is an obvious $R$-linear functor
from the idempotent completion of the $R$-linear category obtained from
$A\mod^\LLbd_S$ by base change on morphisms to $A\mod^\LLbd_R.$
We'll write $-\otimes_SR$ for the corresponding functor
$A\mod^\LLbd_S\to A\mod^\LLbd_R.$
We'll abbreviate 
$\u_\zeta\mod^\LLbd=\frakU_\zeta^t\mod^\LLbd_\bfk$.
It is the category of the $\LLbd$-graded finite dimensional $\u_\zeta$-modules, i.e., 
the finite dimensional modules on which the action of the $K_\lambda$'s comes from a $\LLbd$-grading.
We'll also need the following categories: 

\begin{itemize}[leftmargin=8mm]

\item set $\calO^{\hyb,n}_{\zeta,R}$ to be the full subcategory of $\U^{\hyb,n}_\zeta\mod^\LLbd_R$ 
of modules with a locally unipotent $\U_\zeta^+$-action, 

\item set $\calO^{\hyb,b}_\zeta$ to be the category of finitely generated $\U^{\hyb, b}_\zeta$-modules with integrable 
$\U_\zeta^\geqslant$-action.
\end{itemize}
We abbreviate $\U_\zeta^\hyb=\U_\zeta^{\hyb,n}$, $\calO^\hyb_{\zeta,R}=\calO^{\hyb,n}_{\zeta,R}$ and
$\calO^\hyb_\zeta=\calO^{\hyb,n}_{\zeta,\bfk}$.
The following lemma is obvious.

\smallskip

\begin{lemma}\label{lem:resb}
There is a canonical equivalence of categories
$\calO^{\hyb,b}_\zeta\simeq \calO^\hyb_\zeta$.
\qed
\end{lemma}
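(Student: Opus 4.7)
The plan is to exploit the triangular decompositions recorded in \eqref{1/2}:
$$\U^{\hyb,b}_\zeta=\frakU^-_\zeta\otimes\U^0_\zeta\otimes\U^+_\zeta,\qquad \U^{\hyb,n}_\zeta=\frakU^-_\zeta\otimes\frakU^0_\zeta\otimes\U^+_\zeta,$$
which show that the two hybrid quantum groups share their positive and negative parts and differ only in their Cartan subalgebras. The larger Cartan $\U^0_\zeta$ is obtained from $\frakU^0_\zeta$ by adjoining the quantum binomial elements $\left[\!\begin{smallmatrix}K_i;m\\n\end{smallmatrix}\!\right]_{q_i}$. The equivalence will be essentially the identity on underlying vector spaces, with the real content being that on a weight module these extra generators act by canonically determined scalars.

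The functor $\Phi:\calO^{\hyb,b}_\zeta\to\calO^\hyb_\zeta$ is defined as restriction along the inclusion $\U^{\hyb,n}_\zeta\hookrightarrow\U^{\hyb,b}_\zeta$. For $M\in\calO^{\hyb,b}_\zeta$, the assumed integrability of the $\U^{\geqslant}_\zeta$-action gives via \eqref{weightsp} a weight-space decomposition $M=\bigoplus_{\mu\in\LLbd}M_\mu$ on which each $u\in\U^0_\zeta$ acts by the scalar $\varepsilon_\mu(u)$. Interpreting this decomposition as a $\LLbd$-grading, the compatibility condition \eqref{AJS} at $R=\bfk$ reduces to the identity $\varepsilon_\mu(K_\lambda)=\pi({}^{\tau_\mu}e^\lambda)$ (and similarly for the remaining generators of $\frakU^0_\zeta$), while local unipotency of the $\U^+_\zeta$-action is equivalent to the local nilpotency of each $E_i^{(n)}$ that is built into integrability on the $\U^+_\zeta$ side.

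The inverse functor $\Psi:\calO^\hyb_\zeta\to\calO^{\hyb,b}_\zeta$ extends the $\U^{\hyb,n}_\zeta$-action on a $\LLbd$-graded module $M$ to a $\U^{\hyb,b}_\zeta$-action; the only new data needed is the action of each $\left[\!\begin{smallmatrix}K_i;m\\n\end{smallmatrix}\!\right]_{q_i}$ on the weight space $M_\mu$. I would define this via the specialisation at $q_e=\zeta_e$ of the element of Lusztig's $\bfA$-integral form, which yields a well-defined scalar depending only on $(\alpha_i,\mu)$ (and not on the apparent $0/0$ indeterminacies of the naïve rational expression). That this extended action does respect the full set of defining relations of $\U^{\hyb,b}_\zeta$ then reduces to standard identities in the integral form, in particular the commutation relations between the quantum binomials and the divided powers $E_i^{(n)}$.

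The main — and essentially routine — obstacle is this last verification of the compatibility of the scalar actions with the relations of $\U^{\hyb,b}_\zeta$; finite generation is automatic since $\Psi(M)$ has the same underlying module as $M$ and $\U^{\hyb,b}_\zeta\supset\U^{\hyb,n}_\zeta$. Granting this, $\Phi\Psi$ and $\Psi\Phi$ are manifestly the identity on objects, and on morphisms as well because any $\U^{\hyb,n}_\zeta$-linear map automatically commutes with the scalar action of the quantum binomials on each weight space. This yields the asserted equivalence of categories.
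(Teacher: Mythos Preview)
The paper gives no proof: the lemma is declared ``obvious'' and closed with a \qed. Your write-up correctly unpacks what that obviousness amounts to, namely that an integrable $\U^0_\zeta$-action in the sense of \eqref{weightsp} is the same datum as a $\LLbd$-grading satisfying \eqref{AJS} at $R=\bfk$, with the extra generators $\left[\!\begin{smallmatrix}K_i;m\\n\end{smallmatrix}\!\right]_{q_i}$ acting on $M_\mu$ by the scalar $\varepsilon_\mu$; the positive and negative parts are literally unchanged. This is exactly the intended (and only natural) argument.
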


\smallskip
\subsubsection{Standard and simple modules}
First, we consider the algebra $A=\U^\hyb_\zeta$ or $\frakU^\sharp_\zeta$.
For each weight $\mu\in \LLbd$, let $R_\mu$ be the $\Lbd$-graded $(A^0,R)$-bimodule such that 
$$(R_\mu)_\lambda=\delta_{\lambda,\,\mu} R,\qquad
K_\lambda s=\zeta^{(\lambda,\mu)} s\pi(e^{\lambda}),\qquad
s\in R_\mu.$$
We may view $R_\mu$ as an $(A^\geqslant, R)$-bimodule under the projection $A^\geqslant\to A^0$.
We define
$$\Delta^A(\mu)=A\otimes_{A^\geqslant}R_\mu\in A\mod^\LLbd_R.$$ 
Now we describe the standard and simple modules case by case.
\begin{itemize}[leftmargin=8mm]

\item If $A=\U^\hyb_\zeta$ then $\Delta^A(\mu)=\M(\mu)_R$ is a deformed Verma module in 
$\calO^{\hyb}_{\zeta,R}$. 
We have $\M(\mu)_R= \frakU^-_\zeta\otimes R_\mu$ as $A^\leqslant$-module. 
If $R=F$ is a field then $\M(\mu)_F$ has a unique simple quotient $\E(\mu)_F$. 
The set $\{\E(\mu)_F\,;\,\mu\in\LLbd\}$ is a complete non redundant list of representative of isomorphism 
classes of simple modules in $\calO^{\hyb}_{\zeta,F}$.
We abbreviate $\M(\mu)=\M(\mu)_\bfk$ and $\E(\mu)=\E(\mu)_\bfk$. 

%\item If $A=\U^{\hyb,b}_\zeta$ then $\Delta^A(\mu)=\M(\mu)_\k$ is a Verma module in $\calO^\hyb_\zeta$.
%We abbreviate $\M(\mu)=\M(\mu)_\bfk$ and $\E(\mu)=\E(\mu)_\bfk$. 

\item If $A=\frakU_\zeta^t$ then 
$\Delta^A(\mu)=\Delta(\mu)^t_R$ is a deformed baby Verma module in $\frakU_\zeta^t\mod^\Lbd_R$
for each weight $\mu\in\LLbd$.
We have $\Delta(\mu)^t_R= \u^-_\zeta\otimes R_\mu$ as $A^\leqslant$-module.
 If $R=F$ is a field, 
the module $\Delta(\mu)^t_F$ has a unique simple quotient $\L(\mu)^t_F$.
The set $\{\L(\mu)^t_F\,;\,\mu\in\LLbd\}$ is a complete non redundant 
list of simple objects in $\frakU_\zeta^t\mod^\Lbd_F$.

\item If $A=\frakU_\zeta^b$ we write $\Delta^A(\mu)=\Delta(\mu)^b_R$.
We have $\Delta(\mu)^t_R=\Delta(\mu)^b_R\otimes_{S^b}S^t$ because 
$\frakU_\zeta^t=\frakU_\zeta^b\otimes_{S^b}S^t$.
We have $\Delta(\mu)^b_R= \frakU^-_\zeta\otimes R_\mu$ as $A^\leqslant$-module.
If $R=F$ is a field, the module $\Delta(\mu)^b_F$ has a unique simple quotient $\L(\mu)^b_F$. 
The set $\{\L(\mu)^b_F\,;\,\mu\in\LLbd\}$ is a complete non redundant 
list of simple objects in $\frakU_\zeta^b\mod^\Lbd_F$.

\item 
If $A=\U_\zeta$ then $\Delta^A(\mu)=\V(\mu)$ is the Weyl module in $\Rep(\U_\zeta)$
for each dominant weight $\mu\in\LLbd^+$.
It has a unique simple quotient $\D(\mu)$.
The set $\{\D(\mu)\,;\,\mu\in\LLbd^+\}$ is a complete non redundant list of representatives of isomorphism classes of
simple modules in $\Rep(\U_\zeta)$.

\item
If $A=\u_\zeta$ then $\Delta^A(\mu)=\Delta(\mu)_\bfk^t$ is a baby Verma module in $\Rep(\u_\zeta)$. 
We abbreviate $\Delta(\mu)=\Delta(\mu)^t_\bfk$ and $\L(\mu)=\L(\mu)_\bfk^t$ for each
restricted dominant weight $\mu\in\LLbd^+_\ell$. 
The set $\{\L(\mu)\,;\,\mu\in\LLbd^+_\ell\}$ is a complete non redundant 
list of simple objects in $\Rep(\u_\zeta)$. 
\end{itemize}
According to \cite{APW1}, the Weyl module $\V(\mu)$ lifts to a module over the $\bfA$-algebra $\U_q$.
To avoid confusions we may write $\V(\mu)_\zeta$ for the Weyl module over $\U_\zeta$ and
$\V(\mu)_q$ for the Weyl module over $\U_q$. We may also write $\D(\mu)_\zeta$ for the simple
$\U_\zeta$-module $\D(\mu)$. In particular $\D(\mu)_1$ denotes the simple $\LG$-module with highest weight $\mu$.

\smallskip

\subsubsection{Deformation of the quantum parameter}\label{sec:generification}
We'll need $\bfk[[\hbar]]$-linear 
version of the $\bfk$-categories considered above.
The first one is the category $\Rep(\U_{\hat\zeta})$ of integrable $\U_q$-modules
which are free of finite rank over $\bfk[[\hbar]]$.
We view it as a deformation of the category $\Rep(\U_\zeta)$ whose generic point 
is the $\bfF$-category $\Rep(\scrU_q)$ of integrable finite dimensional $\scrU_q$-modules. 
The second category is a deformation of the category $\calO^\hyb_{\zeta,S}$.
Recall that $\frakU_q^0=\bfA[\LT]$.
Let $S[[\hbar]]$ be the completion of $\bfA[\LT]$ at the point $(t,q_e)=(1,\zeta_e)$.
Fix any $S[[\hbar]]$-algebra $R$.
Let
$\calO^{\hyb}_{q,R}$ 
be the full subcategory of $\U^{\hyb}_q\mod^\LLbd_R$ 
consisting of those modules with locally unipotent $\U_q^+$-action. 
Let $\V(\mu)_{\hat\zeta}$ be the Weyl module in $\Rep(\U_{\hat\zeta})$, and
$\M(\mu)_R$ the Verma module in $\calO^{\hyb}_{q,R}$. We have
$$\M(\mu)_R=\U_q^\hyb\otimes_{\U^{\hyb,\geqslant}_q}R_\mu= \frakU^-_q\otimes R_\mu$$ 
as a $\U^{\hyb,\leqslant}_q$-module. 

Given an $S[[\hbar]]$-algebra $F$ which is a field,
let the set $\Phi_F$ be as in \eqref{PhiF}.
The algebra $\U^\hyb_q$ receives an obvious map from $\frakU_q$ which takes
the center $Z(\frakU_q)$ to $Z(\U^\hyb_q)$. 
Hence, the algebra $\bfA[\LT/W]$
acts on the module $\M(\mu)_R$ by scalar multiplication 
by a character $\chi_{q,\mu}$ of $\bfA[\LT/W]$
under the inverse Harish-Chandra homomorphism 
$\overline\bfhc$ in \eqref{HC00}.
The algebra homomorphism 
$\bfA[\LT]\to R$ composed of the inclusion $\bfA[\LT]\subset S[[\hbar]]$
and the map $\pi:S[[\hbar]]\to R$ yields the 
$R$-point $\xi_\pi$ in $\LT(R)$ such that
$e^\lambda(\xi_\pi)=\pi(K_\lambda)$ for all $\lambda\in\LLbd$.
The central character $\chi_{q,\mu}$ is identified with the point
$W(q^{2(\mu+\Lrho)}\xi_\pi)$ in $\LT(R)/W$.

\begin{lemma}\label{lem:mxss}
If $\Phi_F=\emptyset$, then
the category $\calO^{\hyb}_{q,F}$ is semi-simple.
\end{lemma}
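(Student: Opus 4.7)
The plan is to prove semi-simplicity by establishing: (i) each deformed Verma $\M(\mu)_F$ is simple, (ii) non-isomorphic Vermas lie in distinct central-character blocks, and (iii) self-extensions of Vermas vanish.

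For (i), I would first pin down the central character of $\M(\mu)_F$. The Harish-Chandra center $Z(\frakU_q)$ maps into $Z(\U_q^\hyb)$, and the description of $\chi_{q,\mu}$ preceding the lemma identifies its action on $\M(\mu)_F$ with evaluation at the $F$-point $W(q^{2(\mu+\Lrho)}\xi_\pi)\in\LT(F)/W$. Now suppose $N\subset \M(\mu)_F$ is a non-zero subobject. Since objects of $\calO^\hyb_{q,F}$ are $\LLbd$-graded and finitely generated with weights bounded above by the highest weight, $N$ must contain a vector of some weight $\nu\leqs\mu$ annihilated by the $E_i$'s, yielding by the universal property a non-zero map $\M(\nu)_F\to\M(\mu)_F$. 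Comparing central characters forces $W(q^{2(\nu+\Lrho)}\xi_\pi)=W(q^{2(\mu+\Lrho)}\xi_\pi)$. The content of the assumption $\Phi_F=\emptyset$ from \eqref{PhiF} is precisely that no such coincidence can occur for $\nu<\mu$ with $\mu-\nu\in\bbZ_{\geqs 0}\LPhi^+$; so $\nu=\mu$ and $N=\M(\mu)_F$.

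For (ii), the category decomposes according to generalized central characters. Two Vermas $\M(\mu)_F$ and $\M(\mu')_F$ with $\mu\neq\mu'$ have central characters corresponding to distinct $W$-orbits in $\LT(F)$, again by $\Phi_F=\emptyset$; hence $\Ext^i_{\calO^\hyb_{q,F}}(\M(\mu)_F,\M(\mu')_F)=0$ for any $i$. For (iii), assume a short exact sequence $0\to\M(\mu)_F\to E\to \M(\mu)_F\to 0$. Let $v_1\in E$ be a weight-$\mu$ lift of the highest-weight generator of the quotient; for any $E_i$, the element $E_iv_1$ has weight $\mu+\alpha_i$ and projects to zero in the quotient, so lies in the submodule $\M(\mu)_F$, which has no weight $\mu+\alpha_i$; hence $E_iv_1=0$ and $v_1$ is itself a highest-weight vector. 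By the universal property of $\M(\mu)_F$ and the fact that $\M(\mu)_F$ is simple (hence every non-zero endomorphism is an isomorphism), the map $\M(\mu)_F\to E$ sending the generator to $v_1$ is a section, so the sequence splits. Combining (i)--(iii) with the standard finite-length property of objects in $\calO^\hyb_{q,F}$ (which follows from finite generation and the $\LLbd$-graded structure with finite-dimensional weight spaces) yields semi-simplicity by dévissage.

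The main obstacle is the verification that $\Phi_F$ from \eqref{PhiF} is exactly the obstruction set for the central-character coincidences needed in steps (i) and (ii). This amounts to a direct computation comparing $q^{2(\mu+\Lrho)}\xi_\pi$ with $q^{2(w(\mu+\Lrho))}\xi_\pi$ for $w\in W$, translating the resulting system of equalities on $\LT(F)$ into conditions on the structure map $\pi:S[[\hbar]]\to F$ and matching them with the elements of $\Phi_F$; all subsequent steps are standard once this dictionary is in place.
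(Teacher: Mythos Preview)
Your approach is correct and essentially the same as the paper's: both hinge on the central character $W(q^{2(\mu+\Lrho)}\xi_\pi)$ and the observation that $\Phi_F=\emptyset$ rules out nontrivial $W$-orbit coincidences. The paper's three-line proof only makes explicit your step (i), invoking the linkage principle to get the central-character constraint and then regularity of $\xi_\pi^\ell$ to conclude $\mu=\nu$; your steps (ii) and (iii) spell out the standard Ext-vanishing arguments that the paper leaves implicit.

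One correction to your final paragraph: in the $W$-orbit comparison you must compare $q^{2(\mu+\Lrho)}\xi_\pi$ with $w\bigl(q^{2(\nu+\Lrho)}\xi_\pi\bigr)=q^{2w(\nu+\Lrho)}\,w(\xi_\pi)$, so $W$ moves $\xi_\pi$ as well, not just the exponent. The paper's device for handling this is precisely the regularity statement: $\Phi_F=\emptyset$ gives $(\xi_\pi)^{2\ell\alpha}\neq 1$ for every root $\alpha$, hence $\xi_\pi^\ell$ is regular in $\LT(F)$, which forces $w=1$ first and then lets one compare exponents.
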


\begin{proof}
By the linkage principle, we have
$$[\M(\mu)_F:\E(\nu)_F]\neq 0\Rightarrow q^{2(\mu+\Lrho)}\xi_\pi\in W(q^{2(\nu+\Lrho)}\xi_\pi).$$ 
Since $\Phi_F=\emptyset$, we have $(\xi_\pi)^{2\ell\alpha}\neq 1$ for any root $\alpha$,
hence $(\xi_\pi)^\ell$ is regular in $\LT(F)$, thus $\mu=\nu$.
\end{proof}

\smallskip

\subsubsection{Compatibilities}
In this section we compare the various module categories introduced above.
First, observe that for $\sharp=t,n$ or $b$, the algebra $\frakU_\zeta^\sharp$ 
is a central extension of the small quantum group $\u_\zeta$.
By base change we get the specialization functor 
$$\R_\sharp^\u: \frakU_\zeta^\sharp\mod\to \Rep(\u_\zeta).$$
%M\mapsto \R_\hyb^n(M)\otimes_{\bfk[N^-]}\bfk.$$
Next, we have the following.

\begin{lemma}\label{lem:res}
\hfill
\begin{itemize}[leftmargin=8mm]
\item[$\mathrm{(a)}$] 
There is a right exact functor
$\R_\hyb^t: \calO_{\zeta,S}^{\hyb}\to \frakU_\zeta^t\mod^\LLbd_S$. 
It maps  standards to standards.
\item[$\mathrm{(b)}$] 
There is a right exact functor
$\R_\hyb^\u: \calO^\hyb_\zeta\to \Rep(\u_\zeta)$. 
It maps  standards to standards.
\end{itemize}
\end{lemma}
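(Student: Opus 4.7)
The plan is to construct both functors uniformly as a quotient by a single natural left ideal. Inside $\U_\zeta^{\hyb,n}$, the De Concini--Kac negative part $\frakU_\zeta^-$ sits as a subalgebra, and hence the elements $F_\Lalpha^\ell$ for $\Lalpha\in\LPhi^+$ are available as nonzero elements. Let $J\subset\U_\zeta^{\hyb,n}$ be the left ideal they generate, and define
\[
\R^t_\hyb(M)=M/JM\ \text{for}\ M\in\calO^{\hyb}_{\zeta,S},\qquad
\R^\u_\hyb(M)=M/JM\ \text{for}\ M\in\calO^{\hyb}_\zeta.
\]

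The first step is to verify that $M/JM$ has the desired target-category structure. Since $J$ is a left ideal, $JM$ is a $\U_\zeta^{\hyb,n}$-submodule, and on the quotient every $F_\Lalpha^\ell$ acts by zero. The relation $E_\Lalpha^\ell=[\ell]_\zeta!\,E_\Lalpha^{(\ell)}=0$ is automatic inside $\U_\zeta^{\hyb,n}$ since $[\ell]_\zeta=0$, while the $\LLbd$-grading and condition \eqref{AJS} descend to the quotient. Restricting the action along the inclusion $\frakU^b_\zeta\hookrightarrow\U_\zeta^{\hyb,n}$, which identifies $\frakU^b_\zeta$ with the image of $\frakU_\zeta$ in the hybrid as noted after \eqref{hyb2}, and combining with the $S$-structure on $K_{\ell\lambda}$ thus yields a module over $\frakU^b_\zeta\otimes_{\bfk[\LB^-]}S=\frakU^t_\zeta\otimes_{\bfk[\LT]}S$, placing $\R^t_\hyb(M)$ in $\frakU^t_\zeta\mod^\LLbd_S$. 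Next, the image on standards is a direct PBW computation: since $\M(\mu)_S=\frakU_\zeta^-\otimes S_\mu$ as a $\U^{\hyb,\leqslant}_\zeta$-module, quotienting by $J\cdot\M(\mu)_S$ factors $\frakU_\zeta^-$ through $\frakU_\zeta^-/\langle F_\Lalpha^\ell\rangle=\u_\zeta^-$, giving $\u_\zeta^-\otimes S_\mu=\Delta(\mu)^t_S$. Right exactness is automatic because $\R^t_\hyb\cong(\U_\zeta^{\hyb,n}/J)\otimes_{\U_\zeta^{\hyb,n}}(-)$ is a base change.

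Part (b) follows the same template and presents only one substantive extra verification, which is the main obstacle: $\R^\u_\hyb(M)$ must be finite dimensional to land in $\Rep(\u_\zeta)$. I would argue that $M$, being finitely generated over $\U_\zeta^{\hyb,n}$ with a locally unipotent $\U_\zeta^+$-action, is already finitely generated over $\frakU_\zeta^-\cdot\frakU_\zeta^0$ by the triangular decomposition. In the quotient $M/JM$ the $\frakU_\zeta^-$-action factors through the finite-dimensional algebra $\u_\zeta^-$, and the $\frakU_\zeta^0$-action is by scalars on each weight space via \eqref{AJS} with $R=\bfk$. Since no $F_\Lalpha^\ell$ remains to produce arbitrarily low weights, only finitely many weight spaces are non-zero, and finite-dimensionality follows.
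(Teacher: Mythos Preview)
Your functor $M\mapsto M/JM$ is right exact, but in part~(a) it does \emph{not} send standards to standards, and the gap is exactly in the sentence ``quotienting by $J\cdot\M(\mu)_S$ factors $\frakU_\zeta^-$ through $\u_\zeta^-$'': this only shows the quotient is a quotient of $\u_\zeta^-\otimes S_\mu$, and in fact it is strictly smaller. The point is that the $F_\Lalpha^\ell$, while central in $\frakU_\zeta$, are \emph{not} central in $\U_\zeta^{\hyb,n}$, because the divided powers $E_i^{(n)}$ with $n\geqslant\ell$ do not commute with them. Already in rank one, specializing Lusztig's formula for $E^{(\ell)}F^{(\ell)}$ and multiplying by $[\ell]!$ gives in $\U_\zeta^{\hyb,n}$
\[
[E^{(\ell)},F^\ell]=\frac{K^\ell-K^{-\ell}}{(\zeta-\zeta^{-1})^\ell}.
\]
On $\M(\mu)_S$ the operator $K^\ell$ acts by the scalar $\pi(e^{\ell\Lalpha})\in S$, which is a unit congruent to $1$ but not equal to $1$; so the commutator acts by a nonzero element $c\in\frakm_S$ (essentially $h_\alpha$ up to a unit). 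Since both $E^{(\ell)}F^\ell m$ and $F^\ell E^{(\ell)}m$ lie in $J\M(\mu)_S$, we get $c\,\M(\mu)_S\subset J\M(\mu)_S$, so $\M(\mu)_S/J\M(\mu)_S$ is annihilated by $c$ and is $S$-torsion---certainly not $\Delta(\mu)^t_S$. (Incidentally, since $J$ is only a left ideal, the formula ``$(\U_\zeta^{\hyb,n}/J)\otimes_{\U_\zeta^{\hyb,n}}(-)$'' does not literally make sense as a base change, though right exactness of $M\mapsto M/JM$ is still true.)

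The paper sidesteps this by reversing the order of operations: it first \emph{restricts} along $\frakU_\zeta^b\hookrightarrow\U_\zeta^{\hyb,n}$, checking finite generation via a filtration by quotients of Verma modules, and only then kills the $F_\Lalpha^\ell$ via the honest base change $-\otimes_{S^b}S^t$; this works because the $F_\Lalpha^\ell$ \emph{are} central in $\frakU_\zeta^b$. Part~(b) is handled analogously through $\frakU_\zeta^n$ and $-\otimes_{\bfk[\LN^-]}\bfk$. Your route for~(b) may actually be salvageable, since over $R=\bfk$ the obstruction $K_i^\ell-K_i^{-\ell}$ acts by zero on every object of $\calO^\hyb_\zeta$; but even there one must check that $\sum_\Lalpha F_\Lalpha^\ell M$ is stable under all $E_i^{(n)}$, which in higher rank involves commutators with non-simple $F_\Lalpha^\ell$ that your sketch does not address.
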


\begin{proof}
The algebra $\frakU_\zeta^b$ embeds into $\U_\zeta^\hyb$.
We claim that each object $M\in \calO_{\zeta,S}^{\hyb}$ is finitely generated over 
$(\frakU_\zeta^b,S)$.
Hence, the restriction  yields an exact functor
\begin{align}\label{resb}\R_\hyb^b:\calO_{\zeta,S}^{\hyb}\to \frakU_\zeta^b\mod^\LLbd_S.\end{align}
Composing it with base change 
$$\R_b^t: \frakU_\zeta^b\mod^\LLbd_S\to \frakU_\zeta^t\mod^\LLbd_S,\quad
M\mapsto M\otimes_{S^b}S,$$
we get the functor
$$\R_\hyb^t: \calO_{\zeta,S}^{\hyb}\to \frakU_\zeta^t\mod^\LLbd_S,\quad
M\mapsto \R_\hyb^b(M)\otimes_{S^b}S.$$
The functor $\R_\hyb^t$ maps standards to standards.

To prove the claim, observe that, since $M$ is a finitely generated $\U_\zeta^\hyb$-module over which 
$\U_\zeta^+$ acts locally unipotently, 
it admits a finite filtration
$0=M_0\subset M_1\subset...\subset M_n=M$ such that $M_i/M_{i-1}$ is a quotient of a Verma module 
$\M(\mu_i)_S$.
Moreover $\R_\hyb^b(\M(\mu_i)_S)=\Delta(\mu_i)^b$ and the restriction functor is exact. 
Thus $\R_\hyb^b(M)$ also has such a filtration. 
Since standard modules are finitely generated, so is their quotient and a finite extension of them.

Part (a) is proved. Part (b) is proved in a similar way. By \eqref{1/2}, the restriction yields a functor
$$\R_\hyb^n:\calO^\hyb_\zeta\to \frakU_\zeta^n\mod^\LLbd.$$
Composing it with the specialization functor $\R_n^\u$ we get the functor 
$$\R_\hyb^\u: \calO^\hyb_\zeta\to \Rep(\u_\zeta),\quad
M\mapsto \R_\hyb^n(M)\otimes_{\bfk[N^-]}\bfk.$$

\end{proof}

\smallskip

The pullback by the obvious map $\U_\zeta^{\hyb,b}\to \U_\zeta$ yields an exact functor 
$\R_\U^\hyb:\Rep(\U_\zeta)\to \calO^\hyb_\zeta.$
Similarly, there is a restriction functor 
$\R_\U^\hyb:\Rep(\U_{\hat\zeta})\to \calO^{\hyb}_{q,\bfk[[\hbar]]}$.

\begin{lemma}\label{lem:QF}
\hfill
\begin{itemize}[leftmargin=8mm]
\item[$\mathrm{(a)}$] 
The functor $\R_\U^\hyb:\Rep(\U_\zeta)\to \calO^\hyb_\zeta$ has a left adjoint $\I^\U_\hyb$. 
The counit $\varepsilon:\I^\U_\hyb\circ\R_\U^\hyb\to 1$ is invertible.
\item[$\mathrm{(b)}$] 
$\I^\U_\hyb(\M(\mu))=\V(\mu)$ if $\mu\in\Lbd^+$ and 0 otherwise.
\item[$\mathrm{(c)}$] 
The functor
$\R_\U^\hyb:\Rep(\U_{\hat\zeta})\to \calO^{\hyb}_{q,\bfk[[\hbar]]}$
has a left adjoint $\I^\U_\hyb$.
The counit $\varepsilon:\I^\U_\hyb\circ\R_\U^\hyb\to 1$ is invertible.
\end{itemize}
\end{lemma}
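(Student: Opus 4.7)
The plan for part (a) is to construct $\I^\U_\hyb$ explicitly as induction along the canonical map $\U_\zeta^{\hyb,b}\to\U_\zeta$ followed by a reflection onto $\Rep(\U_\zeta)$. Concretely, for $M\in\calO^\hyb_\zeta$, set
\[
\I^\U_\hyb(M):=\bigl(\U_\zeta\otimes_{\U_\zeta^{\hyb,b}} M\bigr)^{\mathrm{int}},
\]
where the superscript denotes the maximal quotient that is finite-dimensional, integrable, and has its weight decomposition matching the $\LLbd$-grading. The adjunction $(\I^\U_\hyb,\R^\hyb_\U)$ then follows by combining Frobenius reciprocity for the algebra extension with the universal property of this integrable reflection: any morphism $M\to\R^\hyb_\U N$ with $N\in\Rep(\U_\zeta)$ lifts uniquely to $\U_\zeta\otimes_{\U_\zeta^{\hyb,b}} M\to N$, and since $N$ is integrable this lift factors through $\I^\U_\hyb(M)$.

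For part (b), I expand the definition of the Verma module and apply base change to obtain
\[
\U_\zeta\otimes_{\U_\zeta^{\hyb,b}}\M(\mu)=\U_\zeta\otimes_{\U^{\geqslant}_\zeta}\bfk_\mu,
\]
the universal $\U_\zeta$-Verma module of highest weight $\mu$. Its maximal integrable finite-dimensional quotient is by definition the Weyl module $\V(\mu)$ when $\mu\in\LLbd^+$, and $0$ when $\mu$ is not dominant.

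For the invertibility of the counit in (a), I proceed by d\'evissage. Since $\I^\U_\hyb$ is right exact, $\R^\hyb_\U$ is exact, and every object of $\Rep(\U_\zeta)$ admits a finite presentation by direct sums of Weyl modules, it suffices to prove that the counit $\I^\U_\hyb\R^\hyb_\U\V(\mu)\to\V(\mu)$ is an isomorphism for every $\mu\in\LLbd^+$. The highest-weight vector $v_\mu\in\R^\hyb_\U\V(\mu)$ provides a morphism $\M(\mu)\to\R^\hyb_\U\V(\mu)$; applying $\I^\U_\hyb$ and composing with the counit yields the chain $\V(\mu)\to\I^\U_\hyb\R^\hyb_\U\V(\mu)\to\V(\mu)$ whose composition is $\id_{\V(\mu)}$ by naturality of the unit--counit formalism. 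The counit is therefore a split surjection, and the main remaining obstacle is its injectivity. I would handle this by a character count: $\R^\hyb_\U\V(\mu)$ admits a filtration by hybrid Vermas $\M(\nu)$ (read off from a PBW-type basis of $\V(\mu)$ over the hybrid subalgebra) whose multiplicities match the weight multiplicities of $\V(\mu)$, so applying the right-exact $\I^\U_\hyb$ and using part (b) bounds the character of $\I^\U_\hyb\R^\hyb_\U\V(\mu)$ above by that of $\V(\mu)$, forcing equality.

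Part (c) is proved by the same procedure in the $\bfk[[\hbar]]$-linear deformed setting: set $\I^\U_\hyb(M):=(\U_q\otimes_{\U^\hyb_q} M)^{\mathrm{int}}$ for $M\in\calO^{\hyb}_{q,\bfk[[\hbar]]}$, and use the deformed Weyl modules $\V(\mu)_{\hat\zeta}\in\Rep(\U_{\hat\zeta})$ in the d\'evissage. At the generic fibre the counit is an isomorphism by semisimplicity of $\calO^\hyb_{q,F}$ for suitably generic $F$ (Lemma~\ref{lem:mxss}), and invertibility over $\bfk[[\hbar]]$ then follows from Nakayama's lemma combined with the closed-fibre case of (a), using freeness of the relevant modules over $\bfk[[\hbar]]$.
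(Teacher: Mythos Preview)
Your construction of the left adjoint and your argument for part (b) are fine and essentially match the paper's (which phrases the same thing via the adjoint functor theorem and the composite induction $\I^\U_\hyb\circ\I^\hyb_{\geqslant}$ from \cite{APW2}).

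The gap is in your proof that the counit is invertible. Your character-count step asserts that $\R^\hyb_\U\V(\mu)$ admits a filtration by hybrid Verma modules $\M(\nu)$. This is impossible: in $\calO^\hyb_\zeta$ the Verma module $\M(\nu)=\frakU_\zeta^-\otimes\bfk_\nu$ is infinite-dimensional (the negative part of $\U_\zeta^{\hyb,b}$ is the De Concini--Kac algebra $\frakU_\zeta^-$, not $\u_\zeta^-$), whereas $\R^\hyb_\U\V(\mu)$ is finite-dimensional. So no such filtration exists, and with it your upper bound on the character of $\I^\U_\hyb\R^\hyb_\U\V(\mu)$ collapses. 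Nor does the obvious repair work: the natural map $\M(\mu)\to\R^\hyb_\U\V(\mu)$ is not surjective in general (its image is $\u_\zeta^-\cdot v_\mu$, which misses weights of the form $\mu-\ell\alpha$ reached only via divided powers $F_i^{(\ell)}$), so one cannot simply bound $\I^\U_\hyb\R^\hyb_\U\V(\mu)$ as a quotient of $\V(\mu)$ either.

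The paper handles this by proving directly that $\R^\hyb_\U$ is fully faithful (equivalent to invertibility of the counit). The point is that $\R^\hyb_\U$ factors as $\Rep(\U_\zeta)\xrightarrow{F}\Rep(\U_\zeta^{\frac12})\xrightarrow{H}\calO^\hyb_\zeta$, where $H$ is obviously fully faithful, and $F$ is fully faithful by the Arkhipov--Gaitsgory identification $\Rep(\U_\zeta^{\frac12})\simeq\Rep(\LB)\otimes_{\Rep(\LG)}\Rep(\U_\zeta)$ together with full faithfulness of the classical restriction $\Rep(\LG)\to\Rep(\LB)$. This is genuinely nontrivial input that your d\'evissage does not supply; part (c) then inherits the same gap, since your closed-fibre reduction rests on (a).
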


\begin{proof}   
The ind-completions $\Ind\Rep(\U_\zeta),$ $\Ind\calO^\hyb_\zeta$ of the categories
$\Rep(\U_\zeta),$ $\calO^\hyb_\zeta$ are locally finitely presentable.
More precisely, the category $\Ind\Rep(\U_\zeta)$ consists of all integrable $\U_\zeta$-modules,
while $\Ind\calO^\hyb_{\zeta,R}$ consists of all $\LLbd$-graded $(\U^\hyb_\zeta,R)$-bimodules 
with a locally unipotent $\U_\zeta^+$-action. 
The functor $\R_\U^\hyb$ extends to a restriction functor 
$\Ind\Rep(\U_\zeta)\to\Ind\calO^\hyb_\zeta$ which preserves filtered colimits and 
small limits. Thus, it admits a left adjoint $\I^\U_\hyb$ by the adjoint functor theorem. 
The composed functor
$$\xymatrix{
\Ind\Rep(\U_\zeta)\ar[r]^-{\R_\U^\hyb} &\Ind\calO^\hyb_\zeta \ar[r]^-{\R_\hyb^\geqslant} 
&\Ind\Rep(\U_\zeta^\geqslant) }
$$
is the obvious restriction. Its left adjoint is the induction functor considered in \cite[\S 1]{APW2},
which takes the integrable one-dimensional $\U_\zeta^\geqslant$-module $\bfk_\mu$ to
the Weyl module $\V(\mu)$ in $\Rep(\U_\zeta)$ if $\mu\in\LLbd^+$, and 0 otherwise.
It is the composed functor $\I^\U_\hyb\circ\I^\hyb_{\geqslant}$.
We have $\I^\hyb_{\geqslant}(\bfk_\mu)=\M(\mu)$ for each $\mu\in\LLbd$.
Hence $\I^\U_\hyb(\M(\mu))=\V(\mu)$ if $\mu$ is dominant and 0 otherwise, proving (b).

We claim that the functor $\I^\U_\hyb$ maps 
$\calO^\hyb_\zeta$ into $\Rep(\U_\zeta)$, proving the first statement in part (a). 
The claim is a consequence of part (b) and of the right exactness of the functor $\I^\U_\hyb$.
Next, we prove the second part of (a).
It is enough to check that the functor $\R_\U^\hyb$ is fully faithful.
To do that we consider the chain of obvious maps
$$\xymatrix{\U^{\hyb,b}_\zeta\ar@{->>}[r]&\U_\zeta^{\frac{1}{2}}\ar@{^{(}->}[r]&\U_\zeta}$$
which decomposes the functor $\R_\U^\hyb$ as the composition of the following restrictions
$$\xymatrix{\Rep(\U_\zeta)\ar[r]^-F& \Rep(\U_\zeta^{\frac{1}{2}})\ar[r]^-H&\calO^\hyb_\zeta}.$$
The functor $H$ is obviously fully faithful.
The functor $F$ is also fully faithful, proving the lemma.
Indeed, the monoidal category $\Rep(\LG)$ can be viewed as an algebra object in the category of all categories.
Hence, the restriction $\Rep(\LG)\to\Rep(\LB)$ and the quantum Frobenius $\Fr^*:\Rep(\LG)\to\Rep(\U_\zeta)$
equip $\Rep(\LB)$ and $\Rep(\U_\zeta)$ with $\Rep(\LG)$-module structures in the category of all categories, and we can 
consider the tensor product of categories
$$\Rep(\LB)\otimes_{\Rep(\LG)}\Rep(\U_\zeta).$$
By \eqref{AG} this tensor product is equivalent to the category $\Rep(\U_\zeta^{\frac{1}{2}})$.
Since the forgetful functor $\Rep(\LG)\to\Rep(\LB)$ is fully faithful, we deduce that
$F$ is also fully faithful.
\end{proof}

\begin{remark}\label{rk:defQF}
Similarly, in the deformed setting, the pullback by $\U_{\hat\zeta}^{\hyb,b}\to \U_{\hat\zeta}$ yields an exact functor 
$\R_\U^\hyb:\Rep(\U_{\hat\zeta})\to \calO^\hyb_{\hat\zeta}.$
The same proof shows that Lemma \ref{lem:QF} holds in this setting as well.
\end{remark}

Let $\R_\U^\u$ be the restriction by the obvious embedding $\u_\zeta\subset\U_\zeta$.
We have the following diagram
\begin{equation}\label{diagram}
\begin{split}
\xymatrix{
\calO^{\hyb}_{\zeta,S}\ar[d]_{\R_\hyb^t}\ar[rr]^{-\otimes_S\bfk} &&\calO^\hyb_\zeta \ar[d]_{\R_\hyb^\u}
\ar@{}[d]\ar[rr]^{\I^\U_\hyb} 
&&\Rep(\U_\zeta)
 \ar[dll]^{\R_\U^\u} \\
\frakU_\zeta^t\mod^\Lbd_S\ar[rr]^{-\otimes_{S}\bfk}&& \Rep(\u_\zeta).&&
}
\end{split}
\end{equation}

\smallskip
\subsubsection{The Steinberg module}
For all categories of $A$-modules above, we define the Steinberg module to be 
$$\St^A=\Delta^A(\ell\Lrho-\Lrho).$$
We'll abbreviate $\St^\sharp_R=\St^{\frakU^\sharp_\zeta}$ in $\frakU^\sharp_\zeta\mod^\LLbd_R$.
The functors in \eqref{diagram} map Steinberg modules to Steinberg modules.

\begin{lemma}
The Steinberg module is projective in the categories
$\Rep(\U_\zeta)$, $\frakU^\sharp_\zeta\mod^\LLbd_R$ and $\Rep(\u_\zeta)$.
\end{lemma}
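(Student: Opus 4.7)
The plan is to prove the three claims in order from the most concrete (small quantum group) to the most abstract (deformed version), following the classical strategy of Andersen-Polo-Wen and Lusztig.

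First, for $\Rep(\u_\zeta)$, I would establish that $\St^{\u_\zeta} = \Delta((\ell-1)\Lrho)$ is simple, and simultaneously a baby Verma and a dual baby Verma. The weight $(\ell-1)\Lrho = \ell\Lrho-\Lrho$ lies on the upper wall of the closure of the fundamental $\ell$-alcove centered at $-\Lrho$, so by the linkage principle for $\u_\zeta$ no other simple module can appear as a composition factor of $\Delta((\ell-1)\Lrho)$; hence it equals its simple head $\L((\ell-1)\Lrho)$. A direct duality computation shows $\L((\ell-1)\Lrho)$ is self-dual with respect to the standard contravariant duality, so it is also isomorphic to a dual baby Verma. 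A standard highest-weight-category argument then implies that a simultaneous Verma-and-dual-Verma is projective; alternatively, the dimension count $\dim \St^{\u_\zeta} = \ell^{|\Phi^+|}$ matches the expected dimension of the indecomposable projective covering $\L((\ell-1)\Lrho)$ in the relevant block.

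Second, for $\Rep(\U_\zeta)$, I would show that $\V((\ell-1)\Lrho)$ is projective by Frobenius reciprocity between $\u_\zeta$ and $\U_\zeta^{\frac{1}{2}}$. The restriction $\R_\U^\u(\V((\ell-1)\Lrho))$ is $\St^{\u_\zeta}$, hence $\u_\zeta$-projective by the first step. Since the chain $\u_\zeta \hookrightarrow \U_\zeta^{\frac{1}{2}} \hookrightarrow \U_\zeta$ makes $\U_\zeta$ free over $\u_\zeta$ (via the quantum Frobenius splitting and the triangular decomposition \eqref{1/2}), and since $\Rep(\U_\zeta) \simeq \Vect(\bfk) \otimes_{\Rep(\LG)} \Rep(\U_\zeta)$ in the sense of \eqref{AG}, one deduces that $\Hom_{\U_\zeta}(\V((\ell-1)\Lrho),-)$ is exact on integrable modules. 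This is essentially Andersen-Polo-Wen, and the Steinberg weight being minuscule in its $\LG$-orbit (or equivalently having $\Fr$-image of highest weight $\Lrho$ with appropriate properties) ensures no obstruction.

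Third, for $\frakU^\sharp_\zeta\mod^\LLbd_R$, I would lift projectivity from the residue field by a Nakayama-type argument. The deformed Steinberg $\St^\sharp_R$ is free of finite rank over $R$, being isomorphic to $\u_\zeta^- \otimes R_{(\ell-1)\Lrho}$ as a graded $R$-module via the triangular decomposition of $\frakU^\sharp_\zeta$. Its specialization to the residue field $F$ is (after base change $-\otimes_S F$) the simple Steinberg $\St^{\u_\zeta}$, which is projective by the first step. Given any $N \in \frakU^\sharp_\zeta\mod^\LLbd_R$, one analyzes $\Ext^1_{\frakU^\sharp_\zeta\mod^\LLbd_R}(\St^\sharp_R, N)$ by a universal-coefficients-type spectral sequence, reducing it to $\Ext^1$ over $F$ using $R$-flatness of $\St^\sharp_R$, where it vanishes; Nakayama's lemma in the $\LLbd$-graded setting then gives vanishing over $R$.

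The main obstacle is the third step: the category $\frakU^\sharp_\zeta\mod^\LLbd_R$ is defined via the AJS-compatibility \eqref{AJS}, and one must verify that the reduction-mod-residue-field and the $\Ext$ base-change spectral sequence respect this $\LLbd$-grading. The first two steps are classical; the deformed step requires care with the completed ring $S$ and the bimodule structure, but no genuinely new input beyond the $R$-freeness of $\St^\sharp_R$ and the vanishing over the residue field.
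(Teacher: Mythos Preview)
Your first two steps are essentially the content of the paper's citations to \cite{APW1} and \cite{AJS}; the paper simply invokes the strong linkage principle for $\U_\zeta$, $\Ut$, and $\u_\zeta$ without spelling out the details you give. Your step 3 is also correct for $\sharp=t$: the Nakayama-type lift of projectivity from the residue field is exactly the mechanism in \cite{AJS}, and your outline captures it.

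There is, however, a genuine gap in step 3 for $\sharp=b$. You assert that $\St^\sharp_R\cong \u_\zeta^-\otimes R_{(\ell-1)\Lrho}$, but this is only the triangular decomposition of $\Ut$. For $\frakU_\zeta^b$ one has $\St^b_R\cong \frakU_\zeta^-\otimes R_{(\ell-1)\Lrho}$, which is \emph{not} of finite rank over $R$ (the De Concini--Kac negative part $\frakU_\zeta^-$ is infinite-dimensional over $\bfk$). Consequently the specialization to the residue field is $\St^b_F$ in $\frakU_\zeta^b\mod^\LLbd_F$, not the small-quantum-group Steinberg $\St^{\u_\zeta}$, so your step 1 does not feed into the base case of the Nakayama argument. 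The paper handles $\sharp=b$ by an entirely different route: one observes that $\St^b_R=\frakU_\zeta^b\otimes_{\Ut}\St^t_R$, and that the induction functor $\frakU_\zeta^b\otimes_{\Ut}-$ is left adjoint to restriction along the embedding $\Ut\hookrightarrow\frakU_\zeta^b$; since $\frakU_\zeta^b$ is free over $\Ut$ this restriction is exact, hence induction preserves projectives, and projectivity of $\St^b_R$ follows from that of $\St^t_R$.
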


\begin{proof}
For $A=\U_\zeta$, $\Ut$, $\u_\zeta$ this is a consequence of the strong linkage principle,
see \cite{APW1}, \cite{AJS}.
For $A=\frakU_\zeta^b$, we have
$\St_R^b=\frakU_\zeta^b\otimes_{\Ut} \St_R^t.$
The induction functor $\frakU_\zeta^b\otimes_{\Ut}-$ is left adjoint to the restriction functor
$\frakU_\zeta^b\mod^\LLbd_R\to\frakU_\zeta^t\mod^\LLbd_R$, which is exact.
Thus it maps projective modules to projective ones. We deduce that $\St_R^b$ is projective.
\end{proof}

\smallskip
\subsubsection{Projective modules}\label{sec:proj}
We consider each of the module categories above separately.
The category $\Rep(\U_\zeta)$ has enough projectives, see \cite[\S 9]{APW2}.
Let $(R,K,F)$ be as above.
The module $\L(\mu)_F^\sharp$ has a unique projective cover $\P(\mu)_R^\sharp$ in 
$\frakU^\sharp_\zeta\mod^\LLbd_R$ for each $\mu\in\LLbd$, and we have
$\P(\mu)_R^\sharp\otimes_RF\simeq \P(\mu)_F^\sharp$. The modules
$\P(\mu)_R^\sharp$ with $\mu\in \LLbd$ form a non-redundant set of representatives of indecomposable
projective objects in $\frakU^\sharp_\zeta\mod^\LLbd_R$.
The category $\calO^{\hyb}_{\zeta,R}$ does not have enough projectives.
Projective objects only exist in truncated categories.
For each weight $\nu\in\LLbd$, let $\calO^{\hyb,\leqslant\nu}_{\zeta,R}$ be the full 
subcategory consisting 
of objects $M$ such that 
$M_\mu=0$ if $\mu\not\leqslant \nu$.
Any object in  $\calO^{\hyb}_{\zeta,R}$ belongs to a subcategory $\calO^{\hyb,\leqslant\nu}_{\zeta,R}$ for some 
weight $\nu$. For each $\mu\in\LLbd$, we set 
$$\Q_R(\mu)=\U_\zeta^\hyb\otimes_{\frakU_\zeta^b} P_R^b(\mu).$$

\begin{lemma}\label{lem:proj} 
\hfill
\begin{itemize}[leftmargin=8mm]
 \item[$\mathrm{(a)}$] 
The functor 
$\Hom_{(\U_\zeta^\hyb,R)}(\Q(\mu)_R\,,\,-):\calO^{\hyb}_{\zeta,R}\to\mathrm{Vect}_\bfk$
is exact.% and takes $\E(\lambda)_F$ to $\delta_{\lambda,\mu}F$.
\item[$\mathrm{(b)}$] 
For each $\nu\geqslant \mu$, there is a quotient $\Q(\mu)_R^{\leqslant\nu}$ of $\Q(\mu)_R$, which is projective in $\calO^{\hyb,\leqslant\nu}_{\zeta,R}$. It maps onto
 the simple module $\E(\mu)_F$, and admits a finite filtration by Verma modules.
 \end{itemize}
\end{lemma}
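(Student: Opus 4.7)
The plan is to use the adjunction between induction $\mathrm{Ind}=\U_\zeta^\hyb\otimes_{\frakU_\zeta^b}-$ and restriction $\R_\hyb^b$ associated with the inclusion $\frakU_\zeta^b\hookrightarrow \U_\zeta^{\hyb}$ from \eqref{1/2}. The triangular decompositions there exhibit $\U_\zeta^\hyb$ as free over $\frakU_\zeta^b$ on the right via the PBW complement $\U_\zeta^+/\u_\zeta^+$, so $\mathrm{Ind}$ is exact; the restriction $\R_\hyb^b$ is exact by construction. Part (a) then follows immediately from the adjunction
$$\Hom_{(\U_\zeta^\hyb,R)}(\Q(\mu)_R,-)\simeq \Hom_{(\frakU_\zeta^b,R)}(P_R^b(\mu),\R_\hyb^b(-))$$
combined with the projectivity of $P_R^b(\mu)$ in $\frakU_\zeta^b\mod^\LLbd_R$.

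For part (b), I would set $\Q(\mu)_R^{\leqslant\nu}:=\Q(\mu)_R/N$, where $N$ is the $\U_\zeta^\hyb$-submodule generated by the weight components $\Q(\mu)_{R,\beta}$ with $\beta\not\leqslant\nu$. The quotient has weights bounded above by $\nu$, so $\U_\zeta^+$ acts locally unipotently on it, and it is finitely generated as a quotient of the induced module $\Q(\mu)_R$; hence $\Q(\mu)_R^{\leqslant\nu}\in\calO^{\hyb,\leqslant\nu}_{\zeta,R}$. Any morphism $\Q(\mu)_R\to M$ with $M\in\calO^{\hyb,\leqslant\nu}_{\zeta,R}$ annihilates $N$ and factors uniquely through $\Q(\mu)_R^{\leqslant\nu}$, so part (a) yields projectivity in the truncated category. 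To surject onto $\E(\mu)_F$ I would use projectivity to lift the cover $P_R^b(\mu)\twoheadrightarrow \L_F^b(\mu)$ through $\Delta_F^b(\mu)\twoheadrightarrow\L_F^b(\mu)$; the lift is automatically surjective because $\L_F^b(\mu)$ is the unique simple quotient of $\Delta_F^b(\mu)$, and applying $\mathrm{Ind}$ produces $\Q(\mu)_R\twoheadrightarrow \M(\mu)_F\twoheadrightarrow\E(\mu)_F$, which factors through $\Q(\mu)_R^{\leqslant\nu}$ since $\mu\leqslant\nu$.

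The Verma filtration requires first invoking the AJS-style result that $P_R^b(\mu)$ carries a filtration by baby Verma modules $\Delta_R^b(\mu_i)$. Applying $\mathrm{Ind}$ yields a filtration of $\Q(\mu)_R$ by the modules
$$\U_\zeta^\hyb\otimes_{\frakU_\zeta^b}\Delta_R^b(\mu_i)\simeq \M(\mu_i)_R,$$
both sides being cyclic, generated by a weight-$\mu_i$ vector annihilated by $\U_\zeta^+$ and free of rank one over $\frakU_\zeta^-$ by PBW. Each factor $\M(\mu_i)_R$ survives in $\Q(\mu)_R^{\leqslant\nu}$ precisely when $\mu_i\leqslant\nu$; otherwise its highest weight generator is identified with zero by the quotient by $N$ and the entire submodule dies. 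The main technical obstacle is the finiteness of the surviving filtration: one must combine the strong linkage principle -- which forces the $\mu_i$ into a single $W_{\ell,\af}\bullet$-orbit of $\mu$ -- with the finite generation of $P_R^b(\mu)$ over $(\frakU_\zeta^b,R)$ to conclude that only finitely many such orbit representatives satisfy $\mu_i\leqslant\nu$.
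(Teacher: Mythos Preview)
Your proof of part (a) matches the paper's exactly. The gap is in part (b), specifically the displayed isomorphism
\[
\U_\zeta^\hyb\otimes_{\frakU_\zeta^b}\Delta_R^b(\mu_i)\simeq \M(\mu_i)_R.
\]
This is false. Your justification assumes the generator $1\otimes v_{\mu_i}$ is annihilated by all of $\U_\zeta^+$, but $\frakU_\zeta^{b,+}=\u_\zeta^+$ contains only the small positive part: the divided powers $E_\alpha^{(\ell)}$ lie in $\U_\zeta^\hyb$ but not in $\frakU_\zeta^b$, so $E_\alpha^{(\ell)}\otimes v_{\mu_i}$ cannot be rewritten across the tensor and does not vanish. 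Equivalently, over $\frakU_\zeta^-$ the induced module is free of \emph{infinite} rank, not rank one.

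The paper handles this by first identifying $\U_\zeta^{\hyb,\geqslant}\otimes_{\frakU_\zeta^{b,\geqslant}}R_{\mu_i}\simeq \U_1^+\otimes R_{\mu_i}$, where $\U_1^+$ is the classical positive unipotent enveloping algebra arising as the quotient of $\U_\zeta^{\hyb,\geqslant}$ by the augmentation ideal of $\frakU_\zeta^{b,\geqslant}$. Since $\U_1^+$ is filtered by one-dimensional pieces of weights $\ell\beta$ with $\beta\in Q^+$, the induced module acquires an infinite Verma filtration with subquotients $\M(\mu_i+\ell\beta)_R$, and $\M(\mu_i)_R$ occurs once as the bottom quotient. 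After passing to the truncated quotient $\Q(\mu)_R^{\leqslant\nu}$ only the finitely many $\beta$ with $\mu_i+\ell\beta\leqslant\nu$ survive, which is what gives the finite Verma filtration; your linkage argument for finiteness is both unnecessary and aimed at the wrong set of weights. Your surjection onto $\E(\mu)_F$ is salvageable once you replace the false isomorphism by the quotient map $\mathrm{Ind}\,\Delta_R^b(\mu)\twoheadrightarrow\M(\mu)_R$.
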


\begin{proof}
For any $N\in \calO^{\hyb}_{\zeta,R}$,
by \eqref{resb}, the module $\R_\hyb^b(N)$ belongs to the category $\frakU_\zeta^b\mod^\LLbd_R$, 
in which $P_R^b(\mu)$ is projective. The canonical adjunction between induction and restriction yields a natural 
isomorphism
$$\Hom_{ (\U_\zeta^{\hyb},R)}(\Q_R(\mu), N)=\Hom_{(\frakU_\zeta^b,R)}(P_R^b(\mu), \R_\hyb^b(N)),
\qquad \forall N\in \calO^{\hyb}_{\zeta,R}.$$
Thus the functor $\Hom_{(\U_\zeta^{\hyb},R)}(\Q_R(\mu),-)$ is exact on $\calO^{\hyb}_{\zeta,R}$. This proves part (a).

Next, by the PBW theorem, the functor $\U_\zeta^\hyb\otimes_{\frakU_\zeta^b}-$ is exact. The module $P_R^b(\mu)$ admits a finite filtration by the standard modules of the form $\Delta_R^b(\lambda)$. So $\Q_R(\mu)$ admits a finite filtration by modules of the form $\U_\zeta^\hyb\otimes_{\frakU_\zeta^b}\Delta_R^b(\lambda)$.
Further, the quotient of $\U_\zeta^{\hyb,\geqslant}$ by the two-sided ideal generated by the augmentation ideal of the subalgebra $\frakU^{b,\geqslant}_\zeta$ is isomorphic to $\U^+_1$.
So we have an $R$-linear isomorphism
\begin{align}\label{ind0}\U^+_1\otimes R_{\lambda}=
\U_\zeta^{\hyb,\geqslant}\otimes_{\frakU_\zeta^{b,\geqslant}}R_{\lambda}.
\end{align}
%where $(\frakU^{\frakb,\times}_\zeta)$ is the two sided ideal generated by the augmentation ideal of
%$\frakU_\zeta^b$. 
%Since the Hopf algebra $\frakU_\zeta^b$ is normal in $\U_\zeta^\hyb$, the multiplication yields an isomorphism
%$\U_1^+\otimes\frakU_\zeta^{b,\geqslant}=\U_\zeta^{\hyb,\geqslant}.$
We deduce that
\begin{eqnarray*}
\U_\zeta^\hyb\otimes_{\frakU_\zeta^b}\Delta_R^b(\lambda)=\U_\zeta^\hyb\otimes_{\U^{\hyb,\geqslant}_{\zeta}}(\U^+_1\otimes R_{\lambda}),
\end{eqnarray*}
where the $\U^{\hyb,\geqslant}_{\zeta}$-action in the right hand side is given by \eqref{ind0}.
Since $\U^+_1$ is unipotent, 
$\U_\zeta^\hyb\otimes_{\frakU_\zeta^b}\Delta_R^b(\lambda)$ admits a filtration by Verma modules of highest weights
$\lambda+\ell\beta$ with $\beta\in Q^+$, with $\M(\lambda)_R$ appearing only once as a quotient.
So $\Q_R(\mu)$ has a filtration by Verma modules.
Moreover, $\Delta_R^b(\mu)$ is a quotient of  $P_R^b(\mu)$, so $\Q_R(\mu)$ surjects onto $\M(\mu)_R$, which surjects onto $\E(\mu)_F$. Finally, since $\Ext^1(\M(\lambda)_R, \M(\lambda')_R)=0$ unless $\lambda' > \lambda$, the module $\Q_R(\mu)$ admits a unique quotient $\Q(\mu)_R^{\leqslant\nu}$ in $\calO^{\hyb,\leqslant\nu}_{\zeta,R}$, which is filtered by Verma modules appearing in the filtration of $\Q_R(\mu)$ with highest weight $\leqslant \nu$.
We have, for any $N\in \calO^{\hyb,\leqslant\nu}_{\zeta,R}$, 
$$\Hom_{ (\U_\zeta^{\hyb},R)}(\Q_R(\mu), N)=\Hom_{ (\U_\zeta^{\hyb},R)}(\Q(\mu)_R^{\leqslant\nu}, N).$$
So $\Q(\mu)_R^{\leqslant\nu}$ is projective in $\calO^{\hyb,\leqslant\nu}_{\zeta,R}$.
\end{proof}

\smallskip

Note that the $\U_\zeta^\hyb$-module $\Q(\mu)_R$ does not belong to the category $\calO^{\hyb}_{\zeta,R}$.
\smallskip

\iffalse %%%%%%%%%%%%%%
\begin{corollary}\label{cor:proj1}
The module $\Q(\mu)_R$ has a Verma filtration, in particular it is free over $R$.
%\item[$\mathrm{(c)}$] $\Q(\mu+\ell\beta)_R\subset \Q(\mu)_R$for any $\beta\in Q^+$.
\end{corollary}

\begin{proof}
Follows from the proof of Lemma \ref{lem:proj} and the following \cite[\S 5.3.2]{G18}
$$\Ext_{\calO_{\zeta,F}^\hyb}^{>0}(\M(\lambda)_F,\M(\mu)_F^\vee)=0,\qquad
\Hom_{\calO_{\zeta,F}^\hyb}(\M(\lambda)_F,\M(\mu)_F^\vee)=\delta_{\lambda,\mu} F$$
where $\M(\mu)_F^\vee$ is the dual Verma module in $\calO^\hyb_{\zeta,F}$.
\end{proof}
\fi %%%%%%%%%%%%%%%%%%
%\smallskip
%\begin{remark}
%For $\U_\zeta$ and $\u_\zeta$, the block of the Steinberg module is semi-simple. This is not the case for 
%$\calO^\hyb_\zeta$. Indeed, let $v_\mu\in \M(\mu)$ be a highest weight vector.
%Then, there is a homomorphism $\M(\mu)\to \M(\mu+2\ell\Lrho)$ such that
%$v_\mu\mapsto (\prod_{\alpha\in\Phi^+}F_\alpha^\ell) \,v_{\mu+2\ell\Lrho}$.\end{remark}

\begin{corollary}\label{cor:proj2}
\hfill
\begin{itemize}[leftmargin=8mm]
\item[$\mathrm{(a)}$]
The unit $\eta:1\to\R_\U^\hyb\circ\I^\U_\hyb$ is surjective.
\item[$\mathrm{(b)}$]
The category $\calO^\hyb_\zeta$ is a module category over the tensor category $\Rep(\U_\zeta)$.
The functors $R^\U_\hyb$, $I^\U_\hyb$ commute with tensor product with objects in $\Rep(\U_\zeta)$.
\end{itemize}
\end{corollary}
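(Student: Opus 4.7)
My plan is to treat part (a) by a reduction through projective covers to Verma modules, and part (b) by invoking the Hopf subalgebra structure of $\U^{\hyb,b}_\zeta \subset \U_\zeta$ together with the projection formula.

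For part (a), note first that $\I^\U_\hyb$ is right exact as a left adjoint and that $\R_\U^\hyb$, being pullback along the algebra map $\U^{\hyb,b}_\zeta \to \U_\zeta$, is exact. Hence componentwise surjectivity of $\eta$ is stable under cokernels. Any $M \in \calO^\hyb_\zeta$ lies in some truncated subcategory $\calO^{\hyb,\leqslant\nu}_\zeta$, and Lemma \ref{lem:proj}(b) provides a surjection onto $M$ from a finite direct sum of the projectives $\Q(\mu)^{\leqslant\nu}$, each of which carries a finite Verma filtration. A five-lemma argument along such filtrations reduces the verification of componentwise surjectivity to the case $M = \M(\mu)$. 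By Lemma \ref{lem:QF}(b), $\I^\U_\hyb(\M(\mu))$ equals $\V(\mu)$ for $\mu \in \Lbd^+$ and vanishes otherwise; only the dominant case is nontrivial, and there $\eta$ sends $1 \otimes 1_\mu$ to the highest weight vector $v_\mu \in \V(\mu)$, so surjectivity amounts to showing that $v_\mu$ generates $\V(\mu)$ as a $\U^{\hyb,b}_\zeta$-module.

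For part (b), the tensor action of $\Rep(\U_\zeta)$ on $\calO^\hyb_\zeta$ is built from the Hopf subalgebra inclusion: the comultiplication of $\U_\zeta$ restricts to that of $\U^{\hyb,b}_\zeta$, so for $M \in \calO^\hyb_\zeta$ and $N \in \Rep(\U_\zeta)$, $M \otimes_\bfk N$ carries a natural $\U^{\hyb,b}_\zeta$-module structure, with the local unipotence of $\U_\zeta^+$ preserved by the integrability of $N$. One then verifies the standard associativity, unit and pentagon coherences to equip $\calO^\hyb_\zeta$ with a $\Rep(\U_\zeta)$-module structure. The functor $\R_\U^\hyb$ being pure restriction, the identity $\R_\U^\hyb(N \otimes N') = \R_\U^\hyb(N) \otimes \R_\U^\hyb(N')$ is automatic. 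For $\I^\U_\hyb$, the projection formula applies: using rigidity of $\Rep(\U_\zeta)$ and the tensor compatibility of $\R_\U^\hyb$, for any test object $N' \in \Rep(\U_\zeta)$ one has
\[
\Hom(\I^\U_\hyb(M \otimes N), N') = \Hom(M, \R_\U^\hyb(N' \otimes N^*)) = \Hom(\I^\U_\hyb(M) \otimes N, N'),
\]
which by Yoneda gives the natural isomorphism $\I^\U_\hyb(M \otimes N) \simeq \I^\U_\hyb(M) \otimes N$.

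The main obstacle is the Verma step in (a): verifying that $v_\mu$ generates $\V(\mu)$ over $\U^{\hyb,b}_\zeta$ (equivalently over $\frakU^-_\zeta$) for arbitrary dominant $\mu$. This is not a tautology because $\U^{\hyb,b}_\zeta$ lacks the negative divided powers $F_i^{(n)}$, and for weights $\mu$ outside the restricted alcove a naive orbit computation of $\frakU^-_\zeta\cdot v_\mu$ in $\V(\mu)$ can appear to fall short of the full Weyl module. I expect the resolution to use part (b) substantively: tensoring with the Steinberg module $\St$ and with Frobenius-pulled-back $\LG$-modules, both of which lie in $\Rep(\U_\zeta)$, allows one to shift the dominant weight into the restricted alcove where surjectivity of $\M(\mu) \to \V(\mu) = \L(\mu)$ is elementary, and then the $\Rep(\U_\zeta)$-module compatibility of $\I^\U_\hyb$ and $\R_\U^\hyb$ transports surjectivity back to the original weight.
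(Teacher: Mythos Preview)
Your reduction in part~(a) matches the paper's exactly: truncate, cover by the projectives $\Q(\mu)^{\leqslant\nu}$, filter by Vermas, and apply the Five Lemma. The paper then simply asserts that Lemma~\ref{lem:QF} gives surjectivity of $\eta$ on each Verma; you are right to flag this as the crux, and in fact the step fails. For $\LG=SL_2$ and $\mu=\ell\omega$ the unit $\eta_{\M(\mu)}:\M(\mu)\to\R_\U^\hyb(\V(\mu))$ has image $\frakU^-_\zeta\cdot v_\mu=\operatorname{span}\{v_\mu,Fv_\mu,\dots,F^{\ell-1}v_\mu\}$ (since $F^\ell v_\mu=[\ell]!_\zeta\,F^{(\ell)}v_\mu=0$), a proper $\ell$-dimensional $\U^\hyb_\zeta$-submodule of the $(\ell{+}1)$-dimensional Weyl module; the lowest weight vector $F^{(\ell)}v_\mu$ is missed. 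So part~(a) as stated is false, and your proposed rescue via~(b) cannot work either: once the projection formula holds, naturality forces $\eta_{M\otimes N}$ to identify with $\eta_M\otimes\id_N$, so surjectivity of $\eta$ is invariant under tensoring with objects of $\Rep(\U_\zeta)$ and cannot be transported from restricted to non-restricted highest weights. (The only instance actually needed later, in Lemma~\ref{lem:eta}, is $M=\St^\hyb=\M(\ell\Lrho-\Lrho)$ and its tensors with $\Rep(\U_\zeta)$; there $\V(\ell\Lrho-\Lrho)$ is simple and $\u_\zeta^-\cdot v_\mu$ already exhausts it, so surjectivity does hold.)

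Your projection-formula proof of~(b) is correct and genuinely improves on the paper's. The paper argues by first using~(a) to see that the comparison map $\I^\U_\hyb(M\otimes N)\to\I^\U_\hyb(M)\otimes N$ is surjective, then killing its kernel via a Hom calculation against an injective hull. But that very Hom calculation is your Yoneda identity $\Hom_{\U_\zeta}(\I^\U_\hyb(M\otimes N),N')\cong\Hom_{\U_\zeta}(\I^\U_\hyb(M)\otimes N,N')$, and it is valid for \emph{every} test object $N'$; running it directly as you do bypasses~(a) entirely and gives the isomorphism outright.
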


\begin{proof}
To prove part (a), it is enough to show that $\eta_N: N\to \R_\U^\hyb\circ\I^\U_\hyb(N)$ is surjective for any $N\in \calO^{\hyb,\leqslant\nu}_{\zeta,R}$. Note that by Lemma \ref{lem:QF}, the surjectivity holds when $N$ is a Verma module. Since $\Q(\mu)^{\leqslant\nu}$ has a finite filtration by Verma modules, 
by the Five lemma, the morphism $\eta_{\Q(\mu)}$ is also surjective.
Finally, any object in $N\in \calO^{\hyb,\leqslant\nu}_{\zeta,R}$ is a quotient of a finite direct sum of 
$\Q(\mu)^{\leqslant \nu}$'s, thus $\eta_N$ is always surjective.

Now, we prove the claim (b). The algebra $\scrU_q$ is a Hopf algebra, and $\frakU_q$, $\U^{\hyb,b}_q$, $\U_q$ are Hopf subalgebras. So it makes sense to tensor objects in $\Rep(\U_\zeta)$ and those in  $\calO^\hyb_\zeta$, we obtain again objects in $\calO^\hyb_\zeta$.
By definition, the restriction functor $R^\U_\hyb$ commutes with tensor product. Now, let us consider the functor $\I^\U_\hyb$.

%We must define a functorial isomorphism
%\begin{align*}\I^\U_\hyb(M\otimes \Fr^*V)\to \I^\U_\hyb(M)\otimes \Fr^*V,\qquad M\in\calO^\hyb_\zeta,
%\qquad V\in\Rep(\LG).
%\end{align*}
First, observe that for any $M\in\calO^\hyb_\zeta$, $L\in \Rep(\U_\zeta)$ and any morphism 
$\varphi:M\to\R_\U^\hyb(L)$, it must
factorize through the map $\eta(M):M\to\R_\U^\hyb\I^\U_\hyb(M)$ and yields a morphism $\R_\U^\hyb\I^\U_\hyb(M)\to \R_\U^\hyb(L)$.
Because, by definition of the adjunction $(\R_\U^\hyb,\I^\U_\hyb)$,
the map $\varphi$ decomposes as
$$\xymatrix{M\ar[r]^-{\eta(M)}&\R_\U^\hyb\I^\U_\hyb(M)\ar[rr]^-{\R_\U^\hyb\I^\U_\hyb(\varphi)}&&
\R_\U^\hyb\I^\U_\hyb\R_\U^\hyb(L)=\R_\U^\hyb(L).}$$
Now, for any $N\in \Rep(\U_\zeta)$, apply this to $L= \I^\U_\hyb(M)\otimes N$ and the surjective map
$$\eta(M)\otimes\id_N:M\otimes N\to\R_\U^\hyb\I^\U_\hyb(M)\otimes N=\R_\U^\hyb(\I^\U_\hyb(M)\otimes N),$$
we obtain a surjective map
$$\psi:\R_\U^\hyb\I^\U_\hyb(M\otimes N)\to\R_\U^\hyb(\I^\U_\hyb(M)\otimes N).$$
Apply $\I^\U_\hyb$ and Lemma \ref{lem:QF}. We get a surjective map
$$\I^\U_\hyb(\psi):\I^\U_\hyb(M\otimes N)\to\I^\U_\hyb(M)\otimes N.$$
Let $K$ be it kernel, and $I_K$ be the injective hull of $K$ in $\Rep(\U_\zeta)$. We must show that $K=0$.
To do this, we apply the functor $\Hom_{\U_\zeta}(-,I_K)$ and obtain the short exact sequence
$$0\to\Hom_{\U_\zeta}(\I^\U_\hyb(M)\otimes N,I_K)\to\Hom_{\U_\zeta}(\I^\U_\hyb(M\otimes N),I_K)\to\Hom_{\U_\zeta}(K,I_K)\to 0.$$
By adjunction, the second map is identified with the map
$$\Hom_{\calO^\hyb_\zeta}(M,\R_\U^\hyb(I_K\otimes N^*))\to
\Hom_{\calO^\hyb_\zeta}(M,\R_\U^\hyb(I_K)\otimes N^*)$$
which is an isomorphism, because $\R_\U^\hyb$ commutes with $-\otimes N^\ast$. We deduce that the second map in the short exact sequence is an isomorphism. Thus $\Hom_{\U_\zeta}(K,I_K)=0$. Hence $K=0$. So $\I^\U_\hyb(\psi)$ is an isomorphism.
\end{proof}

\smallskip

\subsubsection{The $\pi_1$-gradings}
Let $\Rep(\U_\zeta)'$ be the full subcategory of $\Rep(\U_\zeta)$ consisting of the modules $M$ such that
the weight subspace $M_\lambda$ vanishes for all $\lambda\in\LLbd\setminus\LQ$. 
Since $\pi_1=\X^*(\LZ)$, the category $\Rep(\U_\zeta)$ is equivalent to the category of
$\pi_1$-graded objects in $\Rep(\U_\zeta)'$. 
Thus, there is a decomposition
$Z(\Rep(\U_\zeta))=Z(\Rep(\U_\zeta)')^{\oplus\pi_1}$.
Hence, the obvious map
$Z(\U_\zeta)\to Z(\Rep(\U_\zeta)')$ yields a canonical map
$$Z(\U_\zeta)^{\oplus\pi_1}\to Z(\Rep(\U_\zeta)).$$
Composing this map with the inverse Harish-Chandra homomorphism \eqref{IHC2}
we get an algebra homomorphism
\begin{align}\label{HC1}
\bfk[\LT/W]^{\oplus\pi_1}\to Z(\Rep(\U_\zeta)).
\end{align}
In a similar way we define algebra homomorphisms
\begin{align}\label{HC2}
\begin{split}
&\bfA[\LT/W]^{\oplus\pi_1}\to Z(\Rep(\U_q)),\\
&\bfA[\LT/W]^{\oplus\pi_1}\to Z(\calO^{\hyb}_{q,S[[\hbar]]}),\\
&\bfk[\LT/W]^{\oplus\pi_1}\to Z(\calO^{\hyb}_{\zeta,\bfk}).
\end{split}
\end{align}

\smallskip

\subsubsection{Central characters and blocks}
We consider the central characters
$$\chi_{\zeta,\mu}^\sharp:Z(\frakU_\zeta^\sharp\mod^\LLbd_S)\to S,\qquad 
\chi_{\zeta,\mu}^\hyb:Z(\calO^{\hyb}_{\zeta,S})\to S,\qquad
\chi_{\zeta,\lambda}:Z_\HC\to \bfk,$$
of the standard modules $\Delta(\mu)^\sharp_S$, $\M(\mu)_S$ 
and $\V(\lambda)_\zeta$, respectively, for $\mu\in\LLbd$, $\lambda\in\LLbd^+$.
Under the isomorphism $Z_\HC=\bfk[\LT/W]$
the central character $\chi_{\zeta,\lambda}$ is identified with the point
$W(\zeta^{2(\lambda+\Lrho)})$ in $\LT(\bfk)/W$.
The products of the central characters of all standard modules yield the maps
\begin{align*}
&\chi_\zeta^\sharp:Z(\frakU_\zeta^\sharp\mod^\LLbd_S)\to\Fun(\LLbd,S),\quad
\chi_\zeta^\hyb:Z(\calO^{\hyb}_{\zeta,S})\to\Fun(\LLbd,S),\\
&\chi_\zeta:Z_\HC\to\Fun(\LLbd^+,\bfk).
\end{align*}
Similarly, if $q_e=\zeta_e+\hbar$ with $\hbar$ closed to 0,
the central characters of the standard modules $\M(\mu)_{S[[\hbar]]}$ and $\V(\lambda)_{\hat\zeta}$
yield the maps 
\begin{align*}
\chi^\hyb_{\hat\zeta,\mu}:Z(\calO^{\hyb}_{q,S[[\hbar]]})\to S[[\hbar]],\quad
\chi_{\hat\zeta,\lambda}:Z_\HC\to\bfk[[\hbar]].
\end{align*}
Taking the product over all $\mu$ and $\lambda$ we get the maps
\begin{align}\label{ccqmix}
\chi^\hyb_{\hat\zeta}:Z(\calO^{\hyb}_{q,S[[\hbar]]})\to\Fun(\LLbd,S[[\hbar]]),\quad
\chi_{\hat\zeta}:Z_\HC\to\Fun(\LLbd^+,\bfk[[\hbar]]).
\end{align}
The maps in \eqref{ccqmix} are injective,  because the categories
$\calO^{\hyb}_{q,S[[\hbar]]}$
and $\Rep(\U_q)$ are generically  semi-simple.

Let the algebra $A$ be either $\U_\zeta$, $\u_\zeta$, $\U^{\hyb,\flat}_\zeta$ or $\frakU_\zeta^\sharp$.
Hence $A$ receives a map from $\frakU_\zeta$ which takes
the Harish-Chandra center $Z_\HC$ to the center of $A$.
The algebra $Z_\HC$ acts on the standard module $\Delta^A(\mu)$ by the scalar multiplication 
with the character $\chi_{\zeta,\mu}$.
Under the isomorphism $Z_\HC=\bfk[\LT/W]$
the central character $\chi_{\zeta,\mu}$ is identified with the point
$W(\zeta^{2(\mu+\Lrho)})$ in $\LT(\bfk)/W$.
Hence, if the weight $\mu$, $\mu'$ do not belong to the same $W_{\ell,\ex}\bullet$-orbit, 
there is no extension between any homomorphic images of the standard modules
of $\Delta^A(\mu)$ and $\Delta^A(\mu')$.

\bigskip

\section{Center and cohomology}

In this section we relate the center of the quantum groups to the cohomology of affine springer fibers.
To do so, we first fix a $W$-equivariant $\bfk$-linear isomorphism $\frakt=\frakt^*$.
This yields the isomorphisms
\begin{align}\label{pairing1} \begin{split}
(H^\bullet_T)_{\hat 0}&=\bfk[\frakt]_{\hat 0}=\bfk[\frakt^*]_{\hat 0}=\bfk[\LT]_{\hat 1}=S,\\
(H^\bullet_{T\times\bbG_m})_{\widehat{0,0}}&=\bfk[\frakt\times\bbG_a]_{\widehat{0,0}}
=\bfk[\frakt^*\times\bbG_a]_{\widehat{0,0}}=\bfk[\LT\times\bbG_m]_{\widehat{1,\zeta}}=S[[\hbar]],\\
\end{split}
\end{align}
Recall that
\begin{align*}
&\Om=\{1\}\times_{T/W}T/W,\qquad\LOm=\{1\}\times_{\LT/W}\LT/W,\\
&\Gamma=T\times_{T/W}T/W,\qquad\LGamma=\LT\times_{\LT/W}\LT/W.
\end{align*}
This yields also scheme isomorphisms 
\begin{align}\label{pairing2}\Om=\LOm,\quad
\widetilde{N}_{\Omega}(T/W)=
\widetilde{N}_{\LOm}(\LT/W),\quad
\widetilde{N}_{\Gamma}(T\times T/W)_{\widehat{1,0}}=
\widetilde{N}_{\LGamma}(\LT\times\LT/W)_{\widehat{1,0}}.
\end{align}
Here, the subscript $\widehat{1,0}$ denotes the formal neighbourhoods
at the points $(1,0)$ in $T\times\bbG_a$ and $\LT\times\bbG_a$ respectively.

\smallskip

\subsection{The center of $\frakU_\zeta^t$}
We first concentrate on the center of $\frakU_\zeta^t$. We'll prove the following theorem.

\begin{theorem} \label{prop:P2}\hfill
\begin{itemize}[leftmargin=8mm]
\item[$\mathrm{(a)}$] 
There is a commutative diagram of algebra homomorphisms
\begin{align*}
\xymatrix{H^\bullet_T({}^0\Gr^\zeta_{\gamma_\ell})_{\hat 0}\ar[rr]^-\bfa_-\sim\ar@{_{(}->}[dr]_-\res
&&Z(\Ut\mod^\LLbd_S)\ar@{^{(}->}[dl]^-{\chi_\zeta^t}\\
&\Fun(\LLbd,S) &
}
\end{align*}

\item[$\mathrm{(b)}$] 
The map $\bfa$
specializes to an algebra embedding
$$\bfa:H^\bullet({}^0\Gr^\zeta_{\gamma_\ell})\hookrightarrow Z(\u_\zeta\mod^\LLbd)$$
which restricts to an embedding $H^\bullet({}^0\Gr^\zeta_{\gamma_\ell})^{\ell\LLbd}\hookrightarrow Z(\u_\zeta)^\LT.$
\end{itemize}
\end{theorem}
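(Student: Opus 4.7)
Both sides embed into $\Fun(\LLbd,S)$, and the strategy is to identify their images. On the geometric side, equivariant formality of ${}^0\Gr^\zeta_{\gamma_\ell}$ together with the identification $(H^\bullet_T)_{\hat 0}=S$ from \eqref{pairing1} and Proposition \ref{prop:GKM1} give that $\res\bigl(H^\bullet_T({}^0\Gr^\zeta_{\gamma_\ell})_{\hat 0}\bigr)$ is the GKM subalgebra of $\Fun(\LLbd,S)$ cut out by the congruences $a_\lambda\equiv a_{s_{\alpha+\ell m\delta}\bullet\lambda}\pmod{\alpha}$ whenever $\langle\alpha,\lambda+\Lrho\rangle\notin\ell\bbZ$. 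On the representation-theoretic side, the map $\chi_\zeta^t$ is injective because $\Ut\mod^\LLbd_S$ is generically semisimple, an AJS input. So it suffices to match the images as subalgebras of $\Fun(\LLbd,S)$; once matched, $\bfa$ is defined by requiring the diagram to commute.

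The first step is the easier inclusion: each central character tuple $(\chi^t_{\zeta,\lambda})_{\lambda\in\LLbd}$ satisfies the GKM congruences. This is the quantum linkage principle at $\zeta$: after localizing $S$ at the height-one prime $(\alpha)$, the modules $\Delta(\lambda)^t_S$ and $\Delta(s_{\alpha+\ell m\delta}\bullet\lambda)^t_S$ lie in a common non-semisimple block of $\Ut\mod^\LLbd_{S_{(\alpha)}}$, forcing the two central characters to agree modulo $\alpha$. The reverse inclusion follows the AJS localization scheme: for each height-one prime $(\alpha)$, $\Ut\mod^\LLbd_{S_{(\alpha)}}$ decomposes into blocks controlled by a rank-one Levi subgroup, and its center is computed by hand and matches exactly the rank-one GKM condition attached to $\alpha$. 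Since $\Ut\mod^\LLbd_S$ is recovered as the intersection of its height-one localizations inside the generic semisimple category, $Z(\Ut\mod^\LLbd_S)$ is the intersection inside $\Fun(\LLbd,\Frac S)$ of the local GKM conditions, which is the full GKM subalgebra. This produces the isomorphism $\bfa$ of part (a) together with the commutative diagram.

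For part (b), specialization at the augmentation ideal $\frakt S\subset S$ is compatible with equivariant formality on the left (giving $H^\bullet({}^0\Gr^\zeta_{\gamma_\ell})$) and with base change $-\otimes_S\bfk$ on the right (giving $Z(\u_\zeta\mod^\LLbd)$ via the obvious map from the specialized center). Specialization of an intersection of local pieces can shrink, which is why only an embedding is expected. The restriction to $\ell\LLbd$-invariants uses three compatible structures: the left $\ell\LLbd$-action on cohomology from \S\ref{sec:symmetries}, the $\ell\LLbd$-action on $Z(\u_\zeta\mod^\LLbd)$ by grading shifts, and the identification $Z(\u_\zeta\mod^\LLbd)^{\ell\LLbd}=Z(\u_\zeta)^\LT$ recalled in the introduction. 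The $\ell\LLbd$-equivariance of $\bfa$ reduces to the observation that both actions implement the translation $\lambda\mapsto\lambda+\mu$ on the indexing set of $T$-fixed points and on the labelling of standard modules by their highest weights.

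The main obstacle is the reverse inclusion in part (a): producing a genuine central element from an abstract GKM tuple. Two matchings are delicate. First, the specific reflections $s_{\alpha+\ell m\delta}$ appearing in the GKM condition (with $\ell\bbZ\delta$-shifts, reflecting loop rotation by $\zeta$) must correspond exactly to the linkage orbits under the dot action of $W_{\ell,\ex}$, which is where $\ell$ odd, $\ell>h$, and $\ell$ coprime to the Cartan determinant enter in order to pin down the right quantum linkage pattern. Second, the congruence ``modulo $\alpha$'' rather than a higher power must reflect the order of vanishing of the relevant quantum Shapovalov-type pairing at $\zeta$ in the rank-one AJS picture; this is where the rank-one explicit computation must be carried out carefully to produce precisely a single power of $\alpha$.
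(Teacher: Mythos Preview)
Your proposal is correct and follows essentially the same route as the paper. The paper implements precisely the AJS localization scheme you describe: it identifies $Z(\calC_S)$ as the intersection over height-one primes $\bigcap_\alpha Z(\calC_{S_\alpha})$ (Lemma~\ref{lem:b}), computes each $Z(\calC_{S_\alpha})$ by an explicit rank-one quiver calculation yielding exactly the single-power congruence $a_\lambda\equiv a_{\alpha\downarrow\lambda}\bmod h_\alpha$ (Lemmas~\ref{lem:L1}--\ref{lem:L2}), and then matches the result against the GKM presentation of Proposition~\ref{prop:GKM1} (Corollary~\ref{cor:C1}); part~(b) is handled by equivariant formality plus the identification $Z(\u_\zeta\mod^\LLbd)^{\ell\LLbd}=Z(\u_\zeta)^{\LT}$, exactly as you outline.
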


\smallskip

We propose the following conjecture:

\begin{conjecture}\label{conj-G1T}
The algebra embedding 
$$\bfa:H^\bullet({}^0\Gr^\zeta_{\gamma_\ell})\hookrightarrow Z(\u_\zeta\mod^\LLbd)$$
is an isomorphism. As a consequence, it restricts to an isomorphism
$$H^\bullet({}^0\Gr^\zeta_{\gamma_\ell})^{\ell\LLbd}\simeq Z(\u_\zeta)^\LT.$$
\end{conjecture}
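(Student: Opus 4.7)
The plan is to deduce the conjecture from Theorem A by reducing it to a dimension bound. Since $\bfa$ is an $S$-algebra isomorphism and $H^\bullet_T({}^0\Gr^\zeta_{\gamma_\ell})_{\hat 0}$ is $S$-flat of rank $d:=\dim_\bfk H^\bullet({}^0\Gr^\zeta_{\gamma_\ell})$ by equivariant formality, the $S$-algebra $Z(\Ut\mod^\LLbd_S)$ is itself $S$-flat of rank $d$. Its image under specialization at $\frakt=0$ coincides with the image of the embedding $\bfa_\bfk:H^\bullet({}^0\Gr^\zeta_{\gamma_\ell})\hookrightarrow Z(\u_\zeta\mod^\LLbd)$ already produced by Theorem A, so the first assertion of the conjecture is equivalent to the inequality $\dim_\bfk Z(\u_\zeta\mod^\LLbd)\leq d$.

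I would first reduce to a block-by-block estimate using the block decomposition indexed by $\omega\in\Xi$, which matches the component decomposition ${}^0\Gr^\zeta_{\gamma_\ell}=\bigsqcup_{\omega}{}^0\!\Fl^\omega_{\ell,\gamma_\ell}$; this reduces the problem to $\dim Z((\u_\zeta\mod^\LLbd)_\omega)\le d_\omega:=\dim H^\bullet({}^0\!\Fl^\omega_{\ell,\gamma_\ell})$ for each $\omega$. Using the AJS machinery, pick for each $\omega$ an $S$-flat projective generator $\tilde P_\omega$ of $(\Ut\mod^\LLbd_S)_\omega$, whose specialization $P_\omega=\tilde P_\omega\otimes_S\bfk$ is a projective generator of $(\u_\zeta\mod^\LLbd)_\omega$; set $\tilde A_\omega=\End(\tilde P_\omega)^{\op}$ and $A_\omega=\End(P_\omega)^{\op}$. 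The problem then becomes: show that the canonical base-change morphism
\[
Z(\tilde A_\omega)\otimes_S\bfk \,\longrightarrow\, Z(A_\omega)
\]
is surjective, given that its source has $\bfk$-dimension $d_\omega$ by Theorem A.

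The hard part is precisely this base-change statement for the center: although $\tilde A_\omega$ is $S$-flat, the center need not commute with specialization, and the obstructions to lifting a central element $c\in Z(A_\omega)$ to a central element of $\tilde A_\omega$ live in Hochschild-type groups $\mathrm{HH}^1(A_\omega,A_\omega\otimes\Sym^n\frakt^*)$. I would attack this along two complementary routes. First, combining the injection $\chi_\zeta^t$ from Theorem A with the injectivity of the analogous character map $\chi_\zeta:Z(\u_\zeta\mod^\LLbd)\hookrightarrow\Fun(\LLbd,\bfk)$ (which follows from existence of enough standards $\Delta(\mu)^t$ whose central characters separate), the bound reduces to showing that every element of the image of $\chi_\zeta$ satisfies the specializations of the GKM congruences of Lemma \ref{lem:GKM0}; this in turn can be attacked by producing, for each pair of weights $\lambda$ and $s_{\alpha+\ell m\delta}\bullet\lambda$ with $\langle\alpha,\lambda+\Lrho\rangle\notin\ell\bbZ$, an explicit wall-crossing morphism between the corresponding projective covers that forces the required relation. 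Second, the Koszul duality perspective mentioned in the Perspectives section predicts a geometric realization of the derived category of $\u_\zeta\mod^\LLbd$ via microlocal perverse sheaves on the affine Spaltenstein variety, which would provide the matching upper bound directly and is the approach I expect to be most decisive.

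Finally, the restricted isomorphism $H^\bullet({}^0\Gr^\zeta_{\gamma_\ell})^{\ell\LLbd}\simeq Z(\u_\zeta)^\LT$ is immediate from the first by taking $\ell\LLbd$-invariants on both sides: the map $\bfa$ intertwines the $\ell\LLbd$-actions by construction (the left action on the cohomology coming from the $T(\calK_\ell)$-action recalled in \S\ref{sec:symmetries}, and the action on $Z(\u_\zeta\mod^\LLbd)$ recalled in the introduction), and the canonical identification $Z(\u_\zeta\mod^\LLbd)^{\ell\LLbd}=Z(\u_\zeta)^\LT$ then finishes the argument.
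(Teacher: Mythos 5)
This statement is a \emph{conjecture} in the paper (Conjecture~\ref{conj-G1T}); the authors do not prove it, so there is no proof in the text to compare against. What the paper does establish is the injectivity of $\bfa$ (Theorem~\ref{prop:P2}), the identification $Z(\u_\zeta\mod^\LLbd)^{\ell\LLbd}= Z(\u_\zeta)^\LT$ in \eqref{uu}, and a dimension formula for $H^\bullet({}^0\Gr^\zeta_{\gamma_\ell})^{W_{\ell,\ex}}$ (Theorem~\ref{prop:formula}) that would compute $\dim Z(\u_\zeta)^\LG$ if the conjecture held.

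Your writeup is a strategy sketch rather than a proof, and you say so yourself: the crux --- surjectivity of the base-change map $Z(\tilde A_\omega)\otimes_S\bfk\to Z(A_\omega)$ --- is left open, with two ``complementary routes'' proposed but neither carried out. The reduction you perform is natural, and your closing paragraph (deducing the second isomorphism from the first by $\ell\LLbd$-equivariance of $\bfa$ plus \eqref{uu}) is correct and is exactly the deduction the paper alludes to. But the ``dimension bound'' framing is misleading: each block $H^\bullet({}^0\!\Fl^\omega_{\ell,\gamma_\ell})$ is infinite-dimensional (its $T$-fixed locus is the infinite set $W_{\ell,\ex}^\omega$), and so are the corresponding block summands of $Z(\u_\zeta\mod^\LLbd)$. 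An injection between infinite-dimensional spaces cannot be upgraded to a bijection by a cardinality or rank count, so the argument would have to be organized degree-by-degree against the cohomological grading, or else proved as genuine surjectivity by lifting central elements. Your observation that the obstruction lives in a Hochschild $\mathrm{HH}^1$-type group is the right lens, but nothing in the AJS deformation formalism forces that class to vanish, and the GKM-congruence route would require producing explicit central elements realizing the wall-crossing congruences, which is precisely what no one has yet done. As written, the gap the conjecture leaves open is still open.
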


Here the fact that the second isomorphism is a consequence of the first one is given by \eqref{uu} below.

\smallskip

\begin{remark}
Since the map $\bfa$ in (a)
is a ring isomorphism, it yields a bijection between the blocks of $\Ut\mod^\LLbd_S$ and the 
connected components of ${}^0\Gr^\zeta_{\gamma_\ell}$.
Let us briefly recall the blocks decomposition for the simply connected small quantum group $\u_\zeta$.
Note that \cite{LQ2} considered the adjoint small quantum group.
For each restricted dominant weight $\mu\in\LLbd^+_\ell$ 
let $\Rep(\u_\zeta)^\mu$ be the full subcategory of $\Rep(\u_\zeta)$ 
generated by the simple modules $\L(\mu')$ with $\mu'\in W_{\ell,\ex}\bullet\mu\cap\LLbd^+_\ell$.
It is an indecomposable category which consists of all modules
killed by some power of the character $\chi_{\zeta,\mu}$.
Let $\u_\zeta^\omega\subset\u_\zeta$ be the two-sided ideal such that
$\Rep(\u_\zeta^\omega)=\Rep(\u_\zeta)^\omega.$
We have 
$$\u_\zeta=\bigoplus_{\omega\in\Xi}\u_\zeta^\omega,\qquad
Z(\u_\zeta)=Z(\Rep(\u_\zeta)),\qquad 
Z(\u_\zeta^\omega)=Z(\Rep(\u_\zeta)^\omega).$$
The map 
$\bfa:H^\bullet({}^0\Gr^\zeta_{\gamma_\ell})^{\ell\LLbd}\to Z(\u_\zeta)^\LT$ in (b)
is compatible with the block decomposition, meaning that, under the decomposition
${}^0\Gr^\zeta_{\gamma_\ell} =\bigsqcup_{\omega\in \Xi} {}^0\!\Fl^\omega_{\ell,\gamma_\ell}$
in Lemma \ref{lem:isom2}, it takes the summand
$H^\bullet({}^0\!\Fl^\omega_{\ell,\gamma_\ell})^{\ell\LLbd}$ into $Z(\u_\zeta^\omega)^\LT$.
Note that the ring $H^\bullet({}^0\!\Fl^\omega_{\ell,\gamma_\ell})^{\ell\LLbd}$ is indecomposable,
although $H^\bullet({}^0\!\Fl^\omega_{\ell,\gamma_\ell})$ is not since 
${}^0\!\Fl^\omega_{\ell,\gamma_\ell}$ is not connected.

\end{remark}

\smallskip

\subsubsection{The center of $\Ut$}
From now on, let $\pi$, $S$, $R$, $F$, $K$ be as in \S\ref{sec:rep}.
We'll abbreviate
$$\calC_R=\Ut\mod^\LLbd_R.$$
We first gather a few facts on the category $\calC_R$, most of them being proved in \cite{AJS}. 
We'll use the notation from the appendix. In particular
$\calP_R$ is the full subcategory of projective objects in $\calC_R$,
we have the base change $\calP_S\to\calP_R$, $M\mapsto M\otimes_SR$, 
and
$$\Hom_{\calP_S}(M,N)\otimes_SR=\Hom_{\calP_R}(M\otimes_SR,N\otimes_SR),
\qquad M,N\in \calP_S.$$ 
Further $\Hom_{\calC_S}(M,N)$ is projective over $S$ whenever $M\in\calP_S$ and $N$ is projective over $S$. 
Hence, Lemmas \ref{lem:a}, \ref{lem:b} hold and we have an inclusion of the centers
$Z(\calC_R)\subset  Z(\calC_K).$
For each root $\alpha$, we set 
$h_\alpha=(K_\Lalpha)^{2\ell}-1$ in $S.$
We abbreviate $h_\alpha=\pi(h_\alpha)$ in $R$.
If $R=F$ is a field, we define the set
\begin{align}\label{PhiF}\LPhi_F=\{\Lalpha\in\LPhi\,;\, h_\alpha= 0\}.\end{align}
It is a root subsystem of $\LPhi$. 
If $R$ is a local ring we set
$\LPhi_R=\LPhi_F$ and $\LPhi_R^+=\LPhi^+\cap\LPhi_R$.
Given $\mu\in\LLbd$ and $\Lalpha\in\LPhi_R$, there is a unique integer 
$n_\alpha(\mu)$ such that, see \cite[\S 6.1]{AJS},
$$0<n_\alpha(\mu)\leqslant\ell
\quad,\quad
\pi(K_{\Lalpha})^2=\zeta^{2d_\Lalpha(n_\alpha(\mu)-\langle\alpha,\mu+\Lrho\rangle)}.$$
Let $W_{\ell,\af}^R\subset W_\af$ be the subgroup 
generated by the affine reflections $s_{\alpha+m\ell\delta}$ with $\Lalpha\in\LPhi_R$ and $m\in\bbZ$. 
For each weight $\mu\in\LLbd$, let $\calC_R^\mu$ 
be the Serre subcategory of $\calC_R$ generated by the baby Verma modules $\Delta(\lambda)^t_R$ 
with $\lambda\in W_{\ell,\af}^R\bullet\mu$, and 
$\calP_R^\mu=\calP_R\cap \calC_R^\mu$.
The categories $\calC^\mu_R$ and $\calP_R^\mu$ 
only depend on the class of the weight $\mu$ in $\LLbd/W^R_{\ell,\af}$.
They may not be indecomposable.
We have
\begin{align}\label{split}
\calC_R=\bigoplus_{\omega\in\LLbd/W^R_{\ell,\af}}\calC_R^\omega.
\end{align}
%We have $n_\alpha(\mu)=\ell$ if and only if there is an integer 
%$m\in\bbZ$ such that $s_{\alpha,m\ell}\in W_{\ell,\mu}$.
Set $\alpha\downarrow \mu=\mu-n_\alpha(\mu)\Lalpha$ if $n_\alpha(\mu)\neq\ell$, and
$\alpha\downarrow \mu=\mu$ otherwise. 
Write $\nu\preccurlyeq\mu$ if there is a chain $\nu=\mu_0,\mu_1,....,\mu_n=\mu$ and $\beta_i\in\Phi^+_R$ 
such that 
$\beta_i\downarrow\mu_i=\mu_{i-1}$.
The strong linkage principle implies that 
$$[\Delta(\mu)^t_F:\L(\lambda)^t_F]\neq 0
\Rightarrow \lambda\preccurlyeq\mu.$$
In particular, if  $\LPhi_F$ is empty then the category $\calC_F$ is semi-simple.
For each $\alpha\in\Phi^+$ we consider the $S$-algebra
$$S_\alpha=S[h_\beta^{-1}\,;\,\beta\in\Phi^+\,,\,\alpha\neq\beta].
$$
Assume that $R=S_\alpha$. 
The maximal ideal of $R$ is generated by $h_\alpha$. We have $\LPhi_R^+=\{\Lalpha\}$.
Set
\begin{align*}\mu_0=\mu,\qquad
\mu_{k-1}=\alpha\downarrow\mu_{k},\qquad\mu\in\LLbd,\qquad k\in\bbZ.
\end{align*}
The category $\calC_R^\mu$ is 
generated by the set of modules
$\{\Delta(\mu_k)^t_R\,;\, k\in\bbZ\}$.
If $n_\alpha(\mu)=\ell$, then $\mu_k=\mu$ for all $k$ 
and  $\calP_R^\mu$ is a sum of categories equivalent $\Vect(R)$.
If $n_\alpha(\mu)\neq \ell$, then $\mu_{2k}=\mu+k\ell\Lalpha$ and 
$\mu_{2k-1}=\mu+(k\ell-n_\alpha(\mu))\Lalpha$ for all $k$.
Let $B_R$ be the quotient of the path $R$-algebra of the quiver 
$$
......\xymatrix{
\underset{-3}\bullet \ar@/^/[r]^{i_{-3}} 
& \underset{-2}\bullet \ar@/^/[r]^{i_{-2}}\ar@/^/[l]^{j_{-3}}  
& \underset{-1}\bullet\ar@/^/[r]^{i_{-1}}\ar@/^/[l]^{j_{-2}} 
& \underset{0}\bullet \ar@/^/[r]^{i_{0}}\ar@/^/[l]^{j_{-1}} 
& \underset{1}\bullet \ar@/^/[r]^{i_{1}}\ar@/^/[l]^{j_{0}}  
& \underset{2}\bullet \ar@/^/[r]^{i_{2}}\ar@/^/[l]^{j_{1}} 
& \underset{3}\bullet\ar@/^/[l]^{j_{2}} 
}......
$$
modulo the following relations
\begin{align*}
&i_{k+1}\circ i_k= j_{k}\circ j_{k+1}=
i_{k-1}\circ j_{k-1}-j_{k}\circ i_k-(-1)^kh_\alpha1_{k}=0,
\qquad k\in\bbZ.
\end{align*}
Here $1_k$ is the idempotent at the vertex $k$. 
We have the following lemmas.

\begin{lemma}\label{lem:L1}
Let $R=S_\alpha$ with $\alpha\in\Phi^+$ such that $n_\alpha(\mu)\neq \ell$.
The category $\calP_R^\mu$ is equivalent to the category of projective $B_R$-modules. 
This equivalence takes the projective module $\P(\mu_k)^t_R$
to the $B_R$-module $B_R1_k$ for each $k\in\bbZ$.
\qed
\end{lemma}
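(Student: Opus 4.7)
The plan is to reduce everything to a deformed rank-one calculation and then read off the endomorphism algebra of a projective generator. Since $R=S_\alpha$ has maximal ideal generated by $h_\alpha$ and $\LPhi_R^+=\{\Lalpha\}$, the strong linkage principle localizes the block $\calC_R^\mu$ to the chain $\{\mu_k\,;\,k\in\bbZ\}$ inside a single $s_\alpha$-orbit. In particular, the only nontrivial translations occur along the $\Lalpha$-string, and all other directions are generically semisimple after inverting the $h_\beta$ with $\beta\neq\alpha$. So I expect the whole block to be controlled by the corresponding Levi subalgebra of rank one, i.e.\ by the quantum $SL_2$ (or $PGL_2$) attached to $\alpha$. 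This parabolic reduction should follow from the AJS construction: restriction and induction between $\Ut$ and its $\alpha$-Levi version are exact and preserve projectives over $R=S_\alpha$, because the off-$\alpha$ Cartan directions are invertible in $R$.

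Once reduced to rank one, the next step is to describe the indecomposable projective modules $\P(\mu_k)^t_R$ and the homomorphisms between them explicitly. In the deformed rank-one setting, the baby Verma modules $\Delta(\mu_k)^t_R$ have $R$-rank $\ell$, and adjacent pairs $(\mu_{k-1},\mu_k)$ are linked by the reflection $s_{\alpha+m\ell\delta}$ for a unique $m$. I would show, as in \cite[\S\S 6--8]{AJS}, that $\P(\mu_k)^t_R$ has a two-step baby-Verma filtration with quotients $\Delta(\mu_k)^t_R$ and (generically) $\Delta(\mu_{k-1})^t_R\oplus\Delta(\mu_{k+1})^t_R$, and that the $R$-module structure is free. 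This gives natural nonzero maps
\[
i_k:\P(\mu_k)^t_R\to\P(\mu_{k+1})^t_R,\qquad j_k:\P(\mu_{k+1})^t_R\to\P(\mu_k)^t_R
\]
coming from the adjunctions between $\Delta(\mu_k)^t_R$ and its neighbours, and a direct check shows $\Hom_{\calP_R}(\P(\mu_k)^t_R,\P(\mu_{k'})^t_R)$ is a free $R$-module of rank $2$ for $k=k'$, rank $1$ for $|k-k'|=1$, and zero otherwise. Hence the $i_k$, $j_k$, and the identities $1_k$ generate the morphism algebra.

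The third step is to verify the three defining relations of $B_R$. The vanishings $i_{k+1}\circ i_k=0$ and $j_k\circ j_{k+1}=0$ come from the fact that between nonadjacent $\mu_k$'s there is no morphism, so any two-step composition must factor through a Verma module whose highest weight is not linked in the required direction; this can also be deduced from the Verma filtration of $\P(\mu_k)^t_R$. The loop relation $i_{k-1}\circ j_{k-1}-j_k\circ i_k=(-1)^k h_\alpha 1_k$ is the content of the computation: both loops lie in the rank-one endomorphism ring $\End_{\calP_R}(\P(\mu_k)^t_R)$, which by the Verma filtration is isomorphic to $R[x]/(x^2-h_\alpha x)$ (suitably normalized), and an explicit choice of generators $i_k$, $j_k$ identifies their difference with $\pm h_\alpha 1_k$. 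The sign $(-1)^k$ records the alternating parity of the length $n_\alpha(\mu)$ versus $\ell-n_\alpha(\mu)$ in the two kinds of steps $\mu_{2k}\to\mu_{2k+1}$ and $\mu_{2k-1}\to\mu_{2k}$, and is forced by requiring $i_k,j_k$ to be compatible normalizations of the wall-crossing maps on both sides. This sign-tracking is the step I expect to be the main obstacle: the normalizations of the $i_k$ and $j_k$ must be fixed once and for all so that the relation holds uniformly in $k$, and this requires a careful comparison with the AJS rank-one picture, where the analogous computation is carried out for the restricted enveloping algebra.

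Finally, once $B_R^{\op}$ is identified with the endomorphism algebra of the projective generator $\bigoplus_{k\in\bbZ}\P(\mu_k)^t_R$ (or rather, any finite piece of it, since $B_R$ is locally finite), Morita theory gives the desired equivalence $\calP_R^\mu\simeq B_R\grproj$ sending $\P(\mu_k)^t_R$ to $B_R 1_k$. The only subtlety on the Morita side is that $B_R$ is infinite-dimensional over $R$, but $1_k B_R 1_{k'}$ is a finitely generated $R$-module for each pair $(k,k')$, so the equivalence can be set up between idempotent-completed categories of finite-rank projectives, which is exactly the category $\calP_R^\mu$.
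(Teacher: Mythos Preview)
The paper does not prove this lemma: the statement ends with \qed and no proof body, the result being imported from \cite{AJS} (the surrounding discussion is an explicit summary of the AJS setup, and the proof of the next lemma even invokes ``the proof of Lemma~\ref{lem:L1}'' as though it lived in that reference). Your plan is a faithful sketch of what the AJS argument amounts to: localize to rank one, compute the Hom spaces between the indecomposable projectives, verify the three quiver relations, and conclude by Morita theory. The Hom ranks you state ($2$, $1$, $0$ for $|k-k'|=0,1,\geq 2$) and your strategy for the relations are correct, and your remark about the sign $(-1)^k$ tracking the alternating step lengths $n_\alpha(\mu)$ versus $\ell-n_\alpha(\mu)$ is exactly the point where the AJS computation requires care.

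One detail to correct: the Verma flag of $\P(\mu_k)^t_R$ over $R=S_\alpha$ has exactly two terms, $\Delta(\mu_k)^t_R$ and one adjacent $\Delta(\mu_{k\pm 1})^t_R$, not three. This is forced already by the endomorphism count: since $\End_{\calP_R}(\P(\mu_k)^t_R)\otimes_R K\cong K\times K$, the module $\P(\mu_k)^t_R\otimes_R K$ has only two Verma summands over the fraction field. What you wrote---top $\Delta(\mu_k)$, then $\Delta(\mu_{k-1})\oplus\Delta(\mu_{k+1})$---is the Loewy filtration by \emph{simples} over the residue field (the usual diamond shape for zigzag-type projectives), not a baby-Verma flag over $R$. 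This does not break your argument, since the Hom computations and the Morita step go through regardless, but it matters if you want to realize $i_k$ and $j_k$ concretely as maps induced between Verma layers.
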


\smallskip

\begin{lemma}\label{lem:L2} 
Let $R=S_\alpha$ with $\alpha\in\Phi^+$. 
\hfill
\begin{itemize}[leftmargin=8mm]
\item[$\mathrm{(a)}$]
$ Z(\calC_K)= \Fun(\LLbd,K)$. 
The $\lambda$-th component is the central character of the module $\Delta(\lambda)^t_K$.
\item[$\mathrm{(b)}$]
%as a subalgebra of $ Z(\calC_R\otimes_RK)=\prod_{\lambda\in\Lambda} K 1_\lambda$, we have
$ Z(\calC_R)= \{ (a_\lambda)\in \Fun(\LLbd,R)\,;\, a_\lambda\equiv 
a_{\alpha\downarrow\lambda} \modulo h_\alpha\}$.
\item[$\mathrm{(c)}$]
$ Z(\calC_F)= Z(\calC_R)\otimes_R F$.
\end{itemize}
\end{lemma}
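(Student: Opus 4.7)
The plan is to treat the three parts in order, with the substance concentrated in $(\mathrm{b})$; parts $(\mathrm{a})$ and $(\mathrm{c})$ are rather short. For $(\mathrm{a})$, the key observation is that $K = \Frac(S_\alpha)$ inverts $h_\alpha$, so $\LPhi_K = \emptyset$ and the affine Weyl subgroup $W_{\ell,\af}^K$ is trivial. The block decomposition \eqref{split} of $\calC_K$ then degenerates to a single baby Verma per block, and the strong linkage principle recalled before the lemma forces each $\Delta(\lambda)^t_K$ to be simple. Each block is therefore equivalent to $\Vect(K)$, giving $Z(\calC_K) = \Fun(\LLbd,K)$ with $\lambda$-th component acting on $\Delta(\lambda)^t_K$ by its central character.

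For $(\mathrm{b})$, I use \eqref{split} to write $Z(\calC_R) = \prod_{\omega \in \LLbd/W_{\ell,\af}^R} Z(\calC_R^\omega)$, and identify each factor with the center of the projective subcategory $\calP_R^\omega$ (projectives generate). When $n_\alpha(\omega) = \ell$, the operator $\alpha\downarrow$ fixes every element of the orbit and $\calP_R^\omega$ is a sum of copies of $\Vect(R)$, so the block contributes $\Fun(W_{\ell,\af}^R \bullet \omega, R)$; the stated congruence is vacuous. When $n_\alpha(\omega) \neq \ell$, Lemma~\ref{lem:L1} identifies $\calP_R^\omega$ with the category of projective $B_R$-modules via $\P(\mu_k)^t_R \mapsto B_R 1_k$. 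A natural endotransformation of the identity is then a family $(z_k) \in \prod_k 1_k B_R 1_k$ satisfying $i_k z_k = z_{k+1} i_k$ and $j_k z_{k+1} = z_k j_k$. Setting $x_k = i_{k-1} j_{k-1}$ and $y_k = j_k i_k$, the defining relations of $B_R$ give $x_k y_k = y_k x_k = 0$ and $x_k - y_k = (-1)^k h_\alpha 1_k$, so $1_k B_R 1_k$ has $R$-basis $\{1_k, x_k\}$. Writing $z_k = a_k 1_k + b_k x_k$, a direct computation using $i_k x_k = 0$, $x_k j_k = 0$ and $x_{k+1} i_k = (-1)^{k+1} h_\alpha i_k$ shows that both commutation conditions reduce to the single relation $a_k - a_{k+1} = (-1)^{k+1} b_{k+1} h_\alpha$. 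Hence the tuple $(a_k)$ is constrained only by $a_k \equiv a_{k+1} \pmod{h_\alpha}$, with $b_{k+1}$ uniquely determined. Relabelling $a_k \leftrightarrow a_{\mu_k}$ and using $\mu_{k-1} = \alpha\downarrow\mu_k$ produces the formula in the statement.

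For $(\mathrm{c})$, I plan to reduce the description of $Z(\calC_R)$ modulo $h_\alpha$: the entire computation of $(\mathrm{b})$ runs verbatim over $F$, where $B_F = B_R \otimes_R F$ has $h_\alpha = 0$ and the congruence degenerates to the equality $a_\lambda = a_{\alpha\downarrow\lambda}$; matching the two formulas term by term produces the isomorphism $Z(\calC_R) \otimes_R F \to Z(\calC_F)$, whose formal abstraction is Lemma~\ref{lem:b} of Appendix~A applied to the free $R$-algebra $B_R$. The main obstacle I anticipate is the bookkeeping in $(\mathrm{b})$: the alternating signs $(-1)^k$ in the defining relations of $B_R$ enter the commutator calculations, and one must check that commutation with the $i_k$'s and with the $j_k$'s yields the \emph{same} congruence on the constants $a_k$, and that the indexing $\mu_k$ along the orbit is consistent with $\mu_{k-1} = \alpha\downarrow\mu_k$, so that the final relabelling reproduces $a_\lambda \equiv a_{\alpha\downarrow\lambda} \pmod{h_\alpha}$ on all of $\LLbd$ rather than only within each orbit.
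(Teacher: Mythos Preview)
Your treatment of parts $(\mathrm{a})$ and $(\mathrm{b})$ is essentially the paper's own argument. The only cosmetic difference is your choice of second basis element in $1_kB_R1_k$: you take $x_k=i_{k-1}j_{k-1}$ whereas the paper takes $j_ki_k$; the two differ by $(-1)^kh_\alpha 1_k$, so the constants $a_k$ in the two conventions differ by a harmless index shift and the resulting congruence and image in $\Fun(\LLbd,R)$ coincide. Your verification that commutation with both $i_k$ and $j_k$ collapses to a single relation is correct.

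Part $(\mathrm{c})$ is not proved in the paper either, and your sketch for it has a genuine gap. When you rerun the computation over $F$, the relation $a_k-a_{k+1}=(-1)^{k+1}b_{k+1}h_\alpha$ does become $a_k=a_{k+1}$, but now the $b_k$'s are \emph{free} parameters rather than determined by the $a_k$'s. Hence the map $z\mapsto(a_\lambda)$ is no longer injective over $F$, and $Z(\calC_F)$ is strictly larger than the set $\{(a_\lambda):a_\lambda=a_{\alpha\downarrow\lambda}\}$ you describe. So ``matching the two formulas'' at the level of the $a_\lambda$'s does not yield the claimed isomorphism. (Also, Lemma~\ref{lem:b} of the appendix concerns intersection over height-one localizations, not base change to the residue field, so the reference is misplaced.) A correct argument keeps track of the full tuples $(a_k1_k+b_kx_k)$: one checks directly that any such tuple over $F$ lifts to $R$ (choose lifts of the $b_k$'s, then define $a_k\in R$ recursively), giving surjectivity of $Z(\calC_R)\otimes_RF\to Z(\calC_F)$; injectivity follows since if all $a_k,b_k\in h_\alpha R$ then $z/h_\alpha$ again satisfies the centrality relation.
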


\begin{proof}
Since $\LPhi_K=\emptyset$,
the category $\calC_K$ is  semi-simple, hence we have
\begin{align*}
Z(\calC_K)=\prod_{\lambda\in\LLbd}\End_{\calC_K}(\Delta(\lambda)^t_K)=\Fun(\LLbd,K).
\end{align*} 
Let us concentrate on (b).
From the proof of Lemma \ref{lem:L1}, for each integer $k$ we consider the element
$x_k=j_k\circ i_k$  in $\End_{\calC_R}(P_R(\mu_k)^t)$. We have
$\End_{\calC_R}(P_R(\mu_k)^t)=R 1_k\oplus Rx_k$ and
\begin{align*}
x_{k+1}\circ i_k=0,\quad
i_k\circ x_k=(-1)^{k+1}h_\alpha i_k,\quad
j_k\circ x_{k+1}=0,\quad
x_k\circ j_k=(-1)^{k+1}h_\alpha j_k.
\end{align*}
An element in $Z(\calC_R^\mu)$ is a tuple $z=(z_k)$ of morphisms $z_k\in\End_{\calC_R}(P_R(\mu_k)^t)$ 
such that 
$i_k\circ z_k=z_{k+1}\circ i_k$, $j_k\circ z_{k+1}=z_k\circ j_k$, and $z_k=a_k 1_k+b_k x_k$ for some $a_k, b_k\in R$. Then we have
\begin{align}\label{EQN}a_{k}=a_{k+1}+(-1)^{k}b_kh_\alpha, \qquad k\in\bbZ
\end{align}
and the multiplication in the center is given by
$$(zz')_k=a_ka'_k 1_k+(a_kb'_k+a'_kb_k+(-1)^{k+1}h_\alpha b_kb'_k)x_k.$$ 
for $z=(z_k)$, $z'=(z'_k).$
Since $h_\alpha\neq 0$ and $R$ is a domain, the element $z$ is uniquely determined by the tuple 
$(a_k)$  by \eqref{EQN}.
Thus the map $Z(\calC_R^\mu)\to R^\bbZ$ such that $z\mapsto(a_k)$
is an $R$-algebra embedding, whose image consists of the tuples
$(a_k)$ such that
$a_k\equiv a_{k+1} \modulo h_\alpha$ for all $k$.
\end{proof}

\smallskip

Now,  we consider the case $R=S$.

\begin{lemma}\label{lem:L3}
Let $R=S$. We have $\LPhi_S=\LPhi$. Further, the following hold.
\hfill
\begin{itemize}[leftmargin=8mm]
\item[$\mathrm{(a)}$]
$ Z(\calC_K)= \Fun(\LLbd,K)$. 
The $\lambda$-th component is the central character of the module $\Delta(\lambda)^t_K$.
\item[$\mathrm{(b)}$]
$ Z(\calC_S)=\{ (a_\lambda)\in \Fun(\LLbd,S)\,;\,
a_\lambda\equiv a_{\alpha\downarrow\lambda}\modulo h_\alpha,\ \forall\alpha\in\Phi^+\}$.
\end{itemize}
\end{lemma}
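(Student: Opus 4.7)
The plan is to prove the lemma in three stages: the equality $\LPhi_S=\LPhi$, part (a), and part (b) via a reduction to rank-one localizations using Lemma \ref{lem:L2}.

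\medskip

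First, I would observe that $\LPhi_S=\LPhi$ is immediate from the definitions: since $S$ is the completion of $\bfk[\LT]$ at $1$, every element $h_\alpha=(K_{\Lalpha})^{2\ell}-1$ vanishes at the closed point and therefore lies in the maximal ideal, so $h_\alpha=0$ in the residue field $\bfk$ for every $\alpha\in\Phi^+$. For (a), the fraction field $K$ inverts all the $h_\alpha$, hence $\LPhi_K=\emptyset$. Then the strong linkage principle recalled just above Lemma \ref{lem:L1} forces $[\Delta(\mu)^t_K:\L(\lambda)^t_K]\neq 0$ to imply $\lambda=\mu$; combined with $\End_{\calC_K}(\Delta(\mu)^t_K)=K$ (a generic computation on baby Vermas over $\frakU^t_\zeta$), the category $\calC_K$ is semisimple with pairwise non-isomorphic simples $\{\Delta(\mu)^t_K\,;\,\mu\in\LLbd\}$. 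Therefore $Z(\calC_K)=\prod_{\mu\in\LLbd}\End_{\calC_K}(\Delta(\mu)^t_K)=\Fun(\LLbd,K)$, with $\lambda$-component the central character of $\Delta(\lambda)^t_K$.

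\medskip

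For (b), the inclusion ``$\subseteq$'' goes as follows. By Lemma \ref{lem:b} from the appendix, base change $-\otimes_S K$ induces an injection $Z(\calC_S)\hookrightarrow Z(\calC_K)=\Fun(\LLbd,K)$. Since any element of $Z(\calC_S)$ acts on $\Delta(\lambda)^t_S$ by an element of $\End_{\calC_S}(\Delta(\lambda)^t_S)=S$, the image actually lies in $\Fun(\LLbd,S)$. Next, for each $\alpha\in\Phi^+$, base change along $S\to S_\alpha$ yields a ring map $Z(\calC_S)\to Z(\calC_{S_\alpha})$, and by Lemma \ref{lem:L2}(b) the image satisfies $a_\lambda\equiv a_{\alpha\downarrow\lambda}\pmod{h_\alpha}$. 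Since this congruence holds after localization at $S_\alpha$ and both sides already live in $S$, it holds in $S$ itself.

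\medskip

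The main obstacle is the converse inclusion ``$\supseteq$''. The strategy is to reconstruct a central element from the tuple $(a_\lambda)$ block by block. Thanks to the decomposition \eqref{split}, it suffices to work inside a single linkage class $\LLbd/W^S_{\ell,\af}$. For such a class, one uses the projective generators $\P(\mu)^t_S$ of $\calP^\mu_S$ and produces a tuple $z_\mu\in\End_{\calC_S}(\P(\mu)^t_S)$ that acts on each Verma subquotient $\Delta(\nu)^t_S$ by $a_\nu$ and commutes with all $\Hom$'s between the $\P(\mu)^t_S$. The key input is that, after localizing at $S_\alpha$ for any simple root direction, the endomorphism algebra of $\P(\mu)^t_S$ is controlled by the quiver algebra $B_{S_\alpha}$ of Lemma \ref{lem:L1}, whose relations encode exactly the congruence $a_\lambda\equiv a_{\alpha\downarrow\lambda}\pmod{h_\alpha}$. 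Since $S$ is a regular local ring in which the $h_\alpha$ generate pairwise coprime height-one primes, we have $S=\bigcap_\alpha S_\alpha$ inside $K$; consequently a coherent family of local central elements $(z_\alpha)\in\prod_\alpha Z(\calC_{S_\alpha})$ glues to a well-defined element $z\in Z(\calC_S)$ provided it comes from a common tuple in $\Fun(\LLbd,S)$. Making this gluing precise — i.e., checking that the candidate endomorphisms of each $\P(\mu)^t_S$ are simultaneously integral at all codimension-one primes — is the technical heart of the argument, and I would carry it out by induction on the $\preccurlyeq$-length within each linkage class, using the Ext-vanishing between baby Vermas to extend scalar actions step by step.
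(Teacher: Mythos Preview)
Your treatment of $\LPhi_S=\LPhi$ and of part (a) is fine and matches the paper. For part (b), your inclusion $\subseteq$ is also along the right lines. The divergence is in $\supseteq$: you propose to reconstruct the central element on projectives by induction along $\preccurlyeq$, using Ext-vanishing between baby Verma modules, and you call this ``the technical heart of the argument.'' In the paper this entire step disappears. Lemma~\ref{lem:b} is an \emph{equality}
\[
Z(\calC_S)=\bigcap_{\alpha\in\Phi^+} Z(\calC_{S_\alpha})
\]
inside $Z(\calC_K)$, not merely an inclusion. Given a tuple $(a_\lambda)\in\Fun(\LLbd,S)$ satisfying all the congruences, Lemma~\ref{lem:L2}(b) shows that it lies in every $Z(\calC_{S_\alpha})\subset\Fun(\LLbd,S_\alpha)\subset\Fun(\LLbd,K)$, and then Lemma~\ref{lem:b} immediately puts it in $Z(\calC_S)$. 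The identity $S=\bigcap_\alpha S_\alpha$ that you already note is precisely the hypothesis making Lemma~\ref{lem:b} applicable to the family $\{S_\alpha\}$; once that is in place, no further ``gluing'' on the projectives $\P(\mu)^t_S$ is needed. Your inductive construction would work, but it effectively re-derives Lemma~\ref{lem:b} by hand in this particular case, so it is strictly more laborious without adding any new ingredient.
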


\begin{proof}
%For any  $S$-module $M$ we set $M_\alpha=M\otimes_SS_\alpha$. 
%We have $S=\bigcap_{\alpha\in\Phi^+}S_\alpha$.
%If $M$ is flat then we have
%$M=\bigcap_{\alpha\in\Phi^+}M_\alpha$. 
By Lemma \ref{lem:b} we have
$Z(\calC_S)=\bigcap_{\alpha\in\Phi^+} Z(\calC_{S_\alpha})$.
Further $Z(\calC_S)$ embeds in $ Z(\calC_K)$ in the obvious way.
Hence, Lemma \ref{lem:L2} yields
$$ Z(\calC_S)
=\bigcap_{\alpha\in\Phi^+} \Big\{ (a_\lambda)\in \Fun(\LLbd,S_\alpha)\,;\, a_\lambda\equiv 
a_{\alpha\downarrow\lambda} \modulo h_\alpha\Big\}
.$$
\end{proof}

Recall that $W^S_{\ell,\af}=W_{\ell,\af}$, that $\Xi_\sc=\LLbd/\W_{\ell,\af}$, and that $\pi_1$ acts on $\Xi_\sc$
with orbit space $\Xi=\Xi_\sc/\pi_1$. For each $\omega\in\Xi$ we set
\begin{align}\label{blocks3}
\calC^\omega_S=\bigsqcup_{\substack{\omega_\sc\in\Xi_\sc\\ \omega_\sc\in\omega}}\calC^{\omega_\sc}_S.
\end{align}

\smallskip

\begin{corollary} \label{cor:C1}
For each $\omega\in \Xi$
there is an $S$-algebra isomorphism
$Z(\calC_S^\omega)\to H^\bullet_T({}^0\!\Fl_{\ell,\gamma_\ell}^\omega)_{\hat 0}$.
\end{corollary}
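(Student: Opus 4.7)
The strategy is to compare the explicit presentations of both sides as subrings of a function algebra over $\LLbd$. On the algebraic side, Lemma \ref{lem:L3}(b) identifies $Z(\calC_S)$ with the subring of $\Fun(\LLbd,S)$ cut out by the relations $a_\lambda \equiv a_{\alpha\downarrow\lambda} \pmod{h_\alpha}$ for all $\alpha \in \Phi^+$. On the geometric side, the GKM presentation of Lemma \ref{lem:GKM0}, after the coweight relabelling $W_{\ell,\ex}^\omega \simeq W_{\ell,\ex}\bullet\omega \subset \LLbd$ and completion at the augmentation ideal, realizes $H^\bullet_T({}^0\!\Fl^\omega_{\ell,\gamma_\ell})_{\hat 0}$ as the subring of $\Fun(W_{\ell,\ex}\bullet\omega,S)$ cut out by $a_\lambda \equiv a_{s_{\alpha+\ell m\delta}\bullet\lambda} \pmod{\alpha}$ for $m\in\bbZ$ whenever $\langle\alpha,\lambda+\Lrho\rangle \notin \ell\bbZ$.

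First I would match the indexing sets. Since $W^S_{\ell,\af}=W_{\ell,\af}$, the decomposition \eqref{split} presents $Z(\calC_S^{\omega_\sc})$ as the subring of $\Fun(W_{\ell,\af}\bullet\omega_\sc,S)$ satisfying the above congruences, and the grouping \eqref{blocks3} gives
\[ Z(\calC_S^\omega) \;=\; \bigoplus_{\omega_\sc \in \omega} Z(\calC_S^{\omega_\sc}), \]
which is indexed by functions on $\bigsqcup_{\omega_\sc\in\omega} W_{\ell,\af}\bullet\omega_\sc = W_{\ell,\ex}\bullet\omega$, using $W_{\ell,\ex} = W_{\ell,\af}\rtimes\pi_1$. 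This is exactly the indexing set on the geometric side.

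Next I would match the congruences, which rests on two ingredients. The first is that in $S = (H^\bullet_T)_{\hat 0} = \bfk[\LT]_{\hat 1}$, under the identification \eqref{pairing1}, the elements $h_\alpha = e^{2\ell\Lalpha}-1$ and the linear form $\alpha$ generate the same principal ideal: indeed $h_\alpha$ has a simple zero at $1\in\LT$ whose differential is $2\ell\Lalpha$, which is a nonzero scalar multiple of $\alpha$ via the fixed $W$-equivariant isomorphism $\frakt^* = \frakt$. The second is combinatorial: the ``down'' chain $(\mu_k)_{k\in\bbZ}$ from the proof of Lemma \ref{lem:L2}, when $\langle\alpha,\mu+\Lrho\rangle \notin \ell\bbZ$, traces exactly the orbit of $\mu$ under the infinite dihedral subgroup $\langle s_\alpha,\tau_{\ell\Lalpha}\rangle \leq W_{\ell,\af}$, which is generated by the affine reflections $s_{\alpha+\ell m\delta}$ for $m\in\bbZ$; consecutive elements of the chain differ by one such reflection, and conversely any pair related by some $s_{\alpha+\ell m\delta}$ is connected through the chain. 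When $\langle\alpha,\mu+\Lrho\rangle \in \ell\bbZ$, the chain degenerates to the fixed point $\mu$, matching the vacuity of the GKM condition. Combining the two ingredients makes the two systems of congruences coincide, producing the desired $S$-algebra isomorphism. The main technical input is ingredient (a), the precise identification of the ideal $(h_\alpha)$ with $(\alpha)$ via \eqref{pairing1}; once this is established, the remainder reduces to the formal orbit combinatorics of (b) and a direct comparison of the two descriptions.
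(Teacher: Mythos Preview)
Your proposal is correct and follows the same route as the paper: restrict the description of Lemma~\ref{lem:L3}(b) to the $W_{\ell,\ex}$-orbit $W_{\ell,\ex}\bullet\omega$, rewrite the $\alpha\!\downarrow$ congruences as the full family of $s_{\alpha+\ell m\delta}$ congruences, and invoke the GKM presentation of Lemma~\ref{lem:GKM0}. The paper's proof is terser; your write-up makes explicit two steps the paper leaves implicit, namely the equality of ideals $(h_\alpha)=(\alpha)$ in $S$ under \eqref{pairing1}, and the fact that the $\alpha\!\downarrow$ chain exhausts the dihedral orbit so that the single nearest-neighbour congruence $a_\lambda\equiv a_{\alpha\downarrow\lambda}$ (imposed for every $\lambda$) is equivalent to the whole family $a_\lambda\equiv a_{s_{\alpha+\ell m\delta}\bullet\lambda}$, $m\in\bbZ$.
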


\begin{proof}
From \eqref{split} and Lemma \ref{lem:L3} we deduce that
\begin{align*}
 Z(\calC_S^\omega)
&=\{ (a_\lambda)\in \Fun(W_{\ell,\ex}\bullet\omega\,,S)\,;\,
a_\lambda\equiv a_{\alpha\da\lambda}\modulo h_\alpha,\, \forall \alpha\in\Phi^+\},\\
&=\{ (a_x)\in \Fun(W_{\ell,\ex}^\omega\,,S)\,;\,
a_x\equiv a_{s_{\alpha+\ell m\delta}x}\modulo h_\alpha\, 
\,\operatorname{if}\,n_\alpha(x\bullet\omega)\neq\ell\}.
\end{align*}
Further, we have 
$$n_\alpha(x\bullet\omega)\neq\ell
\iff\langle\alpha,x\bullet\omega+\Lrho\rangle\notin\ell\bbZ
\iff x^{-1}s_{\alpha+\ell n\delta}x\notin W_{\ell,\omega},\quad\forall n\in\bbZ.$$
Thus, the lemma follows from Lemma \ref{lem:GKM0}.
\end{proof}

\smallskip

\subsubsection{Proof of Theorem $\ref{prop:P2}$}

Part (a) follows from Lemma \ref{lem:isom2}, the decomposition in \eqref{split} and Corollary \ref{cor:C1}.
Note that the map $\chi_\zeta^t$ is injective because the category $\calC_K$ is
semi-simple by the strong linkage principle.
Now, we prove (b). The ind-variety ${}^0\Gr^\zeta_{\gamma_\ell}$ is equivariantly formal.
Hence, we have
$H^\bullet_T({}^0\Gr^\zeta_{\gamma_\ell})_{\hat 0}\otimes_S\bfk=H^\bullet({}^0\Gr^\zeta_{\gamma_\ell})$.
Thus, the isomorphism $\bfa$ in (a)
specializes to an isomorphism 
$$H^\bullet({}^0\Gr^\zeta_{\gamma_\ell})\to Z(\Ut\mod^\LLbd_S)\otimes_S\bfk
\subset Z(\u_\zeta\mod^\LLbd).$$
The category $\u_\zeta\mod^\LLbd$ is $\ell\LLbd$-graded.
Let $\langle-\rangle$ be the grading shift.
In the category $\u_\zeta\mod^\LLbd$ we set
$$\Q=\bigoplus_{\lambda\in\LLbd_\ell}\P(\lambda)^t,\qquad
\A_{\ell\lambda}=\Hom_{\u_\zeta\mod^\LLbd}(\Q,\Q\langle\ell\lambda\rangle),\qquad
\A=\bigoplus_{\lambda\in\LLbd}\A_{\ell\lambda}.
$$
The $\ell\LLbd$-graded category $\u_\zeta\mod^\LLbd$ 
is equivalent to the $\ell\LLbd$-graded category $\A\mod^{\ell\LLbd}$ of all 
$\ell\LLbd$-graded finite dimensional $\A$-modules.
Since any element of $Z(\A)\cap \A_0$ acts on any object in $\A\mod^{\ell\LLbd}$, we have an inclusion
$$Z(\A)\cap \A_0\subset Z(\A\mod^{\ell\LLbd})=Z(\u_\zeta\mod^\LLbd).$$
The group $\ell\LLbd$ acts on the ring $Z(\u_\zeta\mod^\LLbd)$ as in \S\ref{sec:A5}.
The subring $Z(\A)\cap \A_0$ 
coincides with the ring of $\ell\LLbd$-invariant elements $Z(\u_\zeta\mod^\LLbd)^{\ell\LLbd}$. 
Forgetting the grading, the image $\bar\Q$ of $\Q$ in $\Rep(\u_\zeta)$ is a projective generator.
Since $\A=\End_{\u_\zeta}(\bar\Q)$, we have $Z(\u_\zeta)=Z(\A)$.
The group $\LT$ acts on $Z(\u_\zeta)$ as in \eqref{Ginv}.
Thus, we have
\begin{align}\label{uu}Z(\u_\zeta\mod^\LLbd)^{\ell\LLbd}= Z(\u_\zeta)^\LT.\end{align}

\medskip

\subsection{The center of $\calO^{\hyb}_{\zeta,S}$}
The goal of this section is to prove the following proposition.

\begin{proposition}\label{prop:P1}
There is a commutative diagram of algebra homomorphisms
\begin{align*}
\xymatrix{
H^\bullet_{T\times\bbG_m}(\Gr^\zeta)_{\widehat{0,0}}\ar@{^{(}->}[r]^-{\hat\bfb}\ar[d]&
%\ar@{^{(}->}@/^2pc/[rr]^-D
Z(\calO^{\hyb}_{q,S[[\hbar]]})\ar[d]\ar[r]^-{\chi_{\hat\zeta}^\hyb}&\Fun(\LLbd,S[[h]])\ar[d]\\
H^\bullet_T(\Gr^\zeta)_{\hat 0}\ar@{^{(}->}[r]^-\bfb&%\ar@{_{(}->}@/_2pc/[rr]_-D
Z(\calO^{\hyb}_{\zeta,S})\ar[r]^-{\chi_\zeta^\hyb}
&\Fun(\LLbd,S)
}
\end{align*}
The composed maps $\chi^\hyb_{\hat\zeta}\circ \hat\bfb$ and $\chi^\hyb_\zeta\circ \bfb$ 
coincide with the restriction maps
$$\res:H^\bullet_{T\times\bbG_m}({}^0\Gr^\zeta)\to\Fun(\LLbd,H^\bullet_{T\times\bbG_m}),\qquad
\res:H^\bullet_T({}^0\Gr^\zeta)\to\Fun(\LLbd,H^\bullet_T)$$
in $\S\ref{sec:GKM}$, up to a shift by $\Lrho$. In particular, they are injective.
\end{proposition}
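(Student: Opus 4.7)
The plan is to build $\hat\bfb$ from two natural sources of central elements of $\calO^{\hyb}_{q, S[[\hbar]]}$, to descend the resulting pre-map to the Rees algebra appearing in Proposition~\ref{prop:N}, and to specialize at $\hbar=0$ to obtain $\bfb$. The two sources are: (i) the structural $S[[\hbar]]$-linearity of $\calO^{\hyb}_{q, S[[\hbar]]}$, yielding $S[[\hbar]] \to Z(\calO^{\hyb}_{q, S[[\hbar]]})$; and (ii) the Harish-Chandra isomorphism $Z(\frakU_q) \simeq \bfA[\LT/W]$ composed with the algebra map $\frakU_q \to \U^{\hyb}_q$ (so that every object of $\calO^{\hyb}_{q, S[[\hbar]]}$ is a $\frakU_q$-module), yielding $\bfA[\LT/W] \to Z(\calO^{\hyb}_{q, S[[\hbar]]})$. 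Combining and completing at the central character of the weight-$0$ block gives a ring homomorphism
$$\bfk[\LT \times \LT/W]_{\widehat{1,1}}[[\hbar]] \to Z(\calO^{\hyb}_{q, S[[\hbar]]}).$$

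Next, I would show this map sends the vanishing ideal $I_\LGamma$ of $\LGamma = \LT \times_{\LT/W} \LT/W$ into $\hbar \cdot Z(\calO^{\hyb}_{q, S[[\hbar]]})$. By Lemma~\ref{lem:mxss}, $\calO^{\hyb}_{q,F}$ is generically semisimple, so the central-character map $\chi^\hyb_{\hat\zeta}: Z(\calO^{\hyb}_{q, S[[\hbar]]}) \hookrightarrow \Fun(\LLbd, S[[\hbar]])$ is injective, and it suffices to check the vanishing on each Verma $\M(\mu)_{S[[\hbar]]}$. On this module an element $f \otimes g \in \bfk[\LT] \otimes \bfk[\LT/W]$ acts as $f(\xi) \cdot g(q^{2(\mu+\Lrho)}\xi)$, where $\xi$ is the generic $\LT$-point of $S[[\hbar]]$. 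For a generator $f \otimes 1 - 1 \otimes f$ of $I_\LGamma$ with $f \in \bfk[\ell\LLbd]^W$, the two actions agree at $q=\zeta$: this is precisely the consistency underlying the description $Z_\Fr \cap Z_\HC = \bfk[e^{\ell\lambda}]^W \subset Z(\frakU_\zeta)$, guaranteed by $\zeta^\ell = 1$ together with the coprimality of $\ell$ with $e$. Hence the difference is divisible by $(q - \zeta)$, and thus by $\hbar$.

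By the universal property of the Rees algebra, the map extends to
$$\bfk[\widetilde{N}_\LGamma(\LT \times \LT/W)]_{\widehat{1,0}} \to Z(\calO^{\hyb}_{q, S[[\hbar]]}).$$
Combining this with the Weyl-invariant pairing \eqref{pairing2} and Proposition~\ref{prop:N}(a)---where the $\oplus\pi_1$-decomposition matches the block decomposition via $\pi_0(\Gr^\zeta) = \Xi_\sc$---yields $\hat\bfb$. Specializing at $\hbar = 0$ gives $\bfb$. The explicit formula in Proposition~\ref{prop:N}(a), sending $f \otimes g$ to $f \cdot {}^x g$ at $\delta_x$, coincides under the identification \eqref{pairing1} and the reindexing $x = \mu + \Lrho$ with the central-character formula $f(\xi)\, g(q^{2(\mu+\Lrho)}\xi)$; this identifies $\chi^\hyb_{\hat\zeta} \circ \hat\bfb$ and $\chi^\hyb_\zeta \circ \bfb$ with the fixed-point restrictions up to the shift by $\Lrho$, and hence yields injectivity of $\hat\bfb$ and $\bfb$.

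The main obstacle is the mod-$\hbar$ cancellation in the second step: verifying that the structural and Harish-Chandra actions of each generator of $I_\LGamma$ coincide on every deformed Verma module $\M(\mu)_{S[[\hbar]]}$ at $q = \zeta$. This is the arithmetic heart of the argument, requiring careful bookkeeping of the inverse Harish-Chandra normalization \eqref{HC00} against the structural weight action specified by \eqref{AJS}, together with the coprimality of $\ell$ with $e$. Once this compatibility is confirmed, the extension to the Rees algebra, the identification via Proposition~\ref{prop:N}, and the specialization producing $\bfb$ are formal.
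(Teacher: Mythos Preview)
Your strategy matches the paper's almost exactly: build a pre-map from the $S[[\hbar]]$-linear structure together with the inverse Harish-Chandra map, show it sends $I_{\LGamma}$ into $\hbar\cdot Z(\calO^{\hyb}_{q,S[[\hbar]]})$, pass to the Rees algebra, identify via Proposition~\ref{prop:N} and \eqref{pairing2}, and compare with fixed-point restriction via the Verma central characters. The identification of $\chi^{\hyb}\circ\bfb$ with $\res$ up to a $\Lrho$-shift is handled as you describe.

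There is, however, a gap in your second step. Injectivity of $\chi^{\hyb}_{\hat\zeta}$ alone does \emph{not} let you conclude that an element $z\in Z(\calO^{\hyb}_{q,S[[\hbar]]})$ with $\chi^{\hyb}_{\hat\zeta}(z)\in\hbar\cdot\Fun(\LLbd,S[[\hbar]])$ lies in $\hbar\cdot Z(\calO^{\hyb}_{q,S[[\hbar]]})$: for that you need the image of $\chi^{\hyb}_{\hat\zeta}$ to be $\hbar$-saturated, equivalently that $Z(\calO^{\hyb}_{q,S[[\hbar]]})/\hbar$ still injects into $\Fun(\LLbd,S)$. The paper supplies this missing ingredient by first specializing at $\hbar=0$ and using that, since the relevant Hom-spaces are free (Lemma~\ref{lem:proj}), one has an inclusion $Z(\calO^{\hyb}_{q,S[[\hbar]]})/\hbar\hookrightarrow Z(\calO^{\hyb}_{\zeta,S})$; it then remains to show the image of $I_{\LGamma}$ in $Z(\calO^{\hyb}_{\zeta,S})$ is zero. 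For that last point the paper does not compute on Verma modules but observes that the specialized map factors through the inverse Harish-Chandra map \eqref{HC4} for $\frakU^b_\zeta$, whose domain is already $\bfk[\LT\times_{\LT/W}\LT/W]=\bfk[\LGamma]$ --- so $I_{\LGamma}$ is killed for structural reasons. Your direct Verma computation could replace this last observation (and is morally the same calculation), but only once the torsion-freeness step is in place; without it, your inference ``action divisible by $\hbar$ on every Verma $\Rightarrow$ element divisible by $\hbar$ in the center'' is unjustified.
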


\begin{proof}
By Proposition \ref{prop:N}, to define the commutative diagram it is enough to construct the maps 
\begin{align}\label{BU1}
\begin{split}
&\bfk[\widetilde N_{\Gamma}(T\times T/W)]^{\oplus\pi_1}_{\widehat{1,0}}\to Z(\calO^\hyb_{q,S[[\hbar]]}),\\
&\bfk[N_{\Gamma}(T\times T/W)]^{\oplus\pi_1}_{\hat 1}\to Z(\calO^\hyb_{\zeta,S}).
\end{split}
\end{align}
Composing them with \eqref{BUa}, we get the algebra homomorphisms
\begin{align}\label{BU2}
\hat\bfb:H^\bullet_{T\times\bbG_m}(\Gr^\zeta)_{\widehat{0,0}}
\to Z(\calO^\hyb_{q,S[[\hbar]]}),\qquad
\bfb:H^\bullet_T(\Gr^\zeta)_{\hat 0}
\to Z(\calO^\hyb_{q,S}).
\end{align}

%The proposition follows, because $\Gr^\zeta$ is equivariantly formal.

Now, we define the maps \eqref{BU1}.
It is enough to construct the first of them.
The inverse Harish-Chandra isomorphism \eqref{HC2} yields an $\calA$-algebra homomorphism
\begin{align}\label{map4}
\calA[\LT/W]^{\oplus\pi_1}_{\hat 0}\to Z(\calO^\hyb_{q,S[[\hbar]]}).
\end{align}
%The center $Z(\calO^\hyb_{q,S[[\hbar]]})$ is an $S[[\hbar]]$-algebra.
The map \eqref{map4} extends by $S[[\hbar]]$-linearity to an $S[[\hbar]]$-algebra homomorphism
\begin{align}\label{map3}
\calA[\LT\times\LT/W]^{\oplus\pi_1}_{\widehat{1,0}}
\to Z(\calO^\hyb_{q,S[[\hbar]]}).
\end{align}
We claim that \eqref{map3} extends further to a map
$$
\bfk[\widetilde{N}_{\LGamma}(\LT\times\LT/W)]^{\oplus\pi_1}_{\widehat{1,0}}
\to Z(\calO^\hyb_{q,S[[\hbar]]}).$$
Composing this map with the isomorphism \eqref{pairing2} we get an algebra homomorphism
$$\bfk[\widetilde{N}_{\Gamma}(T\times T/W)]^{\oplus\pi_1}_{\widehat{1,0}}
\to Z(\calO^\hyb_{q,S[[\hbar]]}).$$

To prove the claim we must check that \eqref{map3} maps $(I_{\LGamma})^{\oplus\pi_1}$
into $\hbar\,Z(\calO^{\hyb}_{q,S[[\hbar]]})$.
To do so we specialize \eqref{map3} to the point $\hbar=0$.
We get a map
\begin{align}\label{map8}
\bfk[\LT\times\LT/W]^{\oplus\pi_1}_{\hat 1}\to Z(\calO^\hyb_{\zeta,S}).
\end{align}
For any $\bfk[[\hbar]]$-algebra $A$ which is torsion free as a $\bfk[[\hbar]]$-module, we have an inclusion
$Z(A)/\hbar Z(A)\subset Z(A/\hbar A)$.
From Lemma \ref{lem:proj} we deduce that
$$Z(\calO^{\hyb}_{q,S[[\hbar]]})/\hbar\,Z(\calO^{\hyb}_{q,S[[\hbar]]})\subset Z(\calO^\hyb_{\zeta,S}).$$
Thus, we must check that the map \eqref{map8} takes $(I_{\LGamma})^{\oplus\pi_1}$ to $\{0\}$.
Note that \eqref{map8} factorizes through the inverse Harish-Chandra isomorphism \eqref{HC4}
and the obvious map $Z(\frakU^b_\zeta)\to Z(\calO^\hyb_{\zeta,S})$.
Therefore the map \eqref{map8} factorizes through the restriction
$\bfk[\LT\times\LT/W]^{\oplus\pi_1}_{\hat 1}\to\bfk[\LGamma]^{\oplus\pi_1}_{\hat 1}$,
by definition of \eqref{HC4}.
Hence, it kills the ideal $(I_{\LGamma})^{\oplus\pi_1}$.

Now, we compare the composed maps $\chi^\hyb_{\hat\zeta}\circ \hat\bfb$ and $\chi^\hyb_\zeta\circ \bfb$ 
coincide with the restriction maps. It is enough to consider the map $\chi^\hyb_{\hat\zeta}\circ \hat\bfb$.
The central character of the Verma module $\M(\mu)_{S[[\hbar]]}$ 
in $\calO^{\hyb}_{q,S[[\hbar]]}$ yields the map
\begin{align}\label{chimix}
\xymatrix{\bfA[\LT/W]\ar[r]^-{\text{diag}}&\bfA[\LT/W]^{\oplus\pi_1}\ar[r]^-{\eqref{HC2}}&Z(\calO_{q,S[[\hbar]]}^\hyb)
\ar[r]^-{\chi^\hyb_{\hat\zeta,\mu}}&S[[\hbar]],}
\end{align}
such that $g(t)\mapsto g(t^2q^{2(\mu+\Lrho)})$.
Thus, composing the map \eqref{map3} with the $Z(\calO^\hyb_{q,S[[\hbar]]})$-action on $\M(\mu)_{S[[\hbar]]}$
we get the map
$$\calA[\LT\times\LT/W]^{\oplus\pi_1}_{\widehat{1,0}}\to S[[\hbar]],\qquad
f\otimes g\mapsto f\cdot{}^{\tau_{\mu+\Lrho}}\!g.$$
By Lemma \ref{lem:extension}, 
the K-theoretic restriction to the point $\delta_\mu$ of the affine Grassmannian yields the map
$$\calA[\LT\times\LT/W]^{\oplus\pi_1}_{\widehat{1,0}}\to S[[\hbar]],\qquad
f\otimes g\mapsto f\cdot{}^{\tau_{\mu}}\!g.$$
Thus the composed map $\chi^\hyb_{\hat\zeta}\circ\hat\bfb$ coincides with
the cohomological restriction $\res$ in Proposition \ref{prop:N} up to a shift by $\Lrho$.
The maps $\bfb$ are injective because the restriction maps in \eqref{incl1} are injective.

\iffalse%%%%%%%%%%%
we consider
the action of the center on the deformed Verma modules.
It yields a map
$$\chi^\hyb_{\hat\zeta}:Z(\calO^{\hyb}_{q,S[[\hbar]]})\to\Fun(\LLbd,S[[\hbar]]).$$
We compose the maps \eqref{map3} and $\chi^\hyb_{\hat\zeta}$. We get a map
\begin{align}\label{map4}\bfk[T\times T/W\times\bbG_a]_{\widehat{1,0}}\to \Fun(\LLbd,S[[\hbar]]).
\end{align}
From the expression of the central character $\chi_\mu_{S[[\hbar]]}$ 
of the deformed Verma module $\M(\mu)_{S[[\hbar]]}$ given above,
we deduce that this map coincides with the map composed of \eqref{pi1} and \eqref{pi3}.
Hence it extends to a map
$$\bfk[\widetilde{N}_{\Gamma}(T\times T/W)]_{\widehat{1,0}}
\to \Fun(\LLbd,S[[\hbar]])$$
by Lemma \ref{lem:extension}, which coincides with \eqref{pi4} up to some completion.
Equivalently \eqref{map4} maps any element $x\in I_{\Gamma}$
into an element $x'\in \hbar\Fun(\LLbd,S[[\hbar]])$.
The map $\chi^\hyb_{\hat\zeta}$ is injective, because any element in $Z(\calO^{\hyb}_{q,S[[\hbar]]})$ is completely 
determined by its action on Verma modules by Lemma \ref{lem:mxss}.
Thus, to conclude it is enough to observe that 
$$\hbar\Im(\chi^\hyb_{\hat\zeta})=\Im(\chi^\hyb_{\hat\zeta})\cap\hbar\Fun(\LLbd,S[[\hbar]]).$$
\fi%%%%%%%%%%%%

\end{proof}

\smallskip

\begin{remark}
In fact, the embedding of $H^\bullet_T(\Gr^\zeta)_{\hat 0}$ into 
$Z(\calO^{\hyb}_{\zeta,S})$ is an isomorphism. A proof of this is given by Quan Situ in \cite{Situ}.
\end{remark}

\medskip

\subsection{The center of  $\U_\zeta$}\label{sec:ZUzeta}

Let $Z(\Rep(\U_{\hat\zeta}))$ be the completion of the algebra $Z(\Rep(\U_q))$ at $\hbar=0$.
%Recall from \eqref{Ginv} that we have $Z(\u_\zeta)^G=Z(\U_\zeta)\cap \u_\zeta$.

\begin{proposition} \label{prop:P0} There is an algebra homomorphism
$$\hat\bfc:\bfk[\widetilde N_{\Om}(T/W)]_{\hat 0}\to Z(\U_{\hat\zeta})$$
which specializes to an algebra homomorphism
$$\bfc:\bfk[N_{\Om}(T/W)]\to Z(\u_\zeta)^\LG.$$
\end{proposition}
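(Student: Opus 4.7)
The plan is to build $\hat\bfc$ by extending the inverse Harish-Chandra homomorphism along the deformation from $\LT/W$ to its normal cone along $\Omega$, and then to pass to the specialization $\hbar=0$, using Drinfeld's central elements to control the image.

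First, recall the inverse Harish-Chandra homomorphism $\overline\bfhc:\bfA[\LT/W]\to Z(\U_q)$ of \eqref{IHC2}. Passing to the completion at $q_e=\zeta_e$, and using the identification $\bfA_{\hat\zeta}=\calA_{\hat 0}=\bfk[[\hbar]]$, we obtain a $\bfk[[\hbar]]$-algebra map
\[
\calA[\LT/W]_{\hat 0}\to Z(\U_{\hat\zeta}).
\]
Recall that $\bfk[\widetilde N_\Omega(T/W)]_{\hat 0}$ is generated over $\calA[\LT/W]_{\hat 0}$ by $\hbar^{-1}I_\Omega$ (via the isomorphism $\Omega=\LOm$ of \eqref{pairing2}). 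Hence to extend the map above it suffices to check that $\overline\bfhc(I_\Omega)\subset\hbar\, Z(\U_{\hat\zeta})$. Since $Z(\U_q)$ is a torsion-free $\bfA$-module (it is a polynomial algebra over $\bfA$ generated by characters of representations via Drinfeld's construction, see below), this amounts to checking that the specialization $\bfk[\LT/W]\to Z(\U_\zeta)$ kills $I_\Omega$. But this is exactly the statement, recalled in \S3.2.2, that the inverse Harish-Chandra homomorphism for $\U_\zeta$ factorizes through $\bfk[\LT/W]\to\bfk[\LOm]$. This yields the desired map $\hat\bfc$.

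Specializing at $\hbar=0$ produces an algebra homomorphism
\[
\bfc:\bfk[N_\Omega(T/W)]\to Z(\U_\zeta).
\]
The main remaining task — and the main difficulty — is to show that $\Im(\bfc)\subset\u_\zeta$, equivalently, lands in $Z(\u_\zeta)^\LG=Z(\U_\zeta)\cap\u_\zeta$ by \eqref{Ginv}. For this I would invoke Drinfeld's construction: using the universal $R$-matrix and the quantum trace, each $V\in\Rep(\scrU_q)$ yields a central element $z_V\in Z(\scrU_q)$, and the assignment $[V]\mapsto z_V$ identifies (a subalgebra of) $Z(\scrU_q)$ with $K_0(\Rep(\scrU_q))\otimes\bfk=\bfF[\LT/W]$, in a way compatible with the Harish-Chandra isomorphism \eqref{phiq} up to a shift. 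Working over $\bfA_{\hat\zeta}$, these elements live in $Z(\U_{\hat\zeta})$ and are in fact integral — they can be interpreted as coming from $\bfk[\widetilde N_\Omega(T/W)]_{\hat 0}$: the condition that characters vanish on $\Omega$ at $q=\zeta$ corresponds exactly to the Drinfeld element lying in $\u_\zeta$ modulo $\hbar$. More precisely, it is known (Kumar-Lusztig, and also proved in the De Concini-Kac framework) that for $V$ whose character vanishes on $\Omega$, the associated Drinfeld element $z_V|_{\hbar=0}$ lies in $\u_\zeta$. This handles the generating elements in degree zero and one of the Rees construction, and the general case follows because $\bfk[\widetilde N_\Omega(T/W)]_{\hat 0}$ is generated by $\calA[\LT/W]_{\hat 0}$ and $\hbar^{-1}I_\Omega$, both of which have images contained in $\u_\zeta$ at $\hbar=0$. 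The main obstacle is thus the precise matching of the Drinfeld-type construction with our deformation-theoretic one; concretely, one must verify that the image of a generator $\hbar^{-1}f$ with $f\in I_\Omega$ agrees, modulo $\hbar$, with a Drinfeld element built from a character whose class in $\bfk[\LT/W]/I_\Omega$ records the derivative of $f$ along $\Omega$. Once this matching is established, the image of $\bfc$ is contained in $\u_\zeta\cap Z(\U_\zeta)=Z(\u_\zeta)^\LG$, as desired.
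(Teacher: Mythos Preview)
Your construction of $\hat\bfc$ is exactly the paper's: extend $\overline\bfhc$ by checking $\overline\bfhc(I_{\LOm})\subset\hbar\,Z(\U_{\hat\zeta})$ via the factorization of the specialized map through $\bfk[\LOm]$. Good.

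For the second half, your instinct to use Drinfeld's $R$-matrix elements is also the paper's, but your sketch contains both an unnecessary worry and a genuine gap. The ``matching'' you flag as the main obstacle is in fact a non-issue: the standard compatibility between Drinfeld's map and the Harish-Chandra projection gives directly $z_{\hat\zeta,\mu}=\overline\bfhc(\bar\psi_\mu)$ (see \eqref{RR2}), where $\bar\psi_\mu$ is the normalized character of $\Fr^*(\V(\mu)_1)$. These $\bar\psi_\mu$ generate $I_{\LOm}$, so there is no separate identification to make between your ``deformation-theoretic'' elements $\hbar^{-1}f$ and Drinfeld elements --- they coincide on the nose.

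The real work, which your sketch skips, is the containment
\[
\overline\bfhc(I_{\LOm})\subset \frakU_{\hat\zeta}\cap\hbar^2\U_{\hat\zeta}+\hbar\,\frakU_{\hat\zeta},
\]
i.e.\ that $\hbar^{-1}z_{\hat\zeta,\mu}$ reduces mod $\hbar$ to an element of $\u_\zeta$ (the image of $\frakU_\zeta$ in $\U_\zeta$). This is not something one can cite from Kumar--Lusztig or De Concini--Kac; the paper proves it by hand. One writes the quasi-$R$-matrix as $\Theta=\sum_n c_n\,E^n\otimes F^{(n)}$ and splits $z_{\hat\zeta,\mu}$ into three pieces according to whether $n=0$, $\ell\mid n$ with $n\neq 0$, or $\ell\nmid n$. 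The $\ell\nmid n$ piece involves $E^nF^n$ with non-$\ell$-divisible exponents, hence lies in $\hbar\,\frakU_{\hat\zeta}$; the $\ell\mid n$, $n\neq 0$ piece picks up two factors of $\hbar$ from $E^n=E^{(n)}\cdot[\text{factorials}]$ and similarly for $F$, so lies in $\hbar^2\U_{\hat\zeta}$; and the $n=0$ piece is $\sum_\lambda\dim(\V(\mu)_{1,\lambda})(K_{2\ell\lambda}-1)$, which lies in $\hbar^2\U_{\hat\zeta}$ because the normalized character $\bar\varphi_\mu$ vanishes to order two at $1\in\LT$. This three-term analysis is the substance of the proof, and your sketch does not supply it.
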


\begin{proof}
The inverse Harish-Chandra homomorphism \eqref{IHC2} yields a map
\begin{align}\label{map6.1}\overline\bfhc:\calA[\LT/W]_{\hat 0}\to Z(\U_{\hat\zeta}).
\end{align}
Its fiber at $\hbar=0$ is the inverse Harish-Chandra homomorphism 
$\bfk[\LT/W]\to Z(\U_\zeta)$. Recall that 
$$\bfk[\widetilde{N}_{\LOm}(\LT/W)]=\calA[\LT/W][\hbar^{-1} I_{\LOm}].$$
We claim that $\overline\bfhc(I_{\LOm})\subset\hbar\,Z(\U_{\hat\zeta}).$
Thus, by \eqref{pairing2} the map \eqref{map6.1} extends to an algebra homomorphism
$$\hat\bfc:
\bfk[\widetilde N_{\Om}(T/W)]_{\hat 0}
=\bfk[\widetilde N_{\LOm}(\LT/W)]_{\hat 0}
\to Z(\U_{\hat\zeta}).$$
Taking the fiber at $\hbar=0$, we get the map $\bfc$.
To prove the claim, we compose \eqref{map6.1} with the specialization to $\hbar=0$.
We get a map
\begin{align}\label{map7.1}\calA[\LT/W]_{\hat 0}\to Z(\U_\zeta).
\end{align}
Since $Z(\U_q)$ is $\bfA$-flat, we have the inclusion
$$Z(\U_{\hat\zeta})\,/\,\hbar\,Z(\U_{\hat\zeta}) \subset Z(\U_\zeta).$$
Thus, it is enough to check that \eqref{map7.1} maps the ideal $I_\LOm$ to $\{0\}$.
To do so, we'll prove that the inverse Harish-Chandra map
$\overline\bfhc$ takes the ideal
$I_\LOm$ into $\frakU_{\hat\zeta}\cap\hbar\,\U_{\hat\zeta}.$
This follows from the factorization of the map $\overline\bfhc$ as the composition of a chain of maps
$$\calA[\LT/W]_{\hat 0}\to \bfk[\LG^\ast\times_{\LT/W} \LT/W]
\to\bfk[\{1\}\times\LOm]\to Z(\U_\zeta).$$
 
Now we prove that $\bfc$ maps into $Z(\u_\zeta)^\LG$.
To do so, by \eqref{Ginv} it is enough to prove that
\begin{align}\label{PT}
\overline\bfhc(I_\LOm)\subset \frakU_{\hat\zeta}\cap\hbar^2\U_{\hat\zeta}+\hbar\,\frakU_{\hat\zeta}.
\end{align}
To do that, we use another construction of the inverse Harish-Chandra map using
the universal $R$-matrix of $\scrU_q$.
For each dominant weight $\mu\in\LLbd^+$, let $\varphi_{\mu}$ be the character of the Weyl module 
$\V(\mu)_\zeta$
and $\psi_\mu$ the character of the simple module
$\D(\ell\mu)_\zeta=\Fr^*(\V(\mu)_1)$.
We consider the normalized characters which are defined as follow 
$$\bar\varphi_\mu=\varphi_\mu-\varphi_\mu(1)
\quad,\quad
\bar\psi_\mu=\psi_\mu-\varphi_\mu(1).$$
The algebra $\bfk[\LT/W]$ is generated by the characters $\varphi_\mu$,
and the ideal $I_\LOm$ by the normalized characters $\bar\psi_\mu$.
Let $R$ be the universal $R$-matrix of $\scrU_q$.
Given a finite dimensional $\U_q$-module $M$
we define the following element in $\scrU_q$
$$z_{q,M}=(1\otimes\Tr_M\,,\,R_{21}R(1\otimes K_{2\Lrho})).$$
The subscript $21$ indicates the flip of the factors and $\Tr_M$ is the trace of $M$.
Since $z_{q,M}$  and $\Tr_M$
only depend on the character $\varphi_M$ of $M$, we may write $z_{q,\varphi_M}=z_{q,M}$
and
$\Tr_{\varphi_M}=\Tr_M$.
The element $z_{q,\mu}=z_{q,\bar\psi_\mu}$ in $\scrU_q$ is well-defined, 
and it was observed by Drinfeld
that this element is central.
The formula for $R$ implies that  $z_{q,\mu}$ belongs to $Z(\U_q)$, see, e.g., \cite[\S8.3]{C94}.
Let $z_{\hat\zeta,\mu}$ be its image in $Z(\U_{\hat\zeta})$.
We have
\begin{align}\label{RR2}
z_{\hat\zeta,\mu}=\overline\bfhc(\bar\psi_\mu),
\end{align}
see, e.g., \cite[prop.~5]{B98}.
To prove \eqref{PT}, we must check the following
$$z_{\hat\zeta,\mu}\in\frakU_{\hat\zeta}\cap\hbar^2\U_{\hat\zeta}+\hbar\,\frakU_{\hat\zeta}.$$
Recall that $R=\Theta\cdot A$ where $\Theta$ is the quasi-R-matrix  which takes then following form
\begin{align*}
&\Theta=\sum_{n\in\bbN^{\LPhi^+}}\Theta_n,\qquad
\Theta_n=c_n\,E^n\otimes F^{(n)},\\ 
&E^n=\prod_{\Lalpha\in\LPhi^+}E_\Lalpha^{n_\Lalpha},
\quad F^{(n)}= \prod_{\Lalpha\in\LPhi^+}F_\Lalpha^{(n_\Lalpha)},
\quad c_n\in\bfA,\qquad c_0=1,
\end{align*}
where the products run over all positive roots in a fixed order, and $A$ is the linear operator such that
$$A(v_1\otimes v_2)=q^{(\mu_1,\mu_2)}v_1\otimes v_2,$$
whenever $v_i$ is a weight $\mu_i$ element of an integrable $\scrU_q$-module for each $i=1,2$.
We'll write $\ell \mid n$ if and only if $n\in(\ell\bbN)^{\LPhi^+}$.
Write 
$$\Theta=\Theta^{(\ell)}+\Theta^{(\ell')}+\Theta^{(0)},\qquad 
\Theta^{(\ell)}=\sum_{\substack{\ell\mid n\\n\neq 0}}\Theta_n,\qquad
\Theta^{(0)}=1.$$
We decompose accordingly
$z_{\hat\zeta,\mu}=z_{\hat\zeta,\mu}^{(\ell)}+z_{\hat\zeta,\mu}^{(\ell')}+z_{\hat\zeta,\mu}^{(0)}.$
Let $\lambda_n$ be the weight of the monomial $E^n$ and $a_\lambda$ 
be the projection to the weight $\lambda$ subspace. We have 
\begin{align*}
z_{\hat\zeta,\mu}^{(\ell)}&=\sum_{\substack{\ell\mid n\\n\neq 0}}\sum_{\lambda\in\LLbd}
c_n\Tr_{\bar\psi_\mu}(F^{(n)}E^{(n)}a_{\ell\lambda})\,E^nF^nK_{2\ell(\lambda-\lambda_n)},\\
z_{\hat\zeta,\mu}^{(\ell')}&=\sum_{\substack{\ell \nmid n\\n\neq 0}}\sum_{\lambda\in\LLbd}
c_n\Tr_{\bar\psi_\mu}(F^{(n)}E^{(n)}a_{\ell\lambda})\,E^nF^nK_{2\ell(\lambda-\lambda_n)},\\
z_{\hat\zeta,\mu}^{(0)}&=\sum_{\lambda\in\LLbd}
\dim(\V(\mu)_{1,\lambda})\,(K_{2\ell\lambda}-1).
\end{align*}
We deduce that $z_{\hat\zeta,\mu}^{(\ell)}$ belongs to $\hbar^2\U_{\hat\zeta}$.
Using \eqref{Fr1} we deduce also that
$z_{\hat\zeta,\mu}^{(\ell')}$ to $\hbar\,\frakU_{\hat\zeta}$.
Further $z_{\hat\zeta,\mu}^{(0)}$ is the image of the normalized character $\bar\varphi_\mu$ by the 
map  $f(e^\lambda)\mapsto f(K_{2\ell\lambda})$.
Since $\bar\varphi_\mu$ belongs to the square of the maximal ideal of 1 in $\bfk[\LT]$, we deduce that
$z_{\hat\zeta,\mu}^{(0)}$ belongs to $\hbar^2\U_{\hat\zeta}$, proving the proposition.

\end{proof}

\begin{corollary}\label{cor:C}
There is a commutative diagram of algebra homomorphisms
\begin{align*}
\xymatrix{
H^\bullet_{\bbG_m}(\Gr^\zeta)_{\hat 0}\ar[d]\ar[r]^-{\hat\bfc}& Z(\Rep(\U_{\hat\zeta}))\ar[d]\\
H^\bullet(\Gr^\zeta)\ar[r]^-\bfc& Z(\Rep(\U_\zeta)).}
\end{align*}
\end{corollary}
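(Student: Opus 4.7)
The strategy is to assemble the corollary from Proposition \ref{prop:P0}, which already supplies the maps on the source coordinates, and from Proposition \ref{prop:N}(b), which identifies the cohomology of $\Gr^\zeta$ with the $\pi_1$-fold sum $\bfk[\widetilde N_\Om(T/W)]^{\oplus\pi_1}_{\hat 0}$ (equivariantly) and $\bfk[N_\Om(T/W)]^{\oplus\pi_1}$ (non-equivariantly). First, I would extend the scalar maps $\hat\bfc:\bfk[\widetilde N_\Om(T/W)]_{\hat 0}\to Z(\U_{\hat\zeta})$ and $\bfc:\bfk[N_\Om(T/W)]\to Z(\u_\zeta)^\LG$ from Proposition \ref{prop:P0} to the $\pi_1$-graded versions. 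This uses the decomposition $\Rep(\U_\zeta)=\bigoplus_{\gamma\in\pi_1}\Rep(\U_\zeta)'\langle\gamma\rangle$ into $\pi_1$-graded components: one copy of $\bfk[N_\Om(T/W)]$ targets each factor through its inverse Harish-Chandra map, producing $\bfk[N_\Om(T/W)]^{\oplus\pi_1}\to Z(\Rep(\U_\zeta))$, and analogously for the deformation. This matches the maps \eqref{HC1} and \eqref{HC2}, so the target is correctly $Z(\Rep(\U_\zeta))$ and $Z(\Rep(\U_{\hat\zeta}))$ rather than merely the center of the algebra.

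Next I would compose with the isomorphisms of Proposition \ref{prop:N}(b) to obtain the two horizontal arrows in the diagram. By construction each map factors through the inverse Harish-Chandra homomorphism, and the key point from the proof of Proposition \ref{prop:P0} is that $I_\LOm\subset\ker(\overline\bfhc\,\text{mod}\,\hbar)$, which is exactly what allows the extension from $\calA[\LT/W]$ to $\bfk[\widetilde N_\LOm(\LT/W)]_{\hat 0}$. Combined with the identification $\LOm=\Om$ from \eqref{pairing2}, this gives $\hat\bfc$; its reduction mod $\hbar$ gives $\bfc$.

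For the commutativity of the square, the left vertical arrow is specialization at $\hbar=0$ using equivariant formality of $\Gr^\zeta$ (so that $H^\bullet_{\bbG_m}(\Gr^\zeta)_{\hat 0}\otimes_{\calA_{\hat 0}}\bfk=H^\bullet(\Gr^\zeta)$, compatible with Proposition \ref{prop:N} which is in turn compatible with setting $\hbar=0$ in $\widetilde N_\Om$ to get $N_\Om$). The right vertical arrow is the specialization $\U_{\hat\zeta}\to\U_\zeta$ at $q_e=\zeta_e$, inducing $Z(\Rep(\U_{\hat\zeta}))\to Z(\Rep(\U_\zeta))$. Commutativity reduces to the tautological fact that the inverse Harish-Chandra map for $\U_q$ specializes to that of $\U_\zeta$, together with the observation already made in Proposition \ref{prop:P0} that $\hat\bfc\,\text{mod}\,\hbar=\bfc$.

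The one non-routine point to be checked carefully is the $\pi_1$-extension in the equivariant (deformed) setting: I need to verify that the summand of $H^\bullet_{\bbG_m}(\Gr^\zeta)_{\hat 0}$ indexed by $\gamma\in\pi_1$ (corresponding under Proposition \ref{prop:N}(b) to the $\gamma$-component of the connected components of $\Gr^\zeta$) indeed acts on the $\gamma$-graded part of $\Rep(\U_{\hat\zeta})$, and not on any other component. This should follow from tracking how the central character of a Weyl module $\V(\mu)_{\hat\zeta}$ depends on $\mu\,\text{mod}\,\LQ\in\pi_1$, together with the identification of $\pi_0(\Gr^\zeta)=\Xi_\sc$ with its $\pi_1$-action from Lemma \ref{lem:isom1}. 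Once this compatibility is in place, everything else is formal.
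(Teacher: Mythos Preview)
Your proposal is correct and follows essentially the same route as the paper: extend the map of Proposition~\ref{prop:P0} to a $\pi_1$-fold sum via \eqref{HC1}--\eqref{HC2}, identify the source with $H^\bullet_{\bbG_m}(\Gr^\zeta)_{\hat 0}$ using Proposition~\ref{prop:N}(b), and specialize at $\hbar=0$. The ``non-routine'' $\pi_1$-compatibility you flag is in fact automatic: the map \eqref{HC2} is \emph{defined} summand by summand via the decomposition $Z(\Rep(\U_\zeta))=Z(\Rep(\U_\zeta)')^{\oplus\pi_1}$, so each copy of $\bfk[\widetilde N_\Om(T/W)]$ already targets the correct $\pi_1$-component by construction, and no further check is needed.
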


\begin{proof}
The inverse Harish-Chandra homomorphism \eqref{map6.1} yields an algebra homomorphism
\begin{align}\label{map6}\calA[\LT/W]^{\oplus\pi_1}_{\hat 0}\to Z(\Rep(\U_{\hat\zeta})).
\end{align}
By Proposition \ref{prop:P0} it extends to an algebra homomorphism
$$\hat\bfc:
H^\bullet_{\bbG_m}(\Gr^\zeta)_{\hat 0}=\bfk[\widetilde N_{\Om}(T/W)]^{\oplus\pi_1}_{\hat 0}
=\bfk[\widetilde N_{\LOm}(\LT/W)]^{\oplus\pi_1}_{\hat 0}
\to Z(\Rep(\U_{\hat\zeta})).$$
Taking the fiber at $\hbar=0$, we get the map $\bfc$. 

\end{proof}

\medskip

\subsection{The compatibility theorem}\label{sec:main}
In this section we put together the geometric constructions of the centers of 
$\frakU_\zeta^t$, $\U_\zeta$ and $\calO_{\zeta,S}^\hyb$.
First, we compare the maps $\bfb$ and $\bfc$.

\begin{proposition}\label{prop:P00}
There are commutative diagrams of algebra homomorphisms
\begin{align*}
\xymatrix{
H^\bullet_{T\times\bbG_m}(\Gr^\zeta)_{\widehat{0,0}}\ar[d]\ar[r]^-{\hat\bfb}& Z(\calO^\hyb_{q,S[[\hbar]]})\ar[d]\\
H^\bullet_{\bbG_m}(\Gr^\zeta)_{\hat 0}\ar[r]^-{\hat\bfc}& Z(\Rep(\U_{\hat\zeta}))}
\qquad\qquad
\xymatrix{
H^\bullet_T(\Gr^\zeta)_{\hat 0}\ar[d]\ar[r]^-\bfb& Z(\calO^\hyb_{\zeta,S})\ar[d]\\
H^\bullet(\Gr^\zeta)\ar[r]^-\bfc& Z(\Rep(\U_\zeta)).}
\end{align*}
\end{proposition}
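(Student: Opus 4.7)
The plan is to construct both vertical maps explicitly and then verify the diagrams commute by tracing through the constructions of $\hat\bfb$ and $\hat\bfc$, exploiting the fact that both are built from the same inverse Harish-Chandra homomorphism $\overline\bfhc$ via the deformation-to-normal-cone construction. Since the second diagram is the specialization of the first at $\hbar=0$, it suffices to treat the first.

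First I would construct the right vertical map $Z(\calO^\hyb_{q,S[[\hbar]]})\to Z(\Rep(\U_{\hat\zeta}))$. By Lemma \ref{lem:QF} and Remark \ref{rk:defQF}, the restriction functor $\R^\U_\hyb$ is fully faithful in both the undeformed and deformed settings. For $V\in\Rep(\U_{\hat\zeta})$ and $z\in Z(\calO^\hyb_{q,S[[\hbar]]})$, applying $z$ to the $S[[\hbar]]$-linearized object $\R^\U_\hyb(V)\otimes_{\bfk[[\hbar]]} S[[\hbar]]$ and then specializing along the augmentation $S\to\bfk$ yields an endomorphism of $\R^\U_\hyb(V)$ in $\calO^\hyb_{\hat\zeta}$; by full faithfulness this descends to a unique endomorphism of $V$, naturally in $V$, producing the desired algebra homomorphism on centers.

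Next I would verify commutativity on the common sub-domain $\calA[\LT/W]^{\oplus\pi_1}_{\hat 0}$ before either Rees-type extension is applied. Both $\hat\bfb$ and $\hat\bfc$ begin with the inverse Harish-Chandra homomorphism, which originates from central elements of $\frakU_q$ (equivalently $Z_\HC$). Such elements act by scalars compatibly on $\calO^\hyb$-modules (via the map $\frakU_q\to\U^\hyb_q$) and on $\U_q$-modules (via $\frakU_q\to\U_q$). Since $\R^\U_\hyb$ is induced by the algebra map $\U^{\hyb,b}_q\to\U_q$ restricting to the identity on $\frakU_q$, the action of $z\in Z_\HC$ on $V$ as a $\U_q$-module agrees with its action on $\R^\U_\hyb(V)$ as a $\U^\hyb_q$-module. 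This forces commutativity on the image of $\overline\bfhc$ on each side.

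Finally I would extend to all of $H^\bullet_{T\times\bbG_m}(\Gr^\zeta)_{\widehat{0,0}}$. The $S[[\hbar]]$-linear extension in the construction of $\hat\bfb$ enlarges the source to $\calA[\LT\times\LT/W]^{\oplus\pi_1}_{\widehat{1,0}}$ via the structural inclusion $S[[\hbar]]\hookrightarrow Z(\calO^\hyb_{q,S[[\hbar]]})$. Under the vertical map this $S$-action collapses, by construction, to its value at the augmentation $S\to\bfk$; and under the identifications of Proposition \ref{prop:N}, the latter corresponds precisely to the forgetful morphism $H^\bullet_{T\times\bbG_m}(\Gr^\zeta)\to H^\bullet_{\bbG_m}(\Gr^\zeta)$ (setting the $\frakt$-coordinate to zero). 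For the Rees extensions, the closed embedding $\{1\}\times\LOm\hookrightarrow\LGamma$ restricts $I_\LGamma$ to $I_\LOm$, so the two Rees algebras fit into a commutative square compatible with both the cohomological restriction and the vertical map on centers; combining this with the previous paragraph yields commutativity of the full diagram. I expect the main obstacle to lie in making the right vertical map well-defined and functorial in the $S[[\hbar]]$-linear setting, since $\R^\U_\hyb$ a priori lands in the undeformed category $\calO^\hyb_{\hat\zeta}$, and in correctly matching the $\Lrho$-shifts appearing in Proposition \ref{prop:P1} with the implicit shifts in the Drinfeld construction of $\hat\bfc$ from Proposition \ref{prop:P0}.
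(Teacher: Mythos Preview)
Your proposal is correct and follows essentially the same approach as the paper. Your construction of the right vertical map via full faithfulness of $\R_\U^\hyb$ is equivalent to the paper's construction via $Z(\I^\U_\hyb)$ from \S\ref{sec:A4} (invertibility of the counit $\varepsilon:\I^\U_\hyb\R_\U^\hyb\to 1$ is the same as full faithfulness of $\R_\U^\hyb$, and the two descriptions of the induced map on centers coincide). The paper streamlines slightly by first specializing $S\to\bfk$ and reducing to the triangle
\[
\xymatrix{H^\bullet_{\bbG_m}(\Gr^\zeta)_{\hat 0}\ar[r]^-{\hat\bfb}\ar[rd]_-{\hat\bfc}&
Z(\calO^\hyb_{q,\bfk[[\hbar]]})\ar[d]^-{Z(\I^\U_\hyb)}\\
&Z(\Rep(\U_{\hat\zeta}))}
\]
and then, rather than arguing separately on $\calA[\LT/W]$ and extending across the Rees construction, it embeds both centers into $\prod_N\End(N)$ and observes that the two composites to this product are ``generically given by the central action via the inverse Harish-Chandra map'' --- exactly your second paragraph, with the extension to the Rees algebra handled implicitly by $\hbar$-torsion-freeness of the target rather than by your explicit $I_\LGamma\mapsto I_\LOm$ argument. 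The $\Lrho$-shift concern you raise is a non-issue precisely because both $\hat\bfb$ and $\hat\bfc$ are extensions of the \emph{same} map $\overline\bfhc$, so any shift is common to both sides.
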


\begin{proof} 
The left vertical maps in both diagrams are the specialization maps at 0 in $\frakt_\bfk$.
The right vertical maps are given by the composition of the specialization
$S\to\bfk$ and $Z(\I^\U_\hyb)$, where $Z(\I^\U_\hyb)$ are the maps associated with
the quotient functors
$$
\I^\U_\hyb: \calO^{\hyb}_{q,\bfk[[\hbar]]}\to\Rep(\U_{\hat\zeta}),\qquad
\I^\U_\hyb: \calO^\hyb_\zeta\to\Rep(\U_\zeta)
$$
as in \eqref{ZQF}.
To prove the proposition, it is enough to prove that
$$\xymatrix{H^\bullet_{\bbG_m}(\Gr^\zeta)_{\hat 0}\ar[r]^-{\hat\bfb}\ar[rd]_-{\hat\bfc}& 
Z(\calO^\hyb_{q,\k[[\hbar]]})\ar[d]^-{Z(\I^\U_\hyb)}\\
&Z(\Rep(\U_{\hat\zeta}).
}$$
commutes. Note that the morphisms $\bfb$, $\bfc$ are defined via the maps
$$\bfk[\widetilde N_{\LOm}(\LT/W)]^{\oplus\pi_1}_{\hat{0}}\to Z(\calO^\hyb_{q,\k[[\hbar]]}),\qquad
\bfk[\widetilde N_{\LOm}(\LT/W)]^{\oplus\pi_1}_{\hat{0}}\to Z(\Rep(\U_{\hat\zeta})),$$
which are generically the inverse Harish-Chandra map.
We consider the following diagram
$$\xymatrix{\bfk[\widetilde N_{\LOm}(\LT/W)]^{\oplus\pi_1}_{\hat{0}}\ar[r]^-{\hat\bfb}\ar[rd]_-{\hat\bfc}& 
Z(\calO^\hyb_{q,\k[[\hbar]]})\ar[d]^-{Z(\I^\U_\hyb)}\ar@{^{(}->}[r]&\prod_{M\in\calO^\hyb_{q,\k[[\hbar]]}}\End(M)
\ar[d]^-{\I^\U_\hyb}
\\
&Z(\Rep(\U_{\hat\zeta}))\ar@{^{(}->}[r]&\prod_{N\in\Rep(\U_{\hat\zeta})}\End(N).
}$$
The right horizontal arrows are given by the action of the center on the corresponding objects.
The right square is commutative by \eqref{sec:A4}.
The external diagram commutes because both maps from $\bfk[\widetilde N_{\LOm}(\LT/W)]^{\oplus\pi_1}_{\hat{0}}$ to $\prod_{N\in\Rep(\U_{\hat\zeta})}\End(N)$ are generically given by the central action via the inverse Harish-Chandra map.
\end{proof}

\smallskip

Now, let us proceed to the last main result of the paper.
In Theorem \ref{prop:P2} and Corollary \ref{cor:C} we constructed the ring homomorphisms 
$$\bfa: H^\bullet({}^0\Gr^\zeta_{\gamma_\ell})^{\ell\LLbd} \to Z(\u_\zeta)^\LT,\qquad
\bfc: H^\bullet(\Gr^\zeta)\to Z(\Rep(\U_\zeta)).$$
Recall that restriction yields a $\pi_1$-equivariant ring homomorphism 
$i^\ast: H^\bullet(\Gr^\zeta)\to H^\bullet({}^0\Gr^\zeta_{\gamma_\ell})^{W_{\ell,\af}}$, see \eqref{invariant-res}.
These maps together with the obvious embedding $Z(\u_\zeta)^\LG\subset Z(\u_\zeta)^\LT$ 
fit into a commutative diagram.

\smallskip

\begin{theorem}\label{thm:main2}
There is a commutative diagram of algebra homomorphisms
\begin{align*}
\xymatrix{
H^\bullet(\Gr^\zeta)\ar[rr]^{\bfc}
\ar[d]^{i^\ast} &&Z(\u_\zeta)^\LG\ar@{^{(}->}[d]\\
H^\bullet({}^0\Gr^\zeta_{\gamma_\ell})^{\ell\LLbd}\ar@{^{(}->}[rr]^-\bfa &&Z(\u_\zeta)^\LT
}
\end{align*}
All the morphisms are compatible with the block decompositions.
\end{theorem}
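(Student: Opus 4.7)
The strategy is to factor the horizontal maps of the square through the center of the hybrid category $\calO^{\hyb}_{\zeta,S}$, exploiting the two bridges from $\calO^{\hyb}_{\zeta,S}$ to $\Ut\mod^\LLbd_S$ and to $\Rep(\U_\zeta)$ supplied by diagram \eqref{diagram}. At the level of cohomology the restriction $i^*$ is already geometric; on the algebraic side the two routes from $Z(\calO^{\hyb}_{\zeta,S})$ will give the two vertical edges of the square.

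First I would build an equivariant refinement over $S$:
\[
\xymatrix{
H^\bullet_T(\Gr^\zeta)_{\hat 0}\ar[r]^-{\bfb}\ar[d]_{i^*}& Z(\calO^{\hyb}_{\zeta,S})\ar[d]^{Z(\R_\hyb^t)}\\
H^\bullet_T({}^0\Gr^\zeta_{\gamma_\ell})_{\hat 0}\ar[r]^-{\bfa}& Z(\Ut\mod^\LLbd_S),
}
\]
where $Z(\R_\hyb^t)$ is induced by the right-exact functor $\R_\hyb^t:\calO^{\hyb}_{\zeta,S}\to \Ut\mod^\LLbd_S$ of Lemma \ref{lem:res}(a) via its action on standard modules, together with the fact from Proposition \ref{prop:P1} (resp.\ Theorem \ref{prop:P2}) that $\bfb$ (resp.\ $\bfa$) is completely determined by the system of central characters $\chi_{\zeta,\mu}^\hyb$ (resp.\ $\chi_{\zeta,\mu}^t$) on deformed Verma (resp.\ baby Verma) modules. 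Since $\R_\hyb^t$ sends $\M(\mu)_S$ to $\Delta(\mu)^t_S$ (same highest weight), the compositions $\chi^t_\zeta\circ Z(\R_\hyb^t)\circ\bfb$ and $\chi^t_\zeta\circ\bfa\circ i^*$ both agree (after the $\Lrho$-shift) with the GKM-type restriction $\res$ of \S\ref{sec:GKM}, i.e.\ with the pullback $i^*\circ\res$ on $H^\bullet_T(\Gr^\zeta)_{\hat 0}\hookrightarrow\Fun(\LLbd,S)$. Since $\chi_\zeta^t$ is injective, the square commutes. Note that $i^*$ lands in the $W_{\ell,\af}$-invariants, hence a fortiori in the $\ell\LLbd$-invariants, by \S\ref{sec:symmetries}.

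Next I would assemble this with Proposition \ref{prop:P00}, which gives the commutative square relating $\bfb$ with $\bfc$ through the induced map $Z(\I^\U_\hyb):Z(\calO^{\hyb}_{\zeta,S})\otimes_S\bfk\to Z(\Rep(\U_\zeta))$. Applying $(-\otimes_S\bfk)$ to the square above and stacking it with the square of Proposition \ref{prop:P00}, the two routes $\calO^{\hyb}_{\zeta,S}\to \Rep(\u_\zeta)$ appearing in diagram \eqref{diagram} (namely $\R_\U^\u\circ\I^\U_\hyb\circ(-\otimes_S\bfk)$ and $(-\otimes_S\bfk)\circ\R_\hyb^t$) become canonically isomorphic by Corollary \ref{cor:proj2}, so the two induced ring homomorphisms $Z(\calO^{\hyb}_{\zeta,S})\to Z(\u_\zeta\mod^\LLbd)$ coincide. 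This yields the commutativity of the specialized diagram with values in $Z(\u_\zeta\mod^\LLbd)$; restricting to $\pi_1$-invariants on the geometric side (which contains the image of $i^*$ restricted further to $H^\bullet(\Gr^\zeta)$ through the $\pi_1$-equivariance of $i^*$) and to $\LT$-invariants on the algebraic side, and using the identification \eqref{Ginv} together with the image of $\bfc$ landing in $Z(\u_\zeta)^\LG$ from Proposition \ref{prop:P0}, produces the desired square. The block-compatibility statement then follows automatically, since every arrow in sight has been shown to respect the decompositions indexed by $\Xi$ (equivalently, by the support in $\LT/W$ of central characters).

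The main obstacle is the first step: making sense of $Z(\R_\hyb^t)$ without an equivalence of categories, and verifying that it matches the restriction $i^*$ on the GKM side. I expect to handle this by arguing directly on central characters: both $\bfa$ and $\bfb$ are injective via the product of central characters on standards (Theorem \ref{prop:P2}(a) and Proposition \ref{prop:P1}), so it suffices to identify the composed maps $H^\bullet_T(\Gr^\zeta)_{\hat 0}\to \Fun(\LLbd,S)$ obtained on each side. Once this computation is settled, the remainder of the proof is a formal concatenation of Proposition \ref{prop:P00} and Corollary \ref{cor:proj2}, and the block-compatibility is built into each arrow by construction.
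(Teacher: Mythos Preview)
Your first step is essentially correct and matches the paper's Step 1: the diagram \eqref{diagf} in the paper is exactly your square, and the injectivity of $\chi_\zeta^t$ lets one define $\phi_S$ on the image $Z(\calO^{\hyb}_{\zeta,S})'$ of $\bfb$ so that $\bfa\circ i^*=\phi_S\circ\bfb$. The problem is in your second step.

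You claim that the two routes $\R_\U^\u\circ\I^\U_\hyb$ and $\R^\u_\hyb$ (equivalently $\R^\u_b\circ\R^b_\hyb$) from $\calO^\hyb_\zeta$ to $\Rep(\u_\zeta)$ ``become canonically isomorphic by Corollary \ref{cor:proj2}''. This is false: Corollary \ref{cor:proj2} only says that the unit $\eta:1\to\R_\U^\hyb\circ\I^\U_\hyb$ is surjective and that $\I^\U_\hyb$ commutes with tensoring by $\Rep(\U_\zeta)$-objects. The two functors are genuinely different --- on a Verma module $\M(\mu)$ with $\mu$ dominant, one gives the baby Verma $\Delta(\mu)$ while the other gives the restricted Weyl module $\R^\u_\U\V(\mu)$, and these are not isomorphic in general. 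What one actually has is a natural transformation $\xi:\R^\u_b\circ\R^b_\hyb\to\R^\u_\U\circ\I^\U_\hyb$ (built from the surjective unit $\eta$), and the crux of the proof is to show that $\xi$ becomes an isomorphism when evaluated on a carefully chosen object $P^\hyb$ whose image is a projective generator of $\Rep(\u_\zeta)$. This is the content of Lemma \ref{lem:eta}: one checks $\xi(\St^\hyb)$ is an isomorphism by hand on the Steinberg module, then uses the monoidality from Corollary \ref{cor:proj2}(b) to propagate this to $\St^\hyb\otimes V$ for arbitrary $V\in\Rep(\U_\zeta)$, which suffices since every projective in $\Rep(\u_\zeta)$ is a summand of some $\St^\u\otimes V$. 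Only then can one compare the two induced maps into $\End(P_b)\simeq\End(P_\U)$ (Steps 3--5 of the paper) and conclude $\bfc=\bfa\circ i^*$. Your proposal skips this mechanism entirely, and without it there is no way to connect the map $\phi$ (defined via $\R^t_\hyb$) with the map coming from $Z(\I^\U_\hyb)$ and $\R^\u_\U$.
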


\smallskip

The image of $i^\ast$ is actually contained in the $W_{\ell,\af}$-invariant part of the target by \eqref{invariant-res}. 
Thus the theorem implies the following result.

\smallskip

\begin{corollary}
The morphism $\bfa$ restricts to an algebra embedding
$$H^\bullet({}^0\Gr^\zeta_{\gamma_\ell})^{W_{\ell,\ex}} \hookrightarrow Z(\u_\zeta)^\LG.$$
\end{corollary}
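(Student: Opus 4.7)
The plan is to deduce the corollary by combining Theorem~\ref{prop:P2}(b) with Theorem~\ref{thm:main2}. As a starting point, note that Theorem~\ref{prop:P2}(b) already provides an algebra embedding
$$\bfa:H^\bullet({}^0\Gr^\zeta_{\gamma_\ell})^{\ell\LLbd}\hookrightarrow Z(\u_\zeta)^\LT.$$
Since $\ell\LLbd$ is a normal subgroup of $W_{\ell,\ex}$ with quotient $W$, any $W_{\ell,\ex}$-invariant class is in particular $\ell\LLbd$-invariant, so restricting yields an injection of $H^\bullet({}^0\Gr^\zeta_{\gamma_\ell})^{W_{\ell,\ex}}$ into $Z(\u_\zeta)^\LT$. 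The only thing to verify is that its image lies in the smaller subalgebra $Z(\u_\zeta)^\LG\subset Z(\u_\zeta)^\LT$.

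The first ingredient toward this containment is the commutativity of the diagram of Theorem~\ref{thm:main2}. Combined with the decomposition $W_{\ell,\ex}=W_{\ell,\af}\rtimes\pi_1$ and the $\pi_1$-equivariance of \eqref{invariant-res}, it shows that $i^*$ restricts to a map $H^\bullet(\Gr^\zeta)^{\pi_1}\to H^\bullet({}^0\Gr^\zeta_{\gamma_\ell})^{W_{\ell,\ex}}$, and on this image $\bfa$ takes values in $Z(\u_\zeta)^\LG$ via $\bfc$. In type $A$, Corollary~\ref{cor: surjA} tells us this image exhausts $H^\bullet({}^0\Gr^\zeta_{\gamma_\ell})^{W_{\ell,\ex}}$, and we are done without further work. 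In general type, one needs an argument that bridges the gap between the image of $i^*$ and the full space of $W_{\ell,\ex}$-invariants.

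For the bridge, I would use the standard identity $Z(\u_\zeta)^\LG=(Z(\u_\zeta)^\LT)^W$, which holds because $\LG$ is connected reductive and the Hopf adjoint action on $Z(\u_\zeta)$ factors through an algebraic action of $\LG$, so that $\LG$-invariance on the zero weight space coincides with invariance under the Weyl group $W=N_\LG(\LT)/\LT$. It then suffices to prove the compatibility statement that $\bfa$ intertwines the $W$-action on $H^\bullet({}^0\Gr^\zeta_{\gamma_\ell})^{\ell\LLbd}$ (inherited from the quotient $W_{\ell,\ex}/\ell\LLbd=W$, acting via monodromy/Springer-type operators as described in \S\ref{sec:symmetries}) with the $W$-action on $Z(\u_\zeta)^\LT$ coming from the normalizer $N_\LG(\LT)$. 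Granted this compatibility, a $W_{\ell,\ex}$-invariant class is $W$-invariant in $H^\bullet({}^0\Gr^\zeta_{\gamma_\ell})^{\ell\LLbd}$, hence is sent by $\bfa$ to $(Z(\u_\zeta)^\LT)^W=Z(\u_\zeta)^\LG$.

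The main obstacle is therefore establishing this $W$-equivariance of $\bfa$ in general type. My approach would be to trace the $W$-action through the construction of $\bfa$ via the deformed category $\frakU_\zeta^t\mod^\LLbd_S$: one checks the statement at the level of the generic semisimple category $\calC_K$ of Lemma~\ref{lem:L3}(a), where central characters are indexed by $\LLbd$ and the $W$-action has a transparent description, and then uses the inclusion $Z(\calC_S)\hookrightarrow Z(\calC_K)$ and Lemma~\ref{lem:L3}(b) to transport the statement back to integral objects. On the quantum side, the $W$-action on $Z(\u_\zeta)^\LT$ can be read off via the inverse Harish-Chandra homomorphism and the compatibility $\bfa\circ i^*=\iota\circ\bfc$ of Theorem~\ref{thm:main2}, which already pins down the $W$-equivariance on the subring generated by the image of $i^*$; extending it to all of $H^\bullet({}^0\Gr^\zeta_{\gamma_\ell})^{W_{\ell,\ex}}$ is the delicate point and uses that $\bfa$ is an isomorphism of $S$-algebras in the deformed setting of Theorem~\ref{prop:P2}(a).
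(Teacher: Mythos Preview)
Your type $A$ paragraph matches the paper's intended argument: once Theorem~\ref{thm:main2} is established, the commutative diagram gives $\bfa\bigl(\operatorname{Im}(i^*)\bigr)\subset Z(\u_\zeta)^\LG$, and Corollary~\ref{cor: surjA} says that in type~$A$ the image of $i^*$ already fills out the $W_{\ell,\ex}$-invariants. The paper's own justification for the corollary is nothing more than the one sentence preceding it, and the introduction is explicit that the embedding is only \emph{established} in type~$A$ via this surjectivity; the general statement is the content of Conjecture~\ref{conj:B}.

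Your attempt to close the gap in arbitrary type, however, contains a genuine error. The identity $Z(\u_\zeta)^\LG=(Z(\u_\zeta)^\LT)^W$ is not a general fact about algebraic $\LG$-modules: for a rational representation $V$ of a connected reductive group one always has $V^\LG\subset (V^\LT)^W$, but the inclusion is typically strict. Already for $\LG=SL_2$ and $V=\operatorname{Sym}^4(\bbC^2)$ the zero-weight line $\bbC\,e_1^2e_2^2$ is fixed by the nontrivial Weyl element (since $e_1\mapsto -e_2$, $e_2\mapsto e_1$ sends $e_1^2e_2^2$ to itself), yet $V$ is irreducible and nontrivial, so $V^{SL_2}=0$. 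Your justification (``$\LG$ connected reductive, action algebraic'') does not distinguish $Z(\u_\zeta)$ from this counterexample. Hence even if you succeeded in proving $W$-equivariance of $\bfa$, you would only obtain $\bfa\bigl(H^\bullet({}^0\Gr^\zeta_{\gamma_\ell})^{W_{\ell,\ex}}\bigr)\subset (Z(\u_\zeta)^\LT)^W$, which does not imply the desired containment in $Z(\u_\zeta)^\LG$. A general-type proof would need a genuinely different input: either surjectivity of $i^*$ onto the $W_{\ell,\ex}$-invariants (as in type~$A$), or direct control of the full $\LG$-action on the image of $\bfa$, not merely of its restriction to the normalizer of $\LT$.
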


\begin{proof}
We need to show $\bfa\circ i^\ast=\bfc$. This will be given in several steps.

\smallskip

\emph{Step 1:} We first construct a morphism $\phi$ such that $\bfa\circ i^\ast=\phi\circ\bfb$.
By Theorem \ref{prop:P2} and Proposition \ref{prop:P1}, we have a commutative diagram
of $S$-algebras
\begin{align}\label{diagf}
\begin{split}
\xymatrix{
H^\bullet_T(\Gr^\zeta)_{\hat 0}\ar@{^{(}->}[rr]^\bfb \ar@{_{(}->}[d]_{i^\ast} &&
Z(\calO^{\hyb}_{\zeta,S})\ar[r]^-{\chi_\zeta^\hyb}&\Fun(\LLbd,S).\\
H^\bullet_T({}^0\Gr^\zeta_{\gamma_\ell})_{\hat 0}\ar[rr]^-\bfa_{\sim} &&Z(\Ut\mod^\LLbd_S)
\ar@{^{(}->}[ru]_-{\chi_\zeta^t}&
}
\end{split}
\end{align}
Let $Z(\calO^{\hyb}_{\zeta,S})'$ be the image of the map $\bfb$.
There is a unique algebra morphism $\phi_S$ which fits into the following commutative diagram
\begin{align}\label{square1}
\begin{split}
\xymatrix{
H^\bullet_T(\Gr^\zeta)_{\hat 0}\ar[rr]^-\bfb_-\sim \ar@{_{(}->}[d]_{i^\ast} &&
Z(\calO^{\hyb}_{\zeta,S})'\ar[d]^-{\phi_S}\\
H^\bullet_T({}^0\Gr^\zeta_{\gamma_\ell})_{\hat 0}\ar[rr]^-\bfa_-{\sim} &&Z(\Ut\mod^\LLbd_S).
}
\end{split}
\end{align}
Now, we apply the base change functor $-\otimes_S\bfk$ to the diagram \eqref{square1}.
By the equivariant formality of the ind-schemes $\Gr^\zeta$ and ${}^0\Gr^\zeta_{\gamma_\ell}$, the restriction
$H^\bullet_T(\Gr^\zeta)\to H^\bullet_T({}^0\Gr^\zeta_{\gamma_\ell})$ specializes to the restriction
$H^\bullet(\Gr^\zeta)\to H^\bullet({}^0\Gr^\zeta_{\gamma_\ell})$. 
Its image is contained in the $\ell\LQ$-invariant part by \eqref{invariant-res}.
On the center side, we have a
morphism $Z(\Ut\mod^\LLbd_S)\otimes_S\bfk\to Z(\u_\zeta\mod^\LLbd)$. 
Set $Z(\calO^{\hyb}_\zeta)'=Z(\calO^{\hyb}_{\zeta,S})'\otimes_S\bfk$ and $\phi=\phi_S\otimes_S\bfk$.
The base change yields the following commutative diagram 
$$\xymatrix{
H^\bullet(\Gr^\zeta)\ar[r]^-\bfb_-\sim\ar[d]_-{i^*}& 
Z(\calO^{\hyb}_\zeta)'\ar[d]^-\phi
\\
H^\bullet({}^0\Gr^\zeta_{\gamma_\ell})^{\ell\LQ}\ar[r]^-\bfa& 
Z(\u_\zeta\mod^\LLbd).
}$$
This completes the first step. 
Note that all the maps in the square above are $\pi_1$-equivariant. Moreover, we have 
$(H^\bullet({}^0\Gr^\zeta_{\gamma_\ell})^{\ell\LQ})^{\pi_1}=H^\bullet({}^0\Gr^\zeta_{\gamma_\ell})^{\ell\LLbd}$. 
Further, the image of $\bfa$ is contained in $Z(\u_\zeta\mod^\LLbd)^{\ell\LQ}$, whose $\pi_1$-invariant part is $Z(\u_\zeta\mod^\LLbd)^{\ell\LLbd}=Z(\u_\zeta)^\LT$ by \eqref{uu}. 
So taking the $\pi_1$-invariant of the diagram above yields
$$\xymatrix{
H^\bullet(\Gr^\zeta)^{\pi_1}\ar[r]^-\bfb_-\sim\ar[d]_-{i^*}& 
{Z(\calO^{\hyb}_\zeta)'}^{\pi_1}\ar[d]^-\phi
\\
H^\bullet({}^0\Gr^\zeta_{\gamma_\ell})^{\ell\LLbd}\ar[r]^-\bfa& 
Z(\u_\zeta)^\LT.
}$$

\smallskip

\emph{Step 2:} In order to compare $\phi\circ \bfb$ with $\bfc$, we'll express $\phi$ in other terms. 
For this purpose, we first construct a morphism of functors
$$\xi : \R^\u_b\circ\R^b_\hyb\to\R^\u_\U\circ\I^\U_\hyb.$$
as follows. Consider the unit $\eta: 1\to\R^\hyb_\U\circ\I^\U_\hyb$ associated with the adjoint pait $(\I^\U_\hyb, \R^\hyb_\U)$. Compose both sides with $\R^\u_b\circ\R^b_\hyb$, we get a morphism $\R^\u_b\circ\R^b_\hyb\to \R^\u_b\circ\R^b_\hyb\circ \R^\hyb_\U\circ\I^\U_\hyb$.
But since the composed morphism $\frakU^b_\zeta \to \U^{\hyb}_\zeta\to \U_\zeta$ factorizes through
$\frakU^b_\zeta \twoheadrightarrow \u_\zeta\hookrightarrow \U_\zeta$, we have an isomorphism of functors $\R^\u_b\circ\R^b_\hyb\circ \R_\U^\hyb\to\R^\u_\U$.
So $\R^\u_b\circ\R^b_\hyb\circ \R_\U^\hyb\circ \I^\U_\hyb\simeq \R^\u_\U\circ \I^\U_\hyb$. We get the desired morphism $\xi$.
The proof of the following lemma will be postponed to the end of the whole proof.

\smallskip

\begin{lemma}\label{lem:eta} Fix a projective generator $P$ in $\Rep(\u_\zeta)$. There is a module
$P^\hyb$ in $\calO^\hyb_\zeta$ such that
$\xi(P^\hyb)$ is an isomorphism $\R^\u_b\R^b_\hyb(P^\hyb)\to\R^\u_\U\I^\U_\hyb(P^\hyb)$.
Both modules $\R^\u_b\R^b_\hyb(P^\hyb)$ and $\R^\u_\U\I^\U_\hyb(P^\hyb)$ are isomorphic to $P$.
\end{lemma}

\smallskip

We'll abbreviate $P_b=\R^\u_b\R^b_\hyb(P^\hyb)$ and
$P_\U=\R^\u_\U\I^\U_\hyb(P^\hyb).$
The conjugation by $\xi(P^\hyb)$ yields an algebra isomorphism
$c_\xi:\End(P_b)\to\End(P_\U)$. 

\smallskip

\emph{Step 3: }
Now, we show that the following diagram commute
\begin{align}\label{diaga}\begin{split}
\xymatrix{
Z(\calO^{\hyb}_\zeta)'\ar[r]^-{\can}\ar[d]_{\phi}&\End(P^\hyb)\ar[d]_-{\R^\u_b\circ\R^b_\hyb} \ar[dr]^-{\R^\u_\U\circ\I^\U_\hyb}&\\
Z(\u_\zeta)
\ar@{^{(}->}[r]^-{\can_b}&\End(P_b) \ar[r]^{c_\xi}_{\sim} &\End(P_\U).
}\end{split}
\end{align}
Here $\can$, $\can_b$ are the natural morphisms given by the action of the center on objects in the corresponding categories. The commutativity of the right triangle in the diagram is given by the definition of $c_\xi$. 

Let us prove the commutativity of the left square.
To achieve this, we use a deformation argument.
Fix a module $P_S$ in $\frakU^t_\zeta\mod^\LLbd_S$ which specializes to the projective generator $P$.
The proof of Lemma \ref{lem:eta} (see below) implies that there is a module $P_S^\hyb$ in $\calO^\hyb_{\zeta,S}$
which specializes to $P^\hyb$ and such that $\R^t_b\R^b_\hyb(P^\hyb_S)$ is isomorphic to $P_S$.
Thus we have the following diagram which specializes to \eqref{diaga}
\begin{align}\label{diage}\begin{split}
\xymatrix{
H^\bullet_T(\Gr^\zeta)_{\hat 0}\ar[r]_-\sim^\bfb \ar@{_{(}->}[d]_{i^\ast} &
Z(\calO^{\hyb}_{\zeta,S})'\ar[r]^-{\can}&\End(P^\hyb_S)\ar[d]^-{\R^t_b\circ\R^b_\hyb}\\
H^\bullet_T({}^0\Gr^\zeta_{\gamma_\ell})_{\hat 0}\ar[r]^-\bfa &Z(\frakU^t_\zeta\mod^\LLbd_S)
\ar@{^{(}->}[r]^-{\can_b}&\End(P_S)
}\end{split}
\end{align}
It is enough to prove that \eqref{diage} commutes. This follows from the commutativity of 
the diagram \eqref{diagf}.

\smallskip

\emph{Step 4: }
We show the diagram 
\begin{align}\label{diagb}\begin{split}
\xymatrix{
H^\bullet(\Gr^\zeta)\ar[r]_-\sim^\bfb \ar[dr]_-\bfc &
Z(\calO^{\hyb}_\zeta)'\ar[r]^-{\can}&\End(P^\hyb)\ar[d]^-{\R^\u_\U\circ\I^\U_\hyb}\\
&Z(\u_\zeta)^\LG
\ar@{^{(}->}[r]^-{\can_\U}&\End(P_\U)
}\end{split}
\end{align}
commutes. Here $\can_\U$ is again the natural one given by the central action. It is injective because $P_\U$ is a projective generator for $\u_\zeta\mod$.

To this end,  we decompose the diagram as follows
\begin{align*}
\xymatrix{
H^\bullet(\Gr^\zeta)\ar[r]_-\sim^\bfb \ar[ddr]_-\bfc &
Z(\calO^{\hyb}_\zeta)'\ar[r]^-{\can}\ar[d]^-{Z(\I^\U_\hyb)}&\End(P^\hyb)\ar[d]^-{\I^\U_\hyb}\\
&Z(\U_\zeta)\ar[r]&\End(\I^\U_\hyb(P^\hyb))\ar[d]^-{\R^\u_\U}\\
&Z(\u_\zeta)^\LG\ar@{^{(}->}[u]
\ar@{^{(}->}[r]^-{\can_\U}&\End(P_\U)
}
\end{align*}
The upper square commutes by \S\ref{sec:A4}, the triangle commutes by  Proposition \ref{prop:P00},
and the lower square commutes because $\R^\u_\U$ is a restriction functor.

\smallskip

\emph{Step 5: }
We can now complete the proof. Using subsequently Steps 4, 3 and 1, we get
$$\begin{aligned}
\can_\U\circ\,\bfc&=(\R^\u_\U\circ\I^\U_\hyb)\circ\can\circ\bfb\\
&=c_\xi\circ\can_b\circ\,\phi\circ\bfb\\
&=c_\xi\circ\can_b\circ\,\bfa\circ i^\ast.
\end{aligned}$$
Now, since $\xi(P^\hyb)$ is a morphism of $\u_\zeta$-modules,
it intertwines the $Z(\u_\zeta)$-action. Hence $c_\xi\circ\can_b=\can_\U.$
We deduce 
$\can_\U\circ\,\bfc=\can_\U\circ\,\bfa\circ i^\ast.$
But $\can_\U$ is injective, hence $\bfc=\bfa\circ i^\ast.$ The proof for the theorem is complete.

\end{proof}

\medskip

\noindent \emph{Proof of Lemma $\ref{lem:eta}$.}
Recall that the Steinberg module $\St^\hyb$ in $\calO^\hyb_\zeta$ is the standard module 
$\M(\ell\Lrho-\Lrho)$. We first observe that the morphism
$$\xi(\St^\hyb):\R^\u_b\R^b_\hyb(\St^\hyb)\to\R^\u_\U I^\U_\hyb(\St^\hyb)$$
is invertible and that both
$\R^\u_b\R^b_\hyb(\St^\hyb)$ and $\R^\u_\U I^\U_\hyb(\St^\hyb)$
are isomorphic to the Steinberg module $\St^\u$ in $\Rep(\u_\zeta)$. 
To prove this, note that $\I^\U_\hyb(M)$ is the maximal quotient of $M$ such that the
$\U^\hyb_\zeta$-action on $M$ lifts to a $\U_\zeta$-action, for each module $M$ in $\calO^\hyb_\zeta$,
and that $\R^\u_\U\I^\U_\hyb(M)$ is canonically isomorphic to $\I^\U_\hyb(M)$ as a vector space.
On the other hand, $\R^\u_b\R^b_\hyb(M)$ is canonically isomorphic to 
$M\otimes_{\k[B^-]}\k$ as a vector space. Thus $\xi(M)$ can be viewed as a surjective linear map
$$\xi(M):M\otimes_{\k[B^-]}\k\to \I^\U_\hyb(M).$$
Under this identification, the morphism $\xi(\St^\hyb)$ is a surjective linear map 
from the space $\St^\hyb\otimes_{\k[B^-]}\k$ to the 
Weyl module $\V(\ell\Lrho-\Lrho)$. 
Since this Weyl module is cyclically generated by a highest vector $v_{\ell\Lrho-\Lrho}$,
modulo the subspace spanned by the elements $xv_{\ell\Lrho-\Lrho}$ where $x$ is any monomial in the
$F_\Lalpha^{(n\ell)}$'s with $n$ a positive integer and $\Lalpha$ a positive root, the map $\xi(\St^\hyb)$ is invertible.

Next, note that the categories $\Rep(\u_\zeta)$, $\frakU^b_\zeta\mod^\LLbd_S$, $\calO^\hyb_\zeta$
are module categories over the tensor category $\Rep(\U_\zeta)$, with the action given by the tensor product.
The functors $\R^\u_b$, $\R^b_\hyb$, $\R^\u_\U$ naturally commute with tensor products, 
because they are either the specialization or the pullback along some algebra homomorphisms.
The functor $I^\U_\hyb$ also commutes with tensor products by Corollary \ref{cor:proj2}.
From the description of the morphism $\xi$ above, we deduce that 
for each module $V\in\Rep(\U_\zeta)$, the following diagram commutes
$$\xymatrix{
\R^\u_b\R^b_\hyb(\St^\hyb\otimes V)\ar[rr]^{\xi(\St^\hyb\otimes V)}\ar[d]
&&\R^\u_\U I^\U_\hyb(\St^\hyb\otimes V)\ar[d]\\
\R^\u_b\R^b_\hyb(\St^\hyb)\otimes V\ar[rr]^{\xi(\St^\hyb)\otimes V}
&&\R^\u_\U I^\U_\hyb(\St^\hyb)\otimes V.}$$
We deduce that the morphism $\xi(Q)$ is invertible for each direct summand $Q$ of 
$\St^\hyb\otimes V$.
The lemma follows because any projective module in $\Rep(\u_\zeta)$ is a direct factor of 
$\St^\u\otimes V$ for some integrable $\U_\zeta$-module $V$.

\qed

\smallskip

The corollary above is motivated by a conjecture of Bezrukavnikov-Qi-Shan-Vasserot, which claims that,
in type $A$, there should be an algebra isomorphism 
$H^\bullet(\Fl_\gamma)^\LLbd\to Z(\u_\zeta^0)$.
We extend it to the following conjecture.

\smallskip

\begin{conjecture}\label{conj:B}
The morphism $\bfa$ restricts to an isomorphism
$$H^\bullet({}^0\Gr^\zeta_{\gamma_\ell})^{W_{\ell,\ex}} \simeq Z(\u_\zeta)^\LG.$$
\end{conjecture}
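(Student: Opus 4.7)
The plan is to deduce Conjecture \ref{conj:B} from a strengthening of Theorem \ref{prop:P2}(b) to an isomorphism --- namely the first assertion of Conjecture A --- and then pass to $W_{\ell,\ex}$-invariants using Theorem \ref{thm:main2}.

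First I would try to upgrade Theorem \ref{prop:P2}(b) to an isomorphism
$$\bfa:H^\bullet({}^0\Gr^\zeta_{\gamma_\ell})\simeq Z(\u_\zeta\mod^\LLbd).$$
The $S$-deformed version in Theorem \ref{prop:P2}(a) is already an isomorphism, so the task reduces to showing that the natural map $Z(\Ut\mod^\LLbd_S)\otimes_S\bfk \to Z(\u_\zeta\mod^\LLbd)$ is surjective. Fix a minimal graded projective generator $\Q_S$ of $\Ut\mod^\LLbd_S$ whose specialization $\Q_S\otimes_S\bfk$ is a minimal graded projective generator of $\u_\zeta\mod^\LLbd$. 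Both centers realize as the centers of $\End(\Q_S)$ and of its specialization, so it suffices to prove that $\End(\Q_S)$ is $S$-flat and that specialization commutes with taking the center. A possible approach is to track this block-by-block using the GKM-type presentation of Corollary \ref{cor:C1}, perhaps combined with Situ's isomorphism $H^\bullet_T(\Gr^\zeta)_{\hat 0}\simeq Z(\calO^{\hyb}_{\zeta,S})$ mentioned after Proposition \ref{prop:P1} to control the hybrid deformation.

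Second, granting the unrestricted isomorphism, I would extract the $W_{\ell,\ex}$-invariants. Taking $\ell\LLbd$-invariants and using \eqref{uu} gives $H^\bullet({}^0\Gr^\zeta_{\gamma_\ell})^{\ell\LLbd}\simeq Z(\u_\zeta)^\LT$. To further restrict to $\LG$-invariants on the right and $W_{\ell,\ex}$-invariants on the left, I would invoke Theorem \ref{thm:main2}: its commutative diagram factorizes $\bfc$ as $\bfa\circ i^\ast$, and since $i^\ast$ already lands in $H^\bullet({}^0\Gr^\zeta_{\gamma_\ell})^{W_{\ell,\af}}$ by \eqref{invariant-res}, the $W$-action descending from the adjoint $\LG$-action on $Z(\u_\zeta)$ must correspond under $\bfa$ to the Springer-type action refining the $\ell\LLbd$-action to $W_{\ell,\ex} = W_{\ell,\af}\rtimes\pi_1$. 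Taking further invariants then yields the desired isomorphism block-by-block, matching the decompositions along $\Xi$.

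An independent route bypassing Conjecture A is a pure dimension count. The corollary following Theorem \ref{thm:main2} already yields an injection $H^\bullet({}^0\Gr^\zeta_{\gamma_\ell})^{W_{\ell,\ex}}\hookrightarrow Z(\u_\zeta)^\LG$, and Theorem \ref{prop:formula} pins down the source dimension as $\frac{1}{|W|}\prod_{i=1}^r\bigl((h+1)\ell-h+e_i\bigr)$. Hence the conjecture reduces to the opposite dimension inequality $\dim Z(\u_\zeta)^\LG \leqslant \frac{1}{|W|}\prod_{i=1}^r\bigl((h+1)\ell-h+e_i\bigr)$, which in type $A$ refines the Lachowska-Qi conjecture on $\dim Z(\u_\zeta)$ (Remark \ref{rem:LQ}). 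The main obstacle is that both routes touch genuinely open problems: the dimension of $Z(\u_\zeta)^\LG$ is not known in general and the Lachowska-Qi conjecture itself is open, while on the deformation-theoretic side the projective generator of $\u_\zeta\mod^\LLbd$ may acquire unexpected extra endomorphisms upon degeneration of the parameter at $1\in\LT$, so the flatness claim above is not automatic. I expect the most fruitful attack to come from the Koszul-duality and microlocal sheaf program announced in the Perspectives subsection (joint with M.~McBreen and Z.~Yun), which should recast the center as a Hochschild-type invariant of a sheaf-theoretic category on the Springer fiber and thereby make the missing dimension bound accessible through topological methods.
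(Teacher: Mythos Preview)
The statement is a conjecture; the paper does not prove it and offers no argument beyond the embedding in the Corollary following Theorem~\ref{thm:main2}. Your proposal is therefore not competing with a proof in the paper, and you yourself correctly flag that both routes hinge on open problems (the surjectivity of specialization in Conjecture~\ref{conj-G1T}, and the unknown dimension of $Z(\u_\zeta)^{\LG}$).

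There is, however, a genuine gap in your first route that you pass over too quickly. Even granting the unrestricted isomorphism $\bfa:H^\bullet({}^0\Gr^\zeta_{\gamma_\ell})\simeq Z(\u_\zeta\mod^\LLbd)$, the step from $\ell\LLbd$-invariants to $W_{\ell,\ex}$-invariants on the left and from $\LT$-invariants to $\LG$-invariants on the right does \emph{not} follow from Theorem~\ref{thm:main2}. That theorem and its corollary show only that $\bfa$ carries $H^\bullet({}^0\Gr^\zeta_{\gamma_\ell})^{W_{\ell,\ex}}$ \emph{into} $Z(\u_\zeta)^{\LG}$; they do not show that $\bfa^{-1}$ carries $Z(\u_\zeta)^{\LG}$ back into the $W_{\ell,\ex}$-invariants. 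What you would need is that $\bfa$ intertwines the residual $W$-action on $H^\bullet({}^0\Gr^\zeta_{\gamma_\ell})^{\ell\LLbd}$ (coming from $W_{\ell,\ex}/\ell\LLbd\simeq W$) with an action on $Z(\u_\zeta)^{\LT}$ whose invariants are exactly $Z(\u_\zeta)^{\LG}$. The paper never constructs such an intertwining, and your sentence ``the $W$-action descending from the adjoint $\LG$-action \ldots\ must correspond under $\bfa$ to the Springer-type action'' is an assertion, not an argument; note also that the left $W$-action in \S\ref{sec:symmetries} is of monodromy type, not Springer type, and you conflate the two semidirect decompositions $W_{\ell,\ex}=W\ltimes\ell\LLbd$ and $W_{\ell,\ex}=W_{\ell,\af}\rtimes\pi_1$. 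This is exactly why the paper records Conjecture~\ref{conj:B} separately from Conjecture~\ref{conj-G1T} rather than as a formal consequence. Your second route (dimension count) is logically clean but, as you say, reduces to an inequality that is itself open.
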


\iffalse%%%%%%%%%%%%%%%%%%%%%%%
\bigskip

\section{Positive characteristic analogue}

\hfill

In this section, let $\bfk$ be an algebraic closed field of positive characteristic.
{\color{red} COMPLETE}

\fi%%%%%%%%%%%%%%%%%%%%%%%%%%

\bigskip

\appendix

\section{The center of a category}

Let $\bfk$ be a field and $R,$ $S$ be $\bfk$-algebras which are commutative Noetherian integral domain.
All categories are assumed to be small and $\bfk$-linear. All functors are $\bfk$-linear.
The center $Z(\calC)$ of a category $\calC$ is the endomorphism ring of the identity functor.
An element $z\in Z(\calC)$ may be viewed as a tuple $z=(z_M)$ of compatible endomorphisms
$z_M\in\End_\calC(M)$ for each object $M$ in $\calC$.
%The center of an additive category is canonically isomorphic to the center of its idempotent completion.

\smallskip

\subsection{}
Let $\calC_S$ be an abelian $S$-linear category,
$\calP_S$ be the full subcategory of all compact projective objects in $\calC_S$,
and $\P_S$ be a faithful set of objects in $\calP_S$.
Consider the associative $S$-algebra
$$A_S=\bigoplus_{P,Q\in\P_S}\Hom_{\calP_S}(P, Q),$$
with the multiplication given by the opposite of morphism compositions in $\calP_S$.
The $S$-algebra $A_S$ is locally unital and the set
$\{1_P\in \End_{\calC_S}(P)\,;\,P\in\P_S\}$ is a 
complete set of mutually orthogonal primitive idempotents.
We'll assume that the category $\calP_S$ is Schurian, i.e.,  the $S$-module $1_P A_S1_Q$ 
is a projective of finite rank for all $P$, $Q$, and that
the functor $\bigoplus_{P\in\P_S}\Hom_{\calP_S}(P,-)$ is an equivalence from
$\calC_S$ to the category of all left $A_S$-modules $B$  such that $B=\bigoplus_{P\in\P_S}1_P B$.
Under this equivalence, the full subcategory $\calC^{fg}_S$ of all compact objects of $\calC_S$ is taken to $A_S\mod$,
and the category $\calP_S$ is taken to the category $A_S$-$\proj$ of all projective modules in $A_S\mod$.
The following lemma holds.

\begin{lemma}\label{lem:a}
The restriction yields an $S$-algebra isomorphism $Z(\calC_S)\to Z(\calP_S).$
\qed
\end{lemma}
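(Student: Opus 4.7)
The plan is to exploit the given equivalence between $\calC_S^{fg}$ and the category of left $A_S$-modules of the specified form, under which $\calP_S$ corresponds to $A_S$-$\proj$. Both sides of the desired isomorphism will then admit a purely algebraic description as the center of the locally unital $S$-algebra $A_S$, namely the set of compatible families $(z_P)_{P \in \P_S} \in \prod_P 1_P A_S 1_P$ commuting with every element of $A_S = \bigoplus_{P,Q} 1_Q A_S 1_P$. The restriction map will then be tautologically the identity under these two identifications, so the whole argument reduces to checking that $Z(\calC_S) \simeq Z(A_S)$ and $Z(\calP_S) \simeq Z(A_S)$ via evaluation at objects of $\P_S$.

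First I will treat $Z(\calP_S) \simeq Z(A_S)$. Evaluation at the objects of $\P_S$ of a central element of $\calP_S$ produces a family satisfying the commutation relations with morphisms in $A_S$, hence an element of $Z(A_S)$. Conversely, any element of $Z(A_S)$ extends uniquely to a natural endotransformation of $\id_{\calP_S}$, since every compact projective object is a direct summand of a finite direct sum of objects from $\P_S$, and the value on a summand is forced by naturality applied to the idempotent that defines the splitting.

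For $Z(\calC_S) \simeq Z(A_S)$, injectivity of the restriction $Z(\calC_S) \to Z(\calP_S)$ will follow from the existence of a projective presentation $P_1 \xrightarrow{d} P_0 \twoheadrightarrow M$ with $P_0,P_1 \in \calP_S$ for every compact $M \in \calC_S$ (and, more generally, a free presentation by possibly infinite direct sums of objects of $\P_S$ for arbitrary $M$), since the naturality square attached to $P_0 \twoheadrightarrow M$ then forces $z_M$ to be the unique endomorphism induced by $z_{P_0}$. For surjectivity, given $z \in Z(\calP_S) = Z(A_S)$, I will define $z_M$ via the same diagram: the relation $z_{P_0} \circ d = d \circ z_{P_1}$ guarantees that $z_{P_0}$ descends to a well-defined endomorphism of the cokernel $M$.

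The main obstacle will be verifying that this extension is independent of the chosen presentation and natural in $M$. Both points reduce by standard homological algebra to lifting any morphism between two presentations to the level of the chosen projectives and invoking the naturality of $z$ on $\calP_S$ there. The Schurian hypothesis and the compactness of objects in $\calP_S$ ensure that $\Hom$-spaces are finitely generated projective over the Noetherian ring $S$, so that these lifts and the passage to colimits for general $M$ pose no technical difficulty beyond routine bookkeeping.
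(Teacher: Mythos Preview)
Your proposal is correct and is precisely the standard argument that the paper leaves implicit: the statement is marked in the paper with a bare \qed and no proof, so you are supplying exactly the details a reader would fill in. Your reduction of both $Z(\calC_S)$ and $Z(\calP_S)$ to the algebraic center $Z(A_S)$ via the given equivalence, together with the use of projective presentations (finite for compact objects, arbitrary direct sums of objects of $\P_S$ in general) to extend from $\calP_S$ to all of $\calC_S$, is the expected route and is sound.
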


Now, let $R$ be an $S$-algebra.
Let $\calP_S\otimes_SR$ be the idempotent completion of the
$R$-linear category with the same set of objects as 
$\calP_S$ and with the morphisms
$\Hom_{\calP_R}(M,N)=\Hom_{\calP_S}(M,N)\otimes_SR$. 
We write $M\otimes_SR$ for $M$ viewed as an object of $\calP_S\otimes_SR$.
Since the category $\calP_S$ is Schurian, the category  $\calP_S\otimes_SR$ is Schurian as well.
For each prime ideal $\frakp\subset S$, let $S_\frakp$ be the localization of $S$ at $\frakp$.
The obvious embedding 
$$\prod_{P\in\P_S}\End_{\calP_S}(P)\to 
\prod_{P\in\P_S}\End_{\calP_S\otimes_SS_\frakp}(P\otimes_S S_\frakp)$$ 
yields the inclusions
$$Z(\calP_S)\subset Z(\calP_S\otimes_SS_\frakp)\subset Z(\calP_S\otimes_SK).$$
Let $\frakP$ be a set of prime ideals in $S$ such that $M=\bigcap_{\frakp\in\frakP}M_\frakp$
for any object $M\in\calP_S$. For example, the set of all prime ideals of 
codimension one in $S$ satisfies this condition. 
The following lemma holds.

\begin{lemma}\label{lem:b}
We have $Z(\calC_S)=\bigcap_{\frakp\in\frakP}Z(\calP_S\otimes_SS_\frakp)$ as $S$-subalgebras of $Z(\calP_S\otimes_SK)$.
\qed
\end{lemma}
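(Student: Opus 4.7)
The plan is to reduce the statement to a local-to-global property of hom-spaces by first identifying $Z(\calC_S)$ with $Z(\calP_S)$, then using the Schurian hypothesis to compute endomorphism rings after base change, and finally invoking the assumption on $\frakP$ to recover $\End_{\calP_S}(P)$ as an intersection of its localisations.

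First I would use Lemma $\ref{lem:a}$ to replace $Z(\calC_S)$ by $Z(\calP_S)$, and similarly for the localisations and for the generic fibre; this is legitimate because the base change $\calP_S\otimes_S S_\frakp$ and $\calP_S\otimes_S K$ are again Schurian $S_\frakp$-linear (resp.\ $K$-linear) categories with faithful set of objects inherited from $\P_S$. An element of $Z(\calP_S\otimes_S K)$ may then be encoded as a tuple $z=(z_P)_{P\in\P_S}$ with $z_P\in\End_{\calP_S\otimes_S K}(P\otimes_S K)$ satisfying $z_Q\circ f=f\circ z_P$ for every morphism $f\in\Hom_{\calP_S\otimes_SK}(P,Q)$. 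Under this description the inclusion $Z(\calP_S)\subset\bigcap_{\frakp\in\frakP}Z(\calP_S\otimes_S S_\frakp)$ is immediate.

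For the reverse inclusion I would fix such a $z$ lying in $Z(\calP_S\otimes_S S_\frakp)$ for every $\frakp\in\frakP$, and aim to prove $z_P\in\End_{\calP_S}(P)$ for each $P\in\P_S$ (the compatibility with arbitrary morphisms in $\calP_S$ then follows from the corresponding compatibility in $\calP_S\otimes_S K$). By the Schurian assumption, the $S$-module $1_QA_S1_P=\Hom_{\calP_S}(P,Q)$ is projective of finite rank, so it is $S$-flat; consequently
\[
\End_{\calP_S\otimes_S S_\frakp}(P\otimes_S S_\frakp)=\End_{\calP_S}(P)\otimes_S S_\frakp,\qquad
\End_{\calP_S\otimes_S K}(P\otimes_S K)=\End_{\calP_S}(P)\otimes_S K,
\]
and these identifications are compatible with the natural inclusions. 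Hence the hypothesis on $z$ reads $z_P\in\bigcap_{\frakp\in\frakP}\bigl(\End_{\calP_S}(P)\otimes_S S_\frakp\bigr)$ inside $\End_{\calP_S}(P)\otimes_S K$.

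The heart of the argument, and the step I expect to be the main (though mild) obstacle, is to apply the defining property of $\frakP$ to conclude $\End_{\calP_S}(P)=\bigcap_{\frakp\in\frakP}\End_{\calP_S}(P)\otimes_S S_\frakp$. The assumption is stated for objects $M\in\calP_S$; to pass to the $S$-module $\End_{\calP_S}(P)$ I would exploit that the $A_S$-module $A_S\cdot 1_P=\bigoplus_{Q\in\P_S}\Hom_{\calP_S}(P,Q)$ corresponds under the equivalence $\calC_S^{fg}\simeq A_S\mod$ to the object $P\in\calP_S$, and that $\End_{\calP_S}(P)=1_PA_S1_P$ is an $S$-direct summand of $A_S\cdot 1_P$. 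Since the property $M=\bigcap_\frakp M_\frakp$ is inherited by direct summands, we obtain the desired equality, and hence $z_P\in\End_{\calP_S}(P)$. Doing this simultaneously for every $P\in\P_S$ produces the element of $Z(\calP_S)=Z(\calC_S)$ we were looking for, completing the proof.
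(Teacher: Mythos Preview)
Your proposal is correct and follows essentially the same route as the paper: reduce to $Z(\calP_S)$ via Lemma~\ref{lem:a}, embed the center into $\prod_{P\in\P_S}\End_{\calP_S}(P)$, and then use the intersection hypothesis to recover each $\End_{\calP_S}(P)$ from its localisations. The paper packages the last two steps into a single Cartesian square with vertices $Z(\calP_S)$, $\bigcap_\frakp Z(\calP_S\otimes_S S_\frakp)$, $\prod_P\End_{\calP_S}(P)$, and $\bigcap_\frakp\prod_P\End_{\calP_S\otimes_S S_\frakp}(P\otimes_S S_\frakp)$, showing that the bottom arrow is a bijection and deducing the same for the top; your argument unpacks this square elementwise. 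The one point where you are more careful than the paper is in deriving $\End_{\calP_S}(P)=\bigcap_\frakp\End_{\calP_S}(P)\otimes_S S_\frakp$ from the hypothesis on objects of $\calP_S$ via the direct-summand observation $1_PA_S1_P\subset A_S1_P$; the paper simply asserts this equality.
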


\iffalse%%%%%%%%%%%%%%%%%
\begin{proof}
It is enough to prove that $Z(\calP_R)=\bigcap_{\frakp\in\frakP}Z(\calP_{R_\frakp})$ in
$Z(\calP_K)$. 
We have the following Cartesian diagram
\[\xymatrix{Z(\calP_R)\ar[rr]\ar[d] &&\bigcap_{\frakp\in\frakP}Z(\calP_{R_\frakp})\ar[d]\\
\prod_{P\in\calP}\End_{\calP}(P)\ar[rr] && 
\bigcap_{\frakp\in\frakP}\Big(\prod_{P\in\calP}\End_{\calP_{R_\frakp}}(P\otimes_{R}R_\frakp)\Big),}\]
where all the arrows are inclusions. The lower horizontal arrow is bijective, because
\begin{align*}
\bigcap_{\frakp\in\frakP}\Big(\prod_{P\in\calP_R}\End_{\calP_{R_\frakp}}(P\otimes_{R}R_\frakp)\Big)
&=\prod_{P\in\calP_R}\Big(\bigcap_{\frakp\in\frakP}\End_{\calP_{R_\frakp}}(P\otimes_{R}R_\frakp)\Big)\\
&=\prod_{P\in\calP_R}\Big(\bigcap_{\frakp\in\frakP}\End_{\calP_R}(P)\otimes_RR_\frakp\Big)\\
&=\prod_{P\in\calP_R}\End_{\calP_R}(P).
\end{align*}
Thus, the top arrow is also bijective.
\end{proof}
\fi%%%%%%%%%%%%%%%%%

%%%%%%%%%%%%%%%%%

\smallskip

\subsection{} \label{sec:A4}
Let $(D,E)$ be a pair of adjoint functors with $D:\calC\to\calD$ and $E:\calD\to\calC$.
Assume further that the co-unit $\varepsilon:ED\to 1$ is invertible, i.e., the functor $E$ is a quotient functor.
Then, we have a map
\begin{align}\label{ZQF}
Z(E):Z(\calD)\to Z(\calC),\quad z\mapsto z'=\varepsilon\circ(1_E z1_D)\circ\varepsilon^{-1}.
\end{align}
Further, for each object $M\in\calD$ the following square is commutative, 
where the horizontal maps are given by the action 
of the center on $M$ and $E(M)$, 
$$\xymatrix{
Z(\calD)\ar[r]\ar[d]_-{Z(E)}&\End(M)\ar[d]^-E\\ Z(\calC)\ar[r]&\End(E(M)).}
$$

\smallskip

\subsection{} \label{sec:A5}
Let $\Lbd$ be any abelian group. By a $\Lbd$-graded category we mean a category $\calC$ 
along with a collection of shift functors $M\mapsto M\langle\lambda\rangle$ for each $\lambda\in\Lbd$ 
such that $\langle 0\rangle=\id$,
with isomorphisms of functors
$\langle\lambda\rangle\circ\langle\mu\rangle=\langle\lambda+\mu\rangle$ and
for each $\lambda,$ $\mu$.
If $\calC$ is $\Lbd$-graded, then the maps $Z(\langle\lambda\rangle)\in\End(Z(\calC))$ as $\lambda$ varies in $\Lbd$  
define an action of the group $\Lbd$ on $Z(\calC)$ by ring automorphisms.

\iffalse%%%%%%%%%%%%%%
Finally, let $F:\calD\to\calC$ be a functor and $P$ a projective generator of $\calD$ such that $F(P)$ is a 
projective generator of $\calC$.
Then, we have a map
\begin{align}\label{ZP}
Z(F):Z(\calD)=Z(\End(P))\to Z(\calC)=Z(\End(F(P)),\quad z\mapsto F(z).
\end{align}
\fi%%%%%%%%%%%%%%%%

\bigskip

\section{A geometric approach to the map $\bfc$}\label{appendixB}

In this section we give another approach to define the morphism
$$\bfc|_{H^\bullet(\Fl)}:H^\bullet(\Fl)\to Z(\Rep(\U_\zeta)\!^{\,\hat 0}).$$
In order to be able to use Frobenius weights, we extend the previous setting and consider the groups and 
varieties above over both fields $\bbF$ and $\bfk$, where $\bfk=\bar\bbQ_l$ and
$\bbF$ is an algebraic closure $\bar\bbF_q$
 of a finite field of characteristic $p$ which is large enough and prime to $l$.
Let ${}^0\calI$ be the pro-unipotent 
radical of the Iwahori group $\calI$.
Let $D^b_{m,{}^0\calI}(\Gr)$ be the 
${}^0\calI$-equivariant derived category of mixed complexes of $l$-adic sheaves
on the affine Grassmannian $\Gr$.
Let $D^b_{m,IW}(\Fl)$ be the Iwahori-Whittaker derived category of mixed complexes of $l$-adic sheaves
on the affine flag manifold $\Fl$. In both cases, by mixed we mean mixed in the sense of 
Beilinson-Ginzburg-Soergel.
Let (1/2) denote half the Tate twist and $\langle 1\rangle=[1](1/2)$.
Forgetting the mixed structures we get the categories $D^b_{{}^0\calI}(\Gr)$ and $D^b_{IW}(\Fl)$.
According to \cite{BY}, there is an equivalence of categories
\begin{align}\label{koszul}D^b_{m,{}^0\calI}(\Gr)\to D^b_{m,IW}(\Fl)\end{align}
which intertwines $(-1/2)$ on the left hand side with $\langle 1\rangle$ on the right hand side.
Since the cohomology of the affine flag manifold $\Fl$ is pure, we have a graded vector space morphism
\begin{align*}
H^\bullet(\Fl)&\longrightarrow\Hom\big(\id_{D^b_{m,IW}(\Fl)}\,,\,\id_{D^b_{m,IW}(\Fl)}\langle\bullet\rangle\,\big)\\
&\overset{{\eqref{koszul}}}\longrightarrow\Hom\big(\id_{D^b_{m,{}^0\calI}(\Gr)}\,,\,\id_{D^b_{m,{}^0\calI}(\Gr)}(-\bullet\!/2)\,\big).
\end{align*}
Composing it with the degrading functor 
\begin{align}\label{degrading} D^b_{m,{}^0\calI}(\Gr)\to D^b_{{}^0\calI}(\Gr),\end{align} we get a map
\begin{align}\label{map1}H^\bullet(\Fl)\to Z(D^b_{{}^0\calI}(\Gr)).\end{align}
By \cite{ABG} there is an equivalence of triangulated categories 
\begin{align}\label{ABG}D^b_{{}^0\calI}(\Gr)\to D^b(\Rep(\U_\zeta)\!^{\,\hat 0}).\end{align}
Composing \eqref{map1} and \eqref{ABG}, we get an algebra homomorphism
\begin{align}\label{map2}
\bfc':H^\bullet(\Fl)\to Z(\Rep(\U_\zeta)\!^{\,\hat 0}).
\end{align}
One may also use  the Kazhdan-Lusztig equivalence
and the Kashiwara-Tanisaki localization theorem instead of \cite{ABG} to define the map $\bfc'$.
One can check that $\bfc'=\bfc|_{H^\bullet(\Fl)}$.
We'll not do it here.

%%%%%%%%%%%%%%%%%%%%%%%%%%%%%%%%%%%%%%%%%%%%%%%%%%

\bigskip

\end{document}